\DeclareMathOperator{\Hom}{Hom}
\DeclareMathOperator{\uHom}{\underline{Hom}}
\DeclareMathOperator{\End}{End}
\DeclareMathOperator{\uEnd}{\underline{End}}
\DeclareMathOperator{\Ext}{Ext}
\DeclareMathOperator{\im}{Im}
\DeclareMathOperator{\Ker}{Ker}
\DeclareMathOperator{\tp}{top}
\DeclareMathOperator{\rad}{rad}
\DeclareMathOperator{\soc}{soc}
\DeclareMathOperator{\mod*}{mod}
\DeclareMathOperator{\id}{id}
\DeclareMathOperator{\proj}{proj}
\DeclareMathOperator{\inj}{inj}
\DeclareMathOperator{\fin}{fin}
\DeclareMathOperator{\ufin}{\underline{fin}}
\DeclareMathOperator{\Aut}{Aut}
\newcommand{\inv}{^{-1}}
\newcommand{\wh}{\widehat}
\newcommand{\wt}{\widetilde}
\newcommand{\ps}[1]{^{(#1)}}
\newcommand{\psp}[1]{^{(#1)'}}
\newcommand{\dynA}{{A}}
\newcommand{\dynC}{{C}}
\newcommand{\dynafA}{\widetilde{{A}}}
\newcommand{\dynafC}{\widetilde{{C}}}
\newcommand{\stp}{\varepsilon}
\newcommand{\crs}{\eta}
\newcommand{\orel}{Z\ps{1}}
\newcommand{\crel}{Z\ps{2}}
\newcommand{\PQ}{\mathcal{P}_Q}
\newcommand{\ov}{Q_{0}\ps{1}}
\newcommand{\cv}{Q_{0}\ps{2}}
\newcommand{\oa}{Q_{1}\ps{1}}
\newcommand{\ca}{Q_{1}\ps{2}}
\newcommand{\unfQ}{\wh{Q}}
\newcommand{\unfZ}{\wh{Z}}
\newcommand{\unfA}{\wh{A}}
\newcommand{\wrd}{\mathcal{W}}
\newcommand{\str}{\mathcal{S}}
\newcommand{\bnd}{\mathcal{B}}
\newcommand{\fw}{\kappa}
\newcommand{\fiso}{\Lambda}
\newcommand{\viso}{\fiso_v}
\newcommand{\finite}{\mathbb{F}}
\newcommand{\rational}{\mathbb{Q}}
\newcommand{\complex}{\mathbb{C}}
\newcommand{\iu}{\mathrm{i}\mkern1mu}
\newcommand{\real}{\mathbb{R}}
\newcommand{\integer}{\mathbb{Z}}
\newcommand{\spint}{\mathbb{Z}_{>0}}
\newcommand{\tikznode}[2]{
	\ifmmode
		\tikz[remember picture,baseline=(#1.base),inner sep=0pt] \node (#1) {$#2$};
	\else
		\tikz[remember picture,baseline=(#1.base),inner sep=0pt] \node (#1) {#2};
	\fi
}
\theoremstyle{plain}
\newtheorem{thm}{Theorem}[section]
\newtheorem*{thm*}{Theorem}
\newtheorem{lem}[thm]{Lemma}
\newtheorem*{lem*}{Lemma}
\newtheorem{cor}[thm]{Corollary}
\newtheorem*{cor*}{Corollary}
\newtheorem{prop}[thm]{Proposition}
\newtheorem*{prop*}{Proposition}
\theoremstyle{definition}
\newtheorem{defn}[thm]{Definition}
\newtheorem{exam}[thm]{Example}
\newtheorem*{exam*}{Example}
\theoremstyle{remark}
\newtheorem{rem}[thm]{Remark}
\numberwithin{equation}{section}
\numberwithin{figure}{section}
\begin{document}
	\title[Folded Gentle Algebras]{Folded Gentle Algebras}
	\author{Drew Damien Duffield}
	\maketitle
	\begin{abstract}
	    We use folding techniques to define a new class of gentle-like algebras that generalise the iterated tilted algebras of type $C$ and $\widetilde{C}$, which we call folded gentle algebras. We show that folded gentle algebras satisfy many of the same properties of gentle algebras, and that the proof of these properties follows directly from folding arguments. As a subclass of clannish algebras (with irreducible quadratic relations), we show that the classification of indecomposable modules (in terms of symmetric and asymmetric strings and bands) can be recovered from folding techniques, and that this proof technique provides further explicit detail on the classification of band modules. In particular, our paper includes a classification of indecomposable modules over the algebra $K\langle x,y\rangle/ \langle p(x),q(y) \rangle$, where $p$ and $q$ are monic, irreducible, quadratic polynomials over $K$. In addition, we classify the Auslander-Reiten sequences of folded gentle algebras, showing that irreducible morphisms between string modules are given by adding/deleting hooks and cohooks to/from strings. Finally, we show that the class of folded gentle algebras is closed under derived equivalence. 
	\end{abstract}
	\tableofcontents
	
	\section{Introduction}
	Since their inception, the class of \emph{gentle algebras} have proven to be incredibly influential to mathematics. Their origins go back to the theory of iterated tilted algebras of Dynkin/Euclidean types $A$ and $\widetilde{A}$ (\cite{AssemA,AssemHappel,ASIteratedTilted}), for which gentle algebras are a natural generalisation. As combinatorially interesting algebras in their own right, gentle algebras received much initial study. They are a subclass of \emph{special biserial algebras} (also incredibly well-studied), and as such, are a class of \emph{tame} algebras (in the sense of the tame-wild dichotomy of Drozd in \cite{Drozd}). In particular, the indecomposable modules over special biserial algebras (and hence also gentle algebras) have been classified. For gentle algebras, the indecomposable modules are given by string and band modules (\cite{WaldWasch}). The \emph{Auslander-Reiten quiver} of the module category of a gentle algebra is also well-understood. For example, the \emph{Auslander-Reiten sequences} of gentle algebras and special biserial algebras have been classified (\cite{ButlerRingel,WaschSkow}). All of this combines together to give a very concrete understanding of the module category of gentle algebras.
	
	What makes gentle algebras a particularly useful subclass of biserial algebras, however, is that we also have a good understanding of their \emph{bounded derived category}. For example, they are known to be \emph{derived-tame} and there is a classification of the indecomposable objects in the bounded derived category of a gentle algebra in \cite{BekkertGentle}. In addition, a basis of the morphisms between indecomposable objects in the bounded derived category is given in \cite{Arnesen} and the mapping cones of these morphisms is considered in \cite{Canakci}. Above all, one of most striking results in the theory of gentle algebras is that they are closed under derived equivalence \cite{SchroerZim}. There are very few classes of algebras that satisfy this property, making them particularly important.
	
	Aside from these very nice properties, there has been a resurgence of interest in gentle algebras due to their natural appearance in numerous other areas of mathematics. For example, they appear in cluster theory as $m$-cluster tilted algebras, $m$-Calabi-Yau tilted algebras (\cite{Elsener}), and as Jacobian algebras associated to triangulations of surfaces with marked points (\cite{ABCP}). They also appear in areas of geometry and mathematical physics such as with Dimer models (\cite{Broomhead}), enveloping algebras of Lie algebras (\cite{Huerfano}), and partially wrapped Fukaya categories (\cite{Opper}).
	
	There is another class of algebras that is closely related to the class of gentle algebras, called \emph{skewed-gentle algebras}. They were originally defined by Geiss and de la Pe\~{n}a in \cite{SkewedGentle}, and are a subclass of clannish algebras (defined in \cite{CBClans}). These algebras have many properties in common with gentle algebras and are just as influential, also appearing in cluster theory, geometry and mathematical physics. For example, they are closely related to iterated tilted algebras of Dynkin/Euclidean types $D$ and $\widetilde{D}$. There is a classification of their indecomposable modules in terms of symmetric and asymmetric string and band modules (\cite{CBClans}), the Auslander-Reiten sequences of the module category have been classified (\cite{GeissClanMaps}), and the indecomposable objects in the bounded derived category of a skewed-gentle algebra have been classified (\cite{BekkertSkewedGentle}). However, unlike gentle algebras, skewed-gentle algebras are not closed under derived equivalence.
	
	The classes of gentle and skewed-gentle algebras thus form generalisations of algebras associated to the two simply-laced Dynkin/Euclidean types that consist of infinite families ($A$, $\widetilde{A}$, $D$ and $\widetilde{D}$). However, this begs the question of whether similar gentle-like classes of algebras exist for the other infinite families of Dynkin/Euclidean diagrams, such as $B$, $\widetilde{B}$, $C$ and $\widetilde{C}$. Algebras of finite/tame representation type indeed exist for these families of diagrams. They are given by the species of the appropriate type, described in \cite{DRIndec}. Moreover, there exist iterated tilted algebras based on these types, described in \cite{AssemC}.
	
	In this paper, we consider the type $C$ and $\widetilde{C}$ case, and indeed show that such a gentle-like class of algebras exists for these types. Not only do we define this class of algebras, which we call \emph{folded gentle algebras}, we also show that they satisfy many of the same properties as gentle algebras, including closure under derived equivalence. The algebras we define are essentially gentle algebras with the addition of certain loops (which we call \emph{crease loops}) that satisfy irreducible quadratic relations over the ground field $K$. Consequently, the ground field $K$ in this paper is never algebraically closed, which is in contrast to the setting of gentle algebras. (That said, these algebras can contain simple modules isomorphic to $\complex$ whenever $K=\real$.) Folded gentle algebras actually turn out to be a special subclass of clannish algebras in the original sense of \cite{CBClans}. Specifically, they arise from \emph{irreducible clans}. On this point, it is worth remarking that this is the key difference between folded gentle algebras and skewed-gentle algebras --- skewed gentle algebras are defined using special loops that satisfy quadratic relations that are reducible over every field, whereas folded gentle algebras are defined using loops with quadratic relations that must be irreducible over the ground field. This leads to the key difference in representation-theoretic behaviour (from Dynkin/Euclidean type $D$ or $\widetilde{D}$ to type $C$ or $\widetilde{C}$). It is also worth remarking that the indecomposable modules of clannish algebras which contain irreducible quadratic relations were recently classified in \cite{SemilinearClan}.
	
	The main inspiration behind the name of the algebras central to this paper, their definition, and the proof of their properties, is a technique called a  \emph{(crystallographic) folding}. Beyond exploring the specific theory and properties of folded gentle algebras, one of the primary aims of this paper is to showcase just how useful and powerful folding arguments can be to representation theory. Crystallographic foldings arise from group symmetry on simply-laced Dynkin or Euclidean diagrams. For example, Dynkin diagrams of type $A_{2n+1}$ have a group action of $\integer_2$ which fixes the central vertex and reflects all other edges and vertices about the central vertex. Dynkin diagrams of type $D_n$ also have a group action of $\integer_2$ which reflect the vertices and edges at the `fork' of the diagram. When one takes the quotient with respect to this group action (which is called folding), one obtains a non-simply-laced Dynkin/Euclidean diagram. Specifically, there are crystallographic foldings $A_{2n-1} \rightarrow C_{n}$, $\widetilde{A}_{2n-1} \rightarrow \widetilde{C}_{n}$, $D_{n+1} \rightarrow B_{n}$ and $\widetilde{D}_{n+1} \rightarrow \widetilde{B}_{n}$ (amongst others). This folding manifests itself in other ways: For each folding, there is an embedding of the corresponding Coxeter groups and there is a projection of the corresponding root systems (see \cite{FockGoncharov06} and \cite[Chapter 11]{SteinbergLectures} for details). Closely related to this paper is work that explores foldings in the context of the representation theory of finite-dimensional algebras, such as in \cite{DengDuI,DengDuII,Tanisaki}.
	
	This paper takes a foundational approach and does not necessarily require that the reader has specialist knowledge, although an understanding of the basics of representation theory is beneficial. In Section~\ref{sec:Defn}, we begin with outlining the required preliminaries to understand the earlier topics covered in this paper. This then leads us to the definition of folded gentle algebras and some examples. All folded gentle algebras in this paper are assumed to be finite-dimensional, but we make no assumptions on the ground field other than that it must admit a field extension of degree 2.
	
	In Section~\ref{sec:BasicProps}, we outline some basic representation-theoretic properties of folded gentle algebras. This includes the first result of the paper, which is that folded gentle algebras are examples of \emph{biserial algebras} (in the sense of \cite{WaldWasch,WaschSkow}) over a field that is not algebraically closed.
	\begin{thm*}[Theorem~\ref{thm:Biserial}]
		Let $A$ be a folded gentle algebra. Then $A$ is a biserial algebra.
	\end{thm*}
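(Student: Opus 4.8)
The plan is to verify the definition of a biserial algebra directly from the description of $A$ in Section~\ref{sec:Defn}: for each primitive idempotent $e_{v}$ one must show that $\rad(Ae_{v})$ and $\rad(e_{v}A)$ are each a sum of at most two uniserial submodules whose pairwise intersections are simple or zero. Two structural facts will do the work. First, the ordinary arrows of $A$, together with the monomial relations among them, satisfy the conditions defining a gentle algebra, so the path combinatorics of the ordinary arrows obeys the usual gentle rules --- at every vertex a path admits a unique admissible continuation by an ordinary arrow. Second, and in sharp contrast with the gentle situation where every arrow lies in the radical, a crease loop $\varepsilon$ at a crease vertex $v$ satisfies an irreducible quadratic relation over $K$ and is therefore a \emph{unit} of $e_{v}Ae_{v}$; hence $\varepsilon\notin\rad A$, the corner ring $e_{v}Ae_{v}$ is the quadratic field extension $L$ of $K$ determined by that relation, and $\rad(Ae_{v})$ is the span of those reduced paths starting at $v$ that use at least one ordinary arrow. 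Finally, $A^{\op}$ is again a folded gentle algebra --- reversing all arrows turns a gentle algebra into a gentle one and carries every crease loop, with its quadratic relation, to a crease loop --- so it is enough to treat the left modules $\rad(Ae_{v})$.

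Grouping the reduced paths occurring in $\rad(Ae_{v})$ by their first arrow then gives a direct sum decomposition $\rad(Ae_{v}) = \bigoplus_{c} Ac$. If $v$ is not a crease vertex, the index $c$ runs over the at most two ordinary arrows leaving $v$, so $\rad(Ae_{v})$ is a direct sum of at most two submodules $A\alpha$, $A\beta$; and each $A\alpha$ is uniserial, since along the paths starting with $\alpha$ the gentle relations force a unique continuation at every step, and where such a path runs through a crease vertex $u$ the only additional basis vector it produces is $\varepsilon_{u}$ applied on the left, which merely makes the composition factor $S_{u}$ two-dimensional over $K$ without splitting it --- so no branching occurs and the composition series stays uniserial. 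If $v$ is a crease vertex with loop $\varepsilon$, then by the local conditions in the definition at most one ordinary arrow $\alpha\colon v\to w$ leaves $v$ (so that the count stays within the required bound of two); if there is none then $\rad(Ae_{v}) = 0$, and otherwise the reduced paths from $v$ using an ordinary arrow begin either with $\alpha$ or (since $\varepsilon^{2}$ rewrites) with $\varepsilon$ followed by $\alpha$, giving $\rad(Ae_{v}) = A\alpha \oplus A\alpha\varepsilon$, two submodules that are uniserial exactly as before and are interchanged by right multiplication by the unit $\varepsilon$. In all cases $\rad(Ae_{v})$ is a direct sum of at most two uniserial submodules, hence in particular a sum of at most two uniserials with pairwise intersections zero, and the same computation in $A^{\op}$ disposes of the right modules.

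The step needing the most care --- and the only genuine departure from the gentle case --- is the treatment of the crease loops. Because they are units, the defining ideal of $A$ is not admissible and $\rad A$ is strictly smaller than the arrow ideal, so one must not count a crease loop among the arrows contributing to $\rad(Ae_{v})$: doing so would wrongly put $\varepsilon$ itself into the radical of the projective. Once it is correctly recorded that $\rad(Ae_{v})$ is spanned by the paths out of $v$ that involve an ordinary arrow --- and that a crease loop met further along such a path only enlarges a composition factor to $K$-dimension two rather than creating a new composition factor --- both the uniseriality of the constituent submodules and the bound of two on their number follow as above, and the remainder is routine bookkeeping inherited from the theory of gentle algebras.
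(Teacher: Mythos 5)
Your decomposition of $\rad(\stp_v A)$ into $\alpha A \oplus \alpha' A$ (for an ordinary $v$) or into $\alpha A \oplus \crs_v\alpha A$ (for a crease $v$), and the observation that the opposite algebra is again folded gentle, both match the paper's Lemma~\ref{lem:radmASum}. However, you diverge from the paper's route at the final step: the paper proves the stronger statement that \emph{every} cyclic submodule $mA$ of \emph{every} module is biserial (Theorem~\ref{thm:Biserial}(a)--(b)), and for an arbitrary generator $m$ the two uniserials $m\alpha A$ and $m\alpha' A$ need not be independent, so a genuine contradiction argument is required to bound $U_1\cap U_2$. You instead specialise to the indecomposable projectives (which is all that the definition of ``biserial algebra'' requires) and observe that, because $A$ has a $K$-basis of reduced paths partitioned by their first arrow, the sum is automatically direct. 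That is a real simplification, and it is sound --- for $P(v)$ the submodules $\alpha A$, $\alpha' A$ (resp.\ $\alpha A$, $\crs_v\alpha A$) are spanned by disjoint sets of basis paths, so their intersection is zero, not merely ``simple or zero.''

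Where your argument has a genuine gap is in the uniseriality of $\alpha A$. You write that ``the gentle relations force a unique continuation at every step'' and that at a crease vertex $u$ the crease loop ``merely makes the composition factor $S_u$ two-dimensional without splitting it,'' but you never say \emph{why} there is no branching at $u$. If $p$ is a reduced path in $\alpha A$ ending at a crease vertex $u$ with crease $\crs_u$ and ordinary arrow $\beta$ leaving $u$, then a priori both $p\beta$ and $p\crs_u\beta$ lie in $\rad(\alpha A)$, land in the same radical layer, and are distinct basis paths; were both non-zero, $\rad^{j}/\rad^{j+1}$ would contain two independent copies of a composition factor and uniseriality would fail. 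What saves the argument is axiom~\ref{en:FG6}: any composition $\alpha\beta$ of ordinary arrows through a crease vertex is a relation, so $p\beta=0$ and only $p\crs_u\beta$ survives. This is exactly the point the paper's Lemma~\ref{lem:maAUniserial} addresses explicitly (via the rewriting $m\alpha\crs_u^2\beta = \lambda_1 m\alpha\crs_u\beta$ and the vanishing of $m\alpha\beta$). You should state this explicitly, since it is the one place where the folded-gentle axioms --- rather than mere bookkeeping inherited from the gentle case --- are doing essential work.
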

	
	In Section~\ref{sec:Strings}, we outline some preliminary notions from various papers (such as \cite{SemilinearClan}\cite{ButlerRingel}, \cite{CBClans}  and \cite{Deng}) on \emph{words, strings, bands} and their associated modules. We also adapt these notions to the setting of folded gentle algebras, as the crease loops require additional considerations to be made. In particular, words, strings and bands of folded gentle algebras contain \emph{crease symbols}, which can result in symmetric structures and additional equivalences to consider. We then precisely define the classes of modules we wish to consider in the paper, which are \emph{symmetric} and \emph{asymmetric} \emph{string modules} and \emph{band modules}. The are essentially the same as the definitions given in \cite{SemilinearClan}, except for some differences in notation: we define the module structures combinatorially in terms of word concatenations. The reason for the slight difference in notation is that it better suits the combinatorial proofs of results later in the paper.
	
	In Section~\ref{sec:UnfoldedGentle}, we introduce both \emph{folding} and \emph{unfolding} in the context of gentle and folded gentle algebras. Specifically, we show that for any folded gentle algebra, there is a corresponding gentle algebra that is related to the folded gentle algebra via an \emph{unfolding procedure}. We call the corresponding gentle algebra an \emph{unfolded gentle algebra}. We then show that this induces the existence of a group action of $\integer_2$ on both the quiver and the algebra of the unfolded gentle algebra. Taking the quotient of the quiver of the unfolded gentle algebra with respect to this group action (a.k.a folding) takes one back to the folded gentle algebra. At the end of this section, we explore the technical implications of this on the combinatorics of words, strings and bands in folded gentle algebras and unfolded gentle algebras, and how they are related to each other. In particular, we show the following, which is essentially Definitions~\ref{def:Z2Words}, \ref{def:FoldedStrings} and \ref{def:FoldedBands} and Lemmata~\ref{lem:FoldedStrings}, \ref{lem:FoldedBands}, \ref{lem:OddParity} and \ref{lem:FoldedSymmetric}.
	
	\begin{lem*}
		Let $A$ be a finite-dimensional folded gentle algebra and let $\wh A$ be the unfolded gentle algebra corresponding to $A$ via the unfolding procedure. Let $\integer_2=\{1,g\}$ be the group that acts on $\wh A$. For any gentle or folded gentle algebra $A'$, denote by $\mathcal{S}_{A'}$ and $\mathcal{B}_{A'}$ the classes of all strings and bands of $A'$ respectively, up to the equivalences $\approx$ defined in Section~\ref{sec:Strings}. Then the following hold.
		\begin{enumerate}[label=(\alph*)]
			\item $\integer_2$ acts on $\str_{\wh A}$ and $\bnd_{\wh A}$. 
			\item There is a bijection $\str_A \cong \str_{\wh A} / \integer_2$. In particular, there exists a surjection $\rho\colon \str_{\wh A} \rightarrow \str_A$ such that
			\begin{enumerate}[label=(\roman*)]
				\item $\rho(\wh w) \approx \rho(\wh w')$ if and only if $\wh w' \approx \wh w$ or $\wh w' \approx g\wh w$; and
				\item $\rho(\wh w) \in \str_A$ is symmetric if and only if $\wh w \in \str_{\wh A}$ is $\integer_2$-invariant (that is, $g \wh w \approx \wh w$).
			\end{enumerate}
			\item There is a bijection $\bnd_A \cong \bnd_{\wh A} / \integer_2$. In particular, there exists a surjection $\rho\colon \bnd_{\wh A} \rightarrow \bnd_A$ such that
			\begin{enumerate}[label=(\roman*)]
				\item $\rho(\wh w) \approx \rho(\wh w')$ if and only if $\wh w' \approx \wh w$ or $\wh w' \approx g\wh w$;
				\item $\rho(\wh w) \in \bnd_A$ has an odd number of crease symbols (and is hence asymmetric) if and only if $\wh w \approx (g\wh w')\wh w'$ for some subword $\wh w'$ of $\wh w$ (and $\wh w$ is hence $\integer_2$-invariant);
				\item $\rho(\wh w) \in \bnd_A$ is asymmetric with an even number of crease symbols if and only if $\wh w$ is not $\integer_2$-invariant; and
				\item $\rho(\wh w) \in \bnd_A$ is symmetric if and only if $\wh w \approx  (g\wh w')\inv \wh w'$ for some subword $\wh w'$ of $\wh w$ (and $\wh w$ is hence $\integer_2$-invariant).
			\end{enumerate}
		\end{enumerate}
	\end{lem*}
	
	Section~\ref{sec:FoldedModule} leads towards some of our most important results. In this section, we use (un)folding techniques to give a complete description of the module category of a folded gentle algebra by using known results on the module category of its corresponding unfolded gentle algebra. We define a functor $U$ (called the \emph{unfolding functor}) which describes how to `unfold' the module category of a folded gentle algebra into the module category of its corresponding (unfolded) gentle algebra. We then give a very explicit correspondence between the string and band modules of the folded gentle algebra and the string and band modules of the unfolded gentle algebra, which is detailed in Propositions~\ref{prop:UStrings}, \ref{prop:UEvenBands}, \ref{prop:UOddBands} and \ref{prop:USymBand}. 
	
	\begin{prop*}[Proposition~\ref{prop:UStrings}]
		Let $M \in \mod*A$ and let $\wh w \in \str_{\wh A}$. Then the following are equivalent.
		\begin{enumerate}[label=(\alph*)]
			\item $U(M)$ contains a direct summand isomorphic to the string module associated to $\wh w$.
			\item $U(M)$ contains a direct summand isomorphic to a direct sum of two string modules: one associated to $\wh w$ and the other to $g\wh w$.
			\item $M$ contains a direct summand isomorphic to the string module associated to $\rho(\wh w)$.
		\end{enumerate}
	\end{prop*}
	
	\begin{prop*}[Proposition~\ref{prop:UEvenBands}]
		Let $M \in \mod*A$ and let $\wh w \in \bnd_{\wh A}$ such that $\wh w$ is not $\integer_2$-invariant. Then the following are equivalent.
		\begin{enumerate}[label=(\alph*)]
			\item $U(M)$ contains a direct summand isomorphic to a band module associated to $\wh w$.
			\item $U(M)$ contains a direct summand isomorphic to a direct sum of two band modules: one associated to $\wh w$ and the other to $g\wh w$.
			\item $M$ contains a direct summand isomorphic to a band module associated to $\rho(\wh w)$.
		\end{enumerate}
	\end{prop*}
	
	\begin{prop*}[Propositions~\ref{prop:UOddBands}, \ref{prop:USymBand}]
		Let $M \in \mod*A$ and let $\wh w \in \bnd_{\wh A}$ such that $\wh w$ is $\integer_2$-invariant. Then the following are equivalent.
		\begin{enumerate}[label=(\alph*)]
			\item $U(M)$ contains a direct summand isomorphic to a band module associated to $\wh w$.
			\item $U(M)$ contains a direct summand isomorphic to a band module associated to $g\wh w$.
			\item $M$ contains a direct summand isomorphic to a band module associated to $\rho(\wh w)$.
		\end{enumerate}
	\end{prop*}
	
	It is worth remarking here that in the Proposition concerning the $\integer_2$-invariant case, the direct summand in (b) is sometimes distinct from the direct summand in (a), and sometimes it is not distinct. Precisely when this happens for a given band module is dependent on the $K$-vector space automorphism that describes the band module. It is also worth remarking that the precise statements of Propositions~\ref{prop:UEvenBands}, \ref{prop:UOddBands} and \ref{prop:USymBand} are even more specific --- we describe this correspondence precisely in terms of each band module associated to $\wh w \in \bnd_{\wh A}$ (of which there are infinitely many). 
	
	As well as showcasing the interesting correspondence of indecomposable modules between folded gentle algebras and gentle algebras, these propositions are useful for several other results in the paper. They can also be used to recover the classification of the iso-classes of indecomposable modules of folded gentle algebras (without repetitions) using folding techniques. Whilst the indecomposable modules have already been classified due to \cite{SemilinearClan}, we have kept the classification of indecomposables via folding as a proof of concept, as it showcases a new classification technique that is distinct from matrix reduction and functorial filtration. We also remark that the use of folding techniques in this paper results in an explicit classification of the symmetric band modules in terms of the $\integer_2$-orbits of certain classes of powers of irreducible polynomials. Specifically, we define three collections of iso-classes of indecomposable modules, denoted by $\mathcal{M}_1$ (Definition~\ref{defn:IndM1}), $\mathcal{M}_2$ (Definition~\ref{def:AsymBandsClass}) and $\mathcal{M}_3$ (Definition~\ref{def:SymBandInds}). The modules in $\mathcal{M}_1$ are the collection of all string modules, up to an equivalence previously defined in Section~\ref{sec:Strings}. The modules in $\mathcal{M}_2$ are asymmetric band modules, again up to an equivalence of bands defined in Section~\ref{sec:Strings}, and further indexed by a set 
	\begin{equation*}
		\Pi = \{p^n \in K[x] : n \in \spint \text{ and } p \text{ is monic, irreducible, and } p(0) \neq 0\}.
	\end{equation*}
	The modules in $\mathcal{M}_3$ are symmetric band modules, again up to an equivalence of bands, and further indexed by a set $\Pi_w = \Pi / \integer_2$, where the action of $\integer_2$ on $\Pi$ is determined by each band $w$.
	\begin{thm*}[Theorem~\ref{thm:IndClassification}]
		Let $A$ be a folded gentle algebra. The combined collection of all objects in $\mathcal{M}_1 \cup \mathcal{M}_2 \cup \mathcal{M}_3$ forms a complete collection of all iso-classes of indecomposable objects of $\mod*A$ without repetitions.
	\end{thm*}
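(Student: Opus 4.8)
The plan is to deduce the classification over $A$ from the known classification over the corresponding unfolded gentle algebra $\widehat{A}$, transporting information across the unfolding functor $U$ of Section~\ref{sec:FoldedModule} and the $\integer_2$-action on $\widehat{A}$ obtained by folding. Recall from that section that $U$ sends each string (resp.\ band) module of $A$ to a string (resp.\ band) module over $\widehat{A}$, or to a direct sum of two such, with the precise correspondences recorded in Propositions~\ref{prop:UStrings}, \ref{prop:UEvenBands}, \ref{prop:UOddBands} and \ref{prop:USymBand}; that folding the quiver of $\widehat{A}$ returns $A$, so $\integer_2$ acts on $\widehat{A}$ and hence on $\mod* \widehat{A}$; and that, up to the identifications of Section~\ref{sec:FoldedModule}, the $\widehat{A}$-modules in the image of $U$ are essentially the $\integer_2$-stable ones. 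We must prove two things: (i) every indecomposable $M \in \mod* A$ is isomorphic to some object of $\mathcal{M}_1 \cup \mathcal{M}_2 \cup \mathcal{M}_3$, and (ii) the objects listed in $\mathcal{M}_1 \cup \mathcal{M}_2 \cup \mathcal{M}_3$ are pairwise non-isomorphic.

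For (i), fix an indecomposable $M \in \mod* A$. Since $\widehat{A}$ is gentle, the classification of indecomposables over gentle (indeed special biserial) algebras \cite{ButlerRingel,WaldWasch,CBClans} expresses $U(M)$ as a direct sum of string and band modules over $\widehat{A}$. Because $M$ is an $A$-module, $U(M)$ is $\integer_2$-stable, so $\integer_2$ permutes the isomorphism classes of these summands; and the descent properties of $U$ from Section~\ref{sec:FoldedModule} — a decomposition of $M$ induces a $\integer_2$-equivariant decomposition of $U(M)$, and conversely $\integer_2$-stable summands of $U(M)$ arise from summands of $M$ — together with the indecomposability of $M$ force the isomorphism classes of summands of $U(M)$ to constitute a single $\integer_2$-orbit. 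Thus $U(M)$ is, up to multiplicity, either a $\integer_2$-fixed string or band module, or a sum $N \oplus \sigma(N)$ with $N \not\cong \sigma(N)$ a string or band module. Matching these possibilities against Propositions~\ref{prop:UStrings}–\ref{prop:USymBand} identifies $M$ with a symmetric or an asymmetric string module (hence an object of $\mathcal{M}_1$, Definition~\ref{defn:IndM1}), an asymmetric band module ($\mathcal{M}_2$, Definition~\ref{def:AsymBandsClass}), or a symmetric band module ($\mathcal{M}_3$, Definition~\ref{def:SymBandInds}). Conversely, those same Propositions show every object of $\mathcal{M}_1 \cup \mathcal{M}_2 \cup \mathcal{M}_3$ is a genuine indecomposable $A$-module, which completes (i).

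For (ii), the three families are mutually disjoint: $U$ carries members of $\mathcal{M}_1$ to string modules and members of $\mathcal{M}_2 \cup \mathcal{M}_3$ to band modules over $\widehat{A}$, and string modules are never isomorphic to band modules over a gentle algebra (this being part of the classification in \cite{ButlerRingel,WaldWasch,CBClans}), so $\mathcal{M}_1$ is disjoint from $\mathcal{M}_2 \cup \mathcal{M}_3$; and $U$ takes asymmetric band modules to non-$\integer_2$-stable bands and symmetric ones to $\integer_2$-stable bands, so $\mathcal{M}_2$ and $\mathcal{M}_3$ are disjoint. Within each family, the equivalence relations on strings and on bands set up in Section~\ref{sec:Strings} were defined precisely so that inequivalent data yield non-isomorphic modules; applying $U$ and invoking that the classification over the gentle algebra $\widehat{A}$ is itself without repetitions reduces this to the distinctness of the associated $\widehat{A}$-modules (for $\mathcal{M}_1$ and $\mathcal{M}_2$) or of their $\integer_2$-orbits (for $\mathcal{M}_3$). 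The one subtle point is the parametrisation: over the non-algebraically-closed field $K$, band modules are indexed by $\Pi$, the strictly positive powers of monic irreducible polynomials over $K$; for an asymmetric band this set is preserved by $U$, while for a symmetric band the folding induces the involution on $\Pi$ described in Proposition~\ref{prop:USymBand}, under which the two band modules in an orbit become isomorphic over $A$ — this is exactly what the indexing of $\mathcal{M}_3$ by $\Pi / \integer_2$ records, and it removes the last potential repetitions.

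The step I expect to be the main obstacle is the part of (i) that controls the decomposition of $U(M)$, proves the single-orbit statement, and — crucially — pins down the multiplicities and decides which orbit type lands in which of $\mathcal{M}_1,\mathcal{M}_2,\mathcal{M}_3$. Since $A$ is obtained from $\widehat{A}$ not by a skew group construction but by a quadratic field extension localised at the crease loops, this is really a Galois-descent argument along $K \subset K[x]/(q)$ with $q$ an irreducible quadratic, and the behaviour in characteristic two (and, more generally, when the extension is inseparable) must be treated with care; this is where most of the work goes. A secondary, more combinatorial obstacle is making the translation between the parameter set $\Pi$ for bands over $\widehat{A}$ and the parameters for asymmetric and symmetric bands over $A$ fully rigorous, since a polynomial irreducible over $K$ may factor upon unfolding — the precise correspondences in Propositions~\ref{prop:UEvenBands}, \ref{prop:UOddBands} and \ref{prop:USymBand} are what make this work, and they would need to be applied with care.
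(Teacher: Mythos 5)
Your overall strategy — pass to $\widehat{A}$ via the unfolding functor $U$, invoke the Butler--Ringel classification of indecomposables over the gentle algebra $\widehat{A}$, and transport the result back along the Propositions of Section~\ref{sec:FoldedModule} — is the same one the paper uses. However, there are two places where the $\integer_2$-orbit framework you set up is not accurate enough to carry the argument, and both are precisely where the paper has to deviate from that clean picture.

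First, your dichotomy ``$U$ takes asymmetric band modules to non-$\integer_2$-stable bands and symmetric ones to $\integer_2$-stable bands'' is false. By Lemma~\ref{lem:OddParity} a band $w \in \bnd_A$ of \emph{odd parity} is necessarily asymmetric, yet its unfolding $\theta(w)$ is $\integer_2$-invariant, and by Proposition~\ref{prop:UOddBands} the module $M(w,m,\phi)$ unfolds to the single indecomposable $\integer_2$-invariant band module $M(\theta(w), m, \lambda_{\theta(w)}\phi^2)$ — not to a two-element orbit and not to a symmetric band. So $\integer_2$-invariance of the unfolded band does not separate $\mathcal{M}_2$ from $\mathcal{M}_3$; what separates them is the \emph{parity} of the band (Lemma~\ref{lem:FoldedSymmetric}(b): symmetric $\Leftrightarrow$ $\integer_2$-invariant of \emph{even} parity). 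This is exactly the distinction the paper draws in Lemma~\ref{lem:NoRepetitions}(a) when it decomposes $\bnd_{\widehat{A}}$ into $\mathcal{B}^{(0)}$, $\mathcal{B}^s$ and $\mathcal{B}^{(1)}$, and your argument for disjointness of $\mathcal{M}_2$ and $\mathcal{M}_3$ does not go through without it.

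Second, the descent step ``$\integer_2$-stable summands of $U(M)$ arise from summands of $M$, hence the summands of $U(M)$ form a single $\integer_2$-orbit'' is not established, and in the odd-parity case it is genuinely problematic. The injectivity of $U$ on iso-classes (the paper's Lemma~\ref{lem:UEssInj}) is only proved \emph{away} from odd-parity quasiband summands, and the surjectivity statement (Lemma~\ref{lem:UCoversBands}) has a Case 2 in which no band module of $A$ unfolds exactly to a given $M(\widehat{w},\widehat{m},\widehat{\phi})$: instead one must pass to a module of twice the dimension whose image under $U$ is $M(\widehat{w},\widehat{m},\widehat{\phi})^{\oplus 2}$, because $\lambda_{\widehat{w}}^{-1}\widehat{\phi}$ need not have a square root over $K$. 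So ``up to multiplicity the summands form a single orbit'' is not a consequence of an abstract equivariance lemma here; it is recovered, after the fact, from the four Propositions~\ref{prop:UStrings}, \ref{prop:UEvenBands}, \ref{prop:UOddBands}, \ref{prop:USymBand} applied case by case. (Relatedly, the paper does \emph{not} use a $\integer_2$-action on $\mod*\widehat{A}$ as a structural tool in this theorem; it works directly with $\rho$, $\theta$, $\kappa$ and the Propositions, and your ``single orbit'' claim would need to be proved rather than cited.) The parts of your plan that are sound — disjointness of $\mathcal{M}_1$ from the band families, the functor from $\fin K[x,x^{-1}]$ for distinctness within $\mathcal{M}_2$, and the $\Pi/\integer_2$ indexing for $\mathcal{M}_3$ — match the paper's Lemma~\ref{lem:NoRepetitions}, but the completeness half of the argument needs the case analysis above before it closes.
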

	As a consequence of the classification, we automatically recover the following further classification.
	\begin{cor*}[Corollary~\ref{cor:Kxy}]
		Let $K$ be a field and let $A = K\langle x,y \rangle / \langle p(x), q(y) \rangle$ be such that
		\begin{align*}
			p(x) &= x^2 - \lambda_1 x - \lambda_2, \\
			q(x) &= y^2 - \lambda'_1 y - \lambda'_2
		\end{align*}
		are both irreducible over $K$. Let
		\begin{equation*}
			\Pi = \{p^n \in K[x] : n \in \spint \text{ and } p \text{ is monic, irreducible, and } p(0) \neq 0\}
		\end{equation*}
		and let $\integer_2 = \{1,g\}$ be the group of order 2 with $g^2 = 1$. Define an action of $\integer_2$ on $\Pi$ by defining, for each $p \in \Pi$, the polynomial $gp$ as the characteristic polynomial of the matrix $\lambda_2\lambda'_2\phi_p\inv$, where $\phi_p$ is the companion matrix of $p$. Then the isomorphism classes of indecomposable objects in the category $\fin A$ are indexed by the set $\Pi/\integer_2$.
	\end{cor*}
	
	The final main result of Section~\ref{sec:FoldedModule} is the classification of the Auslander-Reiten sequences of a folded gentle algebra. In particular, the band modules belong to homogeneous tubes. Moreover, the irreducible morphisms between string modules are given combinatorially by adding/deleting hooks and/or cohooks to strings, as is the case for gentle algebras (c.f. \cite{ButlerRingel}).
	\begin{thm*}
		Let $A$ be a folded gentle algebra. Then the Auslander-Reiten sequences of $A$ are precisely those given in Theorem~\ref{thm:ARSeq}.
	\end{thm*}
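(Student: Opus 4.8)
The plan is to reduce the statement to the already-known classification of Auslander--Reiten sequences for the unfolded gentle algebra $\wh{A}$ and then transport it along the (un)folding functors. Recall that for a gentle algebra --- indeed for any special biserial algebra --- the Auslander--Reiten sequences are completely described by \cite{ButlerRingel} (for string modules) and \cite{WaschSkow} (for the remaining special biserial phenomena): every irreducible morphism between string modules is obtained by adding or deleting a hook or a cohook, the middle term of each almost split sequence starting or ending at a string module is the explicit direct sum of string modules produced by those operations (with the usual correction at projective--injective modules), and every band module lies in a homogeneous tube, whose rank-one almost split sequence has the form $0 \to M \to M' \to M \to 0$ for the band module $M$ of the first infinitesimal neighbourhood. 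It therefore suffices to (i) relate almost split sequences in $\mod*A$ to those in $\mod*\wh{A}$ by means of the unfolding functor $U$ and the $\integer_2$-action on $\wh{A}$ established in Section~\ref{sec:UnfoldedGentle}, and (ii) verify that this relation is compatible with the hook/cohook operations under the string and band correspondences (Propositions~\ref{prop:UStrings}, \ref{prop:UEvenBands}, \ref{prop:UOddBands} and \ref{prop:USymBand}).

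For (i): since $A$ is finite-dimensional and biserial (by the first theorem of the paper), almost split sequences exist in $\mod*A$, and by the preceding theorem every non-projective indecomposable lies in $\mathcal{M}_1 \cup \mathcal{M}_2 \cup \mathcal{M}_3$. First I would show that $U$ is exact and that, together with the folding functor in the opposite direction, it interchanges the Auslander--Reiten translates up to the $\integer_2$-twist; then I would split into cases according to whether the indecomposable $M$ ending the sequence is \emph{asymmetric} or \emph{symmetric}. When $M$ is asymmetric, $UM$ is indecomposable and $0 \to U\tau M \to UE \to UM \to 0$ is a representative of a $\integer_2$-orbit of almost split sequences in $\mod*\wh{A}$; conversely such orbits fold down to almost split sequences in $\mod*A$, and the (asymmetric) string/band correspondences identify the two pictures. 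When $M$ is symmetric, $UM$ is decomposable --- it is a string or band module of $\wh{A}$ together with its image under the nontrivial element $g$ of $\integer_2$, or a multiple of a $g$-fixed one --- and the almost split sequence in $\mod*A$ is obtained by folding the corresponding symmetric configuration of almost split data in $\mod*\wh{A}$; this is the step where the crease loops and their irreducible quadratic relations must be treated directly rather than formally.

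For (ii): the bijections of Section~\ref{sec:UnfoldedGentle} between words, strings and bands of $A$ and of $\wh{A}$ are compatible with adding and deleting hooks and cohooks. Adding or deleting a hook (resp.\ cohook) at one end of an asymmetric string of $A$ unfolds to doing the corresponding operation simultaneously at both ends of the unfolded string of $\wh{A}$, while the same operation on a symmetric string of $A$ unfolds to the symmetric such operation on the $g$-fixed unfolded string; the only subtlety is when the added or deleted letter is a crease loop, which unfolds to a pair of arrows and so has to be inserted or removed in pairs. Granting these compatibilities, the middle terms of the almost split sequences in $\mod*A$ are forced to be exactly the string modules prescribed by the hook/cohook recipe, the projective--injective modules of $A$ contribute their standard almost split sequences (read off from the projective--injective modules of $\wh{A}$), and each band module of $A$ --- asymmetric, indexed by $\Pi$, or symmetric, indexed by $\Pi/\integer_2$ --- inherits a homogeneous tube from the homogeneous tube containing its unfolded band module over $\wh{A}$. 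Assembling these cases reproduces precisely the list in Theorem~\ref{thm:ARSeq}, and since $\mathcal{M}_1 \cup \mathcal{M}_2 \cup \mathcal{M}_3$ exhausts the indecomposables of $\mod*A$, no almost split sequence is omitted.

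I expect the main obstacle to be the symmetric case throughout. For symmetric string modules there is a Clifford-theoretic ambiguity in the decomposition of $UM$ --- whether $UM$ is a sum of a string module and its distinct $g$-conjugate, or a multiple of a single $g$-fixed string module --- and this ambiguity governs whether the almost split sequence in $\mod*A$ has the same shape as in the gentle case or acquires an extra middle summand, as well as whether adding a hook or cohook near the crease of a symmetric string keeps the string symmetric. One has to confirm that the irreducible quadratic relation on a crease loop does not obstruct the exactness of $U$, and that folding the symmetric almost split configurations of $\wh{A}$ really yields non-split short exact sequences with the correct indecomposable end terms; the homogeneous-tube statement for symmetric band modules similarly requires matching the $\integer_2$-action on the parameter set $\Pi$ coming from the field extension defined by the crease loop with the $\integer_2$-action coming from the unfolded band. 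Once these symmetric subtleties are resolved, the asymmetric cases and the band cases follow formally from exactness of $U$, its compatibility with $\tau$, and the known gentle picture.
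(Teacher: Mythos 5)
Your overall strategy --- reduce to the gentle algebra $\wh A$ by unfolding and then invoke the Butler--Ringel / Wald--Waschb\"usch classification --- is the same strategy the paper uses, but two of your intermediate claims are incorrect, and you miss the key lemma that makes the reduction work cleanly.

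First, your claim that ``when $M$ is asymmetric, $UM$ is indecomposable'' is false for string modules. By Proposition~\ref{prop:UStrings}, for \emph{every} string $w \in \str_A$ (symmetric or not) one has $U(M(w)) \cong M(\theta(w)) \oplus M(g\theta(w))$, which is a direct sum of two (possibly isomorphic) indecomposable $\wh A$-modules; it is the symmetric case, not the asymmetric one, where the two summands coincide up to isomorphism. For bands the picture is more varied --- only for odd-parity bands (Proposition~\ref{prop:UOddBands}) does $U$ send a band module to a single indecomposable --- so your dichotomy has the cases scrambled. Second, you propose to ``fold the corresponding symmetric configuration of almost split data in $\mod*\wh A$'' back down to $\mod*A$; but the paper never defines a folding functor $\mod*\wh A \to \mod*A$, and the relationship between the two module categories (via $\im U$, which is an exact but not full subcategory) does not give one for free. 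The paper's argument carefully runs in one direction only.

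The clean mechanism the paper uses, and that your sketch is missing, is Lemma~\ref{lem:UIrred}: if $U(f)$ is irreducible then $f$ is irreducible. This is a purely formal consequence of $U$ being exact, additive, and faithful, and it is what makes the reduction work uniformly across all cases without any Clifford-theoretic case analysis. Given that lemma, the proof is: (1) construct the candidate short exact sequences in $\mod*A$ directly from the standard bases of string/band modules (trivial); (2) observe that the (un)folding procedure commutes with ``starts/ends in a deep'' and with adding cohooks / deleting hooks, so $U$ of the candidate sequence is a direct sum of the two known AR sequences of $\wh A$ ending in $M(\theta(w))$ and $M(g\theta(w))$ (Proposition~\ref{prop:UStrings} again); (3) since those morphisms are irreducible in $\mod*\wh A$, Lemma~\ref{lem:UIrred} forces the original morphisms in $\mod*A$ to be irreducible; (4) since the end terms are indecomposable by Theorem~\ref{thm:IndClassification}, the candidate sequence is almost split. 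The band cases are handled the same way, using the exact faithful functor $F_w$ from $\fin K[x,x\inv]$. In particular the symmetric case requires no special treatment of the sort you anticipate: the formula in Theorem~\ref{thm:ARSeq}(a) is identical for symmetric and asymmetric strings, and the observation that the middle summands coincide when $w$ is symmetric is relegated to a remark about valued AR sequences, not an extra branch of the proof.
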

	
	We also show that there is a $\integer_2$-action on the Auslander-Reiten quiver of an unfolded gentle algebra $\wh A$, and that the Auslander-Reiten quiver of the folded gentle algebra associated to $\wh A$ is given by the quotient with respect to this group action.
	
	\begin{cor*}[Corollary~\ref{cor:ARZ2}]
		Let $A$ be a folded gentle algebra and $\wh A$ be the unfolded gentle algebra of $A$ corresponding to $A$ via the unfolding procedure. Let $\Gamma_A$ be the valued Auslander-Reiten quiver of $A$, let $\Gamma'_A$ be the underlying unvalued translation quiver of $\Gamma_A$, and let $\Gamma_{\wh A}$ be the Auslander-Reiten quiver of $\wh A$. 
		\begin{enumerate}[label = (\alph*)]
			\item There exists a group action of $\integer_2$ on $\Gamma_{\wh A}$ such that $\Gamma'_{A} \cong \Gamma_{\wh A} / \integer_2$.
			\item Let $\wh M$ be a vertex of $\Gamma_{\wh A}$ that is fixed under the action of $\integer_2$ and which corresponds to a string module. Then:
			\begin{enumerate}[label=(\roman*)]
				\item $[\wh M] \in \Gamma_A$ corresponds to the string module of a symmetric string;
				\item the arrow of source $[\wh M] \in \Gamma_A$ has a valuation of $(1,2)$; and
				\item the arrow of target $[\wh M] \in \Gamma_A$ has a valuation of $(2,1)$;
			\end{enumerate}
		\end{enumerate}
	\end{cor*}
	
	In Section~\ref{sec:DerEquiv}, we provide some details on the repetitive algebra for folded gentle algebras. This includes an explicit description of the quiver and relations of the repetitive algebra of a folded gentle algebra (Lemma~\ref{lem:RepAlgQuiver}). In particular, we show that the repetitive algebra of a folded gentle algebra is related to the quiver and relations of the repetitive algebra of its corresponding unfolded gentle algebra (described in \cite{Schroer}), and this relationship is once again given by the (un)folding procedure given in Section~\ref{sec:UnfoldedGentle}. This allows us to prove our final result.
	\begin{thm*}[Theorem~\ref{thm:DerClosed}]
	Let $A$ be a finite-dimensional folded gentle algebra and let $B$ be an algebra that is derived equivalent to $A$. Then $B$ is a folded gentle algebra. Moreover, there are only finitely many such algebras (up to Morita equivalence) that are derived equivalent to $A$.
	\end{thm*}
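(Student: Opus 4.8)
The plan is to bootstrap the corresponding result for gentle algebras, due to Schr\"oer and Zimmermann \cite{SchroerZim}, through the (un)folding machinery of Sections~\ref{sec:UnfoldedGentle} and \ref{sec:FoldedModule}. Throughout, write $\unfA$ for the unfolded gentle algebra attached to $A$, equipped with the $\integer_{2}$-action whose associated folding recovers $A$, and let $U$ be the unfolding functor. The first move is to lift the module-level unfolding to bounded derived categories: the $\integer_{2}$-action on $\unfA$ induces a triangulated auto-equivalence of $\D^{b}(\mod*\unfA)$, the functor $U$ extends to an exact functor $\D^{b}(U)\colon\D^{b}(\mod*A)\to\D^{b}(\mod*\unfA)$, and $\D^{b}(\mod*A)$ is recovered from $\D^{b}(\mod*\unfA)$ together with this action exactly as $\mod*A$ is recovered from $\mod*\unfA$ in Section~\ref{sec:FoldedModule}. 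This is essentially formal given the results of that section, but one must take care that no argument relies on $2$ being invertible, since $\field$ may have characteristic $2$; the combinatorial description of $U$ should render this harmless.

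Given a derived equivalence $\D^{b}(\mod*A)\simeq\D^{b}(\mod*B)$, the next step is to transport the folding datum. By Rickard's theorem the equivalence is induced by a tilting complex $T\in\D^{b}(\mod*A)$ with $B\cong\End_{\D^{b}(\mod*A)}(T)$. I would show that $\wh{T}:=\D^{b}(U)(T)$ is a tilting complex over $\unfA$ that is fixed, as an object up to isomorphism, by the $\integer_{2}$-action; then $\wh{B}:=\End_{\D^{b}(\mod*\unfA)}(\wh{T})$ is an algebra derived equivalent to $\unfA$ carrying a $\integer_{2}$-action compatible with the equivalence, and $B$ is recovered from $\wh{B}$ by folding along this action (the first step, now read for $B$ and $\wh{B}$ in place of $A$ and $\unfA$). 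By \cite{SchroerZim}, $\wh{B}$ is gentle. It remains to check that the $\integer_{2}$-action obtained on $\wh{B}$ is of the special kind singled out in Section~\ref{sec:UnfoldedGentle} — free on the quiver away from the crease data, each fixed arrow a loop governed by an irreducible quadratic over $\field$ — so that folding it yields a \emph{folded} gentle algebra. For this I would isolate an intrinsic characterisation of those $\integer_{2}$-actions on a gentle algebra that occur as the folding datum of a folded gentle algebra, and verify that it is inherited under the transport above. As $B$ is finite-dimensional, this shows $B$ is folded gentle.

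For finiteness, note that a gentle algebra is the unfolding of only finitely many folded gentle algebras up to Morita equivalence: its (finite) quiver carries only finitely many admissible $\integer_{2}$-actions, and the quadratic field extensions of $\field$ attached to the creases may, up to $\field$-isomorphism, be taken from the finitely many that already occur in $A$ — one shows the multiset of these extensions is a derived invariant of $A$ (it can be read off, e.g., from the endomorphism algebras of the simple modules at the crease vertices, which are exceptional-type objects with endomorphism algebra the associated quadratic field). It therefore suffices to know that, up to Morita equivalence, only finitely many gentle algebras are derived equivalent to $\unfA$, which is known for gentle algebras (see, e.g., the geometric model of \cite{Opper}).

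The principal obstacle is the pair of middle steps: $U$ is not an equivalence but a ``doubling'' functor, so one must argue carefully that a tilting complex over $A$ pushes forward to a genuine tilting complex over $\unfA$ — rather than, say, to a direct sum of one with a twist of itself — that the induced action on $\wh{B}=\End_{\D^{b}(\mod*\unfA)}(\wh{T})$ is well defined and admissible, and that the folded gentle algebra it produces is precisely $B$. All of this has to be done over a field that need not be algebraically closed or of characteristic different from $2$, so the division algebras arising as endomorphism rings of indecomposables must be tracked at every stage.
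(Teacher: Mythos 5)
Your proposal takes a genuinely different route from the paper, and the route has a gap at precisely the point you flag as the principal obstacle.

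The paper follows the strategy of \cite{SchroerZim} quite closely: it passes from $\D^b(A)$ to the stable module category $\ufin A_R$ of the repetitive algebra via Happel's fully faithful embedding, so that $B\cong\End_{\D^b(A)}(T)\cong\uEnd_{A_R}(F(T))$. The real content is Proposition~\ref{prop:FoldedEnd}: if $\Ext^1_{A_R}(M,M)=0$ then $\uEnd_{A_R}(M)$ is folded gentle. The transfer of the rigidity condition from $A_R$ to $\wh A_R$ is Lemma~\ref{lem:URExt}, whose proof exploits the explicit classification of indecomposables over the special biserial algebra $\wh A_R$ (string, band, projective-injective) to show that every self-extension of $U_R(M)$ is in the image of $U_R$ and hence splits. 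Proposition~\ref{prop:FoldedEnd} then inspects morphisms between string $A_R$-modules directly (in particular the endomorphisms $h_w$ of symmetric string modules, which become the crease loops) to realise $\uEnd_{A_R}(M)$ as the fold of $\uEnd_{\wh A_R}(\wh M^\ast)$.

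Your proposal instead pushes a tilting complex $T$ forward to $\wh T = \D^b(U)(T)$ and tries to conclude $\wh T$ is tilting over $\wh A$. The generation half is fine (since $U(A)\cong\wh A$ by Corollary~\ref{cor:UProj}), but the self-orthogonality half is exactly where things break: $U$ is faithful but \emph{not full}, so $\Hom_{\D^b(A)}(T,T[i])=0$ does not by itself give $\Hom_{\D^b(\wh A)}(\wh T,\wh T[i])=0$. Indeed $U$ typically doubles objects ($U(M(w))\cong M(\theta(w))\oplus M(g\theta(w))$), and the cross-Homs between the two summands are not controlled by $\End(T)$. Closing this gap would require an argument of the same combinatorial character as Lemma~\ref{lem:URExt}, which is far cleaner to prove at the level of $\fin\wh A_R$ (where one has the classification of indecomposables) than directly in $\D^b(\wh A)$. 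So the obstacle you name is not merely technical; it is precisely the hard point, and the paper's detour through the repetitive algebra is what makes it tractable.

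On the finiteness claim your route is at least as careful as the paper's: you observe that the quadratic extensions at the creases must be shown to be derived invariants, which the paper's one-line appeal to ``only finitely many folded gentle algebras with $n$ vertices'' elides (over an infinite base field there are infinitely many irreducible quadratics, hence infinitely many non-isomorphic folded gentle algebras on a fixed quiver). You would still need to actually establish that these extensions are recoverable from $\D^b(A)$, but the observation that this is needed is correct.
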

	The proof of the theorem is essentially the same as that in \cite{SchroerZim}. In fact the result for folded gentle algebras directly follows from applying a folding argument on the result in \cite{SchroerZim}.
	
	There are numerous examples throughout the paper. However, we end with a comprehensive worked example in Appendix~\ref{sec:Example}, which showcases every aspect of the theory developed in this paper.
	
	We thus show that folded gentle algebras, which generalise the iterated tilted algebras of types $C$ and $\widetilde{C}$, satisfy many of the same remarkable properties as gentle algebras. The author believes that folded gentle algebras therefore have the potential to be just as influential to mathematics as gentle and skewed-gentle algebras, and would not be surprised if they also appear naturally in other fields. Furthermore, the author would not be at all surprised if the folding techniques employed in this paper could be used to prove further properties of folded gentle algebras. In particular, the author strongly suspects that, like gentle algebras, folded algebras are derived-tame, and if so, that the classification of indecomposable objects in the bounded derived category should follow from a folding argument. It may also be possible to form a geometric model of the bounded derived category of folded gentle algebras that is similar to the geometric model for gentle algebras in \cite{Opper}. In addition, the author believes that the folding techniques applied in this paper can be readily applied to other settings with the aim of defining and exploring further new classes of `folded' algebras that inherit important properties from their `unfolded' counterparts. Examples of such candidate algebras could be folded analogues of the more general class of special biserial algebras, the class of skewed-gentle algebras, and Brauer graph/configuration algebras, amongst others.

	% =================================================
	% DEFINITIONS SEC
	% =================================================
	\section{The Definition of Folded Gentle Algebras} \label{sec:Defn}
	\subsection{Preliminaries and notation} \label{sec:Preliminaries}
	Throughout this paper, $Q= (Q_0,Q_1,s,t)$ is a quiver, where $Q_0$ is a set of vertices, $Q_1$ is a set of arrows, and the functions
	\begin{align*}
		s\colon &Q_1 \rightarrow Q_0 \\
		t\colon &Q_1 \rightarrow Q_0
	\end{align*}
	are source and target functions respectively.
	
	By a \emph{path} of $Q$, we mean either a \emph{stationary path} (a path of length 0) $\stp_v$ for some $v \in Q_0$, or a concatenation of arrows $\alpha_1\ldots\alpha_n$ such that each $\alpha_i \in Q_1$ and $t(\alpha_i)=s(\alpha_{i+1})$. By a \emph{subpath} of a path $p=\alpha_1\ldots\alpha_n$, we mean either a stationary path $p' \in \{s(\alpha_i),t(\alpha_i) : 1 \leq i \leq n\}$ or a path $p'=\alpha_i\alpha_{i+1}\ldots\alpha_{j}$ for some $1 \leq i \leq j \leq n$. We denote by $\PQ$ the set of all paths of $Q$.
	
	The source and target functions of $Q$ may be extended to the set $\PQ$ in the natural way. That is, for any $p=\alpha_1\ldots\alpha_n \in \PQ$, we define $s(p)=s(\alpha_1)$ and $t(p)=t(\alpha_n)$, or $s(p)=t(p)=v$ if $p$ is the stationary path at $v$. We define the \emph{length} of $p$ to be $|p|=n$ (or $|p|=0$ if $p$ is stationary). A subset $Z \subseteq \PQ$ is called an \emph{admissible set of zero relations of $Q$} if either $Z=\emptyset$ or $|p|\geq 2$ for any $p \in Z$.
	
	Given a field $K$, we denote by $KQ$ the \emph{path algebra} of $Q$ over $K$. This is the vector space given by $K$-linear combinations of elements of $\PQ$ with ring multiplication defined by concatenation of paths. For any paths $p,p' \in \PQ$ whose concatenation $pp' \not\in\PQ$, we define $pp' = 0_K \in KQ$. 
	
	\subsection{Gentle and folded gentle pairs/triples}
	We will now provide the definition of the family of quivers (with relations) that will be the main focus of this paper. 
	\begin{defn} \label{def:FoldedGentleTriple}
		Let $K$ be a field, $Q$ be a finite quiver, and $Z$ be a subset of the path algebra $KQ$. Suppose in addition that
		\begin{align*}
			Q_0 &= \ov \cup \cv, \\
			Q_1 &= \oa \cup \ca, \\
			Z &= \orel \cup \crel
		\end{align*}
		for some disjoint subsets $\ov$, $\cv$, $\oa$, $\ca$, $\orel$ and $\crel$ such that
		\begin{equation*}
			\ca = \{\crs_v : v \in \cv\}
		\end{equation*}
		is a set of loops (that is, $s(\crs_v)=t(\crs_v)=v \in \cv$), $\orel$ is an admissible set of zero relations on the subquiver $Q \setminus \ca$, and 
		\begin{equation*}
			\crel = \{\crs_v^2 - \lambda_{v,1} \crs_v - \lambda_{v,2} \stp_v : v \in \cv\}
		\end{equation*}
		for some $\lambda_{v,1},\lambda_{v,2} \in K$ (for each $v \in \cv$). We call the triple $(K,Q,Z)$ a \emph{folded gentle triple} if the following conditions are satisfied.
		\begin{enumerate}[label=(FG\arabic*)]
			\item For any $v \in Q_0$, there exist at most two arrows of source $v$ and at most two arrows of target $v$. \label{en:FG1}
			\item For any arrow $\beta \in \oa$, there exists at most one arrow $\alpha \in Q_1$ such that $\alpha\beta \not\in \orel$, and there exists at most one arrow $\gamma$ such that $\beta\gamma \not\in \orel$. \label{en:FG2}
			\item For any arrow $\beta \in \oa$, there exists at most one arrow $\alpha \in Q_1$ such that $\alpha\beta \in \orel$, and there exists at most one arrow $\gamma$ such that $\beta\gamma \in \orel$. \label{en:FG3}
			%\item For any $\alpha,\beta \in Q_1$ with $t(\alpha)=s(\beta) \in \cv$, we have $\alpha\beta \in Z$. \label{en:FG4}
			\item $|p|=2$ for any $p \in \orel$. \label{en:FG4}
			\item For each relation $\eta_v^2 - \lambda_{v,1} \eta_v - \lambda_{v,2}\stp_v \in \crel$, the polynomial $x^2-\lambda_{v,1} x - \lambda_{v,2} \in K[x]$ is irreducible. \label{en:FG5}
		\end{enumerate}
		We call the arrows in the set $\oa$ the \emph{ordinary arrows} of $Q$ and the arrows in the set $\ca$ the \emph{crease loops} of $Q$ (or shortly, \emph{creases}). We call the vertices in $\ov$ the \emph{ordinary vertices} of $Q$, and we call the vertices in $\cv$ the \emph{crease vertices} of $Q$. Finally, we call the zero relations in $\orel$ the \emph{ordinary relations} of $Q$ and the relations in $\crel$ the \emph{crease relations}.
	\end{defn}
	
	\begin{rem}
		Note that as a result of how folded gentle triples are defined, no relation in $\orel$ contains a crease loop. As a consequence, we have the following axiom that is redundant but nevertheless useful to point out.
		\begin{enumerate}[label=(FG\arabic*)]
			\setcounter{enumi}{5}
			\item For any $\alpha,\beta \in \oa$ with $t(\alpha)=s(\beta) \in \cv$, we have $\alpha\beta \in \orel$. \label{en:FG6}
		\end{enumerate}
	\end{rem}
	
	The axioms \ref{en:FG1}-\ref{en:FG4} above will be familiar to readers with experience in working with gentle algebras: these are precisely the axioms used to define gentle algebras (c.f. \cite{AssemHappel,ASIteratedTilted}). This motivates the following definition.
	
	\begin{defn}
		Let $(K,Q,Z)$ be a folded gentle triple. We call $(K,Q,Z)$ a \emph{gentle triple} (or equivalently, we call $(Q,Z)$ a \emph{gentle pair}) if $\cv=\emptyset$.
	\end{defn}
	
	Readers familiar with skewed-gentle algebras will also be familiar with the above definitions for the aforementioned reason, as well as the addition of \emph{special loops} (c.f. \cite{SkewedGentle}). However, every special loop $\crs$ of a skewed-gentle algebra satisfies an idempotent relation $\crs^2 - \crs$. This is never the case for a folded gentle triple, as the polynomial $x^2 - x$ is reducible over every field (which would violate \ref{en:FG5}). This subtle difference in the quadratic relations between special loops and the crease loops defined above leads to markedly different behaviour. In particular, the algebras that we will define from folded gentle triples are neither Morita nor derived equivalent to skewed-gentle algebras. In addition, when this paper later discusses words, strings and bands, we will consider inverses of crease loops to be distinct symbols from crease loops (in contrast to special loops in the theory of both clannish algebras and skewed-gentle algebras). We have thus given our `special loops' a different name (crease loops) to distinguish them with regards to these properties. The name `crease loops' also better fits the idea behind folding that will be presented later in the paper. Those who instead work more frequently with irreducible clans/clannish algebras or with semilinear clannish algebras can consider the words `special' and `crease' to be interchangeable, though again, some caution is advised with our consideration of words, strings and bands.
	
	\begin{defn}
		Let $A$ be a finite-dimensional $K$-algebra.
		\begin{enumerate}[label=(\alph*)]
			\item We call $A$ a \emph{folded gentle algebra} if $A$ is Morita equivalent to a quotient algebra $KQ / \langle Z \rangle$ for some folded gentle triple $(K,Q,Z)$ that is not a gentle triple.
			\item We call $A$ a \emph{gentle algebra} if $A$ is Morita equivalent to a quotient algebra $KQ / \langle Z \rangle$ for some gentle pair $(Q,Z)$.
		\end{enumerate}
	\end{defn}
	
		Henceforth, whenever we say that $(K,Q,Z)$ is a folded gentle triple, we will assume that $\cv\neq\emptyset$ and reserve the term gentle triple/pair exclusively for the case where $\cv=\emptyset$. In addition, all gentle and folded gentle algebras are assumed to be finite-dimensional in this paper.

	Given a folded gentle algebra $A$ associated to a triple $(K,Q,Z)$, it is straightforward to verify that each $\stp_v \in A$ is an idempotent element of $A$, where $\stp_v \in \PQ$ is stationary. In particular, it follows that $\sum_{v\in Q_0}\stp_v = 1_{A}$.
	
	\begin{rem} \label{rem:InverseCrease}
		Let $A$ be a folded gentle algebra. It follows from axiom \ref{en:FG5} that for any $v \in \cv$, the $K$-subalgebra $\langle \stp_v, \crs_v \rangle$ of $A$ is isomorphic to the field $K[x] / ( x^2 - \lambda_{v,1} x - \lambda_{v,2} )$. Consequently, there exists an element $\crs_v\inv = \frac{1}{\lambda_{v,2}}(\crs_v - \lambda_{v,1}\stp_v) \in A$.
	\end{rem}
	
	\begin{rem}
		The above construction of folded gentle algebras could equivalently be expressed in the language of a quotient algebra of a tensor algebra over a $K$-species (see \cite{DRIndec,GabrielIndec,Simson} for an account of species and \cite{AssemC} for the subclass of iterated tilted algebras of type $\dynC_n$). Species are defined over a much more general framework. However for the specific class of algebras we are investigating in this paper, the author is of the opinion that the general framework of species makes the construction harder to understand than is necessary. Thus we have given the alternative construction above, which makes the comparison with gentle algebras much more straightforward.
	\end{rem}
	
	\begin{exam} \label{ex:C3}
		Consider the folded gentle triple $(\real,Q,Z)$ given by
		\begin{equation*}
			\begin{tikzpicture}
				\draw [anchor = east] (-0.4,0) node {$Q\colon$};
				\draw (-0.2,0) node [draw] {1};
				\draw[<-] (-0.1,0.4) .. controls (0.1,0.7) and (-0.1,0.8) .. (-0.2,0.8) .. controls (-0.3,0.8) and (-0.5,0.7) .. (-0.3,0.4);
				\draw (0.2,0.6) node {\footnotesize$\eta_1$};
				\draw [->] (0.2,0) -- (0.8,0);
				\draw (0.5,0.2) node {\footnotesize$\alpha$};
				\draw (1,0) node {2};
				\draw [->] (1.2,0) -- (1.8,0);
				\draw (1.5,0.2) node {\footnotesize$\beta$};
				\draw (2,0) node {3};
			\end{tikzpicture}
		\end{equation*}
		with ordinary vertices $2,3 \in \ov$, crease vertex $1 \in \cv$ (displayed as a box vertex for highlighting purposes), and $Z = \crel = \{\crs_1^2 + \stp_1\}$. We also have $\alpha,\beta \in \oa$ and $\crs_1 \in \ca$. The corresponding folded gentle algebra is
		\begin{equation*}
			\real Q / \langle Z\rangle = \langle \stp_1, \stp_2, \stp_3, \alpha, \beta, \alpha\beta, \crs_1, \crs_1\alpha, \crs_1\alpha\beta\rangle_\real
		\end{equation*}
		whose $\real$-basis is given by the listed set of generators. It follows that $\real Q / \langle Z \rangle$ is isomorphic to the matrix algebra
		\begin{equation*}
			\begin{pmatrix}
				\complex & \complex & \complex \\
				0 & \real & \real \\
				0 & 0 & \real
			\end{pmatrix}.
		\end{equation*}
		It is also isomorphic to the tensor algebra of a linearly oriented $\real$-species of Dynkin type $\dynC_3$.
	\end{exam}
	
	\begin{exam}\label{ex:C2tilde}
		Consider the folded gentle triple $(\rational,Q,Z)$ given by
		\begin{equation*}
			\begin{tikzpicture}
				\draw [anchor = east] (-0.4,0) node {$Q\colon$};
				\draw (-0.2,0) node [draw] {1};
				\draw[<-] (-0.1,0.4) .. controls (0.1,0.7) and (-0.1,0.8) .. (-0.2,0.8) .. controls (-0.3,0.8) and (-0.5,0.7) .. (-0.3,0.4);
				\draw (0.2,0.6) node {\footnotesize$\eta_1$};
				\draw [->] (0.2,0) -- (0.8,0);
				\draw (0.5,0.2) node {\footnotesize$\alpha$};
				\draw (1,0) node {2};
				\draw [->] (1.2,0) -- (1.8,0);
				\draw (1.5,0.2) node {\footnotesize$\beta$};
				\draw (2.2,0) node [draw] {3};
				\draw[<-] (2.3,0.4) .. controls (2.5,0.7) and (2.3,0.8) .. (2.2,0.8) .. controls (2.1,0.8) and (1.9,0.7) .. (2.1,0.4);
				\draw (2.6,0.6) node {\footnotesize$\eta_3$};
			\end{tikzpicture}
		\end{equation*}
		with $2 \in \ov$ and $1,3 \in \cv$ and $Z = \crel=\{\crs^2_1 +\stp_1, \crs^2_3 - \crs_3 - \stp_3\}$. The corresponding folded gentle algebra then has $\rational$-basis
		\begin{equation*}
			\{\stp_1, \stp_2, \stp_3, \alpha, \beta, \crs_1, \crs_3, \crs_1 \alpha, \beta\crs_3, \alpha\beta, \crs_1\alpha\beta, \alpha\beta\crs_3, \crs_1\alpha\beta\crs_3\}.
		\end{equation*}
		One can show that this algebra is isomorphic to the tensor algebra of a linearly oriented $\rational$-species of type $\dynafC_2$. It is also isomorphic to the matrix algebra
		\begin{equation*}
			\begin{pmatrix}
				\rational(\iu) & \rational(\iu) & \rational(\iu) \otimes_{\rational} \rational(\varphi) \\
				0 & \rational & \rational(\varphi) \\
				0 & 0 & \rational(\varphi)
			\end{pmatrix},
		\end{equation*}
		where $\varphi = \frac{\sqrt{5} + 1}{2}$ is the golden ratio, which satisfies the identity $\varphi^2 = \varphi +1$.
	\end{exam}
	
	\begin{exam} \label{ex:C3TildeD4}
		Consider the folded gentle triple $(\finite_2,Q,Z)$ given by
		\begin{equation*}
			\begin{tikzpicture}
				\draw [anchor = east] (-1.4,0) node {$Q\colon$};
				\draw (-1.2,0) node [draw] {1};
				\draw[<-] (-1.1,0.4) .. controls (-0.9,0.7) and (-1.1,0.8) .. (-1.2,0.8) .. controls (-1.3,0.8) and (-1.5,0.7) .. (-1.3,0.4);
				\draw (-0.8,0.6) node {\footnotesize$\eta_1$};
				\draw [<-] (-0.8,0) -- (-0.2,0);
				\draw (-0.5,0.2) node {\footnotesize$\alpha$};
				\draw (0,0) node {2};
				\draw [<-] (0.2,0) -- (0.8,0);
				\draw (0.5,0.2) node {\footnotesize$\beta$};
				\draw (1,0) node {3};
				\draw [->] (1.2,0) -- (1.8,0);
				\draw (1.5,0.2) node {\footnotesize$\gamma$};
				\draw (2.2,0) node [draw] {4};
				\draw[<-] (2.3,0.4) .. controls (2.5,0.7) and (2.3,0.8) .. (2.2,0.8) .. controls (2.1,0.8) and (1.9,0.7) .. (2.1,0.4);
				\draw (2.6,0.6) node {\footnotesize$\eta_4$};
				\draw [->] (2.6,0) -- (3.2,0);
				\draw (2.9,0.2) node {\footnotesize$\delta$};
				\draw (3.4,0) node {5};
				\draw [->] (3.6,0.1) -- (4.2,0.3);
				\draw (3.9,0.5) node {\footnotesize$\zeta_1$};
				\draw (4.4,0.4) node {6};
				\draw [->] (3.6,-0.1) -- (4.2,-0.3);
				\draw (3.9,-0.5) node {\footnotesize$\zeta_2$};
				\draw (4.4,-0.4) node {7};
				\draw [dashed] (1.717,-0.1294) arc (-165.0022:-15:0.5);
				\draw [dashed](3.0116,-0.0957) arc (-166.1583:-41.5385:0.4);
			\end{tikzpicture}
		\end{equation*}
		with $\orel=\{\gamma\delta, \delta\zeta_2\}$ (zero relations are indicated on the quiver with dashed lines) and $\crel = \{\crs_1^2+\crs_1+\stp_1,\crs_4^2+\crs_4+\stp_4\}$. The corresponding folded gentle algebra $\finite_2 Q/ \langle Z \rangle$ has $\finite_2$-basis
		\begin{equation*}
			\{\stp_1, \stp_2, \stp_3, \stp_4, \stp_5, \stp_6, \stp_7, \alpha, \beta, \gamma, \delta, \zeta_1, \zeta_2, \beta\alpha, \delta\zeta_1, \crs_1, \crs_4, \alpha\crs_1, \beta\alpha\crs_1, \gamma\crs_4,\gamma\crs_4\delta,\gamma\crs_4\delta\zeta_1, \crs_4\delta\zeta_1\}.
		\end{equation*}
		In particular, $\finite_2 Q / \langle Z \rangle$ is a finite set given precisely by sums of pairwise distinct $\finite_2$-basis elements.
	\end{exam}

	\begin{exam} \label{ex:TripleCrease}
		Consider the folded gentle triple $(\rational,Q,Z)$ given by the following.
		\begin{equation*}
			\begin{tikzpicture}
				\draw[<-] (0.9,0.4) .. controls (1.1,0.7) and (0.9,0.8) .. (0.8,0.8) .. controls (0.7,0.8) and (0.5,0.7) .. (0.7,0.4);
				\draw (1.2,0.6) node {\footnotesize$\eta_2$};
				\draw[<-] (2.3,0.4) .. controls (2.5,0.7) and (2.3,0.8) .. (2.2,0.8) .. controls (2.1,0.8) and (1.9,0.7) .. (2.1,0.4);
				\draw (2.6,0.6) node {\footnotesize$\eta_3$};
				
				\draw (-0.4,0) node {1};
				\draw [->] (-0.2,0) -- (0.4,0);
				\draw (0.1,0.2) node {\footnotesize$\alpha$};
				\draw (0.8,0) node [draw] {2};
				\draw [->] (1.2,0) -- (1.8,0);
				\draw (1.5,0.2) node {\footnotesize$\beta$};
				\draw (2.2,0) node [draw] {3};
				\draw [->] (2.6,0) -- (3.2,0);
				\draw (2.9,0.2) node {\footnotesize$\gamma$};
				\draw (3.4,0) node {4};
				\draw[->] (3.6,-0.2) .. controls (4,-0.4) and (4.2,-0.2) .. (4.2,0) .. controls (4.2,0.2) and (4,0.4) .. (3.6,0.2);
				\draw (4.4,0) node {\footnotesize$\delta$};

				\draw [dashed](3.8,-0.2) .. controls (3.9,-0.15) and (3.9,-0.1) .. (3.9,0) .. controls (3.9,0.1) and (3.9,0.15) .. (3.8,0.2);
				\draw [dashed](0.2174,-0.1436) arc (-166.1537:-13.8462:0.6);
				\draw [dashed](1.6174,-0.1436) arc (-166.1537:-13.8462:0.6);	
			\end{tikzpicture}
		\end{equation*}
		Then $\oa= \{\alpha,\beta,\gamma,\delta\}$ and $\ca = \{\crs_2,\crs_3\}$. Moreover, $\ov=\{1,4\}$, $\cv=\{2,3\}$. Defining $\orel= \{\alpha\beta,\beta\gamma,\delta^2\}$ and $\crel= \{\crs_2^2+\stp_2, \crs_3^2-2\crs_3 + 5 \stp_3\}$, it follows that $\rational Q / \langle Z \rangle$ has basis
		\begin{multline*}
			\{\stp_1,\stp_2,\stp_3,\stp_4,\alpha,\beta,\gamma,\delta, \gamma\delta, \crs_2, \crs_3, \alpha\crs_2, \crs_2\beta, \beta\crs_3, \crs_3\gamma, \alpha\crs_2\beta, \crs_2\beta\crs_3, \beta\crs_3\gamma,  \\
			\crs_3\gamma\delta, \alpha\crs_2\beta\crs_3,
			\crs_2\beta\crs_3\gamma,\beta\crs_3\gamma\delta,\alpha\crs_2\beta\crs_3\gamma, \crs_2\beta\crs_3\gamma\delta, \alpha\crs_2\beta\crs_3\gamma\delta\}.
		\end{multline*}
	\end{exam}
	
	% =================================================
	% BASIC PROPERTIES SEC
	% =================================================
	\section{Basic Properties of Folded Gentle Algebras} \label{sec:BasicProps}
	In this section, we will outline of some elementary representation theory of folded gentle algebras. The primary intention is to provide readers who may be unfamiliar with the representation theory of species some essential background in the context of folded gentle algebras. However, it is also intended to serve as a useful comparison to gentle algebras. This will then be followed by some algebraic structural properties. Throughout $A$ is a folded gentle algebra associated to a folded gentle triple $(K,Q,Z)$.
	
	\subsection{Elementary representation theoretic properties} The following largely results from the representation theory of species.
	
	\subsubsection{Simple $A$-modules} Firstly, we will discuss simple $A$-modules. Since $K$ is not an algebraically closed field, we can no longer assume that the simple $A$-modules are $1$-dimensional over $K$. The following definition/notation is useful for describing the structure of simple $A$-modules and will be used later in the paper.
	
	\begin{defn}
		For each $v \in Q_0$, define a field $\viso$ by
		\begin{equation*}
			\viso =
			\begin{cases}
				K & \text{if } v \in \ov \\
				K[x]/(x^2 - \lambda_1 x - \lambda_2)	& \text{if } v \in \cv \text{ and } \crs_v^2 - \lambda_1 \crs_v - \lambda_2 \stp_v \in \crel.
			\end{cases}
		\end{equation*}
		We call $\viso$ the field associated to $v \in Q_0$.
	\end{defn}
	
	By \cite[Theorem 2.3]{Simson}, a complete collection of pairwise non-isomorphic simple $A$-modules is in bijective correspondence with the vertices of $Q$. In particular, for each $v \in Q_0$, we have a simple $A$-module $S(v)$ whose underlying vector space is $\viso$ and whose $A$-action is defined such that $\stp_v$ acts by an identity endomorphism, $\crs_v$ (if $v \in \cv$) acts by multiplication by $x$ on the field $K[x]/(x^2 - \lambda_1 x - \lambda_2)$, and any arrow or stationary path $\alpha\neq \stp_v,\crs_v$ acts by the zero morphism. Consequently, 
	\begin{equation*}
		\End_A S(v) \cong \viso
	\end{equation*}
	and $\dim_{K}S(v)=2$ for each $v \in \cv$.
	
	\subsubsection{Morphisms of right $A$-modules} A consequence of Schur's Lemma is that a morphism of right $A$-modules $f\colon M \rightarrow M'$ can be written as a tuple $(f_v)_{v \in Q_0}$ of linear maps, where $f_v\colon M\stp_v \rightarrow M'\stp_v$ and each $M\stp_v$ (resp. $M'\stp_v$) is the image subspace given by the action of $\stp_v$ on $M$ (resp. $M'$). Thus, for any arrow $\alpha \in A$ with $s(\alpha)=u$ and $t(\alpha) = v$, we have $f_v(m\alpha)=f_u(m) \alpha$ for any $m \in M\stp_u \subseteq M$. Note that $f_v$ is actually a $\viso$-linear map as well as a $K$-linear map.
	
	\subsubsection{Projective and injective $A$-modules} It is easy to verify that we also have a unique decomposition of $A$ into indecomposable projective right $A$-modules given by $\stp_v A$ for each $v \in Q_0$. Dually, we have a unique decomposition of $A$ into indecomposable projective left $A$-modules given by $A \stp_v$ for each $v \in Q_0$. Thus, each vertex $v \in Q_0$ corresponds to both an indecomposable projective right $A$-module $P(v) = \stp_v A$ and indecomposable injective right $A$-module $I(v) = D(A\stp_v)$, where $D= \Hom_{K}({-},K)$ is the standard $K$-duality.
	
	Taking axioms \ref{en:FG1}-\ref{en:FG5} into consideration, the above characterisation leads to the following remarks about morphisms between indecomposable projective $A$-modules.
	
	\begin{rem} \label{rem:ProjMorphs}
		Let $v,v' \in Q_0$.
		\begin{enumerate}[label=(\alph*)]
			\item If $v \in \cv$, then $\Hom_A(P(v),P(v'))$ is a $\viso$-vector space. Specifically, suppose that $\viso = K[x] / (q)$ for some quadratic relation $q$ and suppose $f \in \Hom_A(P(v),P(v'))$. Then there exists a morphism $(\lambda+\mu x)f\in \Hom_A(P(v),P(v'))$, where $\lambda,\mu \in K$ and $xf(a) = f(\crs_v a)$.
			\item If $v' \in \cv$, then $\Hom_A(P(v),P(v'))$ is a $\fiso_{v'}$-vector space. If we similarly suppose that $\fiso_{v'} = K[x] / (q)$ and $f \in \Hom_A(P(v),P(v'))$, then there exists a morphism $(\lambda+\mu x)f\in \Hom_A(P(v),P(v'))$, where $\lambda,\mu \in K$ and $xf(a) = \crs_{v'} f(a)$.
			\item If $v,v' \in \cv$, then $\Hom_A(P(v),P(v'))$ has the structure of a $\fiso_{v'}$-$\viso$-bimodule in the obvious way. In particular, $\Hom_A(P(v),P(v'))$ is a $\fiso_{v'} \otimes_K \viso$-module.
			%\item If $v\neq v'$ and there exist arrows $\alpha,\alpha' \in Q_1$ with $t(\alpha)=v=t(\alpha')$, then any morphism $f \in \Hom_A(P(v),P(v'))$ factors through a linear combination of morphisms $g,g' \in \Hom_A(P(v),P(s(\alpha)) \oplus P(s(\alpha')))$ given by $g(\stp_v) = \alpha$ and $g'(\stp_v)=\alpha'$.
		\end{enumerate}
	\end{rem}
	
	\subsubsection{Decomposition of $A$-modules} By virtue of being a module category of a finite-dimensional $K$-algebra, the category $\mod*A$ is Krull-Schmidt: every object in $\mod*A$ has a unique (up to isomorphism and permutation of direct summands) decomposition into indecomposable objects, and an object is indecomposable if and only if its endomorphism algebra is local.
	
	\subsection{Biseriality}
	The aim of this subsection is to show that folded string algebras satisfy an analogous condition to (special) biserial algebras, the latter of which are typically defined over an algebraically closed field. First, some definitions.
	
	\begin{defn} \label{def:Biserial}
		Let $A$ be a finite-dimensional algebra. A (left or right) $A$-module $M$ is said to be \emph{uniserial} if it has a unique composition series, or equivalently, if $\rad^i M / \rad^{i+1} M$ is simple or zero for each $i$, where $\rad M$ is the radical of the (left or right) module $M$.
		
		A (left or right) $A$-module $M$ is said to be \emph{biserial} if there exist (possibly trivial) uniserial submodules $U_1,U_2 \subseteq M$ such that $\rad M \cong U_1 + U_2$ and $U_1 \cap U_2$ is simple or zero. We say $A$ is a biserial algebra if every left and right indecomposable projective $A$-module is biserial.
	\end{defn}
	
	Ultimately, we aim to show the following in this subsection.
	
	\begin{thm}\label{thm:Biserial}
		Let $A=KQ / \langle Z \rangle$ be a folded gentle algebra.
		\begin{enumerate}[label=(\alph*)]
			\item Let $M$ be a left $A$-module and let $0\neq m \in M$ such that $\stp_v m = m$ for some $v \in Q_0$. Then the submodule $A m = \langle a m : a \in A\rangle \subseteq M$ is biserial.
			\item Let $M$ be a right $A$-module and let $0\neq m \in M$ such that $m\stp_v = m$ for some $v \in Q_0$. Then the submodule $mA = \langle ma : a \in A\rangle \subseteq M$ is biserial.
		\end{enumerate}
		Consequently, $A$ is a biserial algebra.
	\end{thm}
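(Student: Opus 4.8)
The plan is to prove statement~(b) and deduce everything else from it. First, (a) follows from (b) applied to the opposite algebra $A^{\op}=KQ^{\op}/\langle Z^{\op}\rangle$, which is again a folded gentle algebra since axioms \ref{en:FG1}--\ref{en:FG5} are unchanged when all arrows are reversed (the crease loops and crease relations being unaffected). The final assertion is then immediate: $P(v)=\stp_v A$ is the submodule $\stp_v\cdot A$ of the regular right module and $\stp_v\stp_v=\stp_v$, so by (b) every indecomposable projective right module $P(v)$ is biserial; dually, by (a), so is every $A\stp_v$.

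For~(b), note that $mA$ is a quotient of $P(v)$ via the surjection $\stp_v A\to mA$, $\stp_v a\mapsto ma$ (well defined because $\stp_v a=\stp_v b$ forces $ma=m\stp_v a=m\stp_v b=mb$); so it suffices to show $P(v)$ and all of its quotients are biserial. I would first identify $\rad A$ with the two-sided ideal $J$ generated by the ordinary arrows: $A/J\cong\prod_{w\in Q_0}\viso$ is semisimple, so $J\supseteq\rad A$, while $J$ is nilpotent because $A$ is finite-dimensional and $J^{N}$ is spanned by reduced paths (those with no ordinary zero relation as a subpath and no crease loop repeated) containing at least $N$ ordinary arrows. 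Hence $\rad(mA)=m\stp_v J$, and $\stp_v J$ is spanned by the reduced paths from $v$ that involve an ordinary arrow: this is $\alpha_{1}A+\alpha_{2}A$ for the at-most-two arrows $\alpha_i$ with source $v$ when $v\in\ov$ (\ref{en:FG1}), and $\alpha A+\crs_{v}\alpha A$ for the at-most-one ordinary arrow $\alpha$ with source $v$ when $v\in\cv$ (\ref{en:FG6}, together with the fact that $\crs_v$ is a unit of $\viso$, Remark~\ref{rem:InverseCrease}). Either way $\rad P(v)=U_{1}\oplus U_{2}$ with $U_{1},U_{2}$ spanned by disjoint families of reduced paths, so $U_{1}\cap U_{2}=0$ in $P(v)$; and each $U_{i}$ is uniserial, because a nonzero reduced path starting with a prescribed arrow has at most one nonzero prolongation by \ref{en:FG2} (one must traverse a crease vertex via its loop, by \ref{en:FG6}, and then along the unique ordinary continuation), so $\rad^{k}U_{i}$ is cyclic and generated by a single reduced path for every $k$, whence each layer $\rad^{k}U_{i}/\rad^{k+1}U_{i}$ is simple or zero. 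This proves $P(v)$ biserial.

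For an arbitrary quotient $mA=P(v)/X$, put $\bar U_{i}=(U_{i}+X)/X$; then $\rad(mA)=\bar U_{1}+\bar U_{2}$, and each $\bar U_{i}\cong U_{i}/(U_{i}\cap X)$ is uniserial (the submodule lattice of a quotient of a uniserial module is an interval in a chain). Hence $mA$ is biserial as soon as one shows: \emph{either one of $\bar U_{1},\bar U_{2}$ contains the other, or $\bar U_{1}\cap\bar U_{2}$ is simple or zero}, using in the first case the pair (larger one, $0$) and in the second the pair $(\bar U_1,\bar U_2)$. This is the crux. Assume $\bar U_{1}\cap\bar U_{2}$ has length $\ge 2$ and neither $\bar U_i$ contains the other, and read off the maximal direct path $p$ underlying $U_{1}=\alpha_{1}A$ and $q$ underlying $U_{2}=\alpha_{2}A$. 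A length-$\ge2$ intersection forces $p$ and $q$ to agree --- same vertices, same connecting arrows --- along a common bottom segment of length $\ge2$: the connecting arrow into the socle layer must be the same in $p$ and in $q$ (otherwise one could push the corresponding second-layer elements, whose difference lies in $X$, into $X$ via an arrow that kills one of them, contradicting that the socle of $\bar U_1$ is not cut off by $X$), and then \ref{en:FG2} forces the next arrow up to agree as well, since two distinct arrows cannot share a common nonzero prolongation; so the agreement propagates upward. As $p$ starts with $\alpha_1$ and $q$ with $\alpha_2$, they must eventually disagree, and at that vertex either two distinct arrows acquire a common nonzero prolongation --- contradicting \ref{en:FG2} --- or, if the vertex is a crease, two distinct arrows are both forced into ordinary zero relations by \ref{en:FG6}, contradicting \ref{en:FG3}. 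The sole remaining possibility is that the common segment reaches the top of $p$, forcing $q=wp$ for a nontrivial direct path $w$ from $v$ to $v$; but since a continuation is determined by the preceding arrow, $X$ then contains $\rho-w\rho$ for the relevant prefix $\rho$ of $p$ and hence, being a submodule, for every such prefix, so $X+U_{1}=X+U_{2}'$ where $U_{2}'\subseteq U_2$ is spanned by the $w\rho$, i.e.\ $\bar U_{1}\subseteq\bar U_{2}$ --- contrary to assumption. Therefore $\bar U_{1}\cap\bar U_{2}$ is simple or zero, completing the proof.

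I expect the combinatorial bookkeeping in the third paragraph --- handling crease vertices, where a single composition factor may be $2$-dimensional over $K$; the possibility that $Q$ has oriented cycles, which is exactly what makes the case $q=wp$ genuinely occur; and the point that one of the two uniserials in the biserial datum may have to be taken trivial --- to be the only real obstacle. Everything else is a direct transcription of the familiar string-algebra computations (cf.\ \cite{ButlerRingel}) into the folded setting.
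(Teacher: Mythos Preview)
Your reduction to quotients of $P(v)$ and the decomposition $\rad P(v)=U_1\oplus U_2$ into uniserials are correct and parallel the paper's Lemmas~\ref{lem:maAUniserial}--\ref{lem:radmASum}; for $v\in\ov$ your third-paragraph argument (agreement of the tails of $p,q$ propagates by \ref{en:FG2} until a contradiction) is also essentially the paper's Case~1. The genuine gap is the crease case $v\in\cv$, where $U_1=\alpha A$, $U_2=\crs_v\alpha A$ and the underlying paths are $p$ and $q=\crs_v p$. There the dichotomy you aim to prove --- ``either one of $\bar U_1,\bar U_2$ contains the other, or $\bar U_1\cap\bar U_2$ is simple or zero'' --- is simply \emph{false}, so no combinatorial argument can establish it.

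Concretely, take $v\in\cv$ followed by a chain $v\xrightarrow{\alpha}u\xrightarrow{\beta}w\xrightarrow{\gamma}x$ of ordinary arrows with $u,w,x\in\ov$, and let $X\subset P(v)$ be the submodule generated by $\alpha\beta-\mu\,\crs_v\alpha\beta$ for some nonzero $\mu\in K$. In $P(v)/X$ both $\bar U_1$ and $\bar U_2$ are uniserial of length $3$, neither contains the other (since $\alpha$ and $\crs_v\alpha$ remain independent modulo $X$), yet $\bar U_1\cap\bar U_2$ is spanned by the images of $\alpha\beta$ and $\alpha\beta\gamma$ and so has length $2$. Your ``$q=wp$'' clause does catch this situation (with $w=\crs_v$), but the step ``$X$ contains $\rho-w\rho$ for the relevant prefix and hence, being a submodule, for every such prefix, so $\bar U_1\subseteq\bar U_2$'' is where it breaks: right-multiplication propagates $\rho-\mu\,\crs_v\rho\in X$ only to \emph{longer} prefixes, never back up to $\rho=\alpha$, and indeed $\alpha-\mu\,\crs_v\alpha\notin X$ here.

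The quotient $P(v)/X$ is nonetheless biserial, but one needs a \emph{different} pair of uniserials: for instance $V_1=(\alpha-\mu\,\crs_v\alpha)A+X$, which is simple because $(\alpha-\mu\,\crs_v\alpha)\beta\in X$, together with $V_2=\bar U_2$; then $V_1+V_2=\rad(P(v)/X)$ and $V_1\cap V_2=0$. So at a crease vertex one cannot fix the two uniserials once and for all as images of those in $P(v)$; one has to twist within the $\viso$-line $\langle m,m\crs_v\rangle$, with the twist depending on $X$. This is exactly the extra ingredient in the paper's Case~2 (the endomorphism $m\mapsto m-c\,m\crs_v$ for a suitable constant $c$), and it is precisely what distinguishes the folded situation from the ordinary gentle one --- the pure path combinatorics of your third paragraph cannot see it.
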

	
	To achieve this, we must prove some technical lemmata, the first of which is adapted from \cite{VHW} and \cite[Lemma 2.7]{GSMultiserial}.
	
	\begin{lem}\label{lem:maAUniserial}
		Let $m \in M$ be as in Theorem~\ref{thm:Biserial}(b). For any arrow $\alpha \in \oa$, the module $m\alpha A = \langle m\alpha a : a \in A \rangle$ is a (possibly trivial) uniserial module.
	\end{lem}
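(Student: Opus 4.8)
The plan is to show that $m\alpha A$ is uniserial by exhibiting an explicit composition series whose factors are forced to be unique by the axioms \ref{en:FG1}--\ref{en:FG6}. The key observation is that for an ordinary arrow $\alpha\in\oa$, axiom \ref{en:FG2} guarantees there is \emph{at most one} arrow $\gamma\in Q_1$ with $\alpha\gamma\notin\orel$; iterating, there is essentially a unique maximal path $p=\alpha\gamma_1\gamma_2\cdots$ in $A$ starting with $\alpha$ that avoids the ordinary relations, and this path must be finite since $A$ is finite-dimensional. I would first reduce to the case $m\alpha\neq 0$ (otherwise the module is trivial and the claim is vacuous), and observe that as a $K$-vector space $m\alpha A$ is spanned by the elements $m\alpha\gamma_1\cdots\gamma_k$ for the successive subpaths of this maximal path $p$, together with possibly one application of a crease loop $\crs_v$ at a crease vertex $v$ encountered along the way (using \ref{en:FG6}, which says a path entering a crease vertex via an ordinary arrow and leaving via an ordinary arrow lies in $\orel$, so the only way to ``leave'' a crease vertex is via its crease loop).

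Next I would define the descending chain of submodules $m\alpha A = N_0 \supseteq N_1 \supseteq \cdots$ where $N_i$ is the submodule generated by all elements $m\alpha a$ with $a$ a path of length $\geq i$ (equivalently, $N_i = m\alpha\,\rad^i A$ up to the appropriate indexing). The point is that each quotient $N_i/N_{i+1}$ is, as an $A$-module, either zero or isomorphic to the simple module $S(w)$ at the vertex $w$ reached after $i$ steps along $p$ — there is no branching, precisely because \ref{en:FG2} forbids a second continuation avoiding $\orel$ and \ref{en:FG3} controls the relations. At crease vertices one must be slightly careful: the element $m\alpha\gamma_1\cdots\gamma_k$ landing at a crease vertex $v$ generates a $\viso$-line (dimension $2$ over $K$), but as an $A$-\emph{submodule} the relevant layer $N_i/N_{i+1}$ is still the single simple module $S(v)$, because multiplication by $\crs_v$ acts within that layer (by Remark~\ref{rem:InverseCrease} the crease loop is invertible on $S(v)$) rather than producing a new layer. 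So the $\rad^i/\rad^{i+1}$ filtration of $m\alpha A$ has simple-or-zero factors, which is exactly uniseriality.

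The main obstacle I anticipate is the bookkeeping at crease vertices: one must verify that when the maximal path $p$ passes through a crease vertex $v$, the crease loop $\crs_v$ does not introduce an extra composition factor or a second uniserial branch, and that the module structure ``factors through'' $\viso$ cleanly so that the $\rad$-layer is genuinely the simple $S(v)$ of $K$-dimension $2$ and not something decomposable. This is handled by Remark~\ref{rem:InverseCrease} (so $\crs_v$ acts invertibly, hence bijectively, on each layer it touches) together with \ref{en:FG6} (so no ordinary arrow can be composed after $\crs_v$ except the unique continuation already accounted for in $p$, and no ordinary-arrow-to-ordinary-arrow passage through $v$ survives). A secondary, more routine point is checking that $m\alpha A$ is finite-dimensional and the chain $N_i$ terminates, which follows immediately from finite-dimensionality of $A$ and the admissibility of $\orel$. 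Once the filtration with simple-or-zero factors is in place, uniseriality is the equivalent condition recorded in the definition, so the proof concludes.
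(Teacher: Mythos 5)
Your proposal is correct and follows essentially the same route as the paper: use \ref{en:FG2} (together with \ref{en:FG6} at crease vertices) to get a unique maximal right-continuation of $\alpha$, filter $m\alpha A$ by radical powers, and observe that at each crease vertex the crease loop acts invertibly within a single radical layer (Remark~\ref{rem:InverseCrease}), so it contributes the two-dimensional simple $S(v)$ rather than an extra layer or a branch. The only thing worth tightening is your parenthetical identification of $N_i$ with the span of elements $m\alpha a$ for paths $a$ of ``length $\geq i$'': since crease loops are units, they do not lie in $\rad A$, so ``length'' here must count only ordinary arrows (which is what the paper makes explicit with its decomposition of the maximal path into blocks $\delta_{r_{l-1}+1}\cdots\delta_{r_l}\crs_{v_l}$ and the definition of the subpaths $p_j$).
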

	\begin{proof}
		If $m\alpha A$ is either simple or zero, then the lemma is trivially satisfied. Thus, we will assume that this is not the case. This implies that there exists an arrow $\beta_1 \in Q_1$ such that $m\alpha\beta_1 \neq 0$. By axiom \ref{en:FG2}, there exists no other arrow $\gamma\neq \beta_1 \in Q_1$ such that $m\alpha\gamma \neq 0$. Thus, the arrow $\beta_1$ is unique with regard to this property.
		
		Two possibilities now exist: either $t(\alpha) \in \ov$ or $t(\alpha) \in \cv$. In the former case, we note that $\beta_1 \in \oa$ and if there exists an arrow $\beta_2 \in Q_1$ such that $m\alpha\beta_1\beta_2 \neq 0$, then it is again unique by axiom \ref{en:FG2}. On the other hand if $t(\alpha) \in \cv$, then $\beta_1 \in \ca$ by axiom \ref{en:FG6}. In this case $m\alpha\beta^2_1 \neq 0$, but there may also exist an arrow $\beta_2 \in \oa$ such that $m\alpha\beta_1\beta_2 \neq 0$. At first glance, this appears to imply that we have two arrows $\beta_1$ and $\beta_2$ that can be multiplied on the right of $m\alpha\beta_1$ to yield a non-zero element of the submodule $m\alpha A$. However, we also have a relation $\beta_1^2 - \lambda_1 \beta_1 - \lambda_2 \stp_{t(\beta_1)} \in \crel$ for some $\lambda_1,\lambda_2 \in K$, which implies
		\begin{equation*}
			m\alpha\beta^2_1\beta_2 = \lambda_1 m\alpha\beta_1\beta_2 + \lambda_2 m\alpha\beta_2 =\lambda_1 m\alpha\beta_1\beta_2
		\end{equation*}
		by \ref{en:FG6}.
		
		Since $Q$ is finite by definition, an iterative use of the above argument results in a unique path $\alpha\beta_1\ldots\beta_n$ that is of maximal length with respect to the condition that $m\alpha\beta_1\ldots\beta_n \neq 0$, and is such that
		\begin{equation*}
			m\alpha A = \langle m\alpha, m\alpha\beta_1, \ldots, m\alpha\beta_1\ldots\beta_n \rangle_K
		\end{equation*}
		Now rewrite the path $\alpha\beta_1\ldots\beta_n$ as
		\begin{equation} \tag{$\ast$} \label{eq:UniserialPath}
			\alpha \delta_1\delta_2\ldots\delta_{r_1} \crs_{v_1}\delta_{r_1+1}\delta_{r_1+2}\ldots\delta_{r_2} \crs_{v_2} \delta_{r_2+1}\delta_{r_2+2}\ldots \delta_{r_s} \crs_{v_s}
		\end{equation}
		where each $\delta_i \in \oa$, each $\crs_i \in \ca$, and we omit the last symbol ($\crs_{v_s}$) if $\beta_n \not\in\ca$. It is then straightforward to verify that
		\begin{equation*}
			\rad^j m\alpha A = m p_j A
		\end{equation*}
		where $p_j$ is the subpath of (\ref{eq:UniserialPath}) given by removing everything to the right of $\delta_j$ (if $j \neq r_l$ for any $l$) or $\crs_{v_l}$ (if $j = r_l$ for some $l$). Finally, we note that there is a vector space isomorphism
		\begin{equation*}
			\rad^j m\alpha A / \rad^{j+1} m\alpha A \cong
			\begin{cases}
				\langle m p_{j-1} \delta_j \rangle_K	& \text{if } j \neq r_l \text{ for any } l\\
				\langle m p_{j-1} \delta_{r_l},   m p_{j-1}\delta_{r_l} \crs_{v_l}  \rangle_K	& \text{if } j = r_l \text{ for some } l,
			\end{cases}
		\end{equation*}
		which in particular extends to an $A$-module isomorphism
		\begin{equation*}
			\rad^j m\alpha A / \rad^{j+1} m\alpha A \cong S(t(\delta_j)),
		\end{equation*}
		and we are done.
	\end{proof}
	
	\begin{lem} \label{lem:radmASum}
		Let $m \in M$ be as in Theorem~\ref{thm:Biserial}(b). Then either $\rad mA$ is uniserial or $\rad mA = U_1 + U_2$ for some uniserial submodules $U_1$ and $U_2$ such that $U_1 \not\subseteq U_2$ and $U_2 \not\subseteq U_1$.
	\end{lem}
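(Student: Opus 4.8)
The plan is to reduce the structure of $\rad mA$ to the uniserial pieces already understood in Lemma~\ref{lem:maAUniserial}, using axiom~\ref{en:FG1} to control how many such pieces there can be. First I would observe that $mA$ is generated as an $A$-module by the single element $m = m\stp_v$, so that $\rad mA$ is generated by the elements $m\alpha$ as $\alpha$ ranges over the arrows of $Q_1$ with source $v$; concretely, $\rad mA = \sum_{s(\alpha)=v} m\alpha A$. By axiom~\ref{en:FG1} there are at most two such arrows. The main case to handle is $v \in \cv$, where one of those arrows is the crease loop $\crs_v$: here I would use Remark~\ref{rem:InverseCrease}, namely that $\crs_v$ is invertible in $A$, to argue that $m\crs_v A = mA \not\subseteq \rad mA$ cannot literally be a summand in the naive sense, and instead that $\crs_v$ acts as an automorphism on each relevant $\viso$-layer, so that multiplying by $\crs_v$ does not enlarge the submodule structure — the point being that $\rad mA$ is still generated by the $m\delta A$ for the \emph{ordinary} arrows $\delta$ of source $v$, of which, by \ref{en:FG1} and \ref{en:FG6}, there is at most one. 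So in the crease-vertex case $\rad mA = m\delta A$ is itself uniserial by Lemma~\ref{lem:maAUniserial}, handling that branch of the dichotomy.

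In the remaining case $v \in \ov$, there are at most two arrows $\alpha_1, \alpha_2 \in \oa$ with source $v$, and $\rad mA = m\alpha_1 A + m\alpha_2 A$. If there are fewer than two, or if one of the $m\alpha_i A$ is zero, then $\rad mA$ is uniserial by Lemma~\ref{lem:maAUniserial} and we are done. Otherwise set $U_i = m\alpha_i A$ for $i=1,2$; each is uniserial by Lemma~\ref{lem:maAUniserial}, and $\rad mA = U_1 + U_2$. It remains to rule out $U_1 \subseteq U_2$ (and symmetrically $U_2 \subseteq U_1$). Here I would argue at the level of tops: $m\alpha_1$ lies in $\rad^{0} U_1 \setminus \rad^{1} U_1$, i.e. it contributes a copy of $S(t(\alpha_1))$ to the top of $U_1$, and similarly $m\alpha_2$ contributes $S(t(\alpha_2))$ to the top of $U_2$. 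If $U_1 \subseteq U_2$ then the class of $m\alpha_1$ in $U_2 / \rad U_2$ must be nonzero, forcing $m\alpha_1 \in U_2 \setminus \rad U_2 = m\alpha_2 \viso + \rad U_2$ — but then $m\alpha_1$ and $m\alpha_2$ span the same simple top of $U_2$, so $t(\alpha_1) = t(\alpha_2)$ and $m\alpha_1 = c\, m\alpha_2 + (\text{radical term})$ for some scalar $c$ in the appropriate field; comparing with the direct-sum decomposition of $\rad mA / \rad^2 mA$ dictated by distinctness of $\alpha_1$ and $\alpha_2$ gives a contradiction. (When $t(\alpha_1)=t(\alpha_2)$ one still has that $m\alpha_1$ and $m\alpha_2$ are $K$-linearly independent modulo $\rad^2 mA$, because the action of $\stp_v A$ realizes them as images of distinct basis paths; this is where \ref{en:FG2} and the computation $m\alpha A = \langle m\alpha, m\alpha\beta_1,\ldots\rangle_K$ from the previous lemma get reused.)

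I expect the crease-vertex case to be the main obstacle: one has to be careful that the presence of $\crs_v$ among the arrows of source $v$ does not secretly introduce a second uniserial summand, and that the $\viso$-module structure on the layers (as in Remark~\ref{rem:ProjMorphs}) is compatible with ``uniserial'' in the sense of the paper's definition — i.e. that $\rad^i / \rad^{i+1}$ is still simple rather than a sum of two simples. The resolution is exactly the computation already carried out inside the proof of Lemma~\ref{lem:maAUniserial}: the crease relation $\crs_v^2 - \lambda_1\crs_v - \lambda_2\stp_v$ collapses what looks like a branching into a single uniserial chain, because $m\alpha\beta_1^2\beta_2 = \lambda_1 m\alpha\beta_1\beta_2$. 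So the bulk of the work is bookkeeping with \ref{en:FG1}, \ref{en:FG6} and Lemma~\ref{lem:maAUniserial}, plus the short top-comparison argument to get strictness of the two inclusions; no genuinely new idea beyond the previous lemma is needed.
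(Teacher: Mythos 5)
Your handling of the crease-vertex case is wrong, and it is precisely the case the paper has to be careful about. You claim that for $v \in \cv$ with unique ordinary arrow $\delta$ of source $v$, the invertibility of $\crs_v$ forces $\rad mA = m\delta A$, hence uniserial. This does not follow. The invertibility of $\crs_v$ implies $m\crs_v A = mA$, so $m$ and $m\crs_v$ are two generators of the same cyclic module; but the elements $m\delta$ and $m\crs_v\delta = (m\crs_v)\delta$ need not generate the same submodule, because once you have applied the ordinary arrow $\delta$ you have left the vertex $v$ and there is no crease available to invert your way back. Concretely, in Example~\ref{ex:C3} take $M = A$ and $m = \stp_1$: then $\rad mA$ has $\real$-basis $\{\alpha, \crs_1\alpha, \alpha\beta, \crs_1\alpha\beta\}$, the submodule $m\alpha A = \alpha A$ has basis $\{\alpha, \alpha\beta\}$, and $\crs_1\alpha \notin \alpha A$. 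Indeed $\rad mA / \rad^2 mA \cong S(2) \oplus S(2)$ is not simple, so $\rad mA$ is not uniserial. The correct decomposition, as in Equation~(\ref{eq:radmACase2}) of the paper, is $\rad mA = m\alpha A + m\crs_v\alpha A$, where the second summand is uniserial by applying Lemma~\ref{lem:maAUniserial} to the element $m\crs_v$ (which is nonzero and satisfies $m\crs_v\stp_v = m\crs_v$). So the crease-vertex case produces two uniserial pieces, not one; your argument collapses precisely the branching that the lemma is designed to accommodate.

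You also spend effort in the ordinary-vertex case proving that $U_1 \not\subseteq U_2$ and $U_2 \not\subseteq U_1$, but this is unnecessary and your top-comparison argument is shaky (why must the class of $m\alpha_1$ in $U_2/\rad U_2$ be nonzero?). The paper's opening observation makes strictness free: if $\rad mA = U_1 + U_2$ with $U_1, U_2$ uniserial and one contained in the other, then $\rad mA$ itself is uniserial and you are in the first branch of the dichotomy. So it suffices to exhibit two (possibly nested or equal) uniserial submodules summing to $\rad mA$ in every case, which is exactly what Equations~(\ref{eq:radmACase1}) and~(\ref{eq:radmACase2}) do. Your instinct that ``no genuinely new idea beyond the previous lemma is needed'' is right, but the bookkeeping you propose is both incomplete (crease case) and harder than necessary (strictness).
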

	
	\begin{proof}
		We begin by making the trivial observation that if $\rad mA = U_1 + U_2$ for uniserial submodules $U_1$ and $U_2$ such that either $U_1 \subseteq U_2$ or $U_2 \subseteq U_1$, then $\rad mA$ is uniserial. Thus, it is sufficient to show that there exist uniserials $U_1$ (and possibly $U_2$) such that $\rad mA = U_1$ (resp. $\rad mA = U_1+U_2$) in all possible cases
	
		There are two possibilities to consider: either $v \in \ov$ or $v \in \cv$. First suppose that $v \in \ov$. Then $\tp mA = mA / \rad mA$ is a 1-dimensional $K$-vector space. Now since $m \stp_v \neq 0$, we must have $m\alpha \neq 0$ for some $\alpha \in Q_1$ only if $s(\alpha) =v$. By axiom \ref{en:FG1}, there exist at most two arrows of source $v$, say $\alpha$ and $\alpha'$. By construction, both $\alpha,\alpha' \in \oa$. It is therefore easy to see that we have 
		\begin{equation*} \label{eq:radmACase1} \tag{\ref*{lem:radmASum}.1}
			\rad mA = m\alpha A + m\alpha' A.
		\end{equation*}
		By Lemma~\ref{lem:maAUniserial}, both $m\alpha A$ and $m\alpha' A$ are uniserial, and so we are done.
	
		Now suppose that $v \in \cv$. Then $v$ is incident to a crease $\crs_v \in \ca$ and (by \ref{en:FG1}) at most one ordinary arrow, say $\alpha \in \oa$. In this case $\tp mA$ is a 2-dimensional $K$-vector space, indexed by the elements $\stp_v,\crs_v \in A$. One can verify that we have
		\begin{equation*} \label{eq:radmACase2} \tag{\ref*{lem:radmASum}.2}
			\rad mA = m\alpha A + m\crs_v\alpha A.
		\end{equation*}
		Now $m\crs_v \in M$ is necessarily non-zero and satisfies $m\crs_v\stp_v = m\crs_v$, so by Lemma~\ref{lem:maAUniserial}, both $m\alpha A$ and $m\crs_v\alpha A$ are uniserial, as required. This completes the proof.
	\end{proof}
	
	We can now prove the theorem above.
	
	\begin{proof}[Proof of Theorem~\ref{thm:Biserial}]
		We will prove (b), as the proof for (a) is dual. If $mA$ is uniserial, then the theorem is automatically satisfied, so we will assume throughout the proof that $mA$ is not uniserial. This implies that $\rad mA$ is not uniserial and thus, by Lemma~\ref{lem:radmASum}, that $\rad mA = U_1 + U_2$ for some uniserials $U_1$ and $U_2$ such that $U_1 \not\subseteq U_2$ and $U_2 \not\subseteq U_1$. In particular, it follows from the proof of Lemma~\ref{lem:radmASum} that $U_1$ and $U_2$ are of the form
		\begin{equation*}
			U_1 = ma_1 A  \qquad \text{and} \qquad U_2 = ma_2A
		\end{equation*}
		for some distinct basis elements $a_1,a_2 \in A$.
		
		If $U_1 \cap U_2$ is simple or zero, then we are done. So we will also assume that this is not the case, and show that this leads to a contradiction. Now as a submodule of $U_1$ and $U_2$, it follows that $U_1 \cap U_2$ is uniserial. In particular, this implies that there exist $p_1,p_2 \in \PQ$ such that
		\begin{equation*}
			U_1 \cap U_2 = m a_1 p_1 A = m a_2 p_2 A
		\end{equation*}
		and we may assume that $p_1$ and $p_2$ are maximal with respect to this property --- this maximality assumption implies that if $t(p_1)=t(p_2) \in \cv$, then the last arrow of $p_1$ and $p_2$ is a crease.
	
		Write $p_1 = \beta_{n_1}\ldots\beta_{1}$ and $p_2 = \beta'_{n_2}\ldots\beta'_{1}$. Since $U_1 \cap U_2$ is neither simple nor zero by assumption, there exists $\gamma \in \oa$  such that $m a_1 p_1 \gamma = m a_2 p_2 \gamma \neq 0$. But then $\beta_{1}\gamma, \beta'_{1}\gamma \not\in Z$. By axiom \ref{en:FG2}, this is possible only if $\beta_{1} =\beta'_{1}$. Proceeding iteratively with this argument, we obtain $\beta_{i} = \beta'_{i}$ and $n_1 = n_2$.
	
		From here, the proof splits into three cases. For the first case, suppose that $v$ is the source of precisely two distinct ordinary arrows $\alpha,\alpha' \in \ov$. Then it follows from the proof of Lemma~\ref{lem:radmASum} that $U_1 = m\alpha A$ and $U_2 = m \alpha' A$ (this is from Equation~\ref{eq:radmACase1}). Since we have assumed that $\rad mA$ is not uniserial, $U_1,U_2 \neq 0$. But then the same iterative argument we used before to show that $\beta_i = \beta'_i$ can be used to show that we must have $\alpha = \alpha'$, which contradicts the assumption that $\alpha$ and $\alpha'$ are distinct. Thus if $U_1 \cap U_2$ is neither simple nor zero, then $v$ cannot be the source of precisely two distinct ordinary arrows.
	
		This leads us to the second case, where $v$ is the source of precisely one ordinary arrow $\alpha \in \oa$. Since our initial assumption is that $\rad mA$ is not uniserial, it follows from the proof of Lemma~\ref{lem:radmASum} that $v \in \cv$ and thus that there exists a crease $\crs_v \in \ca$ such that $U_1 = m\alpha A$ and $U_2 = m \crs_v\alpha A$ (this is from Equation~\ref{eq:radmACase2}). We thus have
		\begin{align*}
			U_1 \cap U_2 = m\alpha \beta_n \ldots \beta_1 A = m\crs_v\alpha \beta_n \ldots \beta_1 A,
		\end{align*}
		
		There are two subcases to consider: namely that either $t(\beta_1) \in \ov$ (in which case $\beta_1 \in \oa$) or $t(\beta_1) \in \cv$ (in which case $\beta_1 \in \ca$). Suppose the former is true. Then $\tp(U_1 \cap U_2)$ is a 1-dimensional $K$-vector-space, as it must be isomorphic to $S(t(p))$. But we also have
		\begin{align*}
			\tp(U_1 \cap U_2) &= \rad^{n+1} mA / \rad^{n+2} mA \\
			&\cong \langle m\alpha \beta_n \ldots \beta_1, m\crs_v\alpha \beta_n \ldots \beta_1 \rangle_{K}.
		\end{align*}
		However these statements are only compatible if $m\crs_v\alpha \beta_n \ldots \beta_1 = \mu m\alpha \beta_n \ldots \beta_1$ for some non-zero $\mu \in K$. Now consider the automorphism $f \in \End_A(mA)$ defined such that
		\begin{equation*}
			f(m) = m - \mu (\lambda_1 \mu+\lambda_2)\inv m \crs_v,
		\end{equation*}
		where $\crs_v^2 -\lambda_1 \crs_v + \lambda_2 \in \crel$. Then $f(m \crs_v \alpha \beta_n \ldots \beta_1) = 0$. In particular, this implies that
		\begin{equation*}
			U_1 \cap U_2 = m\crs_v\alpha \beta_n \ldots \beta_1 A \cong 0,
		\end{equation*}
		which obviously contradicts our assumption that $U_1 \cap U_2$ is non-zero. So we cannot have $t(\beta_1) \in \ov$.
		
		Suppose instead that $t(\beta_1) \in \cv$. Then $\tp(U_1 \cap U_2)$ is a 2-dimensional $K$-vector-space. But then we also have
		\begin{equation*}
			\tp(U_1 \cap U_2) \cong \langle m\alpha \beta_n \ldots \beta_1 ,m\alpha \beta_n \ldots \beta_2, m\crs_v\alpha \beta_n \ldots \beta_1, m\crs_v\alpha \beta_n \ldots \beta_2\rangle_{K}.
		\end{equation*}
		This is again only possible if $m\crs_v\alpha \beta_n \ldots \beta_1 = \mu m\alpha \beta_n \ldots \beta_1$ for some non-zero $\mu \in K$. The same isomorphism above then shows that we again must have $U_1 \cap U_2 \cong 0$ --- a contradiction. Thus, if we assume that $U_1 \cap U_2$ is neither simple nor zero, then $v$ cannot be the source of precisely one ordinary arrow. 
		
		By \ref{en:FG1}, we have a final case to consider --- namely, where $v$ is the source of no ordinary arrows. But then $mA$ must be simple, and hence uniserial, and we have already ruled this case out. We thus conclude that if $mA$ is not uniserial, then $U_1 \cap U_2$ must be simple or zero. Hence, $mA$ is biserial.
		
		To show that this implies that $A$ is biserial, we simply use the fact that each indecomposable projective right $A$-module is isomorphic to a module $\stp_v A$ for some $v \in Q_0$, and every indecomposable projective left $A$-module is isomorphic to a module $A\stp_v$ for some $v \in Q_0$. Each such module is clearly biserial from the proof above (and its dual), and so we are done.
	\end{proof}
		
	% ======================================================
	\section{Words, Strings and Bands} \label{sec:Strings}
	% ======================================================
	There is a classification of the indecomposable modules of gentle algebras, skewed-gentle algebras, and clannish algebras (see \cite{ButlerRingel}, \cite{CBClans} and \cite{Deng} for an account). The indecomposable modules over these algebras are given by \emph{string modules} and \emph{band modules}. Inspired by this classification, we will define string and band modules for folded gentle algebras, which are also indecomposable due to \cite{SemilinearClan}.
	
	\subsection{Words}
	We will begin with some terminology that is commonplace in the literature for string algebras and gentle algebras, which we adapt to folded gentle algebras. Most of the terminology we will define will be applicable to any finite-dimensional algebra (except for where we state otherwise or use the terms ordinary/crease).
	
	For each arrow $\alpha \in Q_1$, we define a \emph{formal inverse} $\alpha\inv$. We denote the set of all formal inverses by $Q_1\inv$ and we call the elements of the set $Q_1 \cup Q_1\inv$ \emph{symbols} of $Q$. For folded gentle algebras, the set of symbols can therefore be partitioned into subsets $\oa \cup (\oa)\inv$ and $\ca \cup (\ca)\inv$. We call the subset $\oa \cup (\oa)\inv$ the set of \emph{ordinary symbols} and the subset $\ca \cup (\ca)\inv$ the set of \emph{crease symbols}. The source and target functions of $Q$ can then be extended to the set of symbols:
	\begin{align*}
		s\colon Q_1 \cup Q_1\inv &\rightarrow Q_0, \\
		t\colon Q_1 \cup Q_1\inv &\rightarrow Q_0,
	\end{align*}
	where $s(\alpha\inv)=t(\alpha)$ and $t(\alpha\inv) =s(\alpha)$ for each $\alpha\inv \in Q_1\inv$.
	
	By a \emph{word} of $Q$, we mean a concatenation $w=\sigma_1\ldots\sigma_n$ of symbols of $Q$ such that $t(\sigma_i)=s(\sigma_{i+1})$ for each $1 \leq i < n$. The \emph{source} of $w$ is defined to be $s(w) = s(\sigma_1)$ and the \emph{target} of $w$ is defined to be $t(w) = t(\sigma_n)$. The \emph{inverse} of $w$ is defined to be the word $w\inv = \sigma_n\inv\ldots\sigma_1\inv$, where we define $(\alpha\inv)\inv = \alpha \in Q_1$ for any formal inverse $\alpha\inv \in Q_1\inv$. We say a word is \emph{reduced} if $\sigma_i \neq \sigma_{i+1}\inv$ for each $1 \leq i <n$. The \emph{length} of $w$ is defined to be $|w|=n$. A \emph{subword} of $w$ is a word $w' = \sigma_i \sigma_{i+1} \ldots \sigma_j$ for some $1 \leq i \leq j \leq n$. A subword $w'$ of $w$ is said to be \emph{proper} if $|w'|<|w|$.
	
	For technical reasons, we allow for the existence of words of length 0, which we call \emph{simple words}, and we say that there are precisely $|Q_0|$ distinct simple words associated to the quiver $Q$. By a slight abuse of notation, we will denote the simple word corresponding to $v \in Q_0$ by $\stp_v$, and we define $s(\stp_v)=v=t(\stp_v)$ and $\stp_v\inv = \stp_v$. Note that a consequence of the definitions is that a simple word never has a proper subword.
	
	For any words $w$ and $w'$ such that $t(w)=s(w')$, we define a word $ww'$ given by their concatenation subject to the following identifications. Firstly, simple words give rise to trivial concatenations: we define $\stp_u w = w$ and $w \stp_v = w$ for any word $w$ such that $s(w)=u$ and $t(w)=v$. Secondly, if $w= w_1 w_2$ and $w' = w_3 w_4$ with $w_2 = w_3\inv$, then we identify $ww' = w_1 w_4$. Consequently, every word has a reduced expression.
	
	For any given words $w$ and $w'$, we define two equivalence relations. The first equivalence relation is given by $w' \sim_1 w$ if and only if $w' = w$ or $w' = w\inv$. The second equivalence relation is given by $w \sim_2 w'$ if and only if $w = w'$ up to the identification $\crs_v = \crs_v\inv$ for each $\crs_v \in \ca$. It is not difficult to check that these equivalence relations commute. We then say that two words $w$ and $w'$ are equivalent (written $w \approx w'$) if there exists a word $w''$ such that $w \sim_1 w'' \sim_2 w'$.
	
	\begin{defn}
		By $\wrd_Q$ we denote the collection of all words, up to equivalence, of the quiver $Q$.
	\end{defn}
	
	\subsection{Strings and bands} \label{sec:StringsBands}
	We now have all we need to define strings and bands.
	
	\begin{defn}
		Let $A= KQ / \langle Z \rangle$ be a gentle or folded gentle algebra. We call a word $w \in \wrd_Q$ a \emph{string} of $A$ if
		\begin{enumerate}[label = (S\arabic*)]
			\item $w$ is reduced; \label{en:FS1}
			\item $w$ avoids the relations in $Z$: there exists no subword $w'$ of $w$ such that $w' \in \orel$ or $(w')\inv \in \orel$, and there exists no proper power of a crease symbol in $w$; and \label{en:FS2}
			\item $s(w) \in \cv$ if and only if $w$ starts with a crease symbol, and $t(w) \in \cv$ if and only if $w$ ends with a crease symbol. \label{en:FS3}
		\end{enumerate}
		By \ref{en:FS3}, a simple word $\stp_v$ is a string if and only if $v \in \ov$. We call such a string a \emph{simple string}.
		
		We call a word $w$ of $Q$ a \emph{band} of $A$ if
		\begin{enumerate}[label = (B\arabic*)]
			\item $w$ is neither a simple word nor a crease symbol; \label{en:FB1}
			\item $w$ has a cyclic structure and $w^m$ satisfies \ref{en:FS1} and \ref{en:FS2} for any $m \geq 0$; and \label{en:FB2}
			\item there exists no proper subword $w'$ such that $w \approx (w')^m$ for $m>0$. \label{en:FB3}
		\end{enumerate}
	\end{defn}
	
	We then say that two strings $w$ and $w'$ are equivalent (written $w \approx w'$) if they are equivalent as words. For bands, we define a third equivalence relation by $w \sim_3 w'$ if and only if $w = w'$ up to cyclic permutation/rotation. Again, it is not difficult to check that $\sim_3$ is pairwise commutative with $\sim_1$ and $\sim_2$. We thus say that two bands $w$ and $w'$ are equivalent (again written $w \approx w'$) if there exist bands $w''$ and $w'''$ such that $w \sim_1 w'' \sim_2 w''' \sim_3 w'$.
	
	\begin{defn}
		We denote by $\mathcal{S}_A$ the collection of all classes of strings of $A$ up to equivalence. Likewise, we denote by $\mathcal{B}_A$ the collection of all bands of $A$ up to equivalence.
	\end{defn}
	
	For brevity, we will often abuse notation throughout the paper by writing $w \in \str_A$ (or $w \in \bnd_A$) to refer to a \emph{representative} of the class $[w] \in \str_A$ (or $[w] \in \bnd_A$).
	
	We say a vertex $v$ is at a \emph{peak} of a string $w=\sigma_1\ldots\sigma_n$ if either $v=s(\sigma_1)$ with $\sigma_1 \in Q_1$, or $v=t(\sigma_i)=s(\sigma_{i+1})$ with $\sigma_i\inv,\sigma_{i+1} \in Q_1$, or $v=s(\sigma_n)$ with $\sigma_n \in Q_1\inv$. Dually, we say $v$ is at a \emph{deep} of $w$ if either $v=s(\sigma_1)$ with $\sigma_1 \in Q_1\inv$, or $v=t(\sigma_i)=s(\sigma_{i+1})$ with $\sigma_i\inv,\sigma_{i+1} \in Q_1\inv$, or $v=s(\sigma_n)$ with $\sigma_n \in Q_1$.
		
	We say a string is \emph{direct} if it is simple, or it is a crease, or if every ordinary symbol is an arrow. We say a string is \emph{inverse} if it is simple, or it is a crease, or if every ordinary symbol is a formal inverse. We say that a non-simple string $w$ is \emph{symmetric} if $w \sim_2 w\inv$ and \emph{asymmetric} otherwise. For technical purposes, we state that simple strings are never symmetric. We say that a band $w$ is \emph{symmetric} if there exists a band $w'$ such that $w \sim_2 w' \sim_3 w\inv$, and we say that $w$ is \emph{asymmetric} otherwise.
		
		\begin{exam} \label{ex:C3Strings}
		Let $(K,Q,Z)$ be as in Example~\ref{ex:C3}. There are no bands, and thus only finitely many strings. Namely, we have precisely nine strings up to equivalence:
		\begin{align*}
			&\crs_1,	&	&\crs_1\alpha,		&	&\alpha\inv\crs_1\alpha, \\
			&\stp_2, 	&	&\beta,				&	&\alpha\inv\crs_1\alpha\beta, \\
			&\stp_3,	&	&\crs_1\alpha\beta,	&	&\beta\inv\alpha\inv\crs_1\alpha\beta.
		\end{align*}
		There are precisely three symmetric strings in this list, namely $\crs_1$, $\alpha\inv\crs_1\alpha$ and $\beta\inv\alpha\inv\crs_1\alpha\beta$.
	\end{exam}
	
	\begin{exam} \label{ex:C3TildeD4Bands}
		Let $(K,Q,Z)$ be as in Example~\ref{ex:C3TildeD4}. There are infinitely many strings in this triple. There is also precisely one band up to equivalence given by $\crs_1\alpha\inv\beta\inv\gamma\crs_4\gamma\inv\beta\alpha$, which is symmetric.
	\end{exam}
	
	\begin{exam} \label{ex:TripleCreaseBands}
		Let $(K,Q,Z)$ be as in Example~\ref{ex:TripleCrease}. There are infinitely many bands in this triple. For example, there is a family of bands given by $(\gamma\delta\gamma\inv\crs_3)^n\gamma\delta\inv\gamma\inv\crs_3$ with $n \geq 0$. This is symmetric for $n=1$ but is otherwise asymmetric. There is also a symmetric band given by $\crs_2\beta\crs_3\beta\inv$.
	\end{exam}
	
	\subsection{Modules associated to strings and bands}
	Let $A = KQ / \langle Z \rangle$ be a folded gentle algebra. We will show how one obtains $A$-modules that correspond to the strings and bands of the folded gentle triple $(K,Q,Z)$.
	
	\subsubsection{String modules} \label{sec:StringModules}
	Let $w = \sigma_1\ldots\sigma_n \in \str_A$. For each $0 \leq i \leq n$, denote by $b_{i}$ the subword $\sigma_1\ldots\sigma_i$ of $w$, where $b_{0} = \stp_{s(w)}$. From this, we define a $K$-vector space
	\begin{equation*}
		M(w)= \langle b_{ i} : 0 \leq i \leq n\rangle_K
	\end{equation*}
	whose elements are given by $K$-linear combinations of the subwords $b_{ i}$. Thus, the subwords $b_{ i}$ form a basis of $M(w)$, which we call the \emph{standard basis} of $M(w)$. We define the action of an arrow or stationary path $a \in A$ on a subword $b_{ i}$ by the concatenation $b_{ i} a$, subject to the following identifications:
	\begin{enumerate}[label=(SM\arabic*)]
		\item \label{en:SM1} $b_{ i} a$ is identified with its reduction. That is, if $\sigma_i = \alpha\inv\in Q_1\inv$, then $b_{ i} \alpha = b_{i-1}\alpha\inv\alpha = b_{i-1}$.
		\item \label{en:SM2} If $\sigma_i = \alpha\in Q_1$, then $b_{ i-1} \alpha = b_i$.
		\item \label{en:SM3} If $\sigma_i=\crs_v \in \ca$ and $\crs_v^2 - \lambda_1\crs_v - \lambda_2 \stp_v \in \crel$, then $b_{ i} \crs_v = b_{ i-1}\crs_v^2 = \lambda_1 b_{ i} + \lambda_2 b_{ i-1}$. On the other hand if $\sigma_i=\crs_v\inv \in (\ca)\inv$, then $b_{ i-1} \crs_v = b_{ i-1}(\lambda_1 + \lambda_2 \crs_v\inv) = \lambda_1 b_{ i-1} + \lambda_2 b_{ i}$.
		\item \label{en:SM4} If, after the identifications \ref{en:SM1}-\ref{en:SM3} are considered, $b_{ i} a$ is not a $K$-linear combination of the elements in $\{b_{ i} : 0 \leq i \leq n\}$, then we define $b_{ i} a=0$.
	\end{enumerate}
	When extended linearly and distributively, this action enriches $M(w)$ with the structure of an $A$-module, as required.
	
	\begin{defn}
		Let $w$ be a string and let $M(w)$ be the $A$-module constructed above. We call $M(w)$ the \emph{string module} corresponding to $w$.
	\end{defn}
	
	\begin{rem} \label{rem:StringInverseAction}
		By a slight abuse of notation, we can define the `action' of a formal inverse $\alpha\inv \in Q_1 \inv$ on $M(w)$ by a similar set of rules to \ref{en:SM1}-\ref{en:SM4} (given in the obvious way by inverting the symbols). This `action' is useful later in the paper, and essentially describes the preimage of the action of $\alpha$ on $M(w)$.
	\end{rem}
	
	\begin{lem} \label{lem:StringIsos}
		Suppose $w=\sigma_1\ldots\sigma_n$ and $w'=\sigma'_1\ldots\sigma'_{n}$ are strings.
		\begin{enumerate}[label=(\alph*)]
			\item If $w \sim_1 w'$ then $M(w) \cong M(w')$.
			\item If $w \sim_2 w'$ then $M(w) \cong M(w')$.
		\end{enumerate}
	\end{lem}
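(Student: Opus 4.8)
The plan is to exhibit explicit $K$-linear isomorphisms $\phi\colon M(w) \to M(w')$ that are compatible with the $A$-action, treating the two equivalence relations separately.

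For part (a), the relation $w \sim_1 w'$ means either $w' = w$ (trivial) or $w' = w^{-1}$. So I would assume $w' = w^{-1} = \sigma_n^{-1}\ldots\sigma_1^{-1}$. The standard basis of $M(w)$ is $\{b_i : 0 \le i \le n\}$ with $b_i = \sigma_1\ldots\sigma_i$, and the standard basis of $M(w^{-1})$ is $\{b'_j : 0 \le j \le n\}$ with $b'_j = \sigma_n^{-1}\ldots\sigma_{n-j+1}^{-1}$. The natural guess is the linear map $\phi$ sending $b_i \mapsto b'_{n-i}$, since $b'_{n-i}$ ``is'' the subword of $w^{-1}$ that corresponds, under reversal, to the complementary part of $w$. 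I would then check that $\phi$ intertwines the action of each arrow $\alpha$ and each crease $\eta_v$. The key observation is that the action rules \ref{en:SM1}--\ref{en:SM4} are built precisely so that the action of $\alpha$ on $M(w)$ at position $i$ corresponds to the ``inverse action'' of $\alpha$ on $M(w^{-1})$ at position $n-i$ (Remark~\ref{rem:StringInverseAction}). For ordinary arrows this is the standard string-module argument from \cite{ButlerRingel}; the only genuinely new case is a crease symbol $\sigma_i = \eta_v$ (or $\eta_v^{-1}$), where I must verify that the quadratic rule in \ref{en:SM3}, which mixes $b_i$ and $b_{i-1}$ with coefficients $\lambda_1,\lambda_2$, transports correctly under $\phi$. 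Because \ref{en:SM3} is symmetric in the roles of $b_{i-1}$ and $b_i$ once one uses $\eta_v^{-1} = \frac{1}{\lambda_2}(\eta_v - \lambda_1\stp_v)$ (Remark~\ref{rem:InverseCrease}), this goes through; this is the step I expect to require the most care.

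For part (b), the relation $w \sim_2 w'$ means $w' $ is obtained from $w$ by replacing some occurrences of crease symbols $\eta_v$ by $\eta_v^{-1}$ (and vice versa). Here I would argue that $M(w)$ and $M(w')$ have literally the same standard basis $\{b_i\}$ as $K$-vector spaces and that the action of every arrow and stationary path coincides, so the identity map on basis elements is an $A$-isomorphism. The point is that replacing $\sigma_i = \eta_v$ by $\sigma_i = \eta_v^{-1}$ changes which of the two clauses of \ref{en:SM3} applies at position $i$, but both clauses describe the same pair of relations $b_i\eta_v = \lambda_1 b_i + \lambda_2 b_{i-1}$ and $b_{i-1}\eta_v = \lambda_1 b_{i-1}+\lambda_2 b_i$ on the subspace $\langle b_{i-1}, b_i\rangle_K$ — they just assign the two basis vectors the opposite ``reading direction.'' Concretely one checks $b_{i-1}\eta_v$ and $b_i\eta_v$ yield the same $K$-linear combinations whether computed via the $\eta_v$-clause or the $\eta_v^{-1}$-clause of \ref{en:SM3}, using again $\eta_v^{-1} = \frac{1}{\lambda_2}(\eta_v-\lambda_1\stp_v)$. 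Since only positions carrying crease symbols are affected, and these are isolated (no two consecutive crease symbols, by \ref{en:FS2} and \ref{en:FS3}), each such local check is independent and the identity map is an isomorphism of $A$-modules.

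Finally, since $\sim_1$ and $\sim_2$ commute, composing the isomorphisms from (a) and (b) handles the combined equivalence $\approx$, though the lemma as stated only asks for the two relations individually. The main obstacle, as noted, is bookkeeping the crease-symbol case in part (a): one must be careful that the index reversal $i \mapsto n-i$ interacts correctly with the asymmetry between the ``$\eta_v$'' and ``$\eta_v^{-1}$'' clauses of \ref{en:SM3}, and it is cleanest to first reduce part (a) to the case where $w$ is direct or use part (b) to normalise all crease symbols in $w$ to $\eta_v$ before reversing.
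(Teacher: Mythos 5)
Your part~(a) is essentially the paper's proof: the index-reversal $b_i \mapsto b'_{n-i}$ is exactly the map the paper uses, and the crease check you flag as needing care does indeed go through via Remark~\ref{rem:InverseCrease}.

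Part~(b), however, contains a genuine error. You claim the identity map on standard basis elements is an $A$-isomorphism because ``the action of every arrow and stationary path coincides.'' It does not. Write $\sigma_j = \crs_v$ in $w$ and $\sigma'_j = \crs_v^{-1}$ in $w'$, with $\crs_v^2 - \lambda_1\crs_v - \lambda_2\stp_v \in \crel$. In $M(w)$, rule~\ref{en:SM2} gives $b_{j-1}\crs_v = b_j$ (plain concatenation, since $\sigma_j$ is an arrow of $Q_1$), while rule~\ref{en:SM3} gives $b_j\crs_v = \lambda_1 b_j + \lambda_2 b_{j-1}$. In $M(w')$, rule~\ref{en:SM3} (second clause) gives $b'_{j-1}\crs_v = \lambda_1 b'_{j-1} + \lambda_2 b'_j$, while rule~\ref{en:SM1} gives $b'_j\crs_v = b'_{j-1}$. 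So the matrix of $\crs_v$ on $\langle b_{j-1},b_j\rangle_K$ is $\left(\begin{smallmatrix}0 & \lambda_2\\ 1 & \lambda_1\end{smallmatrix}\right)$ in $M(w)$ but $\left(\begin{smallmatrix}\lambda_1 & 1\\ \lambda_2 & 0\end{smallmatrix}\right)$ in $M(w')$ — transposes, hence similar, but not equal. The identity map would force $b_j = \lambda_1 b_{j-1} + \lambda_2 b_j$, i.e.\ $\lambda_1 = 0$ and $\lambda_2 = 1$, which contradicts \ref{en:FG5} since $x^2 - 1$ is reducible. You overlooked that when $\sigma_j$ is the direct crease symbol, the action of $\crs_v$ on $b_{j-1}$ goes through \ref{en:SM2}, not \ref{en:SM3}; only one of the two ``relations'' you wrote down actually holds in each module.

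The fix is the explicit change of basis the paper constructs: $\omega(b_i) = b''_i$ for $i < j$, then $\omega(b_j) = \lambda_1 b''_{j-1} + \lambda_2 b''_j$ (this is precisely the similarity conjugating the two companion-form matrices above), and then propagate $\omega$ forward through positions $j < i \leq n$ by the concatenation rule $\omega(b_i) = \omega(b_{i-1})\sigma_i$. One then verifies this commutes with the $A$-action and iterates over all inverted crease positions. Your reduction strategy (normalise crease symbols via~(b) before reversing in~(a)) is a reasonable idea but inherits the same gap, since it relies on the broken version of~(b).
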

	\begin{proof}
		(a) Obviously if $w' = w$, then $M(w) \cong M(w')$, so suppose instead that $w' = w\inv$. It is easy to verify that $M(w)$ has the same module structure as $M(w')$. Let $b_0,\ldots,b_n$ be the standard basis for $M(w)$ and let $b'_0,\ldots,b'_n$ be the standard basis for $M(w')$. The $A$-module isomorphism $\omega\colon M(w) \rightarrow M(w')$ is then given by $\omega(b_i) = b_{n-i}$.
		
		(b) The statement can be proved by induction. Suppose $w''$ is a string obtained from $w$ by inverting a single crease symbol, say $\sigma_j$. We will show that $M(w) \cong M(w'')$. Without loss of generality, we may assume that $\sigma_j = \crs_v \in \ca$, as otherwise the symbol $\sigma_j\inv$ of $w''$ is in $\ca$ instead, and we can simply relabel $w$ as $w''$ and vice versa.
		
		Let $b_0,\ldots,b_n$ be the standard basis for $M(w)$ and let $b''_0,\ldots,b''_n$ be the standard basis for $M(w'')$. We define a $K$-linear map $\omega\colon M(w) \rightarrow M(w'')$ as follows. Firstly, we define $\omega(b_i)=b''_i$ for each $0 \leq i < j$. Then define 
		\begin{equation*}
			\omega(b_j)=\lambda_1 b''_{j-1} + \lambda_2 b''_j = \lambda_1 \sigma_1\ldots\sigma_{j-1} + \lambda_2 \sigma_1\ldots\sigma_{j-1}\crs_v\inv,
		\end{equation*}
		where $\crs_v^2 - \lambda_1 \crs_v - \lambda_2\stp_v \in \crel$. For each $j < i \leq n$, we define $\omega(b_{i})$ inductively by the following $K$-linear concatenation rule: First we write $\omega(b_{i-1}) = \sum_{k=0}^{i-1} \mu_{k} b''_k$ for some $\mu_k \in K$, and define
		\begin{equation*}
			\omega(b_{i}) =  \omega(b_{i-1})\sigma_{i} = \sum_{k=0}^{i-1} \mu_{k} b''_k\sigma_{i},
		\end{equation*}
		where each individual summand $\mu_{k} b''_k\sigma_{i}$ is identified according to \ref{en:SM1}-\ref{en:SM4} and Remark~\ref{rem:StringInverseAction}.
%		\begin{equation*}
%			b''_k\sigma_{i} =
%			\begin{cases}
%				b''_{k-1} & \text{if } \sigma_{k} = \sigma_{i}\inv, \\
%				b''_{k+1} & \text{if } \sigma_{k+1} = \sigma_{i}, \\
%				\lambda'_{1,u} b''_{k} + \lambda'_{2,u} b''_{k-1} & \text{if } \sigma_{k} = \sigma_{i} = \crs_u \in \ca, \\
%				\frac{1}{\lambda'_{2,u}} b''_{k-1} -\frac{\lambda'_{1,u}}{\lambda'_{2,u}} b''_{k}  & \text{if } \sigma_{k} = \sigma_{i}= \crs_u\inv \in (\ca)\inv, \\
%				0 & \text{otherwise},
%			\end{cases}
%		\end{equation*}
%		and where $\crs_u^2 - \lambda'_{1,u}\crs_u - \lambda'_{2,u}\stp_u \in \crel$. 
		It is then easy to verify that $\omega$ is an isomorphism such that $\omega(b_k)\alpha = \omega(b_k \alpha)$, and is thus an $A$-module isomorphism, as required. Hence, $M(w) \cong M(w'')$. 
		
		To see that $M(w) \cong M(w')$, we note that there must exist a sequence of strings $w=w_0,w_1,\ldots, w_{m-1},w_m=w'$ such that $w_i$ is obtained from $w_{i-1}$ by inverting a single crease symbol. By the above proof, this induces a chain of isomorphisms
		\begin{equation*}
			M(w) = M(w_0) \cong M(w_1) \cong \ldots \cong M(w_{m-1}) \cong M(w_m) = M(w'),
		\end{equation*}
		and so we are done.
	\end{proof}
	
	\begin{exam} \label{ex:C3String}
		Let $w=\alpha\inv\crs_1\alpha\beta$ be the string from Example~\ref{ex:C3Strings}. The corresponding string module $M(w)$ has underlying vector space $\real^5$ with basis elements
		\begin{align*}
			b_0 &= \stp_2, & b_1&=\alpha\inv, & b_2 &= \alpha\inv\crs_1,	\\
			b_3&=\alpha\inv\crs_1\alpha, & b_4 &= \alpha\inv\crs_1\alpha\beta.
		\end{align*}
		One can also interpret the vector space structure of $M(w)$ as $\real \oplus \complex \oplus \real \oplus \real$, as the vector subspace $\langle b_1,b_2 \rangle_{\real} \cong \complex$. We have the following actions on $M(w)$ by the basis elements of the $\real$-algebra given in Example~\ref{ex:C3}:
		\begin{align*}
			b_1\stp_1 &= \alpha\inv \stp_1 = \alpha\inv = b_1, 	&	
			b_1\alpha &= \alpha\inv \alpha = \stp_2 = b_0, \\
			b_2\stp_1 &= \alpha\inv\crs_1 \stp_1 = \alpha\inv\crs_1= b_2, 	&	
			b_2\alpha &= \alpha\inv\crs_1\alpha = b_3, \\
			b_1\crs_1 &= \alpha\inv\crs_1 =b_2,	&	
			b_1\crs_1\alpha &= \alpha\inv\crs_1\alpha = b_3, \\
			b_2\crs_1 &= \alpha\inv\crs_1^2 = -\alpha\inv = -b_1,	&	
			b_2\crs_1\alpha &= \alpha\inv\crs_1^2\alpha = -\alpha\inv\alpha = -b_0, \\
			b_0\stp_2 &= \stp_2^2 = \stp_2 = b_0,	&	
			b_3\beta &= \alpha\inv\crs_1\alpha\beta= b_4, \\
			b_3\stp_2 &= \alpha\inv\crs_1\alpha\stp_2 = \alpha\inv\crs_1\alpha=b_3,	&	
			b_2\alpha\beta &= \alpha\inv\crs_1\alpha\beta = b_4, \\
			b_4\stp_3 &= \alpha\inv\crs_1\alpha\beta \stp_3 = \alpha\inv\crs_1\alpha\beta =b_4,	&	
			b_1\crs_1\alpha\beta &= \alpha\inv\crs_1\alpha\beta = b_4,
		\end{align*}
		with all other actions by basis elements being the zero action. So for example, $b_2\beta=0$ because $\alpha\inv\crs_1\beta$ is not a $\real$-linear combination of the words $b_0,\ldots, b_4$ (it is not even a word itself). On the other hand, $b_0 \beta=0$, because whilst $b_0 \beta=\stp_0\beta=\beta$ is a word, it is also not a $\real$-linear combination of the words $b_0,\ldots, b_4$.
	\end{exam}
	
	\subsubsection{Asymmetric band modules} \label{sec:ABandModules}
	First some notation. Let $w=\sigma_1\ldots\sigma_n$ be an asymmetric band. Since $w$ has a cyclic structure and $A$ is assumed to be finite-dimensional, $w$ must contain at least one arrow symbol in $\oa$. Let $r\leq n$ be the greatest index such that $\sigma_r \in \oa$. Let $m \in \spint$ and consider $m$ distinct copies of the word $w$, which we denote by $w\ps{1}=\sigma_1\ps{1}\ldots\sigma_n\ps{1},\ldots,w\ps{m}=\sigma_1\ps{m}\ldots\sigma_n\ps{m}$ with each $\sigma_i\ps{j} = \sigma_i$.  Similar to the previous subsection, we denote by $b_i\ps{j}$ the subword $\sigma_1\ps{j}\ldots\sigma_i\ps{j}$ of $w\ps{j}$. Finally, let $\phi \in \Aut(K^m)$.
	
	From this, we define a $K$-vector space 
	\begin{equation*}
		M(w,m,\phi)= \langle b_i\ps{j} : 1 \leq i \leq n \text{ and } 1 \leq j \leq m\rangle_K,
	\end{equation*}
	and we call the above listed set of generators the \emph{standard basis} of $M(w,m,\phi)$. The action of any arrow or stationary path $a \in A$ on a subword $b_i\ps{j} \in M(w,m,\phi)$ is then defined by the concatenation $b_i\ps{j}a$, subject to the following identifications:
	\begin{enumerate}[label=(ABM\arabic*)]
		\item For each $j$, we identify $b_n\ps{j}=\sigma_1\ps{j}\ldots\sigma_n\ps{j}=\stp_{s(w)}\ps{j}=b_0\ps{j}$. \label{en:ABM1}
		\item $b_{ r-1}\ps{j} \alpha = \phi(b_r\ps{j})$ whenever $\alpha=\sigma_r\in\oa$, and where we view $\langle b_r\ps{1},\ldots,b_r\ps{m}\rangle_K \cong K^m$. \label{en:ABM2} 
		\item Let $\alpha \in Q_1$ and suppose that $\alpha \neq \sigma_r$ or $i \neq r-1$. Further suppose that $b_i \alpha = \sum_{k=1}^n \lambda_k b_k$ with respect to the identifications \ref{en:ABM1} and \ref{en:SM1}-\ref{en:SM4}. Then $b_i\ps{j} \alpha = \sum_{k=1}^n \lambda_k b_k\ps{j}$ for each $j$. \label{en:ABM3} 
	\end{enumerate}
	Extending this action linearly and distributively then enriches $M(w,m,\phi)$ with the structure of an $A$-module.
	
	\begin{defn} \label{def:ASBandModules}
		Let $w$ be an asymmetric band, $m \in \spint$, and $\phi \in \Aut(K^m)$. We call the module $M(w,m,\phi)$, as constructed above, an \emph{(asymmetric) quasi-band module} corresponding to $w$. If in addition $(K^m,\phi)$ is an $m$-dimensional indecomposable representation of the algebra $K[x,x\inv]$, then we call $M(w,m,\phi)$ an \emph{(asymmetric) band module}.
	\end{defn}
	
	\begin{rem}
		Since $K[x,x\inv]$ is a principal ideal domain, the condition that $(K^m,\phi)$ is an indecomposable representation of $K[x,x\inv]$ implies that $\phi$ is similar to the companion matrix of some $p^n \in K_j[x]$ for some $n \in \spint$, where $p$ is monic and irreducible, $p \neq x$ and $\deg(p^n)=m$. That is, if
		\begin{equation*}
		p^n=x^{m} + a_{m - 1} x^{m-1} + \ldots + a_1 x + a_0,
		\end{equation*}
		then $\phi$ is similar to the matrix
		\begin{equation*}
			C_{p,m} =
			\begin{pmatrix}
				0			&	0		&	\cdots	&	0		&	-a_0			\\
				1			&	0		&	\cdots	&	0		&	-a_1			\\
				0			&	\ddots	&	\ddots	&	\vdots	&	\vdots		\\
				\vdots		&	\ddots	&	1		&	0		&	-a_{m-2}	\\
				0			&	\cdots	&	0		&	1		&	-a_{m-1}.
			\end{pmatrix}.
		\end{equation*}
	\end{rem}
	
	\begin{rem} \label{rem:BandInverseAction}
		We have an analogous remark for band modules to that given for string modules in Remark~\ref{rem:StringInverseAction}. That is, one can also define the `action' of a formal inverse $\alpha\inv$ on $M(w,m,\phi)$ to be given by the preimage of $\alpha$.
	\end{rem}
	
	\begin{lem} \label{lem:BandIsos}
		Let $w=\sigma_1\ldots\sigma_n$ be an asymmetric band let $m \in \spint$ and let $\phi \in \Aut(K^m)$. Then the following hold.
		\begin{enumerate}[label=(\alph*)]
			\item $M(w,m,\phi) \cong M(w\inv,m,\phi\inv)$.
			\item Suppose $\sigma_k =\crs_v \in \ca$ for some $1\leq k \leq n$ and let $w' = \sigma_1\ldots\sigma_{k-1}\sigma_k\inv\sigma_{k+1}\ldots\sigma_n$. Suppose that $\crs_v^2 - \lambda_1 \crs_v - \lambda_2 \stp_v \in \crel$. Then $M(w,m,\phi) \cong M(w',m,\lambda_2\inv\phi)$.
			\item Suppose $w' \sim_3 w$. Then  $M(w,m,\phi) \cong M(w',m,\phi)$.
		\end{enumerate}
	\end{lem}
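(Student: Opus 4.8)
The plan is to prove all three parts by the method used for Lemma~\ref{lem:StringIsos}: in each case I exhibit an explicit $K$-linear bijection $\omega$ between the two standard bases and check that it intertwines the $A$-actions, the latter being determined by the rules \ref{en:ABM1}--\ref{en:ABM3} together with \ref{en:SM1}--\ref{en:SM4} and Remark~\ref{rem:BandInverseAction}. Bijectivity will be automatic in every case, since $\omega$ will carry the standard basis onto the standard basis up to a permutation and nonzero scalars; all the work is in verifying that $\omega(b_i\ps{j} a) = \omega(b_i\ps{j}) a$ for each arrow or stationary path $a \in A$. As a preliminary reduction I would record that the module $M(w,m,\phi)$ depends only on the underlying cyclic word together with the automorphism $\phi$ placed at one chosen ``bead'': moving the distinguished index $r$ to any other position carrying an ordinary arrow (or the formal inverse of one) replaces $\phi$ by its conjugate under the product of the intervening action maps read around the cycle, and since those maps are scalars times identities this is a harmless relabeling. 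This makes the bookkeeping in (a) and (c) transparent.

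For part (c), if $w' = \sigma_{k+1}\ldots\sigma_n\sigma_1\ldots\sigma_k$, the cyclic word is literally unchanged, so I define $\omega$ by matching $b_i\ps{j}$ of $M(w,m,\phi)$ with the subword of $w'\ps{j}$ ending at the same symbol of the cyclic word, using \ref{en:ABM1} to interpret this across the seam. The distinguished arrow of $w'$ generally sits at a different position, but by the preliminary reduction this only conjugates the gluing datum, and a careful choice of how the copy indices are matched across the seam absorbs exactly that conjugation, leaving the parameter $\phi$ unchanged; the checks against \ref{en:ABM2}, \ref{en:ABM3}, \ref{en:SM1}--\ref{en:SM4} are then routine. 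Part (a) is handled the same way: $\omega$ sends $b_i\ps{j}$ to the reversed subword in $(w\inv)\ps{j}$, exactly as $\omega(b_i)=b_{n-i}$ in the proof of Lemma~\ref{lem:StringIsos}(a); reversing the word reverses the direction one travels around the cyclic word, so the composite of the action maps around the loop is inverted, forcing the new parameter to be $\phi\inv$.

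For part (b), note first that the irreducibility of $x^2 - \lambda_1 x - \lambda_2$ forces $\lambda_2 \neq 0$ (otherwise $x^2 - \lambda_1 x = x(x-\lambda_1)$ would be reducible), so $\lambda_2\inv\phi$ is a well-defined automorphism of $K^m$, and by Remark~\ref{rem:InverseCrease} we have $\crs_v\inv = \lambda_2\inv(\crs_v - \lambda_1 \stp_v)$. I then define $\omega\colon M(w,m,\phi) \to M(w',m,\lambda_2\inv\phi)$ by copying the recipe of Lemma~\ref{lem:StringIsos}(b) on each band copy: $\omega(b_i\ps{j}) = b_i''\ps{j}$ for $i < k$, $\omega(b_k\ps{j}) = \lambda_1 b_{k-1}''\ps{j} + \lambda_2 b_k''\ps{j}$, and for $i > k$ define $\omega(b_i\ps{j})$ inductively by $\omega(b_i\ps{j}) = \omega(b_{i-1}\ps{j})\sigma_i$, each summand reduced according to \ref{en:SM1}--\ref{en:SM4}, \ref{en:ABM1}--\ref{en:ABM3} and Remark~\ref{rem:BandInverseAction}. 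Away from the seam the verification that $\omega$ is an $A$-module map is identical to the string case.

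The main obstacle is the consistency check at the seam, which has no counterpart in Lemma~\ref{lem:StringIsos} because strings have no wrap-around. One must verify that continuing the inductive definition of $\omega$ once around the cyclic word returns a value compatible with that already assigned at $b_0\ps{j} = b_n\ps{j}$ via \ref{en:ABM1}, and that the accumulated scalar is precisely what converts $\phi$ into $\lambda_2\inv\phi$ in (b) (respectively $\phi$ into $\phi\inv$ or $\phi$ in (a), (c)). I would carry this out by writing the cyclic word in a normal form analogous to the one displayed in the proof of Lemma~\ref{lem:maAUniserial}, isolating the contribution of the inverted crease $\crs_v$ (a factor $\lambda_2$ in one reading direction and nothing in the other) and of the distinguished arrow (the factor $\phi$ imposed by \ref{en:ABM2}), and computing the total multiplicative factor around the loop; it comes out to $\lambda_2\inv$, so the gluing map becomes $\lambda_2\inv\phi$ as claimed. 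Once the seam is settled, $\omega$ visibly sends the standard basis onto the standard basis up to scalars, hence is an isomorphism, completing the proof.
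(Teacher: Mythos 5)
Your overall strategy coincides with the paper's: prove~(c) by rotation, then use it (or its informal cousin, your ``preliminary reduction'') as a WLOG step in~(a) and~(b), in each case building an explicit bijection between standard bases following the recipe of Lemma~\ref{lem:StringIsos} and isolating the extra work as a ``seam'' consistency check. For~(a) and~(c) your outline is sound, though I'd caution that the clause ``since those maps are scalars times identities'' is not literally true of the crease actions (which are rank-two affine-type maps, not scalar multiples); what is true, and what you need, is that every non-distinguished symbol acts by the \emph{identity} on the copy index $j$, so the identification between the $K^m$-factors at different positions is trivial.

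The genuine gap is in~(b). You describe the seam check as ``computing the total multiplicative factor around the loop,'' treating the passage through the inverted crease as contributing a clean scalar $\lambda_2$. But that isn't what the construction gives: the base case is $\omega(b_k\ps{j}) = \lambda_1 b''_{k-1}\ps{j} + \lambda_2 b''_k\ps{j}$, so the ``main'' term $\lambda_2 b''_k\ps{j}$ is accompanied by a residual $\lambda_1$-term living at position $k-1$, and as you push forward by the symbols $\sigma_{k+1},\sigma_{k+2},\ldots$ this residual piece propagates --- it does not automatically die. The paper's proof tracks this as the term $d_i$ in $\omega(b_{i}\ps{j}) = \lambda_2 c_{i}\ps{j} + \lambda_1 d_{i}$ (and then $\phi\omega(b_i\ps{j}) = \lambda_2\phi'(c_i\ps{j}) + \lambda_1 d_i$ after the distinguished position), and shows it \emph{must} vanish for some $1 < i \le n$ precisely because $w$ is asymmetric. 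That is the only place in the proof where the asymmetry hypothesis enters, and your argument nowhere invokes it, which should be a red flag: a ``total multiplicative factor'' picture that never mentions asymmetry cannot be the whole story. To fix the proposal you need to argue that the residual term propagated from the crease dies before it closes the loop, and explain why a symmetric band would let it persist. Finally, a notational nit: the accumulated coefficient on the $c$-side of the loop is $\lambda_2$, not $\lambda_2\inv$; the $\lambda_2\inv$ appears in the gluing parameter $\phi' = \lambda_2\inv\phi$ as compensation, and conflating the two makes the sketch harder to audit.
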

	\begin{proof}
		In all cases, we will explicitly state the isomorphism. We will prove (c) first, as we use it in the proof for both (a) and (b).
	
		(c) The proof works by induction on rotation. Let $r \leq n$ be the greatest index such that $\sigma_r \in \oa$ and let $\{b_i\ps{j}:1 \leq i \leq n, 1 \leq j \leq m\}$ be the standard basis of $M(w,m,\phi)$. Let $w_1 = \sigma'_{1}\ldots\sigma'_{n}$ with $\sigma'_i = \sigma_{i-1}$ (with respect to the cyclic ordering of the indices that identifies $n+1 = 1$). That is, $w_1$ is given by a single rightward rotation of symbols. Let $\{c_i\ps{j}:1 \leq i \leq n, 1 \leq j \leq m\}$ be the standard basis of $M(w_1,m,\phi)$ and let and $r' \leq n$ be the greatest index such that $\sigma'_{r'} \in \oa$. Here, we need to note that it follows from the way that asymmetric band modules are defined that $\phi(b_i\ps{j})a =\phi(b_i\ps{j}a)$ for any $a \in A$. It is then easy to see that we have an isomorphism $\omega\colon M(w,m,\phi) \rightarrow M(w_1,m,\phi)$ given by $\omega(b_i\ps{j}) = \phi(c_{i+1}\ps{j})$ for $r'-1 \leq i \leq r-1$ (with respect to the cyclic ordering of the indices) and $\omega(b_i\ps{j}) = c_{i+1}\ps{j}$ otherwise.
		
		We therefore have $M(w,m,\phi) \cong M(w_1,m,\phi)$. Now define
		\begin{equation*}
			w_i = \sigma_{n-i+1}\ldots\sigma_n\sigma_1\ldots\sigma_{n-i},
		\end{equation*}
		the rightward rotation of $w$ by $i$ symbols. Then an iterative use of the above argument shows that we have
		\begin{equation*}
			M(w,m,\phi) = M(w_0,m,\phi) \cong M(w_1,m,\phi) \cong \ldots \cong  M(w_{n},m,\phi) = M(w,m,\phi).
		\end{equation*}
		Thus, for any $w' \sim_3 w$, we have $M(w,m,\phi) \cong M(w',m,\phi)$ as required.
	
		(a) Again, let $r \leq n$ be the greatest index such that $\sigma_r \in \oa$ and let $\{b_i\ps{j}:1 \leq i \leq n, 1 \leq j \leq m\}$ be the standard basis of $M(w,m,\phi)$. Since $A$ is finite-dimensional, $w$ must have at least one formal inverse. By (c), we may assume without loss of generality that $r=n$ and $s(\sigma_r)$ is at a peak of the band. In particular, this implies that $r'=n-1$ is the greatest index such that $\sigma_{r'} \in (\oa)\inv$. Now consider the rotated band
		\begin{equation*}
			w\inv_{2} = \sigma_{n-2}\inv\sigma_{n-3}\inv\ldots\sigma_1\inv\sigma_n\inv\ldots\sigma_{n-1}\inv
		\end{equation*}
		of $w\inv$. Let $\{c_i\ps{j}:1 \leq i \leq n, 1 \leq j \leq m\}$ be the standard basis of $M(w\inv_{2},m,\phi\inv)$. As a caution to the reader regarding notation, recall that this means that each $c_n\ps{j}$ is a representative of the complete word $w\inv_{2}$. It is then straightforward to verify that we have an isomorphism $\omega\colon M(w,m,\phi)\rightarrow M(w\inv_{2},m,\phi\inv)$ given by $\omega(b_{n-1}\ps{j})=\phi(c_{n-1}\ps{j})$ and $\omega(b_{n-i}\ps{j})=c_{n-2+i}\ps{j}$ for all $i \neq 1$, with indices modulo $n$. Since $M(w\inv,m,\phi\inv) \cong M(w_{2}\inv,m,\phi\inv)$ by (c), we are done.
		
		(b) We can assume without loss of generality that $k=1$, as we can otherwise rotate the band by (c). Let $\{b_i\ps{j}:1 \leq i \leq n, 1 \leq j \leq m\}$ be the standard basis of $M(w,m,\phi)$, and let $r \leq n$ be the greatest index such that $\sigma_r \in \oa$. Let $\phi' \in \Aut(K^m)$ and let $\{c_i\ps{j}:1 \leq i \leq n, 1 \leq j \leq m\}$ be the standard basis of $M(w',m,\phi')$. We will construct an isomorphism $\omega\colon M(w,m,\phi) \rightarrow M(w',m,\phi')$, and then show that $\omega$ is an $A$-module isomorphism only if $\phi' = \lambda_2\inv \phi$.
		
		The construction is almost identical to the proof of Lemma~\ref{lem:StringIsos}(b). First define
		\begin{equation*}
			\omega(b_1\ps{j}) = \omega(\crs_v\ps{j}) = \lambda_1 \stp_v\ps{j} + \lambda_2 (\crs_v\ps{j})\inv = \lambda_1 c_0\ps{j} + \lambda_2 c_1\ps{j} = \lambda_1 c_n\ps{j} + \lambda_2 c_1\ps{j}.
		\end{equation*}
		We then define $\omega(b_i\ps{j})$ inductively for $i< r$ by 
		\begin{equation*}
			\omega(b_i\ps{j}) = \omega(b_{i-1}\ps{j} \sigma_{i}\ps{j}) = \omega(b_{i-1}\ps{j})\sigma_{i},
		\end{equation*}
		 where $\omega(b_{i-1}\ps{j})\sigma_{i}$ is interpreted as the usual $A$-action if $\sigma_i \in Q_1$ or by Remark~\ref{rem:BandInverseAction} if $\sigma_i \in Q_1\inv$. At $i=r-1$, we have
		\begin{equation*}
			\omega(b_{r-1}\ps{j}) = \lambda_2 c_{r-1}\ps{j} + \lambda_1 d_{r-1},
		\end{equation*}
		where $d_{r-1}$ is some $K$-linear combination of basis vectors. So far, we have defined $\omega$ such that it is compatible with the right action of $A$. If we wish to continue the inductive process with the aim of ensuring that $\omega$ is an $A$-module isomorphism, then we must have the equality
		\begin{align*}
			\omega(b_{r-1}\ps{j})\cdot \sigma_r &=\omega(b_{r-1}\ps{j} \cdot \sigma_r) = \omega(\phi(b_{r}\ps{j})) = \phi\omega(b_{r}\ps{j}) \\
			&= \lambda_2\phi'( c_{r}\ps{j}) + \lambda_1 d_{r}.
		\end{align*}
		Continuing this inductive process into the range $r \leq i \leq n$, we obtain further equalities
		\begin{equation*}
			\phi\omega(b_i\ps{j}) = \lambda_2\phi'( c_{i}\ps{j}) + \lambda_1 d_{i}.
		\end{equation*}
		Since $w$ is asymmetric, the term $d_{i}$ must eventually vanish for some $1 < i \leq n$. So at $i=n$, we are left with $\phi\omega(b_n\ps{j}) = \lambda_2\phi'( c_{n}\ps{j})$. Thus, we must have
		\begin{align*}
			&\phi\omega(b_n\ps{j}) \crs_v = \phi\omega(b_n\ps{j}\crs_v) = \phi\omega(b_1\ps{j}) \\
			&= \lambda_2\phi'( c_{n}\ps{j})\crs_v = \lambda_2\phi'( c_{n}\ps{j})(\lambda_1\stp_v+\lambda_2\crs_v\inv) = \lambda_2\phi'( \lambda_1 c_{n}\ps{j} + \lambda_2 c_{1}\ps{j}),
		\end{align*}
		which from the very first equality, implies that we must have $\phi' = \lambda_2\inv\phi$.
		
		In conclusion, we have constructed an $A$-module isomorphism $M(w,m,\phi)\cong M(w',m,\lambda_2\inv\phi)$, as required.
	\end{proof}
	
	\begin{exam}
		Consider the $\rational$-algebra from Example~\ref{ex:TripleCrease}. As stated in Example~\ref{ex:TripleCreaseBands}, this has an asymmetric band $w=\gamma\delta\inv\gamma\inv\crs_3$. Suppose $m=3$. The module $M(w,m,\phi)$ then has underlying vector space $\rational^{12} \cong \rational^3 \oplus \rational^3 \oplus \rational(\iu)^3$. In particular, the standard basis of $M(w,m,\phi)$ is given by the words
		\begin{align*}
			b_1\ps{j} &= \gamma\ps{j}, & 
			b_2\ps{j} &= \gamma\ps{j}(\delta\inv)\ps{j}, \\
			b_3\ps{j} &= \gamma\ps{j}(\delta\inv)\ps{j}(\gamma\inv)\ps{j}, &
			b_4\ps{j} &= \gamma\ps{j}(\delta\inv)\ps{j}(\gamma\inv)\ps{j}\crs_3\ps{j},
		\end{align*}
		with $1 \leq j \leq 3$. Suppose that $\phi$ is given by the companion matrix of the irreducible polynomial $x^3 -x^2-2x+1 \in \rational[x,x\inv]$. Then the non-zero actions of basis elements of the algebra (listed in Example~\ref{ex:TripleCrease}) on $M(w,m,\phi)$ are given by the following:
		\begin{align*}
			b_3\ps{j} \stp_3 &= b_3\ps{j}, &
			b_4\ps{1} \gamma &= b_1\ps{2}, &
			b_3\ps{j} \gamma\delta &= b_1\ps{j}, \\
			b_4\ps{j} \stp_3 &= b_4\ps{j}, &
			b_4\ps{2} \gamma &=  b_1\ps{3}, &
			b_3\ps{j} \crs_3\gamma &= b_4\ps{j}\gamma, \\
			b_1\ps{j} \stp_4 &= b_1\ps{j}, &
			b_4\ps{3} \gamma &= -b_1\ps{1}+2b_1\ps{2}+b_1\ps{3}, &
			b_4\ps{j} \crs_3\gamma &= 2b_4\ps{j}\gamma  - 5b_2\ps{j}, \\
			b_2\ps{j} \stp_4 &= b_2\ps{j}, &
			b_3\ps{j} \gamma &=  b_2\ps{j}, &
			b_4\ps{j} \crs_3\gamma\delta &= -5b_1\ps{j}, \\
			b_3\ps{j}\crs_3 &=  b_4\ps{j}, &
			b_2\ps{j}\delta &=  b_1\ps{j}, \\
			b_4\ps{j}\crs_3 &= 2b_4\ps{j} - 5b_3\ps{j}.
		\end{align*}
	\end{exam}
	
	\subsubsection{Symmetric band modules}
	Let $w$ be a symmetric band. Note that this implies that $w$ can be uniquely written (up to equivalence) in the form
	\begin{equation*}
		w= \crs\sigma_1\ldots\sigma_n\crs'\sigma_n\inv\ldots\sigma_1\inv
	\end{equation*}
	for some distinguished crease symbols $\crs,\crs' \in \ca \cup (\ca)\inv$ about which the band is symmetric. We will assume this form throughout the definition. Let $\crs^2 - \mu_{1} \crs - \mu_{2} \stp_{s(\crs)}$ and $(\crs')^2 - \mu'_{1} \crs' - \mu'_{2} \stp_{s(\crs')}$ be the corresponding crease relations of $\crs$ and $\crs'$ (see Remark~\ref{rem:InverseCrease} for inverses of creases). Again, let $m \in \spint$. In this case, we will consider $2m$ distinct copies of the subword $\sigma_1\ldots\sigma_n$, where we label the $j$-th copy by $\sigma_1\ps{j}\ldots\sigma_n\ps{j}$. Denote by $b_i\ps{j}$ the subword $\sigma_1\ps{j}\ldots\sigma_i\ps{j}$ with $b_0\ps{j} = \stp_{s(\sigma_1)}\ps{j}$. Finally, we let $\phi \in \Aut(K^{2m})$ be such that $\phi^2 - \mu'_{1} \phi - \mu'_{2} =0$.
	
	We then define a $K$-vector space
	\begin{equation*}
		M(w,m,\phi) = \langle b_i\ps{j} : 0 \leq i \leq n \text{ and } 1 \leq j \leq 2m \rangle_K,
	\end{equation*}	
	and we call the above listed set of generators the \emph{standard basis} of $M(w,m,\phi)$. The action of any ordinary arrow, crease symbol, or stationary path $a \in A$ on a subword $b_i\ps{j} \in M(w,m,\phi)$ is then defined by the concatenation $b_i\ps{j}a$, subject to the following identifications:
	\begin{enumerate}[label=(SBM\arabic*)]
		\item \label{en:SBM1} $b_0\ps{j} \crs = b_0\ps{j+m}$ and $b_0\ps{j+m} \crs = \mu_{1}b_0\ps{j+m} + \mu_{2}b_0\ps{j}$ (assuming $1 \leq j\leq m$).
		\item \label{en:SBM2} $b_n\ps{j} \crs' = \phi(b_n\ps{j})$.
		\item Let $\alpha \in Q_1$ and suppose that $b_i \alpha = \sum_{k=1}^n \lambda_k b_k$ with respect to the identifications \ref{en:SM1}-\ref{en:SM4}. Then $b_i\ps{j} \alpha = \sum_{k=1}^n \lambda_k b_k\ps{j}$ for each $j$. \label{en:SBM4} 
	\end{enumerate}

	Extending this action linearly and distributively then enriches $M(w,m,\phi)$ with the structure of an $A$-module.
	
	\begin{defn}
		Let $w$ be a symmetric band, $m \in \spint$, and $\phi \in \Aut(K^{2m})$. We call the module $M(w,m,\phi)$, as constructed above, a \emph{(symmetric) quasi-band module} corresponding to $w$. If, in addition, $\left(K^{2m},\left(
		\begin{smallmatrix}
			0	&	\mu_2 \\
			1	&	\mu_1
		\end{smallmatrix}\right)
		\otimes_K \id_m, \phi\right)$ is an indecomposable representation of the algebra
		\begin{equation*}
			\fiso_w = K\langle x,y \rangle / \langle x^2 - \mu_1 x - \mu_2, y^2 - \mu'_1 y - \mu'_2 \rangle,
		\end{equation*}
		then we call $M(w,m,\phi)$ a \emph{(symmetric) band module}.
	\end{defn}
	
	The representation theory of the algebra $\fiso_w$ is highly dependent on the choice of field $K$ and field extensions $\fiso_{s(\crs)}$ and $\fiso_{s(\crs')}$. It is not hard to show that any representation of $\fiso_w$ is isomorphic to a representation
	\begin{equation*}
		\left(K^{2m},\left(
		\begin{smallmatrix}
			0	&	\mu_2 \\
			1	&	\mu_1
		\end{smallmatrix}\right)
		\otimes_K \id_m, \psi\inv\left(\left(
		\begin{smallmatrix}
			0	&	\mu'_2 \\
			1	&	\mu'_1
		\end{smallmatrix}\right)\otimes_K \id_m\right)\psi\right)
	\end{equation*}
	for some $\psi \in \Aut(K^{2m})$. Thus, an equivalent condition for a symmetric quasi-band module to be a band module is for $(\fiso_{v_0}^m,\fiso_{v_n}^m,\psi)$ to be an indecomposable homogeneous representation of the $K$-species	
	\begin{equation*}
		\xymatrix@1{\fiso_{s(\crs)} \ar[rr]^-{\fiso_{s(\crs)} \otimes_K \fiso_{s(\crs')}} && \fiso_{s(\crs')}},
	\end{equation*}
	See \cite[pp. 2-4, Theorem (b); Sec. 5]{DRIndec} and \cite{RingelSpecies} for details. This paper will later give an alternative characterisation of these indecomposable representations in terms of (quasi-)band modules of a Kronecker quiver (see the proof of Corollary~\ref{cor:Kxy} for details).

	\section{Unfolded Gentle Algebras} \label{sec:UnfoldedGentle}
	In this section, we will justify our choice of the name `folded gentle algebras' by showing that this class of algebras is closely related to the quotient of a (covariant) group action on both a quiver $\unfQ$ and set of zero relations $\unfZ$ for some gentle pair $(\unfQ,\unfZ)$. Such a procedure is commonly referred to in the literature as \emph{folding}.
	
	\subsection{The unfolding procedure} We will begin by constructing a gentle pair $(\unfQ,\unfZ)$ that corresponds to a folded gentle triple $(K,Q,Z)$, which we will later show satisfies the properties that we require. We call this the \emph{unfolding procedure}.
	
	So let $(K,Q,Z)$ be a folded gentle triple (that is not a gentle triple). We define $\unfQ$ in the following way.
	\begin{enumerate}[label=(U.\roman*)]
		\item For each ordinary vertex $v \in \ov$, there exist precisely two corresponding vertices $\wh{v}, \wh{v}' \in \unfQ_0$. \label{en:U1}
		\item For each crease vertex $v \in \cv$, there exists precisely one corresponding vertex $\wh{v}=\wh{v}' \in \unfQ_0$. \label{en:U2}
			\item For each ordinary arrow $\alpha\colon u \rightarrow v \in \oa$, there exist precisely two corresponding arrows in $\unfQ_1$, namely $\wh{\alpha}\colon \wh{u} \rightarrow \wh{v}$ and $\wh{\alpha}'\colon \wh{u}' \rightarrow \wh{v}'$. \label{en:U3}
	\end{enumerate}
	This completes the definition of $\unfQ$. The set $\unfZ$ is then defined as follows.
	\begin{enumerate}[label=(U.\roman*)]
		\setcounter{enumi}{3}
		\item For each relation $\alpha\beta \in \orel$, we have precisely two corresponding relations $\wh{\alpha}\wh{\beta}, \wh{\alpha}'\wh{\beta}' \in \unfZ$. \label{en:U4}
	\end{enumerate}
	This completes the definition of $\unfZ$.
	
	This defines a pair $(\unfQ,\unfZ)$, and we consider $\unfQ$ to have no crease vertices. Thus to show $(\unfQ,\unfZ)$ is a gentle pair, we need only verify \ref{en:FG1}-\ref{en:FG4}. This is straightforward to check. For each ordinary vertex $v \in \ov$ with up to two incoming and up to two outgoing arrows the unfolding procedure results in the following.
	\begin{center}
		\begin{tikzpicture}
			\coordinate (v2) at (1,0);
			\coordinate (v1) at ($(v2)-(1,-0.5)$);
			\coordinate (v3) at ($(v2)+(1,0.5)$);
			\coordinate (v4) at ($(v2)+(-1,-0.5)$);
			\coordinate (v5) at ($(v2)+(1,-0.5)$);
			
			\coordinate (sq) at (3,0);
			
			\coordinate (v2p) at (5,0) {} {};
			\coordinate (v1p) at ($(v2p)+(-1,0.5)$);
			\coordinate (v4p) at ($(v2p)+(-1,-0.5)$);
			\coordinate (v3p) at ($(v2p)+(1,0.5)$);
			\coordinate (v5p) at ($(v2p)+(1,-0.5)$);
			\coordinate (v2pp) at (8,0) {} {} {};
			\coordinate (v1pp) at ($(v2pp)+(-1,0.5)$);
			\coordinate (v4pp) at ($(v2pp)+(-1,-0.5)$);
			\coordinate (v3pp) at ($(v2pp)+(1,0.5)$);
			\coordinate (v5pp) at ($(v2pp)+(1,-0.5)$);
			
			\draw (v2) node {$v$};
			
			\draw ($(v1)+0.5*(v2)-0.5*(v1)+(0.1,0.3)$) node {\footnotesize$\alpha$};
			\draw[->, shorten <=1.5ex, shorten >= 1.5ex] (v1) -- (v2);
			\draw ($(v2)+0.5*(v3)-0.5*(v2)+(-0.1,0.3)$) node {\footnotesize$\beta$};
			\draw[->, shorten <=1.5ex, shorten >= 1.5ex] (v2) -- (v3);
			\draw ($(v4)+0.5*(v2)-0.5*(v4)+(0.1,-0.3)$) node {\footnotesize$\gamma$};
			\draw[->, shorten <=1.5ex, shorten >= 1.5ex] (v4) -- (v2);
			\draw ($(v2)+0.5*(v5)-0.5*(v2)+(-0.1,-0.3)$) node {\footnotesize$\delta$};
			\draw[->, shorten <=1.5ex, shorten >= 1.5ex] (v2) -- (v5);
			
			\draw (sq) node {$\rightsquigarrow$};
			
			\draw (v2p) node {$\wh{v}$};
			\draw (v2pp) node {$\wh{v}'$};
			
			\draw ($(v1p)+0.5*(v2p)-0.5*(v1p)+(0.1,0.3)$) node {\footnotesize$\wh{\alpha}$};
			\draw[->, shorten <=1.5ex, shorten >= 1.5ex] (v1p) -- (v2p);
			\draw ($(v4p)+0.5*(v2p)-0.5*(v4p)+(0.1,-0.3)$) node {\footnotesize$\wh{\gamma}$};
			\draw[->, shorten <=1.5ex, shorten >= 1.5ex] (v4p) -- (v2p);
			\draw ($(v3p)+0.5*(v2p)-0.5*(v3p)+(-0.1,0.3)$) node {\footnotesize$\wh{\beta}$};
			\draw[->, shorten <=1.5ex, shorten >= 1.5ex] (v2p) -- (v3p);
			\draw ($(v5p)+0.5*(v2p)-0.5*(v5p)+(-0.1,-0.3)$) node {\footnotesize$\wh{\delta}$};
			\draw[->, shorten <=1.5ex, shorten >= 1.5ex] (v2p) -- (v5p);
			\draw ($(v1pp)+0.5*(v2pp)-0.5*(v1pp)+(0.1,0.3)$) node {\footnotesize$\wh{\alpha}'$};
			\draw[->, shorten <=1.5ex, shorten >= 1.5ex] (v1pp) -- (v2pp);
			\draw ($(v4pp)+0.5*(v2pp)-0.5*(v4pp)+(0.1,-0.3)$) node {\footnotesize$\wh{\gamma}'$};
			\draw[->, shorten <=1.5ex, shorten >= 1.5ex] (v4pp) -- (v2pp);
			\draw ($(v3pp)+0.5*(v2pp)-0.5*(v3pp)+(-0.1,0.3)$) node {\footnotesize$\wh{\beta}'$};
			\draw[->, shorten <=1.5ex, shorten >= 1.5ex] (v2pp) -- (v3pp);
			\draw ($(v5pp)+0.5*(v2pp)-0.5*(v5pp)+(-0.1,-0.3)$) node {\footnotesize$\wh{\delta}'$};
			\draw[->, shorten <=1.5ex, shorten >= 1.5ex] (v2pp) -- (v5pp);

			\draw (4,-1.2) node {$\alpha\beta, \gamma\delta \in Z \qquad \qquad \qquad \qquad \wh{\alpha}\wh{\beta}, \wh{\alpha}'\wh{\beta}', \wh{\gamma}\wh{\delta}, \wh{\gamma}'\wh{\delta}' \in \unfZ$};
			\draw [dashed](0.7168,0.2057) arc (144.0075:36:0.35);
			\draw [dashed](0.7168,-0.2057) arc (-144.0075:-36:0.35);
			\draw [dashed](4.7056,0.1892) arc (147.2726:32.7273:0.35);
			\draw [dashed](4.7056,-0.1892) arc (-147.2726:-32.7273:0.35);
			\draw [dashed](7.7056,0.1892) arc (147.2726:32.7273:0.35);
			\draw [dashed](7.7056,-0.1892) arc (-147.2726:-32.7273:0.35);
		\end{tikzpicture}
	\end{center}
	For each crease vertex $v \in \cv$ incident to up to two ordinary arrows, the unfolding procedure results in the following.
	\begin{center}
		\begin{tikzpicture}
\coordinate (lab) at (0,0.2);
			\coordinate (v2) at (1,0);
			\coordinate (v1) at ($(v2)-(1,0)$);
			\coordinate (v3) at ($(v2)+(1,0)$);
			\coordinate (Z) at ($(v2)-(0,1.2)$);
			
			\coordinate (sq) at (3,0);
			
			\coordinate (v2p) at (5,0);
			\coordinate (v1p) at ($(v2p)+(-1,0.5)$);
			\coordinate (v1pp) at ($(v2p)+(-1,-0.5)$);
			\coordinate (v3p) at ($(v2p)+(1,0.5)$);
			\coordinate (v3pp) at ($(v2p)+(1,-0.5)$);
			\coordinate (Zp) at ($(v2p)-(0,1.2)$);
			
			%\draw (v1) node {$u_1$};
			\draw (v2) node {$v$};
			%\draw (v3) node {$u_3$};
			
			\draw ($(v1)+0.5*(v2)-0.5*(v1)+(lab)$) node {\footnotesize$\alpha$};
			\draw[->, shorten <=1.5ex, shorten >= 1.5ex] (v1) -- (v2);
			\draw ($(v2)+0.5*(v3)-0.5*(v2)+(lab)$) node {\footnotesize$\beta$};
			\draw[->, shorten <=1.5ex, shorten >= 1.5ex] (v2) -- (v3);
			\draw [->](1.1,0.2) .. controls (1.3,0.5) and (1.1,0.6) .. (1,0.6) .. controls (0.9,0.6) and (0.7,0.5) .. (0.9,0.2);
			\draw ($(v2)+(0,0.8)$) node {\footnotesize$\crs_v$};
			\draw [dashed](0.61,-0.089) arc (-167.1449:-12.8571:0.4);
			
			\draw (Z) node {$\alpha\beta \in Z$};
			
			\draw (sq) node {$\rightsquigarrow$};
			
			\draw (v2p) node {$\wh{v}$};
			
			\draw ($(v1p)+0.5*(v2p)-0.5*(v1p)+(0.1,0.3)$) node {\footnotesize$\wh{\alpha}$};
			\draw[->, shorten <=1.5ex, shorten >= 1.5ex] (v1p) -- (v2p);
			\draw ($(v1pp)+0.5*(v2p)-0.5*(v1pp)+(0.1,-0.3)$) node {\footnotesize$\wh{\alpha}'$};
			\draw[->, shorten <=1.5ex, shorten >= 1.5ex] (v1pp) -- (v2p);
			\draw ($(v3p)+0.5*(v2p)-0.5*(v3p)+(-0.1,0.3)$) node {\footnotesize$\wh{\beta}'$};
			\draw[->, shorten <=1.5ex, shorten >= 1.5ex] (v2p) -- (v3p);
			\draw ($(v3pp)+0.5*(v2p)-0.5*(v3pp)+(-0.1,-0.3)$) node {\footnotesize$\wh{\beta}'$};
			\draw[->, shorten <=1.5ex, shorten >= 1.5ex] (v2p) -- (v3pp);
			\draw [dashed](4.6873,-0.2494) arc (-141.4252:-38.5714:0.4);
			\draw [dashed](4.6873,0.2494) arc (141.4252:38.5714:0.4);
			
			\draw (Zp) node {$\wh{\alpha}\wh{\beta}, \wh{\alpha}'\wh{\beta}' \in \unfZ$};
		\end{tikzpicture}
	\end{center}
	
	\begin{defn}
		Let $(K,Q,Z)$ be a folded gentle triple. We call the gentle pair $(\unfQ,\unfZ)$ constructed from $(K,Q,Z)$, as above, the corresponding \emph{unfolded gentle pair}. We call the gentle algebra $K \unfQ / \langle \unfZ \rangle$ the \emph{unfolded gentle algebra} corresponding to the folded gentle algebra $K Q / \langle Z \rangle$.
	\end{defn}
	
	Readers familiar with skewed-gentle algebras will recognise the above construction. The unfolded gentle algebra $\wh A$ associated to a folded gentle algebra $A$ is precisely the skew-group algebra $\wt{B}$ associated to the skewed-gentle algebra $B$, where $B$ is given by replacing each crease relation of $A$ with a special (idempotent) relation. See \cite[Proposition 4.6]{SkewedGentle} for details.
	
	\begin{exam} \label{ex:C3Unf}
		The folded gentle triple $(\real,Q,Z)$ from Example~\ref{ex:C3} unfolds to a gentle pair $(\unfQ,\unfZ)$ of Dynkin type $\dynA_5$, as shown below.
		\begin{equation*}
			\begin{tikzpicture}
				\draw [anchor = east] (-5.6,0) node {$Q\colon$};
				\draw (-5.4,0) node [draw] {1};
				\draw[<-] (-5.3,0.4) .. controls (-5.1,0.7) and (-5.3,0.8) .. (-5.4,0.8) .. controls (-5.5,0.8) and (-5.7,0.7) .. (-5.5,0.4);
				\draw (-5,0.6) node {\footnotesize$\eta_1$};
				\draw [->] (-5,0) -- (-4.4,0);
				\draw (-4.7,0.2) node {\footnotesize$\alpha$};
				\draw (-4.2,0) node {2};
				\draw [->] (-4,0) -- (-3.4,0);
				\draw (-3.7,0.2) node {\footnotesize$\beta$};
				\draw (-3.2,0) node {3};

				\draw (-2.1,0) node {$\rightsquigarrow$};
				
				\draw [anchor = east] (-0.4,0) node {$\unfQ\colon$};
				\draw (-0.2,0) node {$\wh{1}$};
				\draw [->] (0.1,0.1) -- (0.7,0.5);
				\draw (0.3,0.5) node {\footnotesize$\wh{\alpha}$};
				\draw (1,0.6) node {$\wh{2}$};
				\draw [->] (1.2,0.6) -- (1.8,0.6);
				\draw (1.5,0.8) node {\footnotesize$\wh{\beta}$};
				\draw (2,0.6) node {$\wh{3}$};
				\draw [->] (0.1,-0.1) -- (0.7,-0.5);
				\draw (0.2,-0.5) node {\footnotesize$\wh{\alpha}'$};
				\draw (1,-0.6) node {$\wh{2}'$};
				\draw [->] (1.2,-0.6) -- (1.8,-0.6);
				\draw (1.5,-0.4) node {\footnotesize$\wh{\beta}'$};
				\draw (2,-0.6) node {$\wh{3}'$};
			\end{tikzpicture}
		\end{equation*}
		This represents the classical folding of Dynkin diagrams $\dynA_5 \rightarrow \dynC_3$.
	\end{exam}

	\begin{exam} \label{ex:C2tildeUnf}
		The folded gentle triple $(\rational,Q,Z)$ from Example~\ref{ex:C2tilde} unfolds to a gentle pair $(\unfQ,\unfZ)$ of affine Dynkin/Euclidean type $\dynafA_4$, as shown below.
		\begin{equation*}
			\begin{tikzpicture}
				\draw [anchor = east] (-5.6,0) node {$Q\colon$};
				\draw (-5.4,0) node [draw] {1};
				\draw[<-] (-5.3,0.4) .. controls (-5.1,0.7) and (-5.3,0.8) .. (-5.4,0.8) .. controls (-5.5,0.8) and (-5.7,0.7) .. (-5.5,0.4);
				\draw (-5,0.6) node {\footnotesize$\eta_1$};
				\draw [->] (-5,0) -- (-4.4,0);
				\draw (-4.7,0.2) node {\footnotesize$\alpha$};
				\draw (-4.2,0) node {2};
				\draw [->] (-4,0) -- (-3.4,0);
				\draw (-3.7,0.2) node {\footnotesize$\beta$};
				\draw (-3,0) node [draw] {3};
				\draw[<-] (-2.9,0.4) .. controls (-2.7,0.7) and (-2.9,0.8) .. (-3,0.8) .. controls (-3.1,0.8) and (-3.3,0.7) .. (-3.1,0.4);
				\draw (-2.6,0.6) node {\footnotesize$\eta_3$};
				
				\draw (-2.1,0) node {$\rightsquigarrow$};
				
				\draw [anchor = east] (-0.4,0) node {$\unfQ\colon$};
				\draw (-0.2,0) node {$\wh{1}$};
				\draw [->] (0.1,0.1) -- (0.7,0.5);
				\draw (0.3,0.5) node {\footnotesize$\wh{\alpha}$};
				\draw (1,0.6) node {$\wh{2}$};
				\draw [->] (1.2,0.5) -- (1.8,0.1);
				\draw (1.7,0.5) node {\footnotesize$\wh{\beta}$};
				\draw (2.1,0) node {$\wh{3}$};
				\draw [->] (0.1,-0.1) -- (0.7,-0.5);
				\draw (0.2,-0.5) node {\footnotesize$\wh{\alpha}'$};
				\draw (1,-0.6) node {$\wh{2}'$};
				\draw [->] (1.2,-0.5) -- (1.8,-0.1);
				\draw (1.7,-0.5) node {\footnotesize$\wh{\beta}'$};
			\end{tikzpicture}
		\end{equation*}
		This represents the classical folding of affine Dynkin/Euclidean diagrams $\dynafA_4 \rightarrow \dynafC_2$.
	\end{exam}
	
	\begin{exam}
		The folded gentle triple $(\finite_2,Q,Z)$ from Example~\ref{ex:C3TildeD4} unfolds to a gentle pair $(\unfQ,\unfZ)$ as follows.
		\begin{equation*}
			\begin{tikzpicture}
				\draw [anchor = east] (-9.9,0) node {$Q\colon$};
				\draw (-9.7,0) node [draw] {1};
				\draw[<-] (-9.6,0.4) .. controls (-9.4,0.7) and (-9.6,0.8) .. (-9.7,0.8) .. controls (-9.8,0.8) and (-10,0.7) .. (-9.8,0.4);
				\draw (-9.3,0.6) node {\footnotesize$\eta_1$};
				\draw [<-] (-9.3,0) -- (-8.7,0);
				\draw (-9,0.2) node {\footnotesize$\alpha$};
				\draw (-8.5,0) node {2};
				\draw [<-] (-8.3,0) -- (-7.7,0);
				\draw (-8,0.2) node {\footnotesize$\beta$};
				\draw (-7.5,0) node {3};
				\draw [->] (-7.3,0) -- (-6.7,0);
				\draw (-7,0.2) node {\footnotesize$\gamma$};
				\draw (-6.3,0) node [draw] {4};
				\draw[<-] (-6.2,0.4) .. controls (-6,0.7) and (-6.2,0.8) .. (-6.3,0.8) .. controls (-6.4,0.8) and (-6.6,0.7) .. (-6.4,0.4);
				\draw (-5.9,0.6) node {\footnotesize$\eta_4$};
				\draw [->] (-5.9,0) -- (-5.3,0);
				\draw (-5.6,0.2) node {\footnotesize$\delta$};
				\draw (-5.1,0) node {5};
				\draw [->] (-4.9,0.1) -- (-4.3,0.3);
				\draw (-4.6,0.5) node {\footnotesize$\zeta_1$};
				\draw (-4.1,0.4) node {6};
				\draw [->] (-4.9,-0.1) -- (-4.3,-0.3);
				\draw (-4.6,-0.5) node {\footnotesize$\zeta_2$};
				\draw (-4.1,-0.4) node {7};
				
				\draw (-3,0) node {$\rightsquigarrow$};
				
				\draw [anchor = east] (-1.4,0) node {$\unfQ\colon$};
				\draw (-1.2,0) node {$\wh{1}$};
				
				\draw [<-] (-0.9,0.1) -- (-0.3,0.5);
				\draw (-0.7,0.5) node {\footnotesize$\wh{\alpha}$};
				\draw (0,0.6) node {$\wh{2}$};
				\draw [<-] (0.2,0.6) -- (0.8,0.6);
				\draw (0.5,0.8) node {\footnotesize$\wh{\beta}$};
				\draw (1,0.6) node {$\wh{3}$};
				\draw [->] (1.2,0.5) -- (1.9,0.1);
				\draw (1.6,0.5) node {\footnotesize$\wh{\gamma}$};
				
				\draw [<-] (-0.9,-0.1) -- (-0.3,-0.5);
				\draw (-0.7,-0.6) node {\footnotesize$\wh{\alpha}'$};
				\draw (0,-0.6) node {$\wh{2}'$};
				\draw [<-] (0.2,-0.6) -- (0.8,-0.6);
				\draw (0.5,-0.4) node {\footnotesize$\wh{\beta}'$};
				\draw (1,-0.6) node {$\wh{3}'$};
				\draw [->] (1.2,-0.5) -- (1.9,-0.1);
				\draw (1.6,-0.6) node {\footnotesize$\wh{\gamma}'$};
				
				\draw (2.2,0) node {$\wh{4}$};
				
				\draw [->] (2.5,0.1) -- (3.2,0.7);
				\draw (2.8,0.6) node {\footnotesize$\wh{\delta}$};
				\draw (3.4,0.8) node {$\wh{5}$};
				\draw [->] (3.6,0.9) -- (4.2,1.1);
				\draw (3.9,1.3) node {\footnotesize$\wh{\zeta}_1$};
				\draw (4.4,1.2) node {$\wh{6}$};
				\draw [->] (3.6,0.7) -- (4.2,0.5);
				\draw (3.9,0.3) node {\footnotesize$\wh{\zeta}_2$};
				\draw (4.4,0.4) node {$\wh{7}$};
				
				\draw [->] (2.5,-0.1) -- (3.2,-0.7);
				\draw (2.8,-0.6) node {\footnotesize$\wh{\delta}'$};
				\draw (3.4,-0.8) node {$\wh{5}'$};
				\draw [->] (3.6,-0.7) -- (4.2,-0.5);
				\draw (3.9,-0.3) node {\footnotesize$\wh{\zeta}'_1$};
				\draw (4.4,-0.4) node {$\wh{6}'$};
				\draw [->] (3.6,-0.9) -- (4.2,-1.1);
				\draw (3.9,-1.3) node {\footnotesize$\wh{\zeta}'_2$};
				\draw (4.4,-1.2) node {$\wh{7}'$};
				
				\draw (-7.2,-2) node {$Z=\{\gamma\delta,\delta\zeta_2\}$};
				\draw (1.4,-2) node {$\unfZ=\{\wh{\gamma}\wh{\delta},\wh{\gamma}'\wh{\delta}',\wh{\delta}\wh{\zeta}_2,\wh{\delta}'\wh{\zeta}'_2\}$};
				
				\draw [dashed](-6.885,-0.1335) arc (-167.1449:-12.8571:0.6);
				\draw [dashed](-5.49,-0.089) arc (-167.1449:-38.5714:0.4);
				\draw [dashed](1.8091,-0.3117) arc (-141.4315:-38.5714:0.5);
				\draw [dashed](1.7495,0.2169) arc (154.2908:38.5714:0.5);
				\draw [dashed](3.01,-0.711) arc (167.1449:321.4286:0.4);
				\draw [dashed](3.0873,0.5506) arc (-141.4252:-38.5714:0.4);
			\end{tikzpicture}
		\end{equation*}
	\end{exam}
	
	\begin{exam} \label{ex:DoubleCreaseUnf}
		Let $(\rational,Q,Z)$ be as in Example~\ref{ex:TripleCrease}. Then the corresponding unfolded gentle pair is as follows.
		\begin{equation*}
			\begin{tikzpicture}
				\draw[<-] (-7.1,0.4) .. controls (-6.9,0.7) and (-7.1,0.8) .. (-7.2,0.8) .. controls (-7.3,0.8) and (-7.5,0.7) .. (-7.3,0.4);
				\draw (-6.8,0.6) node {\footnotesize$\eta_2$};
				\draw[<-] (-5.7,0.4) .. controls (-5.5,0.7) and (-5.7,0.8) .. (-5.8,0.8) .. controls (-5.9,0.8) and (-6.1,0.7) .. (-5.9,0.4);
				\draw (-5.4,0.6) node {\footnotesize$\eta_3$};
				
				\draw (-8.4,0) node {1};
				\draw [->] (-8.2,0) -- (-7.6,0);
				\draw (-7.9,0.2) node {\footnotesize$\alpha$};
				\draw (-7.2,0) node [draw] {2};
				\draw [->] (-6.8,0) -- (-6.2,0);
				\draw (-6.5,0.2) node {\footnotesize$\beta$};
				\draw (-5.8,0) node [draw] {3};
				\draw [->] (-5.4,0) -- (-4.8,0);
				\draw (-5.1,0.2) node {\footnotesize$\gamma$};
				\draw (-4.6,0) node {4};
				\draw[->] (-4.4,-0.2) .. controls (-4,-0.4) and (-3.8,-0.2) .. (-3.8,0) .. controls (-3.8,0.2) and (-4,0.4) .. (-4.4,0.2);
				\draw (-3.6,0) node {\footnotesize$\delta$};

				\draw [dashed](-4.2,-0.2) .. controls (-4.1,-0.15) and (-4.1,-0.1) .. (-4.1,0) .. controls (-4.1,0.1) and (-4.1,0.15) .. (-4.2,0.2);
				\draw [dashed](-7.7826,-0.1436) arc (-166.1537:-13.8462:0.6);
				\draw [dashed](-6.3826,-0.1436) arc (-166.1537:-13.8462:0.6);

				\draw (-2.6,0) node {$\rightsquigarrow$};
				
				\draw (-1.8,0.6) node {$\wh{1}$};
				\draw [->] (-1.5,0.5) -- (-0.9,0.1);
				\draw (-1.2,0.6) node {\footnotesize$\wh{\alpha}$};
				\draw (-1.8,-0.6) node {$\wh{1}'$};
				\draw [->] (-1.5,-0.5) -- (-0.9,-0.1);
				\draw (-1.2,-0.6) node {\footnotesize$\wh{\alpha}'$};
				\draw (-0.6,0) node [] {$\wh{2}$};
				\draw [->] (-0.2,0.1) -- (0.4,0.1);
				\draw (0.1,0.3) node {\footnotesize$\wh{\beta}$};
				\draw [->] (-0.2,-0.1) -- (0.4,-0.1);
				\draw (0.1,-0.4) node {\footnotesize$\wh{\beta}'$};
				\draw (0.8,0) node [] {$\wh{3}$};
				\draw [->] (1,0.1) -- (1.8,0.5);
				\draw (1.4,0.6) node {\footnotesize$\wh{\gamma}$};
				\draw (2,0.6) node {$\wh{4}$};
				\draw[->] (2.2,0.4) .. controls (2.6,0.2) and (2.8,0.4) .. (2.8,0.6) .. controls (2.8,0.8) and (2.6,1) .. (2.2,0.8);
				\draw (3,0.6) node {\footnotesize$\wh{\delta}$};
				\draw [->] (1,-0.1) -- (1.8,-0.5);
				\draw (1.4,-0.6) node {\footnotesize$\wh{\gamma}'$};
				\draw (2,-0.6) node {$\wh{4}'$};
				\draw[->] (2.2,-0.8) .. controls (2.6,-1) and (2.8,-0.8) .. (2.8,-0.6) .. controls (2.8,-0.4) and (2.6,-0.2) .. (2.2,-0.4);
				\draw (3.1,-0.6) node {\footnotesize$\wh{\delta}'$};

				\draw (-6.1,-2.2) node {$Z=\{\alpha\beta,\beta\gamma,\delta^2\}$};
				\draw (0.6,-2.2) node {$\unfZ=\{ \wh{\alpha}\wh{\beta}, \wh{\alpha}'\wh{\beta}', \wh{\beta}\wh{\gamma}, \wh{\beta}'\wh{\gamma}', \wh{\delta}^2, (\wh{\delta}')^2 \}$};

				\draw [dashed](-0.9909,-0.3117) arc (-141.4315:-25.7143:0.5);
				\draw [dashed](-0.9909,0.3117) arc (141.4315:25.7143:0.5);
				\draw [dashed](0.3495,0.2169) arc (154.2908:38.5714:0.5);
				\draw [dashed](0.3495,-0.2169) arc (-154.2908:-38.5714:0.5);
				\draw [dashed](2.4,0.4) .. controls (2.5,0.45) and (2.5,0.5) .. (2.5,0.6) .. controls (2.5,0.7) and (2.5,0.75) .. (2.4,0.8);
				\draw [dashed](2.4,-0.8) .. controls (2.5,-0.75) and (2.5,-0.7) .. (2.5,-0.6) .. controls (2.5,-0.5) and (2.5,-0.45) .. (2.4,-0.4);
			\end{tikzpicture}
		\end{equation*}
	\end{exam}
	
	\subsection{Folding in the context of gentle algebras} \label{sec:Folding}
	We will now review the reverse process (\emph{folding}) and its relationship to the unfolding procedure defined above. Folding is typically performed by considering the quotient of a group action. In the case where we have started off with a folded gentle triple and obtained a gentle pair by unfolding, it is clear what this group action should be, as outlined in the following lemma.

	\begin{lem} \label{lem:Folding}
		Let $(K,Q,Z)$ be a folded gentle triple and let $(\unfQ,\unfZ)$ be the corresponding unfolded gentle pair. Then there exists an automorphic (covariant) $\integer_2$ group action on $\unfQ$ and $\unfZ$ such that $\unfQ / \integer_2 \cong Q \setminus \ca$ and $\unfZ / \integer_2 \cong Z \setminus \crel$. 
	\end{lem}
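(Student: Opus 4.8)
The plan is to define the $\integer_2$-action explicitly on the generators of $\unfQ$ and $\unfZ$ by swapping the two "copies" produced in the unfolding procedure, verify it is a well-defined covariant automorphism, and then compute the quotient directly. Concretely, I would define $\tau \colon \unfQ \to \unfQ$ on vertices by $\tau(\wh{v}) = \wh{v}'$ and $\tau(\wh{v}') = \wh{v}$ for each $v \in \ov$ (so $\tau$ genuinely swaps the two vertices of \ref{en:U1}), and $\tau(\wh{v}) = \wh{v}$ for each $v \in \cv$ (fixing the single vertex of \ref{en:U2}); on arrows by $\tau(\wh{\alpha}) = \wh{\alpha}'$ and $\tau(\wh{\alpha}') = \wh{\alpha}$ for each $\alpha \in \oa$, using \ref{en:U3}. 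Since applying $\tau$ twice returns every generator to itself, $\tau^2 = \id$, so this generates a $\integer_2$-action. The first thing to check is that $\tau$ is a quiver automorphism: one must verify $s(\tau(\wh{\alpha})) = \tau(s(\wh{\alpha}))$ and likewise for $t$, which is immediate from \ref{en:U3} since $\wh{\alpha}'\colon \wh{u}' \to \wh{v}'$ precisely when $\wh{\alpha}\colon \wh{u} \to \wh{v}$ (and when $u$ or $v$ is a crease vertex, $\wh{u} = \wh{u}'$ so the source/target is literally fixed). Covariance (as opposed to contravariance) is built into the definition: $\tau$ sends arrows to arrows preserving direction.

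Next I would check that $\tau$ preserves $\unfZ$. By \ref{en:U4}, every element of $\unfZ$ is of the form $\wh{\alpha}\wh{\beta}$ or $\wh{\alpha}'\wh{\beta}'$ with $\alpha\beta \in \orel$; applying $\tau$ sends $\wh{\alpha}\wh{\beta} \mapsto \wh{\alpha}'\wh{\beta}'$ and vice versa, so $\tau(\unfZ) = \unfZ$. One small point to be careful about: the concatenation $\wh{\alpha}'\wh{\beta}'$ is a legitimate path of $\unfQ$ precisely because $t(\wh{\alpha}') = \tau(t(\wh{\alpha})) = \tau(s(\wh{\beta})) = s(\wh{\beta}')$, so $\tau$ applied to a path is again a path — this is just functoriality of $\tau$ on $\PQ[\unfQ]$, obtained by extending $\tau$ multiplicatively, and it extends linearly to a $K$-algebra automorphism of $K\unfQ$ carrying $\langle \unfZ \rangle$ to itself.

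Finally I would compute the quotient. The orbits of $\tau$ on $\unfQ_0$ are exactly $\{\wh{v}, \wh{v}'\}$ for $v \in \ov$ (a two-element orbit, identified to a single vertex, which we name $v$) and $\{\wh{v}\}$ for $v \in \cv$ (a fixed point, named $v$); this is in bijection with $Q_0 \setminus \cv$ — wait, actually with all of $Q_0$, but the crease vertices of $Q \setminus \ca$ retain no loop, so $\unfQ_0 / \integer_2 \cong (Q \setminus \ca)_0$. Similarly the orbits on arrows are $\{\wh{\alpha}, \wh{\alpha}'\}$ for $\alpha \in \oa$, identified to a single arrow named $\alpha$, giving a bijection $\unfQ_1/\integer_2 \cong \oa = (Q \setminus \ca)_1$; and since source/target pass to the quotient, the resulting quiver is exactly $Q \setminus \ca$. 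The same orbit count on $\unfZ$ gives $\{\wh{\alpha}\wh{\beta}, \wh{\alpha}'\wh{\beta}'\} \leftrightarrow \alpha\beta$, so $\unfZ / \integer_2 \cong \orel = Z \setminus \crel$, as desired.

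I do not expect any genuine obstacle here — the statement is essentially bookkeeping, being the literal reverse of the construction in \ref{en:U1}–\ref{en:U4}. The only place requiring mild care is making precise what "$\integer_2$ acts on $\unfZ$" and "$\unfZ / \integer_2$" mean (i.e.\ $\tau$ permutes the set of generating relations, and the quotient set of generators together with the quotient quiver recovers $(Q \setminus \ca, Z \setminus \crel)$), and ensuring the reader sees that the crease loops and crease relations of $(K,Q,Z)$ play no role — they are discarded before unfolding and reattached only after folding, which is why the isomorphisms are with $Q \setminus \ca$ and $Z \setminus \crel$ rather than with $Q$ and $Z$ themselves.
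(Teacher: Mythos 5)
Your proof is correct and takes essentially the same approach as the paper: define the involution $g$ on $\unfQ$ by swapping the two copies produced in \ref{en:U1}--\ref{en:U3} and fixing the single copies at crease vertices, extend it to the path algebra, note that $\unfZ$ is preserved because of \ref{en:U4}, and read off that the orbits recover the vertices and ordinary arrows of $Q$ and the ordinary relations of $Z$.
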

	\begin{proof}
		Let $\integer_2 = \{1, g\}$. The unfolding procedure has been deliberately defined such that we have $g\wh{v} = \wh{v}'$ and $g \wh{\alpha} = \wh{\alpha}'$. This gives rise to an automorphic action on the quiver $\unfQ$. For any field $\wh{K}$, one can extend the $\integer_2$-action to the path algebra $\wh{K}\unfQ$ by defining
		\begin{align*}
			g \stp_{\wh{v}} &= \stp_{\wh{v}'} \\
			g(\wh{\alpha}_1\ldots\wh{\alpha}_m) &= \wh{\alpha}'_1\ldots\wh{\alpha}'_m \\
			g(\lambda_1\wh{p}_1 + \ldots \lambda_n \wh{p}_n) &= \lambda_1(g\wh{p}_1) + \ldots + \lambda_n(g\wh{p}_n).
		\end{align*}
		It follows from the definition of the unfolding procedure that the set $\unfZ$ is closed under this action. In particular, it is a straightforward consequence of the definition that the orbits of both $\unfQ$ and $\unfZ$ under the $\integer_2$-action correspond to the vertices and ordinary arrows of $Q$ and ordinary relations in $Z$.
	\end{proof}
	
	So far we have defined unfolded gentle pairs that correspond to folded gentle triples. Now we state a broader definition and show how one obtains folded gentle triples from this broader setting.
	
	\begin{defn}
		Let $\wh{K}$ be a field and $(\unfQ,\unfZ)$ be a gentle pair such that $\unfQ$ is connected. We say that $(\unfQ,\unfZ)$ is an \emph{unfolded gentle pair} if there exists a non-trivial $\integer_2$ group action on $\unfQ$, which when extended to a $\integer_2$-action on the path algebra $\wh{K}\unfQ$, the set $\unfZ$ is closed under the action of $\integer_2$.
		
		We say that $\wh{K}\unfQ/ \langle \unfZ \rangle$ is an \emph{unfolded gentle algebra} if $(\unfQ,\unfZ)$ is an unfolded gentle pair and $\wh{K}$ admits a field extension of degree 2. We call the action of the non-trivial element of $\integer_2$ on $\wh Q$ and $\wh{K}\unfQ/ \langle \unfZ \rangle$ the \emph{folding action}.
	\end{defn}
	
	\begin{rem} \label{rem:NoFixedArrows}
		It is a consequence of axioms \ref{en:FG1}-\ref{en:FG4} that for any unfolded gentle pair $(\unfQ,\unfZ)$, there exists no arrow of $\unfQ$ that is fixed under the folding action.
	\end{rem}
	
	So suppose that $K$ is a field that admits a degree 2 extension. To obtain a folded gentle triple $(K,Q,Z)$ from an arbitrary unfolded gentle pair $(\unfQ, \unfZ)$ with folding action given by $g \in \integer_2$, one can simply consider the quiver $Q$ and set of relations $Z$ obtained by a quotient of the group action of $\integer_2$, and add creases and crease relations as appropriate. More formally, we have the following \emph{folding procedure}.
	\begin{enumerate}[label=(F.\roman*)]
		\item The vertices of $Q$ correspond to the orbits of vertices of $\unfQ$ under $\integer_2$. In particular, we define
		\begin{align*}
			\ov &= \{[\wh{v}] : \wh{v} \in \unfQ_0, g\wh{v} \neq \wh{v}\}, \\
			\cv &= \{[\wh{v}] : \wh{v} \in \unfQ_0, g\wh{v} = \wh{v}\}.
		\end{align*} \label{en:F1}
		\item For each crease vertex $[\wh{v}] \in \cv$, we have a corresponding crease $\crs_{[\wh{v}]} \in \ca$. \label{en:F3}
		\item The ordinary arrows of $Q$ correspond to the orbits of arrows of $\unfQ$ under $\integer_2$. That is, we define $\oa = \{[\wh{\alpha}] : \wh{\alpha} \in \unfQ_1\}$. \label{en:F2}
		\item The ordinary relations are defined to be the orbits of relations in $Z$ under $\integer_2$. That is, $\orel = \{[\wh{\alpha}\wh{\beta}] : \wh{\alpha}\wh{\beta} \in \unfZ\}$. \label{en:F4}
		\item Finally, we (arbitrarily) add crease relations that satisfy \ref{en:FG5}. \label{en:F5}
	\end{enumerate}
	It is easy to check that, up to an appropriate choice of crease relations, this procedure is a left inverse to the unfolding procedure, and thus, that $(K,Q,Z)$ is a folded gentle triple.
	
	\begin{rem}
		The above shows that folding and unfolding is not a unique process: On the one hand, depending on the ground field, there can be infinitely many non-isomorphic folded gentle algebras obtained by performing \ref{en:F1}-\ref{en:F5} on an unfolded gentle algebra (given by varying the crease relations from \ref{en:F5}). On the other hand, given a folded gentle algebra $A$, there can be more than one unfolded gentle algebra $\wh A$ that `folds' onto $A$ (such as, for example, when $\wh Q$ contains loops). Nevertheless, the representation theory of a folded gentle algebra is intimately related to the representation theory of any gentle algebra that folds onto it.
	\end{rem}
	
	Foldings and unfoldings can also be described via certain quiver morphisms. Such morphisms are useful devices that we will need later in the paper. 
	
	\begin{defn} \label{def:QuiverMorphisms}
		Let $(K,Q,Z)$ be a folded gentle triple that corresponds to $(\unfQ,\unfZ)$ via the (un)folding procedure. Define a surjective covariant quiver morphism
		\begin{equation*}
			\pi\colon \unfQ \rightarrow Q \setminus \ca
		\end{equation*}
		that maps each vertex in $\unfQ_0$ to its corresponding $\integer_2$-orbit in $Q_0$ and each arrow in $\unfQ_1$ to its corresponding $\integer_2$-orbit in $Q \setminus \ca$. We call $\pi$ the \emph{quiver folding morphism}.
		
		Consider a map
		\begin{equation*}
			\iota\colon Q \setminus \ca \rightarrow \unfQ.
		\end{equation*}
		We call $\iota$ a \emph{quiver unfolding morphism} if the following hold.
		\begin{enumerate}[label=(QU\arabic*)]
			\item $\iota$ is a covariant quiver morphism that is a right inverse to $\pi$; and \label{en:QU1}
			%\item $\iota$ respects concatenation of paths: if $\alpha_1\ldots\alpha_n \in \PQ$ then $\iota(\alpha_1)\ldots\iota(\alpha_n) \in \mathcal{P}_{\unfQ}$;
			\item $\iota$ respects the relations of $\orel$: if $\alpha\beta \in \orel$ then $\iota(\alpha\beta)=\iota(\alpha)\iota(\beta) \in \unfZ$. \label{en:QU2}
		\end{enumerate}
	\end{defn}
	
	\begin{exam}
		Consider Example~\ref{ex:DoubleCreaseUnf}. The quiver folding morphism $\pi$ is given by $\pi(\wh{v})=\pi(\wh{v}')=v$ and $\pi(\wh{\alpha})=\pi(\wh{\alpha}')=\alpha$. There are precisely two possible quiver unfolding morphisms $\iota_1$ and $\iota_2$. Specifically, we have
		\begin{align*}
%			\iota_1(v)			&=\wh v			&	\iota_2(v)			&=\wh{v}' \\
%			\iota_1(\crs_v)	&=\stp_{\wh v}	&	\iota_2(\crs_v)	&=\stp_{\wh v} \\
%			\iota_1(\alpha)	&=\wh\alpha		&	\iota_2(\alpha)	&=\wh\alpha' \\
%			\iota_1(\beta)	&=\wh{\beta}' 	&	\iota_2(\beta)	&=\wh{\beta} \\
%			\iota_1(\gamma)	&=\wh\gamma	&	\iota_2(\gamma)	&=\wh\gamma' \\
%			\iota_1(\delta)	&=\wh\delta		&	\iota_2(\delta)	&=\wh\delta' \\
			\iota_1(v) &= \wh{v}, &  \iota_1(\zeta) &=\wh{\zeta}, \\
			\iota_2(v) &= \wh{v}', &  \iota_2(\zeta) &=\wh{\zeta}',
%			\iota_1(v) &=\wh{v}, & \iota_1(\crs_v) &=\stp_{\wh v}, & \iota_1(\alpha) &=\wh\alpha, & \iota_1(\beta) &=\wh{\beta}', & \iota_1(\gamma) &=\wh\gamma, & \iota_1(\delta) &=\wh\delta, \\
%			\iota_2(v) &=\wh{v}', & \iota_2(\crs_v) &=\stp_{\wh v}, & \iota_2(\alpha) &=\wh\alpha', & \iota_2(\beta) &=\wh{\beta}, & \iota_2(\gamma) &=\wh\gamma', & \iota_2(\delta) &=\wh\delta',
		\end{align*}
		for each $v \in Q_0$ and $\zeta \in Q_1$. In particular $\iota_2 = g\iota_1$, where $g \in \integer_2$ is the folding action.
	\end{exam}
	
	It is clear that for any unfolded gentle algebra $\wh A$ and any folded gentle algebra $A$ related to $\wh A$ via either the unfolding or folding procedure, the quiver folding morphism $\pi$ always exists. It is also not difficult to verify that if $\wh A$ is related to a folded gentle algebra $A$ via the unfolding procedure, then a quiver unfolding morphism always exists. However, there may not be a quiver unfolding morphism for all other unfolded gentle algebras. For example, let $A$ be the folded gentle algebra in Examples~\ref{ex:TripleCrease} and~\ref{ex:DoubleCreaseUnf}, and consider the unfolded gentle algebra $\wh A'$ given by replacing the loops $\wh\delta\colon\wh 4 \rightarrow \wh 4$ and $\wh \delta'\colon\wh 4' \rightarrow \wh 4'$ in Example~\ref{ex:DoubleCreaseUnf} by arrows $\wh\delta\colon \wh 4 \rightarrow \wh 4'$ and $\wh\delta'\colon \wh 4' \rightarrow \wh 4$, and by replacing the relations $\wh \delta^2$ and $(\wh\delta')^2$ with $\wh \delta\wh \delta'$ and $\wh \delta'\wh \delta$. In this case, $\wh A'$ still folds onto $A$, but no quiver unfolding morphism exists (the condition that fails is \ref{en:QU2}). For this reason, we will primarily focus on unfolded gentle algebras obtained via the unfolding procedure for the rest of the paper.
	
	\subsection{(Un)folding strings and bands}
	 The quiver folding ($\pi$) and unfolding ($\iota$) morphisms can naturally be extended to the set of ordinary symbols. In addition, the folding action of $g \in \integer_2$ can be extended to strings and bands in the natural way. This motivates a theory of (un)folding for strings and bands, which establishes a deep relationship between the combinatorics of gentle algebras and their folded counterparts. Such a theory also illuminates intricate differences in behaviours between certain classes of strings and certain classes of bands. For example, we will later see that certain classes of strings and bands that are invariant (up to equivalence) under the folding action are closely related to symmetric strings and bands in a folded gentle algebra, which exhibit slightly different behaviour to their asymmetric counterparts. Henceforth, $A = K Q / \langle Z \rangle$ will refer to a folded gentle algebra and $\wh A = K \wh Q / \langle \wh Z \rangle$ will refer to an unfolded gentle algebra related to $A$ via the unfolding procedure.
	
	\begin{defn} \label{def:Z2Words}
		For any arrow $\wh \alpha \in \wh Q_1$ and $\alpha \in Q_1$, define $\pi(\wh\alpha\inv)=(\pi(\wh\alpha))\inv$ and $\iota(\alpha\inv)=(\iota(\alpha))\inv$. For any word $\wh w = \wh \sigma_1\ldots\wh\sigma_n \in \wrd_{\wh Q}$, define $g \wh w = (g\wh\sigma_1)\ldots(g\wh\sigma_n)$, where $g\wh\alpha\inv = (g\wh\alpha)\inv$ for any $\wh\alpha\inv \in \unfQ_1\inv$.
	\end{defn}
	
	Clearly if $\wh w \in \str_{\wh A}$ then $g\wh w \in \str_{\wh A}$. Likewise for any band $\wh w \in \bnd_{\wh A}$, it follows that $g\wh w \in \bnd_{\wh A}$. Thus, $\integer_2$ acts on $\str_{\wh A}$ and $\bnd_{\wh A}$, and hence we may define the following.
	
	\begin{defn}
		We say that a string or band $\wh{w}$ of $\wh{A}$ is \emph{$\integer_2$-invariant} if $g \wh w \approx \wh w$.
	\end{defn}
	
	One can also define an analogue of the (un)folding morphisms $\pi$ and $\iota$ for words and strings. This will be an incredibly useful tool when we investigate the module category of folded gentle algebras. Specifically, we have the following.
	
	\begin{defn} \label{def:FoldedWords}
		Define a function
		\begin{equation*}
			\fw\colon \wrd_{\wh Q} \rightarrow \wrd_{Q}
		\end{equation*}
		as follows. If $\wh w \in \wrd_{\wh Q}$ is a simple word, then define $\fw(\wh w) = \stp_{\pi(s(\wh w))}$. Otherwise, for any symbol $\wh \sigma \in \wh Q_1 \cup \wh Q_1\inv$, denote $\sigma= \pi(\wh\sigma)$ and define
		\begin{equation*}
			\fw(\wh\sigma) =
			\begin{cases}
					\crs_{s(\sigma)} \sigma
						& \text{if } \wh\sigma\not\in\im\iota, s(\sigma)\in\cv \text{ and } t(\sigma)\not\in\cv, \\
					\sigma\crs_{t(\sigma)}\inv				
						& \text{if } \wh\sigma\not\in\im\iota, s(\sigma)\not\in\cv \text{ and } t(\sigma)\in\cv, \\
					\crs_{s(\sigma)} \sigma\crs_{t(\sigma)}\inv	
					& \text{if } \wh\sigma\not\in\im\iota \text{ and } s(\sigma),t(\sigma)\in\cv, \\
					\sigma				& \text{otherwise.}
				\end{cases}
		\end{equation*}
		For any non-simple word $\wh w=\wh\sigma_1\ldots\wh\sigma_n\in \wrd_{\wh Q}$, we then define
		\begin{equation*}
			\fw(\wh w) = \fw(\wh\sigma_1)\fw(\wh\sigma_2)\ldots\fw(\wh\sigma_n).
		\end{equation*}
		We call $\fw(\wh w)$ the \emph{folded word} of $\wh w$.
		
		Now define a function
		\begin{equation*}
			\theta\colon \wrd_{ Q} \rightarrow \wrd_{\wh Q}
		\end{equation*}
		in the following way. Firstly, if $w$ is either simple or a crease symbol, then define $\theta(w) = \stp_{\iota(s(w))}$. Otherwise, write $w=\sigma_1\ldots\sigma_n$ and let $\sigma_{i_1},\ldots,\sigma_{i_r}$ be the complete list of crease symbols of $w$ with $1 \leq i_1 < \ldots < i_r \leq n$. In addition, define $i_0=0$ and $i_{r+1} = n+1$. Next, for each $0 \leq j \leq r+1$, we then define 
		\begin{equation*}
			\wh w_j = \iota(\sigma_{i_j+1})\ldots\iota(\sigma_{i_{j+1}-1}).
		\end{equation*}
		Finally, define
		\begin{equation*}
			\theta(w) = (g^0 \wh w_0) (g^1 \wh w_1) \ldots (g^r \wh w_r) (g^{r+1} \wh w_{r+1})
		\end{equation*}
		We call $\theta(w)$ the \emph{unfolded word} of $w$.
	\end{defn}
		
	\begin{rem}
		Note that if $i_1 = 1$, the word $\theta(w)$ should instead be interpreted as the word $(g^1 \wh w_1) \ldots (g^r \wh w_r) (g^{r+1} \wh w_{r+1})$. Similarly, if $i_r=n$, then we omit the $g^{r+1} \wh w_{r+1}$ term.
	\end{rem}
		
	It is not difficult to verify that if $w \in \str_A$, then $\theta(w) \in \str_{\wh A}$. In this case we call $\theta(w)$ the \emph{unfolded string} of $w$. The similar statement for $\fw$ is not necessarily true (the condition that can fail is \ref{en:FS3}). Thus, we need another function for this purpose.

	\begin{defn}\label{def:FoldedStrings}
		Define a function
		\begin{equation*}
			\rho\colon \str_{\wh A} \rightarrow \str_{ A} : \wh w \mapsto \sigma \fw(\wh w) \sigma',
		\end{equation*}
		where $\sigma \in \{ \stp_{\pi(s(\wh w))}, \crs_{\pi(s(\wh w))}\inv\}$ and $\sigma' \in \{  \stp_{\pi(t(\wh w))}, \crs_{\pi(t(\wh w))}\}$ are uniquely determined such that $\sigma \fw(\wh w) \sigma' \in \str_A$. In particular, $\sigma = \crs_{\pi(s(\wh w))}\inv$ if and only if $\pi(s(\wh w)) \in \cv$ and the first symbol of $\wh w$ is in $\im \iota$, and $\sigma' = \crs_{\pi(t(\wh w))}$ if and only if $\pi(t(\wh w)) \in \cv$ and the last symbol of $\wh w$ is in $\im \iota$. We call $\rho(\wh w)$ the \emph{folded string} of $\wh w$.
	\end{defn}
	
	The process of folding strings is well-defined up to equivalence and the folding action, as the next lemma shows.
	\begin{lem} \label{lem:FoldedStrings}
		Let $\wh w, \wh w' \in  \str_{\wh A}$. Then $\rho(\wh w) \approx \rho(\wh w')$ if and only if $\wh w \approx \wh w'$ or $\wh w \approx g\wh w'$.
	\end{lem}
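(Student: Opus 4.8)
The plan is to strip away the equivalence relations and reduce everything to a comparison of the underlying sequences of $\pi$-images. For a string $\wh w=\wh\sigma_1\ldots\wh\sigma_n\in\str_{\wh A}$, write $\pi(\wh w)=\pi(\wh\sigma_1)\ldots\pi(\wh\sigma_n)$ for the associated (possibly non-reduced) formal word of $Q$; since $\wh Q$ has no creases, this word uses only ordinary symbols. I would first establish two structural facts. \emph{(A):} deleting every crease symbol from $\rho(\wh w)$ returns $\pi(\wh w)$, and the $\sim_2$-class of $\rho(\wh w)$ is determined by $\pi(\wh w)$. \emph{(B):} $\pi(g\wh w)=\pi(\wh w)$ and $\pi(\wh w\inv)=\pi(\wh w)\inv$, while conversely $\pi(\wh w)=\pi(\wh w')$ forces $\wh w'\in\{\wh w,g\wh w\}$.

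Granting (A) and (B), the lemma is pure bookkeeping. Because $\wh A$ is gentle, $\sim_2$ is trivial on $\wrd_{\wh Q}$, so $\approx$ on $\str_{\wh A}$ is just $\sim_1$; hence ``$\wh w\approx\wh w'$ or $\wh w\approx g\wh w'$'' means $\wh w'\in\{\wh w,\wh w\inv,g\wh w,g\wh w\inv\}$. By (B) (together with $\pi\circ g=\pi$) this is equivalent to $\pi(\wh w')\in\{\pi(\wh w),\pi(\wh w)\inv\}$, and by (A) this is equivalent to $\rho(\wh w')$ being $\sim_2$-equivalent either to $\rho(\wh w)$ or to $\rho(\wh w)\inv$. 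For the second alternative I use that $\rho(\wh w)\inv$ and $\rho(\wh w\inv)$ are $\sim_2$-equivalent: this follows from the symbolwise identity $\fw(\wh\sigma\inv)=\fw(\wh\sigma)\inv$ (a short check of the four cases in the definition of $\fw$, using $\wh\sigma\inv\in\im\iota\iff\wh\sigma\in\im\iota$) together with the uniqueness of the end-decorations in the definition of $\rho$. Finally, unwinding $\sim_1$, the condition ``$\rho(\wh w')\sim_2\rho(\wh w)$ or $\rho(\wh w')\sim_2\rho(\wh w)\inv$'' is precisely $\rho(\wh w)\approx\rho(\wh w')$, which gives both directions.

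For (B), the identities $\pi\circ g=\pi$ and $\pi(\wh w\inv)=\pi(\wh w)\inv$ are immediate from the definitions. For the converse, I would show by induction on $i$ that in any string $\wh\sigma_{i+1}$ is uniquely determined by $\wh\sigma_i$ and $\pi(\wh\sigma_{i+1})$: at a vertex $\wh v$ with $v\in\ov$ the two $\pi$-preimages of $\pi(\wh\sigma_{i+1})$ have distinct sources, so only one is admissible; at a vertex $\wh v$ with $v\in\cv$ both preimages share the source $\wh v$, but by \ref{en:FG1}, \ref{en:FG6} and the unfolding rule~\ref{en:U4} exactly one of the two choices is compatible with avoiding $\wh Z$ and with reducedness. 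As $\wh\sigma_1$ has exactly the preimages $\wh\sigma_1$ and $g\wh\sigma_1$, and $g$ is an automorphism of $\wh Q$ preserving $\wh Z$ by Lemma~\ref{lem:Folding}, the induction propagates the same choice throughout, giving $\wh w'=\wh w$ or $\wh w'=g\wh w$.

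The substantive part is (A), and the step I expect to be the main obstacle is controlling the crease symbols at \emph{interior} crease-vertex crossings of $\wh w$. The definition of $\fw$ produces $\fw(\wh w)$ from $\pi(\wh w)$ by possibly inserting, between $\pi(\wh\sigma_i)$ and $\pi(\wh\sigma_{i+1})$ when $t(\wh\sigma_i)=s(\wh\sigma_{i+1})=\wh v$ with $v\in\cv$, a symbol $\crs_v^{\pm1}$ whose presence and sign are governed by whether $\wh\sigma_i,\wh\sigma_{i+1}$ lie in $\im\iota$, together with possible terminal symbols $\crs_{\pi(s(\wh w))}$ and $\crs_{\pi(t(\wh w))}\inv$. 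One must check that exactly one crease symbol survives the reduction built into concatenation at each such interior crossing --- equivalently, that the cases ``$\wh\sigma_i,\wh\sigma_{i+1}$ both in $\im\iota$'' and ``both outside $\im\iota$'' cannot occur for a string. This is where I would use that the arrows of $\wh Q$ incident to $\wh v$ split into $\im\iota$/non-$\im\iota$ pairs (Remark~\ref{rem:NoFixedArrows} and $\iota$ being a section of $\pi$) and that, by \ref{en:FG6} and~\ref{en:U4}, the admissible two-letter substrings through $\wh v$ are exactly those taking one symbol from each pair. Once this is in hand, $\rho$ simply normalises the two ends so that a crease symbol sits at a crease endpoint precisely when that endpoint is a crease vertex; consequently $\rho(\wh w)$ carries a crease symbol at every occurrence of a crease vertex (interior or terminal), each equal to $\crs_v$ or $\crs_v\inv$ with a sign that flips under $g$ but is invisible to $\sim_2$, and deleting all of them returns $\pi(\wh w)$ --- which is exactly (A).
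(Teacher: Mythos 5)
Your proposal is correct and takes essentially the same approach as the paper: the published proof likewise reduces the lemma to the observations that $\pi$ identifies ordinary symbols of $Q$ with $\integer_2$-orbits of symbols of $\wh Q$, and that the crease symbols of a string are uniquely forced (up to $\sim_2$) at each passage through a crease vertex. The paper's proof is considerably terser and leaves the ``one-in-$\im\iota$, one-out'' check at interior crease-vertex crossings implicit under ``it follows entirely by construction,'' so your structured decomposition into (A) and (B) is a faithful and more explicit rendering of the same argument.
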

	\begin{proof}
		The result follows from the fact that the ordinary symbols of $Q$ correspond to $\integer_2$-orbits of ordinary symbols of $\wh Q$, and that $\pi$ maps symbols to their corresponding $\integer_2$-orbit. As for crease symbols, it follows from \ref{en:FG1}, \ref{en:FG2}, \ref{en:FG6}, \ref{en:FS2}, \ref{en:QU2} and the definition of $\fw$ that for any ordinary symbol $\sigma_i$ of a string $\sigma_1\ldots\sigma_n \in \str_A$ such that $t(\sigma_i)\in \cv$, there exists a unique crease $\crs_{t(\sigma_i)} \in \ca$ such that $\sigma_{i+1} \in \{\crs_{t(\sigma_i)}, \crs_{t(\sigma_i)}\inv\}$. Similarly, for any ordinary symbol $\sigma_i$ such that $s(\sigma_i)\in \cv$, there exists a unique crease $\crs_{s(\sigma_i)} \in \ca$ such that $\sigma_{i-1} \in \{\crs_{s(\sigma_i)}, \crs_{s(\sigma_i)}\inv\}$. Thus, it follows entirely by construction that $\rho(\wh w) \approx \rho(\wh w')$ if and only if $\wh w \approx \wh w'$ or $\wh w \approx g\wh w'$.
	\end{proof}
	
	The process of (un)folding bands is somewhat less straightforward. This is because there exist words $w$ of a folded gentle algebra such that $s(w)=t(w)$ but $s(\theta(w)) \neq t(\theta(w))$. This occurs only if $t(\theta(w))=g s(\theta(w))$, which occurs if and only if the number of crease symbols of $w$ is odd. In this case, we have $s(\theta(w^2)) = t(\theta(w^2))$ instead. This motivates the following definitions.
	
	\begin{defn} \label{def:Parity}
		Let $w$ be a band of a folded gentle algebra. We say that $w$ is of \emph{odd parity} if $w$ contains an odd number of crease symbols. We say that $w$ is of \emph{even parity} otherwise.
		
		Let $\wh w$ be a band of an unfolded gentle algebra. We say that $\wh w$ is of \emph{odd parity} if there exists a subword $\wh w'$ of $\wh w$ such that $\wh w=\wh w'(g\wh w')$. We say that $\wh w$ is of \emph{even parity} otherwise.
	\end{defn}
	
	Armed with this definition, we can now define how one (un)folds bands.
	
	\begin{defn} \label{def:FoldedBands}
		Let $\wh b \in \bnd_{\unfA}$ and let $\wh w = \wh b \in \wrd_{\wh Q}$ be the equivalent word. Define a function
		\begin{equation*}
			\rho\colon \bnd_{\wh A} \rightarrow \bnd_A
		\end{equation*}
		as follows. If $\wh b$ is of even parity, then we define $\rho(\wh b)$ to be the band of $A$ whose underlying word is $\kappa(\wh w)$. On the other hand if $\wh b$ is of odd parity, then there exist subwords $w'$ and $w''$ of $\kappa(\wh w)$ such that $\kappa(\wh w) = w' w'' \sim_2 (w')^2$. In this case we define $\rho(\wh b)$ to be the band of $A$ whose underlying word is $w'$. In either case, we call $\rho(\wh b)$ the \emph{folded band} of $\wh b$.
		
		Let $b \in \bnd_A$ and let $w=b \in \wrd_Q$ be the equivalent word. Define a function
		\begin{equation*}
			\theta\colon \bnd_{ A} \rightarrow \bnd_{\wh A}
		\end{equation*}
		as follows. If $b$ is of even parity, then we define $\theta(b)$ to be the band of $\unfA$ whose underlying word is equivalent to $\theta(w)$. Otherwise, we define $\theta(b)$ to be the band of $\unfA$ whose underlying word is equivalent to $\theta(w^2)$. In either case, we call $\theta( b)$ the \emph{unfolded band} of $ b$.
	\end{defn}
	
	\begin{lem} \label{lem:FoldedBands}
		Let $\wh w, \wh w' \in  \bnd_{\wh A}$. Then $\rho(\wh w) \approx \rho(\wh w')$ if and only if $\wh w \approx \wh w'$ or $\wh w \approx g\wh w'$.
	\end{lem}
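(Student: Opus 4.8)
The plan is to run the argument in exact parallel with the proof of Lemma~\ref{lem:FoldedStrings}, the only genuinely new feature being that $\rho$ treats even-parity and odd-parity bands differently (Definition~\ref{def:FoldedBands}, Definition~\ref{def:Parity}), so the even and odd cases must be handled separately and then combined.

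\emph{The ``if'' direction and preliminary reductions.} First I would record that $\rho$ descends to a well-defined map on $\approx$-classes and that $\rho(g\wh w)\approx\rho(\wh w)$. Both follow from the fact that $\fw$ is defined symbol-by-symbol and commutes with concatenation, with word inversion, with the identification $\crs_v=\crs_v\inv$, and with cyclic rotation, together with the $\integer_2$-equivariance of $\pi$ and $\iota$ on symbols --- this is precisely the bookkeeping already carried out for strings in Lemma~\ref{lem:FoldedStrings}, applied here to the underlying word of a band (and to its square root in the odd case). Consequently, if $\wh w\approx\wh w'$ or $\wh w\approx g\wh w'$ then $\rho(\wh w)\approx\rho(\wh w')$, which is the easy direction. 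I would also note at the outset that the parity of $\wh w$ equals the parity of $\rho(\wh w)$ as a band of $A$ (an odd number of crease symbols in $\rho(\wh w)$ corresponds exactly to $\wh w$ being a self-doubling $\wh u(g\wh u)$, via a count of ``sheet switches'' around the cycle); since the parity of a band of $A$ is intrinsic, $\rho(\wh w)\approx\rho(\wh w')$ forces $\wh w$ and $\wh w'$ to have the same parity.

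\emph{The even-parity case of the ``only if'' direction.} Assume $\rho(\wh w)\approx\rho(\wh w')$ with both bands even. By Definition~\ref{def:FoldedBands} their underlying words are $\fw(\wh w)$ and $\fw(\wh w')$, which are therefore related by a chain of $\sim_1$, $\sim_2$ and $\sim_3$. I would lift this chain back along $\pi$ and $\iota$: the ordinary symbols of $\fw(\wh w)$ determine the lift uniquely up to applying $g$, because $\pi$ is two-to-one on ordinary symbols and has no fixed arrows (Remark~\ref{rem:NoFixedArrows}); and the crease symbols occurring in $\fw(\wh w)$ are forced by their neighbouring ordinary symbols via \ref{en:FG1}--\ref{en:FG3} and \ref{en:FG6} (exactly as in Lemma~\ref{lem:FoldedStrings}), so they contribute no further ambiguity. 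The only point not already present in the string case is that a cyclic rotation of $\rho(\wh w)$ must lift to a cyclic rotation of $\wh w$; this I would deduce from the $\integer_2$-equivariance of the lifting on words together with the fact that, for an even-parity band, $\wh w$ is recovered up to $\sim_3$ by unfolding its folded word (Definition~\ref{def:FoldedBands}). Chasing $\sim_1$, $\sim_2$, $\sim_3$ through the lift then gives $\wh w\approx\wh w'$ or $\wh w\approx g\wh w'$.

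\emph{The odd-parity case, which is the main obstacle.} Assume $\rho(\wh w)\approx\rho(\wh w')$ with both bands odd, and write $\wh w=\wh u\,(g\wh u)$, $\wh w'=\wh u'\,(g\wh u')$. By Definition~\ref{def:FoldedBands} the band $\rho(\wh w)$ has an underlying word $v$ with $v^2\sim_2\fw(\wh w)=\fw(\wh u)\fw(g\wh u)$. The crux is to show that this pins down $v$, up to $\sim_2$ and $\sim_3$, as $\fw(\wh u)$: for this I would use that $\fw(g\wh u)$ and $\fw(\wh u)$ have the same $\pi$-image and differ only in the placement of crease symbols relative to the $\iota$-section, so that after the $\sim_2$-identifications and reductions implicit in forming the band word one gets that $\fw(\wh w)$ is equivalent, under $\sim_2$ and $\sim_3$, to $(\fw(\wh u))^2$, whence $v$ is equivalent to $\fw(\wh u)$; likewise for $\rho(\wh w')$ and $\wh u'$. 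Then $\rho(\wh w)\approx\rho(\wh w')$ becomes the assertion that $\fw(\wh u)$ and $\fw(\wh u')$ are equivalent as words, to which the lifting argument of the even case applies verbatim (now to these ``half-words''), yielding $\wh u\approx\wh u'$ or $\wh u\approx g\wh u'$. In the first case $\wh w=\wh u(g\wh u)\approx\wh u'(g\wh u')=\wh w'$; in the second, $\wh w\approx(g\wh u')(\wh u')\sim_3\wh u'(g\wh u')=\wh w'$. Either way $\wh w\approx\wh w'$, which is stronger than what is claimed. The delicate steps here --- and where I expect to spend the most effort --- are (i) justifying that forming the folded word, squaring, and halving recovers the original odd-parity band up to $\approx$, so that the half-word $\wh u$ is well defined up to $\approx$ and the $g$-action, and (ii) checking that a cyclic rotation of the folded half-structure lifts to a rotation of $\wh w$ rather than to some other identification of the two copies $\wh u$ and $g\wh u$; both are resolved by the $\integer_2$-equivariance of $\pi$, $\iota$ and $\fw$ on words together with the explicit form of $\theta$ on odd-parity bands in Definition~\ref{def:FoldedBands}. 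Everything else is a routine transcription of the proof of Lemma~\ref{lem:FoldedStrings}.
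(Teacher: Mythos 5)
Your proposal is correct and follows the same route as the paper, which simply cites the argument of Lemma~\ref{lem:FoldedStrings} without elaboration; you spell out the additional bookkeeping that bands require, namely the parity distinction, the fact that $\rho$ on an odd-parity band $\wh u\,(g\wh u)$ returns the half-word $\fw(\wh u)$ up to $\approx$, and the need to check that a cyclic rotation of the folded word lifts to a rotation of the unfolded band.

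One remark worth keeping in mind if you write this up in full: you phrase the well-definedness step as following from ``the $\integer_2$-equivariance of $\pi$ and $\iota$ on symbols,'' but $\iota$ is not $g$-equivariant — rather $g\iota$ is the other section, and $\fw$ tests membership in $\im\iota$. The actual mechanism, as captured by Lemma~\ref{lem:Z2Invert}, is that applying $g$ to $\wh w$ flips which sheet each symbol lies on, which inverts every crease symbol of the folded word and hence only changes it within its $\sim_2$-class. This is what makes $\rho(g\wh w)\approx\rho(\wh w)$ and also what pins down $\fw(\wh w)\sim_2(\fw(\wh u))^2$ in your odd case, so the two ``delicate steps'' you flag there reduce to the same crease-inversion observation together with the alternation of crease symbols forced by \ref{en:FG1}--\ref{en:FG3} and \ref{en:FG6}. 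With that rephrasing your argument is airtight and your observation that the odd case actually yields the stronger conclusion $\wh w\approx\wh w'$ (consistent, since $g\wh w\sim_3\wh w$ for odd bands) is a nice sanity check.
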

	\begin{proof}
		This result follows as a result of the same argument given in the proof of Lemma~\ref{lem:FoldedStrings}.
	\end{proof}
	
	In fact, we can say more about the relationship between $\rho(\wh w)$ and $\rho(g \wh w)$ for bands. The next lemma shows that the pre-action of $\integer_2$ on a band $\wh w$ results in every crease symbol of $\rho(\wh w)$ being inverted.
	
	\begin{lem} \label{lem:Z2Invert}
		Let $\wh w \in \bnd_{\wh A}$, and write $\rho(\wh w) = \sigma_1\ldots\sigma_n$. Let $\sigma_{i_1},\ldots,\sigma_{i_r}$, be a complete list of crease symbols of $\rho(\wh w)$, with $i_1 < \ldots < i_r$. Then $\rho(g\wh w) \sim_3 \sigma'_1\ldots\sigma'_n$, with $\sigma'_i = \sigma_i$ whenever $i \neq i_j$ (for any $j$), and $\sigma'_{i_j} = \sigma\inv_{i_j}$ for each $1 \leq j \leq r$.
	\end{lem}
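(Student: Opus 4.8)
The plan is to track, symbol by symbol, exactly which entries of the folded band $\rho(\wh w)$ change when $\wh w$ is replaced by $g\wh w$, keeping the ordinary symbols and the crease symbols separate. First I would record the two elementary facts that do all the work. \emph{(i)} By construction $\pi\circ g=\pi$ on symbols of $\wh Q$, since $g$ only permutes the two members of a $\integer_2$-orbit while $\pi$ collapses each orbit; hence the sequences of ordinary symbols $\pi(\wh\sigma_1),\pi(\wh\sigma_2),\dots$ read off from $\wh w$ and from $g\wh w$ are literally equal. \emph{(ii)} By Remark~\ref{rem:NoFixedArrows} no arrow of $\wh Q$ is fixed by $g$, so every $\integer_2$-orbit of arrows has exactly two elements; as $\iota$ is a section of $\pi$, its image $\im\iota$ contains exactly one symbol from each orbit, so $\wh\sigma\in\im\iota$ if and only if $g\wh\sigma\notin\im\iota$ (and likewise for formal inverses). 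Feeding this into the four cases of Definition~\ref{def:FoldedWords}, the effect of passing from $\fw(\wh\sigma)$ to $\fw(g\wh\sigma)$ is to toggle on/off the ``half-crease'' that $\fw$ attaches to $\sigma=\pi(\wh\sigma)$ on the side of a crease vertex, and, whenever such a half-crease is present, to reverse its orientation (a leading $\crs_{s(\sigma)}$ becomes a leading $\crs_{s(\sigma)}\inv$, a trailing $\crs_{t(\sigma)}\inv$ becomes a trailing $\crs_{t(\sigma)}$, and vice versa).

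Next I would analyse a single traversal of a crease vertex. Suppose $\wh w=\wh\sigma_1\ldots\wh\sigma_n$, read cyclically, passes through $\wh v$ at the junction of $\wh\sigma_k$ and $\wh\sigma_{k+1}$, with $v=\pi(\wh v)\in\cv$. Using \ref{en:FG1} (a crease vertex carries at most one further ordinary arrow in and at most one out), \ref{en:FG6}, and the string conditions \ref{en:FS1}--\ref{en:FS2}, one checks it is impossible that both $\wh\sigma_k,\wh\sigma_{k+1}\in\im\iota$ (that would leave $\pi(\wh\sigma_k)\pi(\wh\sigma_{k+1})$ as an ordinary composite through $v$, violating \ref{en:FG6} or reducedness), and equally impossible that both lie outside $\im\iota$ (then $\fw$ produces a cancelling $\crs_v\inv\crs_v$, again forcing the forbidden ordinary composite). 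So at each crease-vertex traversal exactly one of the two adjacent symbols of $\wh w$ lies outside $\im\iota$, hence by fact \emph{(ii)} exactly one half-crease is ``on'', and this traversal contributes to $\rho(\wh w)$ exactly one crease symbol — equal to $\crs_v$ if it is the leading half-crease of $\fw(\wh\sigma_{k+1})$, and to $\crs_v\inv$ if it is the trailing half-crease of $\fw(\wh\sigma_k)$. This sets up a bijection between the crease symbols $\sigma_{i_1},\dots,\sigma_{i_r}$ of $\rho(\wh w)$ and the crease-vertex traversals of $\wh w$. Applying $g$ flips the $\im\iota$-membership of \emph{both} adjacent symbols at every traversal, so the same traversal now contributes the opposite half-crease, i.e.\ $\crs_v\inv$ in place of $\crs_v$ and vice versa, while fact \emph{(i)} leaves the intervening ordinary symbols untouched.

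It remains to assemble this and handle parity. If $\wh w$ has even parity then so does $g\wh w$ (parity is invariant under the commuting involution $g$, and under $\sim_3$), so by Definition~\ref{def:FoldedBands} the underlying words of $\rho(\wh w)$ and $\rho(g\wh w)$ are $\fw(\wh w)$ and $\fw(g\wh w)$; by the previous paragraph these agree on ordinary symbols and have every crease symbol inverted, as cyclic words, and cutting at $\sigma_1$ gives $\rho(g\wh w)\sim_3\sigma'_1\ldots\sigma'_n$ exactly as stated. If $\wh w$ has odd parity, write $\wh w=\wh w'(g\wh w')$; then $g\wh w=(g\wh w')\wh w'$ is a cyclic rotation of $\wh w$, and by Definition~\ref{def:FoldedBands} $\rho(\wh w)$ and $\rho(g\wh w)$ have underlying words $\fw(\wh w')$ and $\fw(g\wh w')$ (up to $\sim_2$); applying the single-traversal analysis to the subword $\wh w'$ sitting inside the cyclic word $\wh w$ — whose two ends abut $g$-fixed (crease) vertices, so that each ``seam'' crease is assigned to exactly one of the two halves — shows $\fw(g\wh w')$ differs from $\fw(\wh w')$ precisely by inverting each crease symbol, which is again the claim up to $\sim_3$. (This is consistent with Lemma~\ref{lem:FoldedBands}, which only asserts $\rho(g\wh w)\approx\rho(\wh w)$, since $\sim_2$ ignores crease orientation and $\sim_3$ is part of $\approx$.)

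I expect the main obstacle to be the single crease-traversal analysis: verifying that each traversal of a crease vertex in a \emph{band} contributes exactly one crease symbol of a well-defined orientation — neither zero (a cancelling $\crs_v\inv\crs_v$) nor two (a forbidden $\crs_v^2$) — which requires combining \ref{en:FG1}, \ref{en:FG6}, the string/band conditions \ref{en:FS2} and \ref{en:FB2}, and property \ref{en:QU2} of the unfolding morphism, together with the bookkeeping for how the seam crease(s) of an odd-parity band are distributed between its two halves. Everything else is a routine unwinding of the definitions of $\pi$, $\iota$, $\fw$ and $\rho$.
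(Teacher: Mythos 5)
Your proof is correct and uses the same core mechanism as the paper's: by Remark~\ref{rem:NoFixedArrows} the $\integer_2$-action has no fixed arrows, so $g$ toggles $\im\iota$-membership of each symbol, which by Definition~\ref{def:FoldedWords} toggles which side the attached half-crease sits on and hence inverts its orientation, while $\pi\circ g=\pi$ leaves the ordinary symbols fixed. The one stylistic difference is that the paper observes (via \ref{en:QU2}, \ref{en:FG6} and the unfolding procedure) that consecutive crease symbols of $\rho(\wh w)$ alternate between arrows and formal inverses, and uses this to reduce the whole claim to checking only that $\sigma'_{i_1}=\sigma_{i_1}\inv$, whereas you verify the flip at every crease-vertex traversal directly; both are sound, and yours trades that shortcut for a slightly longer but arguably more self-contained case check. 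One small inaccuracy to flag: in the odd-parity case you assert the two ends of $\wh w'$ in $\wh w=\wh w'(g\wh w')$ ``abut $g$-fixed (crease) vertices,'' but this is not forced — a cyclic rotation can place the seam at a $g$-swapped pair of ordinary vertices. This does not damage your argument, though, since the traversal analysis is entirely local and Definition~\ref{def:FoldedBands} together with the $\sim_3$ in the statement already absorbs the ambiguity in how $\fw(\wh w)=w'w''\sim_2(w')^2$ is split.
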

	\begin{proof}
		Write $\wh w = \wh\sigma_1\ldots\wh\sigma_{\wh n}$. Throughout the proof, indices of $\wh w$ are taken modulo $\wh n$, and indices of $\rho(\wh w)$ are taken modulo $n$. It follows from Definition~\ref{def:FoldedWords} that for each symbol $\wh\sigma_i$ such that $\pi(s(\wh\sigma_i)),\pi(t(\wh\sigma_i)) \in \ov$, we have $\fw(\wh\sigma_i) = \fw(g\wh\sigma_i)$. So we need only consider symbols $\wh\sigma_i$ for which $\fw(\wh\sigma_i)$ contains a crease symbol.
	
		It follows from Definition~\ref{def:FoldedWords}, \ref{en:QU2} and the unfolding procedure that, for each $1 < j \leq r$, $\sigma_{i_j} \in \ca$ if and only if $\sigma_{i_{j-1}} \in (\ca)\inv$. That is, crease symbols alternate between arrows and formal inverses. Thus, it is sufficient to show that $\sigma'_{i_1} = \sigma_{i_1}\inv$. Let $k$ be the index such that $\fw(\wh\sigma_k)$ contains the symbol $\sigma_{i_1}$. Then either $\fw(\wh\sigma_k) = \sigma_{i_1 - 1}\sigma_{i_1}$ or $\fw(\wh\sigma_k)$ contains $\sigma_{i_1} \sigma_{i_1+1}$ as a subword (which is an equality if $t(\sigma_{i_1+1}) \in \ov$). In the former case, we have $\wh\sigma_{k} \not\in \im \iota$ and $\wh\sigma_{k+1} \in \im \iota$. Thus, $g\wh\sigma_{k} \in \im \iota$ and $g\wh\sigma_{k+1} \not\in \im \iota$. Since we necessarily have $\pi(t(g\wh\sigma_{k}))\in\cv$ in this case, it follows from Definition~\ref{def:FoldedWords} that $\fw(g\wh\sigma_{k+1})$ contains $\sigma_{i_1}\inv \sigma_{i_1+1}$ as a subword (which is an equality if $t(\sigma_{i_1+1}) \in \ov$). Similarly, in the latter case, we have $g\wh\sigma_{k-1} \not\in \im \iota$ and $g\wh\sigma_{k} \in \im \iota$, and thus $\fw(g\wh\sigma_{k-1})$ contains $\sigma_{i_1 - 1}\sigma_{i_1}\inv$ as a subword (which is an equality if $s(\sigma_{i_1 - 1}) \in \ov$). In conclusion, the first crease symbol of $\rho(g\wh w)$  (after a possible rightwards rotation if $i_1 = 1$) is the inverse of the first crease symbol of $\rho(\wh w)$. The result then follows.
	\end{proof}
	
	It is not difficult to verify that $\theta$ is a right inverse to $\rho$ on both the classes of strings and bands. Consequently, $\rho$ induces a surjective map from the class of strings of $A$ to the class of strings of $\unfA$, and a surjective map from the class of bands of $A$ to the class of bands of $\unfA$. Similarly, $\theta$ induces an injective map from the class of strings of $A$ to the class of strings of $\unfA$, and an injective map from the class of bands of $A$ to the class of bands of $\unfA$. Additionally, bands are mapped to bands of the same parity under the maps $\rho$ and $\theta$, as the following lemma shows.
	
	 \begin{lem} \label{lem:OddParity}
	 	Let $A$ be a folded gentle algebra. Then the following are equivalent.
	 	\begin{enumerate}[label = (\alph*)]
	 		\item $w \in \bnd_A$ is of odd parity.
	 		\item $w \in \bnd_A$ is asymmetric and of odd parity.
			\item $\theta(w) \in \bnd_{\wh A}$ is $\integer_2$-invariant and of odd parity.
			\item $\theta(w) \in \bnd_{\wh A}$ is of odd parity.
		\end{enumerate}
	 \end{lem}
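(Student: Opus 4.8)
The plan is to prove all the equivalences from the six implications $(b)\Rightarrow(a)$, $(a)\Rightarrow(b)$, $(c)\Rightarrow(d)$, $(d)\Rightarrow(c)$, $(a)\Rightarrow(d)$, $(d)\Rightarrow(a)$, of which $(b)\Rightarrow(a)$ and $(c)\Rightarrow(d)$ merely forget a hypothesis. For $(a)\Rightarrow(b)$ I would invoke the canonical form of a symmetric band recalled in Section~\ref{sec:Strings}: a symmetric $w\in\bnd_A$ has the shape $w=\crs\sigma_1\ldots\sigma_n\crs'\sigma_n\inv\ldots\sigma_1\inv$ with $\crs,\crs'$ crease symbols, so its number of crease symbols is $2+2k$ for some $k\geq0$ (the distinguished $\crs,\crs'$, plus each crease symbol in $\sigma_1\ldots\sigma_n$ paired with its inverse in $\sigma_n\inv\ldots\sigma_1\inv$); hence every symmetric band has even parity, and contrapositively odd parity forces asymmetry. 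Together with $(b)\Rightarrow(a)$ this gives $(a)\Leftrightarrow(b)$.

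For $(a)\Rightarrow(d)$: if $w$ has an odd number $r$ of crease symbols, then by Definition~\ref{def:FoldedBands} the underlying word of $\theta(w)$ is $\theta(w^2)$, and unwinding Definition~\ref{def:FoldedWords} shows that the second copy of $w$ inside $w^2$ contributes the same $r$ ordinary segments as the first, each carrying the extra twist $g^{r}=g$; thus $\theta(w^2)=\theta(w)\,(g\,\theta(w))$, which by Definition~\ref{def:Parity} (with witnessing subword $\theta(w)$) says that $\theta(w)$ is of odd parity. For $(d)\Rightarrow(c)$ it remains to check $\integer_2$-invariance: if $\theta(w)=\wh w'(g\wh w')$ then $g\,\theta(w)=(g\wh w')(g^2\wh w')=(g\wh w')\wh w'\sim_3\wh w'(g\wh w')=\theta(w)$, so $g\,\theta(w)\approx\theta(w)$, and combined with $(c)\Rightarrow(d)$ this gives $(c)\Leftrightarrow(d)$.

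The crux is $(d)\Rightarrow(a)$, which I would prove by contraposition. Suppose $w\in\bnd_A$ is of even parity, with $r$ crease symbols, so that the underlying word of $\theta(w)$ is $\theta(w)$ itself, of length $|w|-r$; also $\rho(\theta(w))=[w]$, since $\theta$ is a right inverse to $\rho$. If $\theta(w)$ were of odd parity, the odd-parity clause of Definition~\ref{def:FoldedBands} would give $\fw(\theta(w))=w'w''$ with $w'w''\sim_2(w')^2$ and $\rho(\theta(w))=[w']$, whence $|w'|=|w|$ and $|\fw(\theta(w))|=2|w'|=2|w|$. On the other hand $\fw$ reinserts at most one crease symbol at each of the $|\theta(w)|$ cyclic junctions of $\theta(w)$ — at a common crease vertex the two produced symbols have the form $\crs_v\inv\crs_v$ and cancel — and none strictly inside a segment, because by \ref{en:FG1}, \ref{en:FG6}, \ref{en:FS1} and \ref{en:FS2} a crease vertex of $A$ occurs in a band only as an endpoint of a crease symbol, so the ordinary segments of $w$ have all internal vertices in $\ov$; hence $|\fw(\theta(w))|\leq 2(|w|-r)\leq 2|w|$, the last inequality being strict unless $r=0$. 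If $r\geq1$ this contradicts $|\fw(\theta(w))|=2|w|$; if $r=0$ then $\theta(w)$ is the $\iota$-lift of $w$, $\fw(\theta(w))=w$, and $|w|=2|w|$ forces $|w|=0$, impossible by \ref{en:FB1}. Hence $\theta(w)$ has even parity, which with $(a)\Rightarrow(d)$ gives $(a)\Leftrightarrow(d)$ and finishes the proof.

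I expect the genuine work to be combinatorial bookkeeping rather than anything conceptual: pinning down the identity $\theta(w^2)=\theta(w)(g\,\theta(w))$ from Definition~\ref{def:FoldedWords}, establishing the structural fact about crease vertices (an easy consequence of the axioms, since a crease vertex carries at most one ordinary arrow into it and at most one out), and keeping straight which clause of the definition of $\rho$ applies in each parity case. A tidy alternative to the length count in $(d)\Rightarrow(a)$ is to track segments directly: a splitting $\theta(w)=\wh v(g\wh v)$ descends under $\pi$ to an identity $w\sim_2(w_0)^2$ with $w_0$ a proper subword, again contradicting \ref{en:FB3}; but the length argument above looks the cleanest to write out.
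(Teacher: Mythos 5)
Your proof is correct. Three of your nontrivial implications match the paper's argument closely: the contrapositive for $(a)\Rightarrow(b)$ (a symmetric band has an even number of crease symbols), the identity $\theta(\wt w^2)=\theta(\wt w)\,(g\theta(\wt w))$ driving $(a)\Rightarrow(d)$ (the paper proves the stronger $(b)\Rightarrow(c)$ via the same identity), and your $(d)\Rightarrow(c)$ computation $g\theta(w)=(g\wh w')\wh w'\sim_3\theta(w)$ usefully fills in a step the paper dismisses with ``clearly.'' Where you genuinely diverge is $(d)\Rightarrow(a)$: the paper argues forwards, rotating $\theta(w)$ into an alternating concatenation of $\iota$- and $g\iota$-segments $\wh w_0\ldots\wh w_{r-1}$, reading off from the splitting $\theta(w)=\wh w'(g\wh w')$ that the segment-period $r'$ is odd, and concluding $w=\rho\theta(w)=\fw(\wh w_0\ldots\wh w_{r'-1})$ has $r'$ crease symbols; you argue by contraposition via a length count, using the (correct and correctly justified) structural fact that a crease vertex of $A$ occurs in a band only at the endpoints of a crease symbol to get $|\fw(\theta(w))|\leq 2|\theta(w)|=2(|w|-r)$, incompatible with the odd-parity clause's $|\fw(\theta(w))|=2|w|$ unless $r=0$, which you handle separately via \ref{en:FB1}. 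Both work; the paper's segment-parity bookkeeping makes the ``odd'' in the conclusion directly visible, while your length count avoids the cyclic period analysis at the cost of the $r=0$ edge case. One caution: the ``tidy alternative'' you sketch at the end is not quite right as stated --- under the odd-parity clause of $\rho$ it is $\fw(\theta(w))$ (of length $2|w|$), not $w$ itself, that is $\sim_2$ a square, and $\rho$ is defined precisely to extract the half-word; so one cannot appeal directly to \ref{en:FB3} on $w$, and the length count you actually wrote out is the sound version.
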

	 \begin{proof}
	 	(a) $\Rightarrow$ (b): The result follows from the contrapositive statement: If $w$ is symmetric, then $w \approx \crs w' \crs' (w')\inv$ for some substring $w$ and crease symbols $\crs$ and $\crs'$. Thus, $w$ must contain an even number of crease symbols, and hence is of even parity as required.
	 	
	 	(b) $\Rightarrow$ (c): Since $w$ is of odd parity, it has at least one crease symbol. We can therefore assume without loss of generality (by Lemma~\ref{lem:BandIsos}(c)) that the final symbol of $w$ is a crease symbol. By \ref{en:FB2}, this implies that the first symbol of $w$ is ordinary. Now consider the word $\wt w \in \wrd_Q$ that is equivalent to $w \in \bnd_A$ and write $\theta(\wt w) = \wh\sigma_1 \ldots \wh \sigma_{n}$. Since the first symbol of $w$ is ordinary, this implies that $\wh \sigma_1 \in \im \iota$. Since $w$ is of odd parity, we have $\theta(w) = \theta(\wt w^2)$ by definition. In particular, there is an odd number of crease symbols in $w$, so by Definition~\ref{def:FoldedWords}, we must have $\wh\sigma_n \in \im \iota$. Now by Definition~\ref{def:FoldedWords}, this implies that we must have $\theta(\wt w^2) = \theta(\wt w) (g \theta(\wt w))$. Hence $\theta(w)$ is of odd parity. Clearly, $\theta(w)$ is also $\integer_2$-invariant, as required.
	 	
	 	(c) $\Rightarrow$ (d): This is obvious from the statement itself.
	 	
	 	(d) $\Rightarrow$ (a): We can assume (by the equivalence of rotation) that $\theta(w) = \wh w_0 \ldots \wh w_{r-1}$ for some non-simple substrings $\wh w_0,\ldots,\wh w_{r-1}$, where each symbol of $\wh w_i$ is in $\im \iota$ if and only if $i$ is even. In particular, the word $\fw \theta(w) \in \wrd_Q$ has precisely $r$ crease symbols. Now since $\theta(w)$ is of odd parity, we have $\theta(w) = \wh w' (g \wh w')$ for some substring $\wh w'$. But then there must exist some $r'$ such that $\theta(w) = \wh w_0 \ldots \wh w_{r'-1}\wh w_{r'} \ldots \wh w_{2r'-1}$ with $\wh w_i = g \wh w_{i + r'}$. This is possible only if $r'$ is odd. Moreover $w = \rho \theta(w) = \fw(\wh w_0 \ldots \wh w_{r'-1})$. Thus, $w$ has an odd number of crease symbols, as required.
	 \end{proof}
	 
	 A somewhat useful technical result is that every band $w$ of odd parity has a symbol ending on a vertex $v$ such that $v \in \ov$ (if $w \in \bnd_A$) or such that $g v \neq v$ (if $w \in \bnd_{\wh A}$). This is outlined in the following lemma.
	 
	 \begin{lem} \label{lem:OddOrdVert}
	 	Let $A$ be a finite-dimensional folded gentle algebra. Let $w=\sigma_1\ldots\sigma_n \in \bnd_A$ be of odd parity. Then there exists some $1 \leq i \leq n$ such that $t(\sigma_i) \in \ov$.
	 \end{lem}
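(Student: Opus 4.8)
First I would argue by contradiction, assuming $t(\sigma_i)\in\cv$ for every $i$. Since $w$ is a band, $t(\sigma_n)=s(\sigma_1)$ and $t(\sigma_i)=s(\sigma_{i+1})$, so every vertex visited by $w$ lies in $\cv$. The first step is to show that, read cyclically, $w$ alternates strictly between ordinary symbols and crease symbols. Two crease symbols can never be adjacent: if $\sigma_i\sigma_{i+1}$ are both crease symbols then $t(\sigma_i)=s(\sigma_{i+1})$ forces them to be creases at the same vertex $u$ (crease loops are loops), and $\crs_u^{\pm1}\crs_u^{\pm1}$ is then either non-reduced or contains a square of a crease symbol, contradicting \ref{en:FS1} or \ref{en:FS2}. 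Two ordinary symbols can never be adjacent either: if $\sigma_i,\sigma_{i+1}$ are ordinary with $v:=t(\sigma_i)=s(\sigma_{i+1})\in\cv$, then by \ref{en:FG1} the crease loop $\crs_v$ already accounts for one arrow of source $v$ and one of target $v$, so $v$ has at most one ordinary arrow of source $v$ and at most one of target $v$; running through the four sign patterns of $(\sigma_i,\sigma_{i+1})$ gives a contradiction in each case — the ``mixed'' patterns $\gamma(\gamma')^{-1}$ and $\gamma^{-1}\gamma'$ force $\gamma=\gamma'$ and hence $\sigma_{i+1}=\sigma_i^{-1}$, while the ``pure'' patterns $\gamma\gamma'$ and $\gamma^{-1}(\gamma')^{-1}$ produce, via \ref{en:FG6}, a subword of $w$ (respectively the inverse of a subword of $w$) lying in $\orel$. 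Hence $w=p_1\crs_{v_1}^{\delta_1}p_2\crs_{v_2}^{\delta_2}\cdots p_r\crs_{v_r}^{\delta_r}$ cyclically, where the $p_k$ are ordinary symbols, each $v_k\in\cv$, each $\delta_k\in\{1,-1\}$, and $r$ is the number of crease symbols of $w$, which is odd by the hypothesis of odd parity. Moreover no $p_k$ is an ordinary loop: if $\alpha\in\oa$ were a loop at a crease vertex $v$, then $(\crs_v\alpha)^m$ would contain no subpath in $\orel$ (no element of $\orel$ involves a crease loop) and no square $\crs_v\crs_v$, hence would be a nonzero element of $A$ for every $m$, forcing $\dim_K A=\infty$. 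Thus each $p_k$ joins two distinct crease vertices.

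Next I would forget the creases and consider the closed walk $p_1p_2\cdots p_r$ in $Q$ (each $\crs_{v_k}^{\delta_k}$ is a loop, so the $p_k$ concatenate to a closed walk on crease vertices). Freely reduce this closed walk, cancelling adjacent pairs $\gamma\gamma^{-1}$, $\gamma^{-1}\gamma$ and also cancelling across the cyclic join, to obtain a cyclically reduced closed walk $R=q_1q_2\cdots q_s$ on crease vertices. Every cancellation deletes two symbols, so $s\equiv r\equiv 1\pmod 2$; in particular $s$ is odd, and $s=1$ is impossible since a closed walk of length $1$ is a loop and no $p_k$ is a loop, so $s\ge 3$. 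I then claim $R$ is either entirely direct (every symbol an arrow) or entirely inverse (every symbol a formal inverse). If not, some cyclically consecutive pair $q_iq_{i+1}$ has opposite orientations; but at the crease vertex $v$ where $q_i$ ends and $q_{i+1}$ begins, the only ordinary arrows are the unique one into $v$ and the unique one out of $v$, and an orientation switch forces both $q_i$ and $q_{i+1}$ to traverse the same one of these, i.e.\ $q_{i+1}=q_i^{-1}$, contradicting reducedness of $R$. Replacing $R$ by $R^{-1}$ if necessary, I may assume $R$ is a directed cycle $v_0\xrightarrow{q_1}v_1\xrightarrow{q_2}\cdots\xrightarrow{q_s}v_s=v_0$ through crease vertices, with $s$ odd and $s\ge 3$.

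Finally I would lift $R$ back to a direct band to contradict finite-dimensionality. At each $v_i$ the arrow $q_i$ is the unique ordinary arrow with target $v_i$ and $q_{i+1}$ the unique ordinary arrow with source $v_i$, so $t(q_i)=s(q_{i+1})=v_i\in\cv$ and hence $q_iq_{i+1}\in\orel$ by \ref{en:FG6}. Put $w'':=q_1\crs_{v_1}q_2\crs_{v_2}\cdots q_s\crs_{v_s}$, a closed path at $v_0$. Its only length-$2$ subpaths are $q_i\crs_{v_i}$ and $\crs_{v_i}q_{i+1}$, none of which lies in $\orel$ (no relation in $\orel$ involves a crease loop), and no square $\crs_v\crs_v$ occurs in $w''$ since the creases are separated by the $q_i$; the same holds for every power $(w'')^m=q_1\crs_{v_1}\cdots q_s\crs_{v_s}q_1\crs_{v_1}\cdots$. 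Hence each $(w'')^m$ is a nonzero element of $A$ of length $2sm$, and these are linearly independent, so $\dim_K A=\infty$ — a contradiction. Therefore some $t(\sigma_i)$ must lie in $\ov$.

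The step I expect to be the main obstacle is not the overall strategy but the supporting combinatorics: carefully verifying the strict alternation of ordinary and crease symbols in the first paragraph (the four-case check using \ref{en:FG1}, \ref{en:FG6} and reducedness), verifying that a cyclically reduced closed walk supported on crease vertices cannot change orientation, and — for the final contradiction — checking that $w''$ is a genuine word and that each $(w'')^m$ is really nonzero in $A$, i.e.\ that the quadratic crease relations in $\crel$ never collapse it. The oddness of $r$ enters only to force the parity, and hence the nontriviality, of the reduced cycle $R$; for bands of even parity the reduced walk can collapse entirely, consistent with the existence of even-parity bands supported on crease vertices.
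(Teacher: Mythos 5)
Your proof is correct and diverges from the paper's after a shared initial step. Both arguments begin by deducing, from \ref{en:FG1}, \ref{en:FG6}, reducedness and relation-avoidance, that a band supported entirely on crease vertices must alternate strictly between ordinary and crease symbols, so $n$ is even and there are $r = n/2$ ordinary symbols and $r$ crease symbols, with $r$ odd by the odd-parity hypothesis. At that point the paper observes that the ordinary arrows traversed by $w$ all join crease vertices, each of which admits at most one ordinary arrow in and one out, so they trace a connected subquiver that is either a linearly oriented $A_m$ quiver or a directed cycle; it rules out the cycle by finite-dimensionality, and then uses the fact that a closed walk on a tree traverses each edge an even number of times to conclude that $r$ is even, contradicting odd parity. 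You instead feed the odd parity in at the reduction step: the closed walk of ordinary symbols freely and cyclically reduces to a nonempty walk $R$ of odd length $\geq 3$ (it cannot collapse because $r$ is odd, and length $1$ is excluded by your loop argument); the same one-in/one-out constraint at crease vertices then forces $R$ to be a purely directed cycle; and interleaving crease loops to form $(w'')^m$ manufactures infinitely many nonzero paths, contradicting finite-dimensionality directly. Both proofs hinge on the same directed-path-versus-cycle dichotomy, and both ultimately need finite-dimensionality to exclude the cycle, but the paper reaches its contradiction against the odd-parity hypothesis while you reach yours against finite-dimensionality; your version also sidesteps the preliminary rotation the paper performs via Lemma~\ref{lem:OddParity} and Lemma~\ref{lem:BandIsos}(c), treating everything cyclically from the outset. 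One small point left implicit in your write-up is that $w$ contains at least one ordinary symbol, so that the walk $p_1\cdots p_r$ is genuinely nonempty; this follows from \ref{en:FB1} and \ref{en:FB2} because a nonsimple word built only from crease symbols is a power of a single crease loop and so cannot be a band.
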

	 \begin{proof}
	 	Suppose for a contradiction that $t(\sigma_i) \in \cv$ for all $i$. Let $\alpha_1\ldots\alpha_n \in Q_1$ be the (not necessarily distinct) arrows corresponding to the symbols $\sigma_1,\ldots,\sigma_n$. By \ref{en:FB1}, $w$ is neither a simple word nor a crease symbol, and hence \ref{en:FB2} implies that there exists some $i$ such that $\alpha_i \in \oa$. By Lemma~\ref{lem:OddParity}, $w$ is asymmetric, so we can use Lemma~\ref{lem:BandIsos}(c) to assume without loss of generality that $\alpha_1 \in \oa$. Now since $s(\alpha_1),t(\alpha_1) \in \cv$, there exist crease loops $\crs_{s(\alpha_1)},\crs_{t(\alpha_1)}\in \ca$. By \ref{en:FG1} this implies that if there exists any other arrow $\alpha' \in Q_1 \setminus \{\alpha_1,\crs_{s(\alpha_1)}\}$ incident to the vertex $s(\alpha_1)$, then it is unique and is such that $s(\alpha')\neq t(\alpha') = s(\alpha)$. Likewise, \ref{en:FG1} implies that if there exists any other arrow $\alpha'' \in Q_1 \setminus \{\alpha_1,\crs_{t(\alpha_1)}\}$ incident to the vertex $t(\alpha_1)$, then it is also unique and is such that $t(\alpha')\neq s(\alpha') = t(\alpha)$. Moreover, \ref{en:FG6} then implies that $\alpha'\alpha_1, \alpha_1\alpha'' \in \orel$. Proceeding inductively, \ref{en:FB2} implies that each $\sigma_{2i+1}$ must be an ordinary symbol (and hence each $\alpha_{2i+1}\in \oa$) and each $\sigma_{2i}$ is a crease symbol (and hence each $\alpha_{2i}\in \ca$). In particular, $n$ must be even, as \ref{en:FG6},  \ref{en:FB2} and $\sigma_1$ being ordinary together imply that $\sigma_n$ is a crease symbol. In addition, the arrows $\alpha_{1},\alpha_3,\ldots,\alpha_{n-1}$ trace a linearly oriented subquiver $Q'\subseteq Q$ that is of Dynkin type $A_m$ (for some $m$) --- one can verify that $Q'$ is not a cycle (that is, it is not of extended Dynkin type $\wt A_m$), as the algebra would otherwise be infinite-dimensional. Now since $s(w)=t(w)$ by \ref{en:FB2}, and since $Q'$ is a linearly oriented subquiver of Dynkin type $A_m$, it follows that $\frac{n}{2}$ must also be even (for any arrow $\beta$ in a cyclic walk in $Q'$, the formal inverse $\beta\inv$ must be walked on to return to the start). But then 
	 	\begin{equation*}
	 		|\{\text{crease symbols of } w\}| = |\{\sigma_2,\sigma_4,\ldots,\sigma_n\}| = \frac{n}{2}
	 	\end{equation*}
	 	is even, which contradicts the supposition that $w$ is of odd parity. Thus, the assumption that $t(\sigma_i) \in \cv$ for all $i$ must be false, as required.
	 \end{proof}
	
	Finally, the relationship between $\integer_2$-invariant strings/bands and symmetric strings/bands is outlined by the following.
	
	\begin{lem} \label{lem:FoldedSymmetric}
		Let $A$ be a folded gentle algebra.
		\begin{enumerate}[label = (\alph*)]
			\item A string $w \in \str_A$ is symmetric if and only if $\theta(w)$ is $\integer_2$-invariant.
			\item A band $w \in \bnd_A$ is symmetric if and only if $\theta(w)$ is $\integer_2$-invariant and of even parity.
		\end{enumerate}
	\end{lem}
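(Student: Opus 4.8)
The plan is to derive both statements from the combinatorics of the unfolded-word map $\theta$ of Definition~\ref{def:FoldedWords}. The two recurring inputs are that $\theta$ is insensitive to the orientation of crease symbols, and that (by Remark~\ref{rem:NoFixedArrows}) the arrow set $\unfQ_1$ is the disjoint union of $\im\iota$ and $g\cdot\im\iota$, which $g$ interchanges.

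I would first clear away the degenerate cases. A simple string $\stp_v$ is never symmetric, and $v\in\ov$ gives $\theta(\stp_v)=\stp_{\iota(v)}$ with $g\iota(v)\neq\iota(v)$, so $\theta(\stp_v)$ is not $\integer_2$-invariant and both sides of (a) fail. A single crease symbol $\crs_v$ is a symmetric string, and $\theta(\crs_v)=\stp_{\iota(v)}$ with $\iota(v)$ a $g$-fixed vertex, so it is $\integer_2$-invariant; both sides of (a) hold. For (b), if $w$ is of odd parity then $w$ is asymmetric while $\theta(w)$ is $\integer_2$-invariant of odd parity by Lemma~\ref{lem:OddParity}, so the left side is false and the right side is false because its parity clause fails. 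This leaves non-simple, non-crease strings and even-parity bands, and since $\theta(w)$ is automatically of even parity in the latter case (Lemma~\ref{lem:OddParity}), it suffices to prove for such $w$ that $w$ is symmetric if and only if $g\,\theta(w)$ equals $\theta(w)\inv$ up to rotation --- which is exactly $\integer_2$-invariance of $\theta(w)$, since $\theta(w)$ now has at least one arrow so $g\,\theta(w)=\theta(w)$ is impossible, and in the even-parity band case $g\,\theta(w)\sim_3\theta(w)$ is also impossible (an elementary fact about cyclic words with no $g$-fixed arrow, which would otherwise force odd parity).

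For the forward implication I would put $w$ into its symmetric normal form and compute $\theta(w)$ from Definition~\ref{def:FoldedWords}. A non-simple symmetric string is equivalent to one of the form $u\,\crs\,u\inv$ with $\crs$ the central crease symbol---a length-two palindromic overlap is impossible by \ref{en:FS1}--\ref{en:FS2}---and a symmetric band is equivalent to one of the form $\crs\,u\,\crs'\,u\inv$. In either case $\theta(w)$ is a word whose two ``mirror halves'' are the $\iota$-images of $u$ and $u\inv$, lying in opposite cosets of $\im\iota$; since $g$ interchanges these cosets it sends $\theta(w)$ to $\theta(w)\inv$ (up to a rotation in the band case), so $\theta(w)$ is $\integer_2$-invariant.

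The converse is the delicate step and the one I expect to require real care: one must recover the strong $\sim_2$- (and, for bands, $\sim_3$-) level symmetry of $w$ from the mere $\approx$-level invariance of $\theta(w)$. The cleanest route uses that $\theta$ is a right inverse to $\rho$, that $\fw$ commutes with inversion (hence $\rho(\theta(w)\inv)=\rho(\theta(w))\inv$), and the analogue for strings/bands of Lemma~\ref{lem:Z2Invert} (hence $\rho(g\,\theta(w))$ is $\rho(\theta(w))$ with all crease symbols inverted, up to rotation, so in particular $\rho(g\,\theta(w))\sim_2\rho(\theta(w))$ up to rotation). Combining these with $g\,\theta(w)=\theta(w)\inv$ gives $\rho(\theta(w))\inv\sim_2\rho(\theta(w))$ up to rotation, i.e. $\rho(\theta(w))$ is symmetric; since $\rho(\theta(w))\approx w$ and being symmetric is an $\approx$-invariant property, $w$ is symmetric. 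Alternatively one argues directly: the block decomposition $\theta(w)=(g^{0}\wh w_{0})(g^{1}\wh w_{1})\cdots(g^{r}\wh w_{r})$ can be read off from $\theta(w)$ alone, because a string of $A$ has no two consecutive crease symbols (by \ref{en:FS1}--\ref{en:FS2}), so the block boundaries of $\theta(w)$ are precisely the places where consecutive arrows switch cosets of $\im\iota$; pushing $g\,\theta(w)=\theta(w)\inv$ through this canonical decomposition and using injectivity of $\iota$ forces $\wh w_{j}=\wh w_{r-j}\inv$ and $|\wh w_{j}|=|\wh w_{r-j}|$ for all $j$, which says exactly that the block-and-crease data of $w$ is palindromic, i.e. $w\sim_2 w\inv$. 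The only residual bookkeeping in this route concerns empty boundary blocks---which appear precisely when $w$ begins or ends with a crease symbol, by \ref{en:FS3}---and, in the band case, the choice of rotation placing the comparison at a crease position.
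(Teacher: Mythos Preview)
Your proposal is correct and follows essentially the same approach as the paper: both directions hinge on the observation that (since $g$ fixes no arrow of $\unfQ$) $\integer_2$-invariance of $\theta(w)$ amounts to $g\,\theta(w)=\theta(w)\inv$ (up to rotation for bands, with the odd-parity case excluded via Lemma~\ref{lem:OddParity}), and then one passes between this and the palindromic normal form $u\,\crs\,u\inv$ or $\crs\,u\,\crs'\,u\inv$ of $w$. Your treatment is more careful with degenerate cases and offers an alternative route via $\rho$ and a string analogue of Lemma~\ref{lem:Z2Invert}, but the paper's proof is just the terse version of your direct block-decomposition argument.
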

	\begin{proof}
		(a) Suppose that $\theta(w) = \wh \sigma_1\ldots \wh\sigma_n$ is a $\integer_2$-invariant string. Since $g\wh\sigma_i \neq \wh\sigma_i$ for any $1 \leq i \leq n$ (by Remark~\ref{rem:NoFixedArrows}), we cannot have $g\theta(w) = \theta(w)$. Thus, we must have $g\theta(w) = (\theta(w))\inv$, and hence, $\wh \sigma_i = g\wh \sigma_{n+1-i}\inv$ for each $1 \leq i \leq n$. In particular, $\pi(\wh \sigma_i) = \pi(\wh \sigma_{n+1-i}\inv)$ for each $1 \leq i \leq n$. So $w$ must be symmetric. Conversely, if $w$ is a symmetric string, then we must have $w= \sigma_1\ldots\sigma_r \crs_v \sigma_r\inv \ldots\sigma_1\inv$ for some crease symbol $\crs_v$. Thus, $\theta(w) = \sigma_1\ldots\sigma_r (g\sigma_r)\inv \ldots(g\sigma_1)\inv$, which is clearly $\integer_2$-invariant, as required.
		
		(b) Firstly, $\theta(w)$ is a $\integer_2$-invariant band of even parity if and only if $g\theta(w) \sim_3 (\theta(w))\inv$. This follows because the only other possibility is that $g\theta(w) \sim_3 \theta(w)$, but then $\theta(w)$ is of odd parity. Secondly, $g\theta(w) \sim_3 (\theta(w))\inv$ if and only if $\theta(w) \sim_3 \wh\sigma_1 \ldots \wh \sigma_n (g\wh\sigma_n)\inv\ldots(g\wh\sigma_1)\inv$. But this is true if and only if $w \approx \crs_{\pi(s(\wh\sigma_1))}w'\crs_{\pi(t(\wh\sigma_n))}(w')\inv$ for some substring $w'$, which is clearly symmetric.
	\end{proof}
	
	% =====================================================
	\section{The Module Category of Folded Gentle Algebras} \label{sec:FoldedModule}
	% =====================================================
	In this section, we will provide a description of the module category of folded gentle algebras, which includes a classification of the indecomposable modules and the Auslander-Reiten sequences. Throughout this section, $(K,Q,Z)$ is a folded gentle triple and $A=K Q / \langle Z \rangle$ is the corresponding folded gentle algebra. Moreover, $(\unfQ,\unfZ)$ will be the unfolded gentle pair corresponding to $(K,Q,Z)$ by the unfolding procedure, and $g \in \integer_2$ will be the folding action on $(\unfQ,\unfZ)$. The map $\pi\colon \unfQ \rightarrow Q \setminus \ca$ will be the quiver folding morphism and $\iota$ will be some fixed choice of quiver unfolding morphism. By $\unfA$, we mean the unfolded gentle algebra $K \unfQ / \langle \unfZ \rangle$.
	
	\subsection{Unfolding via functors} \label{sec:UnfFunctor}
	One can define a categorical notion of the unfolding procedure. This involves defining an exact functor, faithful, $K$-linear functor
	\begin{equation*}
		U\colon \mod*A \rightarrow \mod* \unfA.
	\end{equation*}
	
	For any object $M \in \mod* A$, we define $\wh M = U(M)$ as follows. The underlying $K$-vector space of $\wh M$ is given by
	\begin{equation*}
		\wh M = \bigoplus_{\wh{v} \in \unfQ_0} \wh M_{\wh{v}},
	\end{equation*}
	where each $\wh M_{\wh{v}}$ is a distinct copy of $M_v = M \stp_{\pi(\wh{v})}$, the vector subspace of $M$ induced by the action of $\stp_{\pi(\wh{v})}$. Consequently, there is a single copy of each vector subspace $M_v \subseteq \wh M$ when $v \in \cv$, and precisely two distinct copies of each vector subspace $M_v \subseteq \wh M$ when $v \in \ov$.
	
	We can now enrich $\wh M$ with the structure of an $\unfA$-module. For each  $\wh{m} \in \wh{M}$ and $\wh{v} \in \unfQ_0$, define
	\begin{equation*}
		\wh{m}\stp_{\wh v} =
		\begin{cases}
			\wh{m}	& \text{if } \wh m \in \wh M_{\wh v}, \\
			0			& \text{otherwise,}
		\end{cases}
	\end{equation*}
	For the action of non-idempotent elements of $\wh A$ on $\wh M$, we first note that for each vertex $\wh v \in \wh Q_0$ such that $\pi(\wh v) \in \cv$, there is a natural automorphic $A$-action $M_{\wh v} \rightarrow  M_{\wh v}$ given by right multiplication by $\crs_{s(\pi(\wh v))}$. In addition, for each arrow $\wh \alpha \in \wh Q_1$, there is a natural right $A$-action $\pi(\wh\alpha)\colon \wh M_{s(\wh\alpha)} \rightarrow \wh M_{t(\wh\alpha)}$. We can thus define $\wh m \wh \alpha = \wh m \fw(\wh\alpha)$ for each $\wh{m} \in \wh{M}$ and $\wh{\alpha} \in \unfQ_1$, where $\fw$ is as in Definition~\ref{def:FoldedWords}. 
	
	For any morphism $f\in \Hom_{A}(M,M')$, we define $\wh f = U(f)$ by first writing $f = (f_{v})_{v \in Q_0}$ and $\wh f = (\wh f_{\wh v})_{\wh{v} \in \unfQ_0}$, where each $f_{v}\colon {M}\stp_v \rightarrow {M}'\stp_v$ and $\wh f_{\wh v}\colon \wh{M}_{\wh v} \rightarrow \wh{M}_{\wh v}'$ is $K$-linear, and then by defining $\wh f_{\wh v} = f_{\pi(\wh{v})}$.
	
	\begin{defn} \label{def:UnfoldingFunctor}
		We call the functor $U$, as constructed above, the \emph{unfolding functor} of $\mod*A$.
	\end{defn}
	
	A number of useful remarks result from this definition.
	
	\begin{rem} \label{rem:UImage}
		The functor $U$ is clearly exact, $K$-linear and faithful. There are a number of other useful properties that we will use throughout this section.
		\begin{enumerate}[label=(\alph*)]
			\item The image of $U$ is closed under compositions of morphisms and is thus an abelian subcategory of $\mod*\wh A$. In particular, $\mod*A$ is equivalent to the subcategory $\im U \subset \mod*\wh A$. However, this subcategory is not full. This can result in decomposable objects in $\mod*\wh A$ having a local endomorphism algebra in $\im U$. Thus, indecomposable objects in the category $\im U$ are not necessarily indecomposable objects in $\mod*\wh A$. In addition, there exist some objects that are not isomorphic in $\im U$ but which are isomorphic in $\mod*\wh A$.
			\item Every object in $\mod*A$ and $\im U$ can be viewed simultaneously as a right $A$-module and right $\wh A$-module in the natural way. This is described in the definition of $U$. In particular, every $\wh A$-action on an object in $\im U$ has an equivalent $A$-action given $K$-linearly by the function $\fw$. Conversely, every object in $\mod*A$ has the structure of an $\wh A$-module in the natural way via the map $\fw$. This will frequently be exploited in this section.
			\item As already mentioned in the definition of $U$, for any object $\wh M \in \im U$ and any crease loop $\crs_v \in \ca$, it follows that the right action of $\crs_v$ on $\wh M$ is a non-trivial automorphic action on the subspace $\wh M \stp_{\iota(v)}$ and a zero action on all subspaces $\wh M \stp_{\iota(u)}$ with $u \neq v$. Since $\crs_v$ satisfies an irreducible quadratic relation by \ref{en:FG5}, the elements $\wh m, \wh m \crs_v \in \wh M\stp_{\iota(v)}$ are linearly independent.
		\end{enumerate}
	\end{rem}
	
	\subsection{Unfolding String Modules}
	We will now turn our attention to the relationship between string modules of both $A$ and $\wh A$ induced by the unfolding functor $U$.
	
	\begin{prop} \label{prop:UStrings}
		Let $M \in \mod*A$ and let $\wh w = \wh\sigma_1\ldots \wh\sigma_n \in \str_{\unfA}$. Then the following are equivalent.
		\begin{enumerate}[label=(\alph*)]
			\item $M(\wh w)$ is isomorphic to a direct summand of $U(M)$.
			\item $M(\wh w) \oplus M(g\wh w)$ is isomorphic to a direct summand of $U(M)$.
			\item $M(\rho(\wh w))$ is isomorphic to a direct summand of $M$.
		\end{enumerate}
		In particular, $U(M(\rho(\wh w))) \cong M(\wh w) \oplus M(g\wh w)$.
	\end{prop}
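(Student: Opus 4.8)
The plan is to establish the cycle of implications $(c) \Rightarrow (b) \Rightarrow (a) \Rightarrow (c)$, and then obtain the final identity $U(M(\rho(\wh w))) \cong M(\wh w) \oplus M(g\wh w)$ as a byproduct of the argument for $(c) \Rightarrow (b)$. The central computation, which I would isolate as the engine of the whole proof, is the explicit description of $U(M(\rho(\wh w)))$: I would show directly from the construction of the unfolding functor $U$ (Definition~\ref{def:UnfoldingFunctor}) and the definition of the folded string $\rho(\wh w) = \sigma\,\fw(\wh w)\,\sigma'$ (Definition~\ref{def:FoldedStrings}) that, as an $\wh A$-module, $U(M(\rho(\wh w)))$ decomposes as $M(\wh w) \oplus M(g\wh w)$. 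The key mechanism is Remark~\ref{rem:UImage}: for each ordinary vertex $v \in \ov$ of $Q$ lying on $\rho(\wh w)$, the corresponding weight space $M(\rho(\wh w))_v$ gets duplicated into $\wh M_{\wh v}$ and $\wh M_{\wh v'}$, while for a crease vertex $v \in \cv$ on $\rho(\wh w)$ the single weight space $\wh M_{\wh v}$ carries the non-trivial automorphic action of $\crs_v$, which (since $\viso$ is a degree-$2$ field extension) is what allows the two "strands" over the ordinary vertices to be identified with each other through the crease. Tracking the standard basis $b_0,\ldots,b_m$ of $M(\rho(\wh w))$ through $U$: the subwords $b_i$ indexed by ordinary portions of $\rho(\wh w)$ sitting between two consecutive crease symbols produce, in $\wh M$, precisely the standard basis of one copy of $\iota$-image strand and, after applying $\crs$, of the $g\iota$-image strand; matching this against Definition~\ref{def:FoldedWords} for $\theta$ and the construction of string modules in Section~\ref{sec:StringModules} gives the two summands $M(\theta(\rho(\wh w)))$ and $M(g\,\theta(\rho(\wh w)))$. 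Since $\theta$ is a right inverse to $\rho$ on strings (stated just before Definition~\ref{def:FoldedStrings}), $\theta(\rho(\wh w)) \approx \wh w$ or $\theta(\rho(\wh w)) \approx g\wh w$ (Lemma~\ref{lem:FoldedStrings}), and $M(g\wh w) \cong M(g\,\theta(\rho(\wh w)))$ by Lemma~\ref{lem:StringIsos}; either way the unordered pair of summands is $\{M(\wh w), M(g\wh w)\}$. This simultaneously proves the final sentence of the proposition.

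With that computation in hand, the implications are quick. For $(c) \Rightarrow (b)$: if $M(\rho(\wh w))$ is a direct summand of $M$, then since $U$ is additive (indeed $K$-linear and exact, Remark~\ref{rem:UImage}) it sends the splitting $M \cong M(\rho(\wh w)) \oplus M'$ to a splitting $U(M) \cong U(M(\rho(\wh w))) \oplus U(M') \cong M(\wh w) \oplus M(g\wh w) \oplus U(M')$, giving $(b)$. The implication $(b) \Rightarrow (a)$ is trivial. For $(a) \Rightarrow (c)$: suppose $M(\wh w)$ is a direct summand of $U(M)$. I would argue that the submodule (and quotient) structure witnessing this splitting lies in $\im U$, i.e.\ is compatible with the $g$-action that identifies $\im U$ inside $\mod*\wh A$ — more precisely, since $U(M)$ is $g$-invariant (it is built symmetrically over the orbits of $\unfQ_0$), any direct summand isomorphic to $M(\wh w)$ forces a complementary direct summand isomorphic to $M(g\wh w)$, and the sum $M(\wh w) \oplus M(g\wh w)$ is a $g$-stable direct summand of $U(M)$; a $g$-stable direct summand of an object in $\im U$ is itself in $\im U$, hence of the form $U(N)$ for a direct summand $N$ of $M$ with $U(N) \cong M(\wh w) \oplus M(g\wh w)$; comparing with the identity already established, $N \cong M(\rho(\wh w))$ by faithfulness of $U$ together with the fact that $U$ reflects isomorphisms on the relevant modules (or by directly matching dimension vectors and the $\crs$-action). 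This yields $(c)$.

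The main obstacle is the implication $(a) \Rightarrow (c)$, specifically the step showing that a direct summand of $U(M)$ isomorphic to $M(\wh w)$ "descends" to a direct summand of $M$. The subtlety, flagged in Remark~\ref{rem:UImage}(a), is that $\im U$ is not a full subcategory of $\mod*\wh A$, so one cannot simply pull back the idempotent endomorphism cutting out $M(\wh w)$; the idempotent need not commute with the $g$-action and so need not lie in $\im U = \mod*A$. The way around this is to symmetrize: pass from the idempotent $e$ onto $M(\wh w)$ to a $g$-invariant idempotent built from $e$ and $geg^{-1}$ (using that $M(\wh w)$ and $M(g\wh w)$ are non-isomorphic indecomposables when $\wh w \not\approx g\wh w$, and handling the $\integer_2$-invariant case $\wh w \approx g\wh w$ separately, where $\rho(\wh w)$ is a symmetric string by Lemma~\ref{lem:FoldedSymmetric}(a) and a slightly different but still direct argument applies). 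I would therefore split $(a) \Rightarrow (c)$ into the case $\wh w$ not $\integer_2$-invariant and the case $\wh w$ $\integer_2$-invariant, and in each case exhibit the required idempotent inside $\End_A(M) = \End_{\wh A}(U(M)) \cap \im U$ explicitly.
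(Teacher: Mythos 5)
Your argument for $(c)\Rightarrow(b)$, the trivial $(b)\Rightarrow(a)$, and the final identity $U(M(\rho(\wh w)))\cong M(\wh w)\oplus M(g\wh w)$ is essentially the paper's proof: both track the standard basis of $M(\rho(\wh w))$ through the duplication-over-ordinary-vertices and identification-through-creases built into $U$, and both conclude by observing that the two resulting strands are $\theta(\rho(\wh w))\approx\wh w$ and $g\theta(\rho(\wh w))\approx g\wh w$.

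The genuine divergence, and the gap, is in $(a)\Rightarrow(c)$. Your idempotent-symmetrization strategy has two unresolved points. First, in the $\integer_2$-invariant case $\wh w\approx g\wh w$, writing ``a slightly different but still direct argument applies'' is not a proof: here $M(\wh w)\cong M(g\wh w)$, so the Krull--Schmidt argument from $g$-invariance of $U(M)$ only yields that $M(\wh w)$ occurs with multiplicity $\geq 1$, while $U(M(\rho(\wh w)))\cong M(\wh w)^2$ requires multiplicity $\geq 2$. Establishing this is precisely what is at stake, and it cannot be waved away --- it is this very case that breaks the idempotent approach, since one cannot separate $e$ from $geg^{-1}$ by ``they cut out non-isomorphic summands.'' Second, even in the non-invariant case, your step from ``a $g$-stable $\wh A$-direct summand of $U(M)$'' to ``lies in $\im U$'' is not automatic. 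Recall from Remark~\ref{rem:UImage} that $\im U$ is not a full subcategory, and an object of $\im U$ carries an $A$-module structure via $\fw$ that includes the crease actions $\crs_v$. Stability of a subspace under the $K$-linear $g$-action on $U(M)$ does not by itself give closure under the crease action, so this compatibility must be argued. You would also need to check that $e$ and $geg^{-1}$ are actually orthogonal (equivalently that their images are complementary), which requires choosing $e$ carefully rather than taking it to be arbitrary.

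The paper sidesteps all of this. It picks an indecomposable direct summand $N\subseteq M$ with $M(\wh w)$ a direct summand of $U(N)$, then explicitly builds a subspace $W\subseteq U(N)$ spanned by the words $w_i=w_0\,\kappa(\wh b_i)$ (and the auxiliary crease-shifted elements $\wt w_i$), invokes Remark~\ref{rem:UImage}(b) to see that $W$ is an $A$-submodule, identifies $W\cong M(\rho(\wh w))$ as a string $A$-module, and finally shows $N=M(\rho(\wh w))$ by the short contradiction argument (any element of $N/M(\rho(\wh w))$ hitting $M(\rho(\wh w))$ under an arrow of $Q$ would give an element hitting $M(\wh w)$ under an arrow of $\wh Q$, contradicting that $M(\wh w)$ is a direct summand of $U(N)$). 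This avoids any case split on $\integer_2$-invariance and never passes through idempotents in $\End_{\wh A}(U(M))$. You correctly identified the non-fullness of $\im U$ as the central obstruction, but the way through it is the direct submodule construction, not a symmetrized idempotent.
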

	\begin{proof}
		For readability purposes, we will adopt the following notation throughout the proof. We will write $w = \rho(\wh w)$, $\sigma_i = \pi (\wh \sigma_i)$ for each $i$, $v = \pi(\wh v)$ for each $\wh v \in \unfQ_0$, and $\alpha = \pi(\wh \alpha)$ for each $\wh \alpha \in \unfQ_1$.
		
		(a) $\Rightarrow$ (c): Since $U$ is additive, there exists an indecomposable direct summand $N \subseteq M$ such that $M(\wh w) \subseteq U(N)$. Recall that $M(\wh w)$ has basis $\wh B=\{\wh b_0, \ldots, \wh b_n\}$, where each $\wh b_i$ is the subword $\wh \sigma_1\ldots\wh\sigma_i$, and that the right $\wh A$-action on $M(\wh w)$ is given by concatenation (subject to the identifications \ref{en:SM1}-\ref{en:SM4}). We claim that $N \cong M(w)$. To prove this claim, we will use the definition of $U$ and $M(\wh w)$ to inductively construct a basis for $N$ via the actions of $\wh A$ on $M(\wh w)$. We will then show that this is precisely a basis for $M(w)$. 
		
		So first consider the basis element $\wh b_0 = \stp_{s(\wh w)}$. Note that if $g s(\wh w) = s(\wh w)$, then $s(w) \in \cv$, in which case, there exists an element $\wh b_0 \crs_{s(w)} \in N\stp_{s(w)} = U(N) \stp_{s(\wh w)}$ which is $K$-linearly independent to $\wh b_0$ (since $\crs_{s(w)}$ satisfies an irreducible quadratic relation). With this in mind, define a word of $A$
		\begin{equation*}
			w_0 =
			\begin{cases}
				\crs_{s(w)}\inv		&	\text{if } s(w) \in \cv \text{ and } \wh\sigma_1 \in \im \iota, \\
				\stp_{s(w)}		&	\text{otherwise}.
			\end{cases}
		\end{equation*}
		For each $1 \leq i \leq n$, define a word $w_i = w_0 \kappa(\wh b_i)$ of $A$, where $\kappa$ is as defined in Definition~\ref{def:FoldedWords}. Now note that by the definition of $\fw$ (Definition~\ref{def:FoldedWords}), $\rho$ (Definition~\ref{def:FoldedStrings}) and $U$ (Definition~\ref{def:UnfoldingFunctor}), each $w_i$ is a subword of $w=\rho(\wh w)$. Moreover, for each $w_i$ such that $t(w_i)=\pi(t(\wh b_i)) \in \cv$, either $w_i \crs_{t(w_i)}$ or $w_i \crs_{t(w_i)}\inv$ is also a subword of $w$ that is distinct from the set of subwords $\{w_i : 1 \leq i \leq n\}$. For each $i$ such that $t(w_i) \in \cv$, we will therefore denote by $\wt{w}_i$ whichever word in $\{w_i \crs_{t(w_i)},w_i \crs_{t(w_i)}\inv\}$ is a subword of $w$ that is not in $\{w_i : 1 \leq i \leq n\}$.
		
		Now consider the $K$-vector subspace
		\begin{equation*}
			W = \langle w_i : 0 \leq i \leq n \rangle_K + \langle \wt w_i : t(w_i) \in \cv \rangle_K \subseteq U(N).
		\end{equation*}
		By Remark~\ref{rem:UImage}(b), $U(N)$ has the structure of an $A$-module. By the same remark, $W$ is closed under the action of $A$, and is thus an $A$-submodule. Moreover, it is clear by definition that the set of all $w_i$ and $\wt w_i$ is the standard basis for $M(w)$. Thus, $M(w) \subseteq N$.
		
		The last thing to check is that $N \cong M(w)$, which is a straightforward. Suppose for a contradiction that $M(w)$ is a proper submodule of $N$. Since $N$ is assumed to be indecomposable, this implies that there exists an element $x \in N / M(w)$ such that $x \alpha \neq 0$ and $x \alpha \in M(w)$ for some $\alpha \in Q_1$. By the definition of $U$, this implies that $x \in N\stp_{s(\alpha)} = U(N)\stp_{\iota(s(\alpha))}=U(N)\stp_{g\iota(s(\alpha))}$ and either $x \iota(\alpha) \in M(\wh w)$ or $x g\iota(\alpha) \in M(\wh w)$. But then $M(\wh w)$ is not a direct summand of $U(N)$, which is a contradiction. Thus, $N \cong M(w)$ as required.
		
		(c) $\Rightarrow$ (b), and the final statement: Consider the respective standard bases $\wh B = \{\wh b_0, \ldots, \wh b_n\}$ and $\wh B' = \{\wh b'_0, \ldots, \wh b'_n\}$ of the strings $\wh w, g\wh w \in \str_{\wh A}$. Let $w_0,\ldots,w_n \in \wrd_Q$ be the words corresponding to $\wh b_0,\ldots,\wh b_n \in \wh B$ given in the proof of (a) $\Rightarrow$ (c). Similarly define 
		\begin{equation*}
			w'_0 =
			\begin{cases}
				\crs_{s(w)}\inv	&	\text{if } s(w) \in \cv \text{ and } w_0 = \stp_{s(w)}, \\
				\stp_{s(w)}		&	\text{otherwise,}
			\end{cases}
		\end{equation*}
		and for each $i$, define words $w'_i=w'_0\kappa(\wh b'_i)  \in \wrd_Q$. 
		
		Now note the following implications of this construction: Firstly, as previously remarked, $w_n$ is a subword of $\rho(\wh w)$. Similarly, we have by construction that $w'_n$ is a subword of $\rho(g \wh w)$. Secondly, for each $i$, the $j$-th ordinary symbol of $w_i$ is the same as the $j$-th ordinary symbol of $w'_i$, and the $j$-th crease symbol of $w_i$ is the inverse of the $j$-th crease symbol of $w'_i$. Consequently, $\rho(\wh w) \sim_2 \rho(g \wh w)$, and thus $M(\rho(\wh w)) \cong M(\rho(g\wh w))$ by Lemma~\ref{lem:StringIsos}(b). Thirdly, the last symbol of $w_i$ is the same as the last symbol of $w'_i$ precisely if $t(w_i),t(w'_i) \in \ov$. Otherwise, we must have (by construction) $t(w_i),t(w'_i) \in \cv$, in which case $w'_i \in \{w_i \crs_{t(w_i)}, w_i \crs_{t(w_i)}\inv\}$. Thus,
		\begin{equation*}
			\{w_i : 1 \leq i \leq n\} \cup \{w'_i: t(w_i) \in \cv\}
		\end{equation*}
		is a basis for $M(w)$, and each $w'_i$ such that $t(w'_i) \in \ov$ is linearly dependent to an element in $M(w)$
		
		Now from the definition of $U$, we have two copies of the vector space $M(w)\stp_{v}$ in $U(M(w))$ whenever $v \in \ov$: namely with $U(M(w))\stp_{\iota(v)}$ and $U(M(w))\stp_{g\iota(v)}$. In particular, we have $w_i, w'_i \in U(M(w))\stp_{\iota(v)} + U(M(w))\stp_{g\iota(v)}$. By defining $w_i \in U(M(w))\stp_{t(\wh \sigma_i)}$ and $w'_i \in U(M(w))\stp_{gt(\wh \sigma_i)}$, we can see from the definition of $U$ that we actually have
		\begin{align*}
			M(\wh w) &\cong \langle  w_0, \ldots,  w_n \rangle_K, \\
			M(g\wh w) &\cong \langle  w'_0, \ldots,  w'_n \rangle_K.
		\end{align*}
		In particular, $U(M(w)) \cong M(\wh w) +  M(g\wh w)$. That this sum is actually a direct sum follows from the fact that $w_i$ and $w'_i$ are linearly independent whenever $t(w_i)=t(w'_i) \in \cv$. Thus, the action of $\wh A$ on $U(M(w))$ is closed on both $\langle w_0, \ldots, w_n \rangle_K$ and $\langle w'_0, \ldots, w'_n \rangle_K$. So $U(M(w)) \cong M(\wh w) \oplus M(g\wh w)$, as required.
		
		(b) $\Rightarrow$ (a): This is obvious from the statement itself.
	\end{proof}
	
	\begin{cor} \label{cor:UProj}
		For any $v \in Q_0$, we have
		\begin{align*}
			U(P(v)) &\cong P(\iota(v)) \oplus P(g \iota(v)), \\
			U(I(v)) &\cong I(\iota(v)) \oplus I(g \iota(v)).
		\end{align*}
		Consequently, $U$ induces exact, faithful, $K$-linear functors
		\begin{align*}
			U_{\proj}&\colon\proj A \rightarrow \proj\unfA,  \\
			U_{\inj}&\colon\inj A \rightarrow \inj\unfA,
		\end{align*}
		that are injective on iso-classes of objects, where $\proj A$ and $\proj \unfA$ are the full subcategories of projective objects of $\mod*A$ and $\mod*\unfA$ respectively, and $\inj A$ and $\inj \unfA$ are the full subcategories of injective objects of $\mod*A$ and $\mod*\unfA$ respectively.
	\end{cor}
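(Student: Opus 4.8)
\emph{Proof plan.} The plan is to realise each $P(v)$ as a string module and then feed it into Proposition~\ref{prop:UStrings}. Since $\unfA$ is a gentle algebra, every indecomposable projective $P(\wh u)$ over $\unfA$ is a string module $M(\wh w_{\wh u})$, where $\wh w_{\wh u}$ is the usual projective string read off from $\unfQ$ (the inverse of one maximal direct string at $\wh u$ concatenated with another); moreover, since $g$ is a quiver automorphism of $(\unfQ,\unfZ)$ we have $g\wh w_{\wh u}\approx\wh w_{g\wh u}$, hence $M(g\wh w_{\wh u})\cong P(g\wh u)$ by Lemma~\ref{lem:StringIsos}. The crux of the proof is then to show that, for every $v\in Q_0$, the folded string $\rho(\wh w_{\iota(v)})\in\str_A$ is the projective string of $P(v)$, i.e.\ $M(\rho(\wh w_{\iota(v)}))\cong P(v)$. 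Granting this, Proposition~\ref{prop:UStrings} applied with $\wh w=\wh w_{\iota(v)}$ yields
\begin{equation*}
	U(P(v))\cong U(M(\rho(\wh w_{\iota(v)})))\cong M(\wh w_{\iota(v)})\oplus M(g\wh w_{\iota(v)})\cong P(\iota(v))\oplus P(g\iota(v)),
\end{equation*}
which is the first isomorphism. The second isomorphism then follows dually: one can either run the same argument with the injective strings of $A$ and $\unfA$, or pass to $A^{\op}$, whose unfolded gentle algebra is $\unfA^{\op}$, and dualise.

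To identify $M(\rho(\wh w_{\iota(v)}))$ with $P(v)$ I would distinguish the ordinary and the crease cases. If $v\in\ov$ this is essentially the classical description of projectives over gentle algebras transported through Section~\ref{sec:UnfoldedGentle}: by Lemma~\ref{lem:radmASum}, $\rad P(v)$ is the direct sum of the (at most two) uniserial modules $m\alpha A$ of Lemma~\ref{lem:maAUniserial}, $P(v)$ is biserial with one-dimensional top $S(v)$, and reading off the two maximal direct subwords gives exactly $\rho(\wh w_{\iota(v)})$. If $v\in\cv$ the situation genuinely departs from the gentle case: by Lemma~\ref{lem:radmASum} and axiom~\ref{en:FG1}, the top of $P(v)$ is the two-dimensional simple $S(v)\cong\viso$ and $\rad P(v)=\alpha A\oplus\crs_v\alpha A$ (with $\rad P(v)=0$ if $v$ carries no ordinary arrow, or only an incoming one). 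Using the rules \ref{en:SM1}--\ref{en:SM4}, Remark~\ref{rem:StringInverseAction} and Remark~\ref{rem:InverseCrease}, one checks that the symmetric string $w_v=q^{-1}\crs_v q$, where $q$ is the maximal direct subword running through the ordinary arrow at $v$ (or $w_v=\crs_v$ in the degenerate subcase), produces a string module $M(w_v)$ which is cyclic — the standard-basis vector at the peak generates everything, via the ``backwards'' action of formal inverses of Remark~\ref{rem:StringInverseAction} — with top $S(v)$ and radical $\alpha A\oplus\crs_v\alpha A$, hence $M(w_v)\cong P(v)$; and that $\rho(\wh w_{\iota(v)})\approx w_v$ follows by unwinding the definitions of $\pi$, $\iota$, $\fw$ and $\rho$, using \ref{en:FS3} to see that the crease symbol $\crs_v$ must sit precisely at the peak of $w_v$.

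The ``consequently'' clause is then formal. As $U$ is additive, the isomorphisms above give $U(\proj A)\subseteq\proj\unfA$ and $U(\inj A)\subseteq\inj\unfA$, so $U$ restricts to functors $U_{\proj}$ and $U_{\inj}$, which inherit exactness, faithfulness and $K$-linearity from $U$. For a projective $P\cong\bigoplus_{v\in Q_0}P(v)^{a_v}$ one gets $U(P)\cong\bigoplus_{\wh u\in\unfQ_0}P(\wh u)^{a_{\pi(\wh u)}}$, and since $\proj\unfA$ is Krull--Schmidt and $\pi$ is surjective the multiplicities $a_v$ are recovered from $U(P)$; hence $U_{\proj}$ is injective on iso-classes of objects, and likewise $U_{\inj}$. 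I expect the main obstacle to be the crease-vertex case of the second paragraph: in contrast with gentle algebras, $P(v)$ then has a two-dimensional simple top, and the string realising it genuinely wraps through the crease loop at a peak, so some care is needed both to verify $M(w_v)\cong P(v)$ and to check that this $w_v$ is exactly the folding of the $\unfA$-projective string $\wh w_{\iota(v)}$.
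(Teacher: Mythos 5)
Your proof follows essentially the same route as the paper's: both identify $P(v)$ (resp.\ $I(v)$) as a string module of $A$ and then deduce the decomposition of $U(P(v))$ from Proposition~\ref{prop:UStrings}. The paper's own argument is terser---it simply records that $P(v)$ is the string module on the concatenation of the (at most two) maximal direct strings of source $v$ and declares the rest an ``immediate consequence''---so your more explicit verification, in particular at a crease vertex where $\tp P(v)\cong S(v)$ is two-dimensional and the projective string wraps through the crease loop at its peak, usefully spells out the detail the paper labels ``easy to verify''.
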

	\begin{proof}
		Since $A$ is a special biserial algebra whose relations consist only of zero relations and crease relations, $P(v)$ and $I(v)$ are both string modules. In particular, a basis of $P(v) = \stp_v A$ is given by every direct string of source $v$. There are at most two such direct strings that are of maximal length, say $w$ and $w'$. It is then easy to verify that $P(v) \cong M(w'w\inv)$. The argument for $I(v)$ is dual. The statement is then an immediate consequence of the previous proposition.
	\end{proof}
	
	The above corollary leads to other useful properties concerning projective and injective resolutions.
	
	\begin{cor} \label{cor:UProps}
		Let $U$ be the unfolding functor of $\mod*A$. Then the following hold.
		\begin{enumerate}[label=(\alph*)]
			\item $U$ preserves projective and injective resolutions.
			\item $U$ maps minimal projective resolutions to minimal projective resolutions, and minimal injective resolutions to minimal injective resolutions.
		\end{enumerate}
	\end{cor}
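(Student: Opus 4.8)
The plan is to derive both assertions from two properties of the unfolding functor that are already in hand: $U$ is exact (Remark~\ref{rem:UImage}), and $U$ carries indecomposable projectives to projectives and indecomposable injectives to injectives (Corollary~\ref{cor:UProj}). For part~(a), apply $U$ to a projective resolution $\cdots \to P_1 \to P_0 \to M \to 0$ of $M \in \mod*A$. Exactness of $U$ gives an exact complex $\cdots \to U(P_1) \to U(P_0) \to U(M) \to 0$, and each $U(P_i)$, being a finite direct sum of modules of the form $P(\iota(v)) \oplus P(g\iota(v))$, is projective; hence this is a projective resolution of $U(M)$. The argument for injective resolutions is identical, using $U(I(v)) \cong I(\iota(v)) \oplus I(g\iota(v))$.

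For part~(b) the technical core is the following lemma, which I would establish first: for every $N \in \mod*A$ one has $U(\rad N) = \rad\, U(N)$ and, dually, $U(\soc N) = \soc\, U(N)$. Granting this, part~(b) is formal: a projective resolution $(P_\bullet, d_\bullet)$ with $d_0\colon P_0 \to M$ is minimal exactly when $\Ker d_n \subseteq \rad P_n$ for all $n \geq 0$, and since $U$ is exact it turns this into $\Ker U(d_n) = U(\Ker d_n) \subseteq U(\rad P_n) = \rad\, U(P_n)$, so $U(P_\bullet) \to U(M)$ is again minimal. The statement for minimal injective resolutions follows in the same way with $\soc$ replacing $\rad$; alternatively, one can transport part~(b) for projectives through the standard $K$-duality $D$ and the unfolding functor of the opposite folded gentle algebra $A^{\op}$ (which is again folded gentle, with unfolded gentle algebra $\unfA^{\op}$), using that $D \circ U \cong U^{\op} \circ D$.

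To prove the radical lemma I would proceed in two steps. First, $U$ sends semisimple modules to semisimple modules: for a simple module $S(v)$ every arrow of $Q$ acts as zero, hence so does every path $\fw(\hat\alpha)$ with $\hat\alpha \in \unfQ_1$, and therefore $U(S(v))$ is a length-two semisimple $\unfA$-module (equal to $S(\iota(v)) \oplus S(g\iota(v))$ if $v \in \ov$, and to $S(\iota(v))^{\oplus 2}$ if $v \in \cv$, since $\iota(v)$ has no loop in $\unfQ$). Consequently $U(N)/U(\rad N) \cong U(N/\rad N)$ is semisimple, which yields $\rad\, U(N) \subseteq U(\rad N)$. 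Second, for the reverse inclusion note that $\rad A$ is the two-sided ideal generated by $\oa$ (the quotient being $\prod_v \viso$, a product of fields, hence semisimple), so that $\rad N = \sum_{\alpha \in \oa}(N\alpha + N\alpha\,\crs_{t(\alpha)})$ with the second summand understood to vanish unless $t(\alpha) \in \cv$; and $\rad\, U(N) = \sum_{\hat\alpha \in \unfQ_1} U(N)\hat\alpha$, since $\unfA$ is gentle. One then checks, for each $\alpha \in \oa$, that $\sum_{\pi(\hat\alpha) = \alpha} U(N)\hat\alpha$ is precisely the image in $U(N)$ of $N\alpha$ (doubled when $t(\alpha) \in \ov$) together with $N\alpha\,\crs_{t(\alpha)}$ when $t(\alpha) \in \cv$; summing over $\alpha$ gives $\rad\, U(N) = U(\rad N)$. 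The socle identity is proved dually.

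The step I expect to be the main obstacle is this last verification. It requires careful bookkeeping: one must keep track of how the one or two copies of $N_{s(\alpha)}$ and of $N_{t(\alpha)}$ sit inside $U(N)$, of which of the two arrows of $\unfQ$ lying above $\alpha$ belongs to $\im\iota$ (by Remark~\ref{rem:NoFixedArrows} these two arrows are genuinely distinct), and of the precise form of $\fw(\hat\alpha)$ in each of the four cases of Definition~\ref{def:FoldedWords}. The point that makes it work is that in every case the crease factors appearing in $\fw(\hat\alpha)$ act as automorphisms of the relevant subspace of $U(N)$ (Remark~\ref{rem:InverseCrease}, Remark~\ref{rem:UImage}), so they do not change the subspace $U(N)\hat\alpha$; the difficulty is purely in keeping the cases organised.
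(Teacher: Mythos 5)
Your proof of (a) matches the paper's (exactness of $U$ plus Corollary~\ref{cor:UProj}). For (b) you take a genuinely different route. The paper argues via projective covers and superfluousness: if $U(\Omega^{i+1}M)$ were not superfluous in $U(P_i)$ it would admit a projective direct summand, and since indecomposable projectives over $\unfA$ are string modules, Proposition~\ref{prop:UStrings} would then force a projective direct summand inside $\Omega^{i+1}M \subseteq \rad P_i$, a contradiction. You instead prove the radical identity $U(\rad N) = \rad U(N)$ (and dually for socles) and deduce (b) formally from the characterisation $\Ker d_n \subseteq \rad P_n$. Your route is more self-contained --- it bypasses Proposition~\ref{prop:UStrings} and the string-module classification of projectives over a gentle algebra, and produces a reusable lemma --- at the cost of the case analysis that you correctly flag as the crux. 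I believe that analysis goes through, but two of your glosses are not quite right. First, $\iota(v)$ may carry a loop in $\unfQ$ when $v \in \cv$ (an ordinary loop at a crease vertex is allowed by \ref{en:FG1}--\ref{en:FG6}); the actual reason $U(S(v)) \cong S(\iota(v))^{\oplus 2}$ is simply that every $\hat\alpha \in \unfQ_1$ acts as zero because $\fw(\hat\alpha)$ contains an ordinary arrow. Second, the right crease factor in $\fw(\hat\alpha)$ \emph{does} change the subspace: when $t(\alpha) \in \cv$ the two lifts of $\alpha$ act through $\alpha$ and $\alpha\crs_{t(\alpha)}\inv$, yielding $N\alpha$ and $N\alpha\crs_{t(\alpha)}\inv$, which need not coincide. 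What makes the computation close is that their sum is the $\fiso_{t(\alpha)}$-submodule generated by $N\alpha$, i.e.\ $N\alpha + N\alpha\crs_{t(\alpha)}$, which is precisely $(\rad N)\stp_{t(\alpha)}$; this is where Remark~\ref{rem:InverseCrease} is actually used.
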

	\begin{proof}
		(a) This is a direct consequence of Corollary~\ref{cor:UProj} and the fact that $U$ is exact and additive.
		
		(b) Let $\Omega^i(M)$ be the $i$-th syzygy of $M \in \mod*A$ and $f_i\colon P_i \rightarrow \Omega^i(M)$ be its projective cover. Then by definition, $f_i$ is an epimorphism, $P_i$ is a projective object, and $\Ker f_i \cong \Omega^{i+1}(M)$ is superfluous in $P_i$. Since $U$ is exact, $U(f_i)\colon U(P_i) \rightarrow U(\Omega^i(M))$ is surjective and $\Omega^{i+1}(U(M)) \cong U(\Omega^{i+1}(M))$. It also follows that $U(P_i)$ is a projective object by Corollary~\ref{cor:UProj} and the additivity of $U$. Now suppose for a contradiction that $U(\Omega^{i+1}(M))$ is not superfluous in $U(P_i)$. Then there exists a projective direct summand $\wh P$ of $U(\Omega^{i+1}(M))$. Since indecomposable projective objects of $\mod*\unfA$ are string modules, Proposition~\ref{prop:UStrings} implies that there is a corresponding direct summand $P$ of $\Omega^{i+1}(M)$ that is isomorphic to the module corresponding to the folded string. In particular $P$ must be projective, which contradicts the statement that $\Omega^{i+1}(M)$ is superfluous in $P_i$. Thus, $\Omega^{i+1}(U(M))$ is superfluous in $U(P_i)$, and hence $U(f_i)$ is a projective cover. Consequently, $U$ preserves minimal projective resolutions, as required. The proof for minimal injective resolutions is dual.
	\end{proof}
	
	\subsection{Unfolding Band Modules} We now consider the relationship between the band modules of $A$ and $\wh A$. For bands, that are not $\integer_2$-invariant, we have a similar result to string modules.
	
	\begin{prop} \label{prop:UEvenBands}
		Let $M \in \mod*A$ and let $\wh w \in \bnd_{\unfA}$. Suppose that $\wh w$ is not $\integer_2$-invariant. Then the following are equivalent.
		\begin{enumerate}[label=(\alph*)]
			\item $M(\wh w, m, \phi)$ is isomorphic to a direct summand of $U(M)$.
			\item $M(\wh w, m, \phi) \oplus M(g\wh w, m, \lambda_{\wh w} \phi)$ is isomorphic to a direct summand of $U(M)$ for some non-zero $\lambda_{\wh w} \in K$ that depends on $\wh w$.
			\item $M(\rho(\wh w), m, \phi)$ is isomorphic to a direct summand of $M$.
		\end{enumerate}
		In particular, $U(M(\rho(\wh w), m, \phi)) \cong M(\wh w, m, \phi) \oplus M(g\wh w, m, \lambda_{\wh w} \phi)$ for some non-zero $\lambda_{\wh w} \in K$ that depends on $\wh w$.
	\end{prop}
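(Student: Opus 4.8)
The plan is to prove Proposition~\ref{prop:UEvenBands} by transcribing the argument of Proposition~\ref{prop:UStrings}, with the standard basis of a string module replaced by the standard basis $\{\wh b_i\ps{j}\}$ of the asymmetric band module $M(\wh w,m,\phi)$, and with an extra scalar appearing because a band, unlike a string, closes up into a cycle. First I would fix the set-up. Since $\wh w$ is not $\integer_2$-invariant it cannot be of odd parity by Lemma~\ref{lem:OddParity}, so $\wh w$ is of even parity; combined with Lemmas~\ref{lem:FoldedBands} and~\ref{lem:FoldedSymmetric}, this shows that $\rho(\wh w)$ is an asymmetric band of $A$ of even parity. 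Hence every module in the statement is a genuine asymmetric (quasi-)band module in the sense of Section~\ref{sec:ABandModules}, $M(\wh w,m,\phi)$ is indecomposable in $\mod*\unfA$ by the definition of a band module, and --- this is used at the very end --- for every crease $\crs_v\in\ca$ the scalar $\lambda_{v,2}$ in its crease relation is non-zero, since otherwise $x$ would be a root of the irreducible polynomial $x^2-\lambda_{v,1}x-\lambda_{v,2}$ forbidden by axiom~\ref{en:FG5} (cf.\ Remark~\ref{rem:InverseCrease}).

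For the implication (a)$\Rightarrow$(c), additivity of $U$ and the Krull--Schmidt theorem in $\mod*\unfA$ let me assume $M$ is indecomposable and that $M(\wh w,m,\phi)$ is a direct summand of $U(M)$. I would view $U(M)$ as a right $A$-module through the map $\fw$ of Definition~\ref{def:FoldedWords} (Remark~\ref{rem:UImage}(b)) and, starting from the standard basis of $M(\wh w,m,\phi)$, build a family of words $w_i\ps{j}$ that are subwords of $\rho(\wh w)$ (together with their crease-completions $\wt w_i\ps{j}$ when $t(w_i\ps{j})\in\cv$), exactly as in the proof of (a)$\Rightarrow$(c) of Proposition~\ref{prop:UStrings}. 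The only points genuinely new to the band case are that the cyclic identification \ref{en:ABM1}, the twist by $\phi$ at the distinguished index \ref{en:ABM2}, and the crease-loop rule \ref{en:SM3} survive the passage through $\fw$; the twist causes no trouble because $\phi$ acts on copies and commutes with the $A$-action, exactly as noted in the proof of Lemma~\ref{lem:BandIsos}(c). These vectors then span a submodule of $U(M)$ isomorphic to $M(\rho(\wh w),m,\phi)$, and the indecomposability/superfluous-submodule argument of Proposition~\ref{prop:UStrings} forces $M\cong M(\rho(\wh w),m,\phi)$.

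For (c)$\Rightarrow$(b) and the in-particular statement, I would start from the standard bases of $M(\wh w,m,\phi)$ and $M(g\wh w,m,\psi)$ for an unknown $\psi\in\Aut(K^m)$, produce from them two families of words $w_i\ps{j}$ and $w_i'\ps{j}$ of $A$ exactly as above, and check that jointly they form the standard basis of $M(\rho(\wh w),m,\phi)$ while lying in the complementary $\stp_{\iota(v)}$- and $\stp_{g\iota(v)}$-isotypic parts of $U(M(\rho(\wh w),m,\phi))$; closedness of each family under the $\wh A$-action then yields the direct-sum decomposition, just as in Proposition~\ref{prop:UStrings}. The scalar $\lambda_{\wh w}$ is pinned down by matching $\psi$ with $\phi$: running the $\fw$-reinterpreted $\wh A$-action once around the cyclic word on the $g$-copy and invoking \ref{en:ABM1} together with \ref{en:SM3} at every crease symbol of $\rho(\wh w)$ forces $\psi=\lambda_{\wh w}\phi$, where $\lambda_{\wh w}$ is the product of the scalars $\lambda_{v,2}$ --- or their inverses --- as $\crs_v$ runs over the crease symbols of $\rho(\wh w)$, the choice of exponent governed by whether the symbol equals $\crs_v$ or $\crs_v\inv$. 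By Lemma~\ref{lem:Z2Invert} these are precisely the creases that get inverted in $\rho(g\wh w)$, so the resulting value of $\lambda_{\wh w}$ is consistent with iterating Lemma~\ref{lem:BandIsos}(b); and $\lambda_{\wh w}\neq0$ by the set-up paragraph.

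Finally (b)$\Rightarrow$(a) is immediate, and one may additionally record, using Lemma~\ref{lem:BandIsos} and the failure of $\integer_2$-invariance of $\wh w$, that the two summands are non-isomorphic. I expect the main obstacle to be the bookkeeping in (c)$\Rightarrow$(b): showing that the cyclic "return trip" along the $g$-copy produces one well-defined global scalar $\lambda_{\wh w}$ independent of the chosen base point of the band, determining the exponent on each $\lambda_{v,2}$, and simultaneously verifying that the two candidate summands are $\wh A$-stable. Everything else is a faithful copy of the string-module argument.
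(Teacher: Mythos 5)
Your proposal is correct and mirrors the paper's own proof: it adapts the argument of Proposition~\ref{prop:UStrings} to the band setting by building the words $w_i\ps{j}$ and $(w'_i)\ps{j}$ from the standard bases via the map $\fw$, uses Lemma~\ref{lem:FoldedSymmetric} to conclude that $\rho(\wh w)$ is asymmetric, and extracts the scalar $\lambda_{\wh w}$ from Lemma~\ref{lem:BandIsos}(b) together with Lemma~\ref{lem:Z2Invert}, exactly as the paper does (with Remark~\ref{rem:LamConst} giving the explicit product formula). The additional observations you make --- non-vanishing of $\lambda_{v,2}$ from \ref{en:FG5}, that non-$\integer_2$-invariance forces even parity, and non-isomorphism of the two summands --- are consistent with the paper but not extra structure it needed to spell out.
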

	\begin{proof}
		The proof similar to the proof of Proposition~\ref{prop:UStrings}, with some minor alterations to accommodate the band structure of the module.
	
		(a) $\Rightarrow$ (c): Let $N$ be an indecomposable direct summand of $M$ such that $M(\wh w, m, \phi) \subseteq U(N)$. Let $\wh w = \wh \sigma_1\ldots \wh\sigma_n$ and define $w_0 =\stp_{\pi(s(\wh w))}$. For each $1 \leq i \leq n$, define a word $w_i = \fw(\wh\sigma_1\ldots\wh\sigma_i)$. Since $\wh w$ is not $\integer_2$-invariant, we then have $w_n=\rho(\wh w)$ by Lemma~\ref{lem:OddParity} and Definition~\ref{def:FoldedBands}. Next define $m$ copies of each word $w_i$, denoted by $w_i\ps{j} = w_i$ for $1 \leq j \leq m$. Under the identification $w_0\ps{j} = w_n\ps{j}$ for each $j$, there is then an obvious $\wh A$-module isomorphism
		\begin{equation*}
			\omega\colon \langle w_i\ps{j} : 1 \leq i \leq n \text{ and } 1 \leq j \leq m\rangle_K \rightarrow M(\wh w, m,\phi),
		\end{equation*}
		that maps each $w_i\ps{j}$ to the $j$-th copy of the subword $\wh\sigma_1\ldots\wh\sigma_i$ in the standard basis.
		
		Similar to the proof of Proposition~\ref{prop:UStrings}, for each $i$ such that $t(w_i) \in \cv$, there exists a subword $\wt w_i\ps{j} \in \{w_i\ps{j}\crs_{t(w_i)},w_i \ps{j}\crs_{t(w_i)}\inv\}$ of $\rho(\wh w)$ such that $\wt w_i\ps{j} \not\in \{w_i\ps{j}: 1 \leq i \leq n \text{ and } 1 \leq j \leq m\}$. Again, by Remark~\ref{rem:UImage}, it then follows that the vector subspace
		\begin{equation*}
			W = \langle w_i\ps{j}: 1 \leq i \leq n \text{ and } 1 \leq j \leq m \rangle_K + \langle \wt w_i\ps{j}: t(w_i\ps{j}) \in \cv \rangle_K \subseteq U(N)
		\end{equation*}
		is an $A$-submodule of $U(N)$. Moreover, the set of all $w_i\ps{j}$ and $\wt w_i\ps{j}$ is clearly the standard basis of $M(\rho(\wh w), m, \phi)$. So $W \cong M(\rho(\wh w), m, \phi) \subseteq N$. The proof of the claim that $N \cong M(\rho(\wh w),m,\phi)$ is identical to the corresponding proof in (a) $\Rightarrow$ (c) of Proposition~\ref{prop:UStrings}, and so we are done.
		
		(c) $\Rightarrow$ (b), and the final statement: Let $w_i\ps{j}$ be the same words as above for each $i$ and $j$. For each $i$, define $w'_i = \fw((g\wh\sigma_1)\ldots(g\wh\sigma_i))$, and define $m$ copies of each $w'_i$, with the $j$-th copy denoted by $(w'_i)\ps{j}$. Now by similar arguments to those used in the proof of (c) $\Rightarrow$ (b) in Proposition~\ref{prop:UStrings}, we have have $w_n = \rho(\wh w) \sim_2 \rho(g\wh w) = w'_n$. Again, $t( w_i) = t( w'_i)$ for each $i$. We (again) also have $t( w_i) \in \cv$ if and only if $t( w'_i) \in \cv$. Finally, we (again) have that $ w_i\ps{j}$ and $( w'_i)\ps{j}$ are linearly dependent if and only if $t( w_i) = t( w'_i) \in \ov$.
		
		Now note that since $\wh w$ is not $\integer_2$-invariant, $\rho(\wh w)$ is asymmetric by Lemma~\ref{lem:FoldedSymmetric}. Hence, $M(\rho(\wh w), m, \phi) \cong M(\rho(g \wh w), m, \lambda_{\wh w} \phi)$ for some $\lambda_{\wh w} \in K$ by Lemma~\ref{lem:BandIsos}(b), which is non-zero by \ref{en:FG5}. To see that $\lambda_{\wh w}$ depends on $\wh w$, we simply note from the proof of Lemma~\ref{lem:BandIsos}(b) that $\lambda_{\wh w}$ arises from a product of particular non-zero coefficients of the quadratic relations of the crease symbols of $\wh w$. It is then easy to verify via similar arguments to that used in the proof of (c) $\Rightarrow$ (b) in Proposition~\ref{prop:UStrings} that we have 
		\begin{align*}
			M(\wh w, m, \phi) &\cong \langle  w_i\ps{j} : 1 \leq i \leq n \text{ and } 1 \leq j \leq m \rangle_K, \\
			M(g\wh w, m, \lambda_{\wh w}\phi) &\cong \langle ( w'_i)\ps{j} : 1 \leq i \leq n \text{ and } 1 \leq j \leq m \rangle_K, \\
			U(M(w, m, \phi)) &\cong M(\wh w, m, \phi) \oplus M(g\wh w, m, \lambda_{\wh w}\phi),
		\end{align*}
		and so we are done.
		
		(b) $\Rightarrow$ (a): This is evident from the statement itself.
	\end{proof}
	
	\begin{rem} \label{rem:LamConst}
		We can say precisely what the value of $\lambda_{\wh w}$ is for each band $\wh w \in \bnd_{\wh w}$. Let $\sigma_{i_1},\ldots, \sigma_{i_r}$ be the complete list of crease symbols of $\rho(\wh w)$, and suppose we have relations $\sigma_{i_j}^2-\lambda_{1,j}\sigma_{i_j}-\lambda_{2,j}\stp_s(\sigma_{i_j})=0$. It then follows from Lemma~\ref{lem:BandIsos}(b) and Lemma~\ref{lem:Z2Invert} that
		\begin{equation*}
			\lambda_{\wh w} = \prod_{j=1}^r \lambda_{2,j}\inv.
		\end{equation*}
	\end{rem}
	
	For bands that are $\integer_2$-invariant, the theory is more complicated. Firstly, $\integer_2$-invariant bands of odd and even parity behave differently to each other. Thus, we must consider both of these subcases carefully. Secondly, in the string and non-$\integer_2$-invariant band case, the string/band modules of $\mod* \wh A$ always appear in the image of $U$ as pairs of indecomposables (that are related to each other via the folding action on the string/band). This can also happen for $\integer_2$-invariant bands, but in certain cases, it is possible for the image of $U$ to be indecomposable. To account for this, it is helpful to instead consider the maximal quasiband direct summands of an object in the image of $U$.
	
	\begin{defn}
		Let $A$ be a gentle or folded gentle algebra and let $M \in \mod*A$. Suppose $N \subseteq M$ is a direct summand that is isomorphic to a quasi-band module $M( w,m,\phi)$. We say $ N$ is a \emph{maximal quasi-band direct summand} if there exists no direct summand $ L \subseteq  M /  N$ that is isomorphic to a quasi-band module of $ w$.
	\end{defn}
	
	\begin{prop} \label{prop:UOddBands}
		Let $A$ be a folded gentle algebra and $M \in \mod*A$. Let $\wh w$ be a band of $\wh A$ of odd parity and let $\lambda_{\wh w} \in K$ be as in Remark~\ref{rem:LamConst}. Then the following are equivalent.
		\begin{enumerate}[label = (\alph*)]
			\item $M(\wh w, m, \wh \phi)$ is isomorphic to a maximal quasi-band direct summand of $U(M)$.
			\item There exists $\phi \in \Aut(K^m)$ such that $\wh \phi$ is similar to $\lambda_{\wh w} \phi^2$ and $M(\rho(\wh w),m,\phi)$ is isomorphic to a maximal quasi-band direct summand of $M$.
		\end{enumerate}
		In particular, for any $M \in \mod*A$ with $U(M) \cong M(\wh w, m, \wh \phi)$, $M$ must be isomorphic to an asymmetric quasi-band module $M(\rho(\wh w),m,\phi)$ and $U(M) \cong U(M(\rho(\wh w),m,\phi)) \cong M(\wh w,m,\lambda_{\wh w} \phi^2)$.
	\end{prop}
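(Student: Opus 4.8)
Throughout write $w=\rho(\wh w)$. By Lemma~\ref{lem:OddParity} the band $\wh w$ being of odd parity means it is $\integer_2$-invariant, $g\wh w\sim_3\wh w$, and more precisely $\wh w\sim_3\theta(w)(g\theta(w))$ with $w\in\bnd_A$ itself of odd parity, hence (Lemma~\ref{lem:OddParity}) asymmetric, so that the quasi-band module $M(w,m,\phi)$ of Definition~\ref{def:ASBandModules} is defined. The plan is to mimic the proofs of Propositions~\ref{prop:UStrings} and~\ref{prop:UEvenBands}, the one new phenomenon being this: in the non-$\integer_2$-invariant case the two ``sheets'' of the unfolding of $M(w,m,\phi)$ assemble into two distinct band modules $M(\wh w,m,\phi)$ and $M(g\wh w,m,\lambda_{\wh w}\phi)$, whereas here, since $g\wh w\sim_3\wh w$, these sheets join into a single band module, so that one traversal of $\wh w$ corresponds to two traversals of $w$ --- the first contributing the twist $\phi$ and the second, by Lemma~\ref{lem:BandIsos}(b), Lemma~\ref{lem:Z2Invert} and Remark~\ref{rem:LamConst}, the twist $\lambda_{\wh w}\phi$ --- so that one full loop accumulates $\lambda_{\wh w}\phi^2$. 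This simultaneously yields the ``in particular'' identity $U(M(w,m,\phi))\cong M(\wh w,m,\lambda_{\wh w}\phi^2)$, and (a)$\iff$(b) follows from it.

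First I would establish (b)$\Rightarrow$(a) together with $U(M(w,m,\phi))\cong M(\wh w,m,\lambda_{\wh w}\phi^2)$. Rotating via Lemma~\ref{lem:OddOrdVert}, assume $w=\sigma_1\ldots\sigma_n$ starts at an ordinary vertex, and write $\wh w=\wh\tau_1\ldots\wh\tau_N$ with $\wh\tau_{k+N/2}=g\wh\tau_k$. Exactly as in the proof of (c)$\Rightarrow$(b) of Proposition~\ref{prop:UEvenBands}, I trace the standard basis through the $\fw$-action: over the first half the subwords $w_i\ps{j}=\fw(\wh\tau_1\ldots\wh\tau_i)$ realize $M(w,m,\phi)$ inside one of the two copies of each ordinary-vertex component of $U(M(w,m,\phi))$ (using Remark~\ref{rem:UImage}(b),(c)); the distinguished crease at the joint of $\wh w$ forces passage to the other copy, where by Lemma~\ref{lem:Z2Invert} the second half folds to $\rho(g\wh w)=w$ with all creases inverted, realizing $M(w,m,\lambda_{\wh w}\phi)$ by Lemma~\ref{lem:BandIsos}(b) and Remark~\ref{rem:LamConst}; closing the band shows the accumulated automorphism of $K^m$ is $\lambda_{\wh w}\phi^2$ up to similarity. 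The vectors $w_i\ps{j}$ together with the $\wt w_i\ps{j}$ for $t(w_i)\in\cv$ exhaust a basis of $U(M(w,m,\phi))$, giving the asserted isomorphism. Choosing $\wh\phi$ similar to $\lambda_{\wh w}\phi^2$ then gives $M(\wh w,m,\wh\phi)\cong U(M(w,m,\phi))$, which is a direct summand of $U(M)$ by additivity of $U$; and it is a \emph{maximal} quasi-band summand because a quasi-band summand of $\wh w$ in a complement would, read as an $A$-module via $\fw$ (Remark~\ref{rem:UImage}(b)), yield a quasi-band summand of $w$ in the corresponding complement in $M$, contradicting maximality there.

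For (a)$\Rightarrow$(b), additivity of $U$ puts $M(\wh w,m,\wh\phi)\subseteq U(N)$ for some indecomposable summand $N$ of $M$. As in the proof of (a)$\Rightarrow$(c) of Proposition~\ref{prop:UStrings} I build inside $U(N)$, via the $\fw$-action starting from a basis vector over an ordinary vertex and following the symbols of $w$, an $A$-submodule $W$; the $g$-symmetry $\wh w\sim_3\theta(w)(g\theta(w))$ makes the orbit traverse $w$ twice before closing, so $W\cong M(w,m,\phi')$ with $\phi'$ the automorphism accumulated over the first half-traversal, and the full loop satisfies $\lambda_{\wh w}(\phi')^2\sim\wh\phi$ as above. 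Maximality of $M(\wh w,m,\wh\phi)$ in $U(M)$ together with the ``proper submodule contradicts direct summand'' argument of Proposition~\ref{prop:UStrings} forces $N\cong W\cong M(w,m,\phi)$ with $\phi:=\phi'$, and then that $M(w,m,\phi)$ is a maximal quasi-band summand of $M$, since a quasi-band summand of $w$ in its complement would under $U$ violate maximality of $M(\wh w,m,\wh\phi)$ by the identity already proven. Finally, if $U(M)\cong M(\wh w,m,\wh\phi)$ then $U(M)$ is indecomposable, hence so is $M$, so $M$ itself plays the role of $N$ and $M\cong M(\rho(\wh w),m,\phi)$ with $\wh\phi\sim\lambda_{\wh w}\phi^2$ and $U(M)\cong M(\wh w,m,\lambda_{\wh w}\phi^2)$, which is the ``in particular'' statement.

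The main obstacle I anticipate is pinning down the accumulated twist: showing rigorously that a full traversal of $\wh w$ yields $\lambda_{\wh w}\phi^2$ (equivalently that $\wh\phi$ is forced to be, up to similarity, a scalar multiple of a square, with a genuine $A$-module ``square root'' twist $\phi$ existing). This needs careful bookkeeping of the crease-relation scalars across the two $g$-related halves of the band --- invoking Lemma~\ref{lem:BandIsos}(b), Lemma~\ref{lem:Z2Invert} and Remark~\ref{rem:LamConst} --- and checking compatibility of the distinguished-arrow normalization in Definition~\ref{def:ASBandModules} with the chosen joint crease; the remainder is a routine transcription of the arguments in Propositions~\ref{prop:UStrings} and~\ref{prop:UEvenBands}.
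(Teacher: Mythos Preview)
Your overall strategy matches the paper's: establish the identity $U(M(w,m,\phi))\cong M(\wh w,m,\lambda_{\wh w}\phi^2)$ by tracing the standard basis through the $\fw$-action, then deduce the equivalence via additivity and maximality. The identity computation and the skeleton of (b)$\Rightarrow$(a) are essentially as in the paper.

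However, your (a)$\Rightarrow$(b) has a genuine gap. First, the reduction ``$M(\wh w,m,\wh\phi)\subseteq U(N)$ for some indecomposable summand $N$ of $M$'' is unjustified: a \emph{maximal} quasi-band summand may be decomposable, with pieces spread across several $U(N_i)$. The paper sidesteps this by working directly in $M$, not in a single indecomposable summand. Second, and more seriously, your claim ``$W\cong M(w,m,\phi')$ with $\phi'$ the automorphism accumulated over the first half-traversal'' presupposes that the half-traversal of $\wh w$, when projected to $M$, lands back in the \emph{same} subspace of $M\stp_{v_0}$ it started from. Writing $B_i\subseteq M$ for the subspace corresponding to the standard basis vectors $\wh b_i^{(j)}$, you need $B_{\wh n}=B_{2\wh n}$; otherwise $\phi'$ is only a linear map $B_0\to B_{\wh n}$ between possibly distinct $m$-dimensional subspaces, not an automorphism of $K^m$, and $W$ need not be a rank-$m$ quasi-band module of $w$ at all. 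This equality $B_{\wh n}=B_{2\wh n}$ is precisely the step where the paper spends its effort: if it failed, the quotients $B_{\wh n}/(B_{\wh n}\cap B_{2\wh n})$ and $B_{2\wh n}/(B_{\wh n}\cap B_{2\wh n})$ would be nonzero and $\wh\phi$-stable, producing an extra $\wh w$-quasi-band summand in $U(M)/M(\wh w,m,\wh\phi)$ and contradicting maximality. You do invoke maximality, but only afterwards to force $N\cong W$; the place it is actually needed is earlier, to close the half-orbit. Relatedly, your maximality argument in (b)$\Rightarrow$(a) (``a quasi-band summand of $\wh w$ in a complement would yield a quasi-band summand of $w$'') is essentially the content of (a)$\Rightarrow$(b), so to avoid circularity you must establish (a)$\Rightarrow$(b) first, as the paper does.
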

	\begin{proof}
		We will begin by proving that $U(M) \cong M(\wh w, m, \lambda_{\wh w}\phi^2)$ when $M \cong M(\rho(\wh w),m,\phi)$. First, some notation. Write $w= \rho(\wh w) =\sigma_1\ldots\sigma_n$ and $\wh w = \wh\sigma_1\ldots\wh\sigma_{2\wh n}$. In particular, we have $\wh \sigma_{\wh n+i} = g\wh \sigma_{i}$ for each $1 \leq i \leq \wh n$ by Definition~\ref{def:Parity}. For the purposes of readability, denote for each $i$ the vertex $u_i = t(\sigma_i)$. By Lemma~\ref{lem:OddOrdVert}, there exists some $i$ such that $u_i \in \ov$. Thus by Lemma~\ref{lem:BandIsos}(c) and the fact that $\wh w$ is of odd parity, we may assume without loss of generality that $\wh w$ is rotated such that $s(w)=u_{n} \in \ov$ and that the first crease symbol of $w$ is not a formal inverse. Let $\sigma_{i_1},\ldots,\sigma_{i_p}$ be the crease symbols of $w$ with $1= i_0 < i_1 < i_2 < \ldots < i_p < i_{p+1} = n$. Note, that by the definition of $\fw$ (Definition~\ref{def:FoldedWords}) and combinatorics of $\iota$ (specifically arising from \ref{en:QU2} and \ref{en:U2}-\ref{en:U4} and \ref{en:FG6}), it follows that each $\sigma_{i_{2k}} \in (\ca)\inv$ and $\sigma_{i_{2k+1}} \in \ca$. Now consider the equivalent band $w' = \rho(g\wh w) = \sigma'_1\ldots\sigma'_n$. By Lemma~\ref{lem:Z2Invert}, we have $\sigma'_i = \sigma_i$ whenever $\sigma_i$ is ordinary and $\sigma'_i = \sigma_i\inv$ whenever $\sigma_i$ is a crease symbol. In particular, this implies that each $\sigma_{i_{2k}} \in \ca$ and each $\sigma_{i_{2k+1}} \in (\ca)\inv$.  Also note that we then have $\fw(\wh w) = w w'$.
		
		Define functions
		\begin{align*}
			\omega\colon &\{1,2,\ldots,\wh n\}  \rightarrow \{1,\ldots,n\} \setminus (\{i_{2k+1} : 0 \leq k \leq \lfloor\tfrac{p}{2}\rfloor\} \cup \{i_{2k}-1 : 1 \leq k \leq \lfloor\tfrac{p}{2}\rfloor\}) \\
			\omega'\colon &\{\wh n,\wh n + 1,\ldots,2\wh n\}  \rightarrow \{1,\ldots,n\} \setminus (\{i_{2k+1}-1 : 0 \leq k \leq \lfloor\tfrac{p}{2}\rfloor\} \cup \{i_{2k} : 1 \leq k \leq \lfloor\tfrac{p}{2}\rfloor\})
		\end{align*}
		determined such that $\omega(k)<\omega(l)$ and $\omega'(k)<\omega'(l)$ if and only if $k <l$. It is not hard to verify (from the definitions of $\rho(\wh w)$ and $\fw(\wh w)$ in Definitions~\ref{def:FoldedWords} and \ref{def:FoldedBands}) that both $\omega$ and $\omega'$ are bijective, and thus, uniquely determined. In particular, note that the excluded sets in the codomains of these functions are determined by the indices immediately preceding a symbol in $(\ca)\inv$ or immediately following a symbol $\ca$ --- this is by design to ensure that
		\begin{equation*}
			\fw(\wh\sigma_1,\ldots\wh\sigma_{i}) =
			\begin{cases}
				\sigma_1\ldots\sigma_{\omega(i)}					&\text{if } 1 \leq i \leq \wh n \\
				w\sigma'_1\ldots\sigma'_{\omega'(i)}	&\text{if } \wh n < i \leq 2\wh n.
			\end{cases}
		\end{equation*}
		
		Let $\{b_i\ps{j} : 1 \leq i \leq n, 1\leq j \leq m\}$ be the standard basis of $M(w,m,\phi)$. We will use this basis to construct a basis $\{\wh b_i\ps{j} : 1 \leq i \leq 2\wh n, 1 \leq j \leq m\}$ for $U(M)$ which we will then show is a basis of a quasi-band module isomorphic to $M(\wh w, m, \lambda_{\wh w}\phi^2)$. To achieve this, it is helpful to recall from the definition of $U$ that the vector space structure of $U(M)$ is such that, for each $i$, $U(M)\stp_{\iota(u_i)}$ and $U(M)\stp_{g \iota(u_i)}$ are copies of $M\stp_{u_i}$. So first, for each $j$, define $\wh b_{2\wh n}\ps{j} = \wh b_{0}\ps{j} \in U(M)$ to be the element of $U(M)\stp_{\iota(u_{n})}$ that is a copy of $b_{n}\ps{j} \in M\stp_{u_n}$. Next, for each $1 \leq i \leq \wh{n}$ and each $1 \leq j \leq m$, define $\wh b_{i}\ps{j} \in U(M)$ to be the copy of the element $b_{n}\ps{j}\cdot\fw(\wh\sigma_1\ldots\wh\sigma_{i}) \in M\stp_{u_{\omega(i)}}$, with
		\begin{equation*}
			\wh b_{i}\ps{j} \in
			\begin{cases}
				U(M)\stp_{\iota(u_{\omega(i)})}	&\text{if } i_{2k} -1 < \omega(i) < i_{2k+1} \text{ for some } k, \\
				U(M)\stp_{g\iota(u_{\omega(i)})}	&\text{if } i_{2k+1} < \omega(i) < i_{2k} -1 \text{ for some } k.
			\end{cases}
		\end{equation*}
		Now let $r$ be the greatest index such that $\wh\sigma_{r} \in \wh Q_1$. For each $\wh n < i < r$, define each $\wh b_{i}\ps{j} \in U(M)$ to be a copy of the element $b_{n}\ps{j}\cdot w\fw(\wh\sigma'_1\ldots\wh\sigma'_{i}) \in M\stp_{u_{\omega'(i)}}$, and for each $r \leq i < 2 \wh n$, define each $\wh b_{i}\ps{j} \in U(M)$ to be a copy of the element $b_{n}\ps{j}\cdot\fw((\wh\sigma'_{i+1}\ldots\wh\sigma'_{2\wh n})\inv) \in M\stp_{u_{\omega'(i)}}$. In both cases, each $\wh b_{i}\ps{j}$ is defined to be the specific copy (of the appropriate element of $M\stp_{u_{\omega'(i)}}$) given such that
		\begin{equation*}
			\wh b_{i}\ps{j} \in
			\begin{cases}
				U(M)\stp_{g\iota(u_{\omega'(i)})}	&\text{if } i_{2k} < \omega'(i) < i_{2k+1} -1 \text{ for some } k, \\
				U(M)\stp_{\iota(u_{\omega'(i)})}	&\text{if } i_{2k+1}-1 < \omega'(i) < i_{2k}  \text{ for some } k,
			\end{cases}
		\end{equation*}
		This completes the construction of the set $\{\wh b_i\ps{j} : 1 \leq i \leq 2\wh n, 1 \leq j \leq m\}$. 

		We will now verify that this set is a basis for $U(M)$. Note by the construction that for each $b_i\ps{j}$ with $u_i \in \ov$, we have defined two corresponding elements $\wh b_{\omega\inv(i)}\ps{j} \in U(M)\stp_{g^a\iota(u_i)}$ and $\wh b_{\wh n + \omega\inv(i)}\ps{j} \in U(M)\stp_{g^{a+1}\iota(u_i)}$ (for some $a \in \{0,1\}$). Since $U(M)\stp_{\iota(u_i)}$ and $U(M)\stp_{g\iota(u_i)}$ are distinct subspaces of $U(M)$ when $u_i \in \ov$, it is clear that $\wh b_{\omega\inv(i)}\ps{j}$ and $\wh b_{\wh n + \omega\inv(i)}\ps{j}$ are linearly independent in $U(M)$ despite both corresponding to copies of the $b_i\ps{j}$. We now need to consider the vertices $u_i \in \cv$, for which it is useful to note that $\{i : u_i \in \cv\} = \{i_k -1, i_k : 1 \leq k \leq p\}$. It is then clear from our construction and the definition of $\omega$ and $\omega'$ that to each pair of linearly independent vectors $b_{i_k-1}\ps{j},b_{i_k}\ps{j}$, we have defined corresponding elements $\wh b_{\omega\inv(l)}\ps{j},\wh b_{(\omega')\inv(l')}\ps{j} \in U(M)\stp_{\iota(u_i)}=U(M)\stp_{g\iota(u_i)}$ for some $l,l' \in \{i_k -1, i_k\}$ with $l \neq l'$. Consequently, the set $\{\wh b_i\ps{j} : 1 \leq i \leq \wh n, 1 \leq j \leq m \}$ is linearly independent. Moreover, by dimension counting, we can see that this is indeed a basis for $U(M)$.
		
		To see that $\{\wh b_i\ps{j} : 1 \leq i \leq \wh n, 1 \leq j \leq m \}$ is the basis for some quasi-band module $M(\wh w, m,\wh \phi)$, we simply note by Remark~\ref{rem:UImage} that the action of each symbol $\wh \sigma_i$ of $\wh w$ induces an isomorphism of vector subspaces $\langle \wh b_{i-1}\ps{j} : 1 \leq j \leq m \rangle \rightarrow \langle \wh b_{i}\ps{j} : 1 \leq j \leq m \rangle$. Thus, it only remains to show that $\wh \phi$ is similar to $\lambda_{\wh w}\phi^2$, where $\lambda_{\wh w} \in K$ is as given in Remark~\ref{rem:LamConst}. This follows largely by construction and an application of Lemma~\ref{lem:BandIsos}(b). Recall that $w = \fw(\wh\sigma_1\ldots\wh\sigma_{\wh n})$ and $\fw(\wh w) = w w'$. Also recall by Lemma~\ref{lem:Z2Invert} that $\fw(g(\wh\sigma_1\ldots\wh\sigma_{\wh n}))$ is given by inverting every crease symbol of $\fw(\wh\sigma_1\ldots\wh\sigma_{\wh n})$. Thus by Lemma~\ref{lem:BandIsos}(b), $M(w,m,\phi) \cong M(w',m,\lambda_{\wh w}\phi)$ for any $\phi \in \Aut(K^m)$. Now since $U(M(w,m,\phi)) \cong M(\wh w, m, \wh \phi)$ and $\fw(\wh w) = w w'$, Remark~\ref{rem:UImage} implies that the action of performing a full rotation of $\wh w$ is  equivalent to performing a full rotation of $w$ followed by a full rotation of $w'$. That is,
		\begin{align*}
			\wh b_{\wh n}\ps{j} &= \wh b_{2\wh n}\ps{j} \cdot w & &\text{ which is a copy of } & b_{n}\ps{j}\cdot w &= \phi(b_{n}\ps{j}), \\
			\wh b_{2\wh n}\ps{j} \cdot \fw(\wh w) &= \wh b_{\wh n}\ps{j} \cdot w' & &\text{ which is a copy of } &  \phi(b_{n}\ps{j})\cdot w' &=\lambda_{\wh w}\phi^2(\wh b_{n}\ps{j}).
		\end{align*}
		This is only possible if $\wh \phi$ is similar to $\lambda_{\wh w} \phi^2$, and thus $U(M(\rho(\wh w),m,\phi))\cong M(\wh w, m, \lambda_{\wh w} \phi^2)$, as required.
	
		(a) $\Rightarrow$ (b): We use much of the same notation as before. For this section of the proof, denote for each $i$ the vertex $\wh v_i = t(\wh \sigma_i)$ and the vertex $v_i = \pi(t(\wh \sigma_i))$. Note that $v_i$ here is not necessarily the same as $u_i$ from the previous section of the proof. The orientation/rotation of $\wh w$ is the same as before (so $s(w)=v_{2\wh n} \in \ov$ and the first crease symbol of $w$ is not a formal inverse). Here, we let $\{\wh b_i\ps{j} : 1 \leq i \leq 2n, 1 \leq j \leq m\}$ be the standard basis of $M(\wh w, m, \wh \phi)$. For each $i$, denote by $\wh B_i$ the vector subspace $\langle \wh b_{i}\ps{j} : 1 \leq j \leq m \rangle \subseteq M(\wh w, m,\wh\phi)$. By the definition of $U$, there then exists for each $1 \leq i \leq \wh n$ vector subspaces $B_i, B_{i+\wh n} \subseteq M\stp_{v_i}$ such that $\wh B_i$ is a copy of $B_i$ and $\wh B_{i+\wh n}$ is a copy of $B_{i+\wh n}$.
		
		We claim that $B_{\wh n} = B_{2 \wh n}$. Suppose for a contradiction that this claim is false. Since each symbol $\wh \sigma_i$ induces an isomorphism $B_{i-1} \rightarrow B_i$, this supposition implies that there exists $b \in B_{\wh n}$ such that $b \not\in \wh B_{2\wh n}$. In addition, for any such element, we have
		\begin{equation*}
			b \cdot ((g\wh\sigma_1)\ldots(g\wh\sigma_{\wh n}) \wh\sigma_1\ldots\wh\sigma_{\wh n})^k = \wh\phi^k(b) \not \in B_{2\wh n}
		\end{equation*}
		for any $k$. Thus, $C_{\wh n}=B_{\wh n} / (B_{\wh n} \cap B_{2\wh n})$ is non-zero and closed under $\wh\phi$. By a similar argument, the vector space $C_{2\wh n}=B_{2\wh n} / (B_{\wh n} \cap B_{2\wh n})$ is also non-zero and closed under $\wh\phi$. Thus, $C_{\wh n }\oplus (B_{\wh n} \cap B_{2\wh n}) \oplus C_{2\wh n} \subseteq M\stp_{v_{\wh n}}$ with $C_{\wh n} \oplus (B_{\wh n} \cap B_{2\wh n}) \cong B_{\wh n}$ and $(B_{\wh n} \cap B_{2\wh n}) \oplus C_{2\wh n} \cong B_{2\wh n}$. Now note that both $\wh w$ and $g\wh w$ act by automorphisms on $C_{\wh n} \oplus (B_{\wh n} \cap B_{2\wh n}) \oplus C_{2\wh n}$. But from the definition of $U$, we know that both $U(M) \stp_{\wh v_{\wh n}}$ and $U(M) \stp_{\wh v_{2\wh n}}$ are copies of $C_{\wh n} \oplus (B_{\wh n} \cap B_{2\wh n}) \oplus C_{2\wh n}$. Since both $C_{\wh n}$ and $C_{2\wh n}$ are non-zero by assumption, this is only possible if there exists a direct summand of $U(M) / M(\wh w, m, \wh\phi)$ that is a quasi-band module of $\wh w$. But then $M(\wh w, m, \wh\phi)$ is not a maximal quasi-band direct summand, which contradicts the statement of (a). Thus, we must have $B_{\wh n} = B_{2\wh n}$.
		
		Now, for each $1\leq i \leq \wh n$, let $w_i = \fw(\wh\sigma_1\ldots\wh\sigma_i)$ and $w'_i = \fw(\wh\sigma_{\wh n+1}\ldots\wh\sigma_{\wh n+i})$. Then each $w_i$ is a subword of $w$ and each $w'_i$ is a subword of $w'$. In particular, $t(w_i)=t(w'_i)=v_i$ for each $i$, and $w_i \sim_2 w_i$ if and only if $v_i \in \ov$. In addition, for each $i$ such that $v_i\in\cv$, it follows by construction that $w_i$ ends with the formal inverse of a crease if and only if $\wh \sigma_i \not\in \im \iota$ if and only if $\wh \sigma_{\wh n+i} \in \im \iota$ if and only if $w'_i$ ends with an ordinary symbol. Likewise, $w'_i$ ends with the formal inverse of a crease if and only if $w_i$ ends with an ordinary symbol. %Now, for each $1\leq i \leq 2n$ and $1\leq j \leq m$, let $b_i\ps{j} \in B_i$ be the vector corresponding to $\wh b_i\ps{j} \in \wh B_i$. In addition, let $r$ be the greatest index such that $\wh \sigma_r \in \wh Q_1$. Then 
		Now since $B_{\wh n} = B_{2\wh n}$, the combinatorics that we have described implies that 
		\begin{equation*}
			B_{\wh n+i} =
			\begin{cases}
				B_i					&\text{if } v_i \in \ov, \\
				B_i \crs_{v_i}		&\text{if } v_i \in \cv \text{ and } \wh\sigma_i\not\in\im\iota, \\
				B_i \crs_{v_i}\inv	&\text{if } v_i \in \cv \text{ and } \wh\sigma_i\in\im\iota.
			\end{cases}
		\end{equation*}
		
		Now consider the vector subspace 
		\begin{equation*}
			N = \sum_{i=1}^{2\wh n} B_i = \sum_{1 \leq i \leq \wh n : v_i \in \ov} B_i + \sum_{1 \leq i \leq 2\wh n : v_i \in \cv} B_i \subseteq M
		\end{equation*}
		Then it is clear that $N$ is closed under the action of $A$, and is thus an $A$-submodule of $M$. In particular, $U(N) = \langle \wh b_i\ps{j} : 1 \leq i \leq 2\wh n, 1 \leq j \leq m \rangle$, and thus $N$ must be a direct summand of $M$ by similar arguments to that used in the proof of (a) $\Rightarrow$ (c) in Proposition~\ref{prop:UStrings}. Moreover, every symbol $\sigma$ of $w$ acts on $N$ via an isomorphism between subspaces $B_i, B_j \subseteq N$, where $i$ and $j$ are determined by $s(\sigma)$ and $t(\sigma)$. Since $w$ is cyclic, this implies that $N \cong M(\rho(\wh w),m,\phi)$ for some $\phi \in \Aut(K^m)$. But since $U(N) \cong U(M(\rho(\wh w),m,\phi)) \cong M(\wh w, m, \wh \phi)$, this implies that $\wh \phi$ must be similar to $\lambda_{\wh w} \phi^2$.
		
		We have thus shown that there exists $\phi \in \Aut(K^m)$ such that $\wh \phi$ is similar to $\lambda_{\wh w} \phi^2$ and $M(\rho(\wh w),m,\phi)$ is isomorphic to a direct summand of $N \subseteq M$. Finally, we must show that $M(\rho(\wh w),m,\phi)$ is a maximal quasi-band direct summand of $M$. Suppose for a contradiction that this is not the case. Then there exists a quasi-band module direct-summand $M(\rho(\wh w),m',\phi') \subseteq M / N$. But since $U$ is additive, it follows from the final statement of the proposition (which we have already proved to be true) that we then have $U(M) \cong M(\wh w,m,\wh\phi) \oplus M(\wh w, m', \lambda_{\wh w} (\phi')^2) \oplus \wh X$ for some direct summand $\wh X \subseteq U(M)$. But then $M(\wh w,m,\wh\phi)$ is not a maximal quasi-band direct summand of $U(M)$, which gives the required contradiction. Thus, $M(\rho(\wh w),m,\phi)$ is a maximal quasi-band direct summand of $M$.
		
		(b) $\Rightarrow$ (a): Suppose $M \cong M(\rho(\wh w),m,\phi) \oplus X$, where $X$ contains no direct summand that is isomorphic to a quasi-band module of $\rho(\wh w)$. By the additivity of $U$ and the final statement of the proposition, we then have $U(M) \cong M(\wh w, m, \lambda_{\wh w}\phi^2) \oplus U(X)$. To complete the proof, we must show that $M(\wh w, m, \lambda_{\wh w}\phi^2)$ is a maximal quasi-band direct summand of $U(M)$. Suppose for a contradiction that this is not the case. Then there exists a direct summand of $U(X)$ that is isomorphic to $M(\wh w, m', \wh \phi')$. But then from the statement of (a) $\Rightarrow$ (b) (which we have already proven), this implies that $\wh \phi'$ is similar to $\lambda_{\wh w}(\phi')^2$ for some $\phi' \in \Aut(K^m)$ and that $M(\rho(\wh w),m',\phi')$ is isomorphic to a direct summand of $X$. This contradicts the initial assumption that $X$ contains no such direct summand. Thus, $M(\wh w, m, \lambda_{\wh w}\phi^2)$ must be a maximal quasi-band direct summand of $U(M)$, as required.
	\end{proof}
	
	We are now left with what is perhaps the most complicated case: $\integer_2$-invariant bands of even parity in $\bnd_{\wh A}$, which correspond to symmetric bands in $\bnd_A$ by Lemma~\ref{lem:FoldedSymmetric}(b). To make our next results as explicit as possible, we will say that a band $\wh w \in \bnd_{\wh A}$ that is $\integer_2$-invariant and of even parity is in \emph{standard form} if $\rho(\wh w) \sim_2 \crs'\sigma_n\inv\ldots\sigma_1\inv\crs\sigma_1\ldots\sigma_n$ for some crease symbols $\crs$ and $\crs'$, and is such that $\sigma_r \in Q_1$ for some $1 \leq r \leq n$. Every $\integer_2$-invariant band of even parity can be written in such a form by considering the equivalent band given by rotation and/or inverse.
	
	\begin{rem} \label{rem:SymBandConst}
		It is worthwhile to make the following remarks regarding $\rho(\wh w)$.
		\begin{enumerate}[label=(\alph*)]
			\item Whilst $\rho(\wh w) \sim_2 \crs'\sigma_n\inv\ldots\sigma_1\inv\crs\sigma_1\ldots\sigma_n$, this is not an equality. Instead, we have $\rho(\wh w) = \crs' \sigma_{2n}\ldots\sigma_{n+1} \crs \sigma_1 \ldots \sigma_n$, where $\sigma_{i+n} = \sigma_i\inv$ if $\sigma_i$ is an ordinary symbol, and $\sigma_{i+n} = \sigma_i$ if $\sigma_i$ is a crease symbol.
			\item The crease symbols of $\rho(\wh w)$, reading from left to right, alternate between being direct to inverse and vice versa. This follows from the combinatorial setup of $\wh Q$ and how the function $\fw$ behaves (Definition~\ref{def:FoldedWords}).
		\end{enumerate}
	\end{rem}

	 \begin{lem} \label{lem:SymBandConst}
	 	Let $M \in \mod*A$ and $\wh w = (g\wh\sigma_{\wh n}\inv)\ldots(g\wh\sigma_{1}\inv)\wh\sigma_{1}\ldots\wh\sigma_{\wh n}$ be a band of $\wh A$ of even parity that is $\integer_2$-invariant and in standard form. Let $w=\rho(\wh w) = \crs' \sigma_{2n}\ldots\sigma_{n+1} \crs \sigma_1 \ldots \sigma_n$. Suppose that a quasiband module $M(\wh w, \wh m, \wh \phi)$ with standard basis $\{\wh b_i\ps{j}: 1 \leq i \leq 2\wh n, 1 \leq j \leq \wh m\}$ is a direct summand of $U(M)$. Then there exists $\mu_{\wh w} \in K$ such that
	 	\begin{align*}
	 		\wh b_{2\wh n}\ps{j} \crs' \sigma_{2n}\ldots\sigma_{n+1} \crs \sigma_1 \ldots \sigma_n 
	 		&= \mu_{\wh w} \wh b_{2\wh n}\ps{j} \crs'\sigma_{n}\inv\ldots\sigma_{1}\inv \crs \sigma_1 \ldots \sigma_n \\
	 		&=\mu_{\wh w}^2 \wh b_{2\wh n}\ps{j} \crs'\sigma_{n}\inv\ldots\sigma_{1}\inv \crs \sigma_{n+1}\inv \ldots \sigma_{2n}\inv +p_{\wh w},
	 	\end{align*}
	 	where $p_{\wh w}$ is a $K$-linear combination of terms in the set
	 	\begin{equation*}
	 		\{\wh b_{2\wh n}\ps{j} \crs' \sigma_{n}\inv\ldots\sigma_{1}\inv \crs \sigma_{n+1}\inv \ldots \sigma_{n+i}\inv: t(\sigma_{n+i}\inv)=t(w), 1 \leq j \leq \wh m\}.
	 	\end{equation*}
	 	Moreover, $\mu_{\wh w} =  \mu_{g\wh w}\inv$.
	 \end{lem}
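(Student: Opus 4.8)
The plan is to prove the chain of equalities by a direct computation of the right $A$-action on the quasi-band module $M(\wh w,\wh m,\wh\phi)$, and then to read off the reciprocity $\mu_{\wh w}=\mu_{g\wh w}\inv$ from Lemma~\ref{lem:Z2Invert}. The $A$-action is the one inherited from $U(M)$: by Remark~\ref{rem:UImage}(b) a direct summand of $U(M)$ carries an $A$-module structure in which every symbol $\wh\alpha$ of $\wh Q$ acts as $\fw(\wh\alpha)$, so that acting by the word $w=\rho(\wh w)$ on the base vector $\wh b_{2\wh n}\ps{j}=\wh b_0\ps{j}$ realizes a full traversal of the band $\wh w$ (recall $w$ has underlying word $\fw(\wh w)$ since $\wh w$ has even parity, by Definition~\ref{def:FoldedBands}). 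The computation itself follows the template of the proof of Lemma~\ref{lem:BandIsos}(b): one applies the symbols of $w$ one at a time, and each time a crease symbol occurs as a direct crease $\crs_v$ one substitutes $\crs_v=\lambda_{v,2}\crs_v\inv+\lambda_{v,1}\stp_v$ (Remark~\ref{rem:InverseCrease}), splitting the outcome into a scalar "leading" part and an $\stp_v$-"correction" supported on shorter words. By Remark~\ref{rem:SymBandConst} the three words in the statement differ pairwise only in whether certain crease symbols with "inner" index $1\le i\le n$ are written directly or inverted --- first inside the block $\sigma_{2n}\cdots\sigma_{n+1}$, then inside the block $\sigma_1\cdots\sigma_n$ --- and the crease symbols of $w$ alternate between direct and inverse, which is what keeps the bookkeeping consistent.

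For the first equality one compares the action of the prefix $\crs'\sigma_{2n}\cdots\sigma_{n+1}\crs$ against that of $\crs'\sigma_n\inv\cdots\sigma_1\inv\crs$, both followed by the common suffix $\sigma_1\cdots\sigma_n$. Converting each inner crease between its two forms contributes a factor $\lambda_{v,2}$ or $\lambda_{v,2}\inv$ according to the direction in which that crease symbol is written, and the product of these over the inner crease symbols is $\mu_{\wh w}$, a specific product of crease-relation coefficients of $w$ analogous to the constant of Remark~\ref{rem:LamConst}. The point that must be verified is that the $\stp_v$-corrections from this block all cancel: each such correction is $\wh b_{2\wh n}\ps{j}$ acted on by a proper prefix of $w$ and then continued by a tail that must still pass through $\crs$ and all of $\sigma_1\cdots\sigma_n$ before returning to $t(w)$, and, using that $w$ is a reduced band which passes each crease vertex in the shape $\alpha\,\crs_v\,\beta$ forced by \ref{en:FG1} and \ref{en:FG6}, this remaining action annihilates every such term.

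For the second equality one runs the same substitution on the last block, replacing $\sigma_1\cdots\sigma_n$ by $\sigma_{n+1}\inv\cdots\sigma_{2n}\inv$; the leading parts yield a further factor $\mu_{\wh w}$, so that altogether $\wh b_{2\wh n}\ps{j}\cdot w=\mu_{\wh w}^2\,(\text{fully-inverted word})+p_{\wh w}$. This time the $\stp_v$-corrections are attached at the very tail of the word, with no remaining action to kill them: this is exactly the place where the proof of Lemma~\ref{lem:BandIsos}(b) used the asymmetry of the band to force the analogous term $d_i$ to vanish, and for the symmetric band $w=\rho(\wh w)$ it survives. Collecting these surviving contributions and using the alternation of crease symbols to control which vertices they sit on, one obtains the asserted $p_{\wh w}$ as a $K$-linear combination of the vectors $\wh b_{2\wh n}\ps{j}\crs'\sigma_n\inv\cdots\sigma_1\inv\crs\sigma_{n+1}\inv\cdots\sigma_{n+i}\inv$ with $t(\sigma_{n+i}\inv)=t(w)$.

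Finally, $\mu_{\wh w}=\mu_{g\wh w}\inv$ follows from Lemma~\ref{lem:Z2Invert}: $\rho(g\wh w)$ has the same ordinary symbols as $\rho(\wh w)$ but every crease symbol inverted, so carrying out the same computation for $g\wh w$ turns every crease-conversion of weight $\lambda_{v,2}$ into one of weight $\lambda_{v,2}\inv$ and vice versa, whence the two scalars are reciprocal. (Alternatively one can invoke Lemma~\ref{lem:FoldedBands}, which gives $\rho(\wh w)\approx\rho(g\wh w)$, together with the band isomorphisms of Lemma~\ref{lem:BandIsos}, to match the two computations without redoing them.) I expect the genuine obstacle to be precisely the bookkeeping of the $\stp_v$-corrections --- their total cancellation in the first equality and their exact identification in the second. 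This is not formal, since the crease relation throws off an $\stp_v$-term at every crease; to see that these cancel, respectively survive exactly as stated, one needs the global combinatorics of the reduced symmetric band $w=\rho(\wh w)$: the alternation of its crease symbols, the fact that the standard-form normalisation places a direct arrow $\sigma_r$ in the block $\sigma_1\cdots\sigma_n$ (so that the non-combinatorial $\wh\phi$-twist is felt only there), and the shape in which $w$ is forced to enter and leave its crease vertices.
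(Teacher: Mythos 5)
Your proposal follows essentially the same approach as the paper's proof: one iteratively applies the crease relations (Remark~\ref{rem:InverseCrease}) to trade crease symbols between their direct and inverse forms, collects the coefficients $\lambda_{v,2}^{\pm1}$ into the scalar $\mu_{\wh w}$, argues that the $\stp_v$-corrections thrown off in the prefix block $\sigma_{2n}\cdots\sigma_{n+1}$ are killed by the continued action (the paper attributes this explicitly to the asymmetry of the substring $\sigma_1\cdots\sigma_n$ via \ref{en:FB3}, together with \ref{en:FG3}/\ref{en:FG6}, where you lean on a somewhat looser ``shape $\alpha\crs_v\beta$'' argument), identifies the corrections that survive from the suffix block as $p_{\wh w}$, and derives $\mu_{\wh w}=\mu_{g\wh w}\inv$ from the crease-inversion property of the folding action. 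The one imprecision relative to the paper is in locating the surviving corrections: the paper pinpoints them as arising precisely when some inner crease $\sigma_{i_k}$ lies in $\{\crs',(\crs')\inv\}$ (so the word revisits $t(w)$ before completing), rather than corrections sitting ``at the very tail.''
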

	 \begin{proof}
	 	 Let $\sigma_{i_1},\ldots,\sigma_{i_r}$ be the complete list of crease symbols in the subword $\sigma_1 \ldots \sigma_n$ of $w$, where $i_1<\ldots<i_r$. Then by Remark~\ref{rem:SymBandConst}(a), we have
		\begin{equation*}
			\wh b_{2\wh n}\ps{j} \crs' \sigma_{2n}\ldots\sigma_{n+1} \crs \sigma_1 \ldots \sigma_n 
			= \wh b_{2\wh n}\ps{j} (\mu_1\sigma_{n}\inv\ldots\sigma_{i_r+1}\inv + \mu_2 \sigma_{n}\inv\ldots\sigma_{i_r+1}\inv\sigma_{i_r}\inv)\sigma_{n+i_r-1}\ldots\sigma_{n+1} \crs \sigma_1 \ldots \sigma_n
		\end{equation*}
		for some $\mu_1,\mu_2\in K$ such that $\sigma_{i_r} = \mu_1\stp_{s(\sigma_{i_r})} + \mu_2\sigma_{i_r}\inv$. Now we note that the substrings $\sigma_{1}\ldots \sigma_{n} \approx \sigma_{2n}\ldots\sigma_{n+1}$ are asymmetric (otherwise both $w$ and $\wh w$ would violate \ref{en:FB3}). Thus, the term
		\begin{equation*} \tag{$\ast$}\label{eq:BndConstCancel}
			\mu_1\wh b_{2\wh n}\ps{j} \sigma_{2n}\ldots\sigma_{i_r+1} \cdot \sigma_{n+i_r-1}\ldots\sigma_{n+1} \crs \sigma_1 \ldots \sigma_n = 0, 
		\end{equation*}
		either by \ref{en:FG2} or \ref{en:FG6}. Iteratively repeating this procedure with the symbols $\sigma_{n+i_{r-1}},\ldots, \sigma_{n+i_1}$ yields 
		\begin{align*}
			\wh b_{2\wh n}\ps{j} \crs' \sigma_{2n}\ldots\sigma_{n+1} \crs \sigma_1 \ldots \sigma_n =\mu_{\wh w} \wh b_n\ps{j} \crs'\sigma_{n}\inv\ldots\sigma_{1}\inv \crs \sigma_1 \ldots \sigma_n
		\end{align*}
		for some $\mu_{\wh w} \in K$, as required.
		
		For the second equality, one can perform the above iterative procedure further with the crease symbols  $\sigma_{i_1},\ldots,\sigma_{i_r}$ within the substring $\sigma_1\ldots\sigma_n$. Since each $\sigma_{i_k} = \sigma_{n+i_k}$, this produces a leading term
		\begin{equation*}
			\mu_{\wh w}^2 \wh b_{2\wh n}\ps{j} \crs' \sigma_{n}\inv\ldots\sigma_{1}\inv \crs \sigma_{n+1}\inv \ldots \sigma_{2n}\inv.
		\end{equation*}
		However, we note that the word $\rho(\wh w)$ may not be sufficiently long enough for other terms to vanish through the action (as in (\ref{eq:BndConstCancel})). This occurs if there exists $k$ such that $\sigma_{i_k}\in\{\crs',(\crs')\inv\}$. Thus, we may additionally be left with a $K$-linear combination $p_{\wh w}$ of terms in the set $\{\wh b_{2\wh n}\ps{j} \crs' \sigma_{n}\inv\ldots\sigma_{1}\inv \crs \sigma_{n+1}\inv \ldots \sigma_{n+i}\inv: t(\sigma_{n+i}\inv)=t(w), 1 \leq j \leq \wh m\}$. Hence,
		\begin{equation*}
			\wh b_{2\wh n}\ps{j} \crs' \sigma_{2n}\ldots\sigma_{n+1} \crs \sigma_1 \ldots \sigma_n 
	 		= \mu_{\wh w}^2 \wh b_{2\wh n}\ps{j} \crs'\sigma_{n}\inv\ldots\sigma_{1}\inv \crs \sigma_{n+1}\inv \ldots \sigma_{2n}\inv +p_{\wh w}.
		\end{equation*}
		
		That $\mu_{\wh w}=\mu_{g\wh w}\inv$ follows from the fact that $g\wh w = \wh w\inv$. In particular, every crease symbol of $\rho(g \wh w)$ is the inverse of the corresponding crease symbol in $\rho(\wh w)$ by Lemma~\ref{lem:Z2Invert}. The result then follows from Remark~\ref{rem:InverseCrease}.
	 \end{proof}
	 
	\begin{prop} \label{prop:USymBand}
		Let $M \in \mod*A$ and let $\wh w = (g\wh\sigma_{\wh n}\inv)\ldots(g\wh\sigma_{1}\inv)\wh\sigma_{1}\ldots\wh\sigma_{\wh n}$ be a band of $\wh A$ of even parity that is $\integer_2$-invariant and in standard form. Let $w=\rho(\wh w) = \crs'\sigma_{2n}\ldots\sigma_{n+1}\crs\sigma_1\ldots\sigma_n$ be the corresponding symmetric band, where $\crs,\crs' \in A$ are crease symbols that satisfy relations $\crs^2 - \lambda_1 \crs - \lambda_2\stp_{s(\crs)}$ and $(\crs')^2 - \lambda'_1 \crs' - \lambda'_2\stp_{s(\crs')}$, respectively. Let $M(\wh w, \wh m, \wh\phi) \in \mod*\wh A$ be a band module. Then the following are equivalent.
		\begin{enumerate}[label=(\alph*)]
			\item $M(\wh w, \wh m, \wh\phi)$ is isomorphic to a direct summand of $U(M)$.
			\item $M(g\wh w, \wh m, \tfrac{\mu_{g\wh w}^2}{\lambda_2\lambda'_2}\wh\phi)$ is isomorphic to a direct summand of $U(M)$, where $\mu_{g\wh w} = \mu_{\wh w}\inv \in K$ is as given in Lemma~\ref{lem:SymBandConst}.
			\item $M(\wh w, \wh m, \mu_{\wh w}^2\lambda_2\lambda'_2\wh\phi\inv)$ is a direct summand of $U(M)$, where $\mu_{\wh w} \in K$ is as given in Lemma~\ref{lem:SymBandConst}.
			\item There exists $m \in \spint$ and a direct summand $\wh N \subseteq U(M)$ such that $\wh N$ is closed under the action of $A$ and is isomorphic to a quasiband module
			\begin{equation*}
				M(\wh w, 2 m, \mu_{\wh w} \phi \phi') \cong
				\begin{cases}
					M(\wh w, \wh m, \wh\phi)	& \text{if } M(\wh w, \wh m, \wh\phi) \crs' \subseteq M(\wh w, \wh m, \wh\phi), \\
					M(\wh w, 2\wh m, \wh\phi \oplus \mu_{\wh w}^2\lambda_2\lambda'_2\wh\phi\inv) & \text{otherwise,}
				\end{cases}
			 \end{equation*}
			where $\phi,\phi' \in \Aut(K^{2m})$ are induced by the actions of $\crs$ and $\crs'$ on the subspaces $M(\wh w, \wh m, \wh\phi)\stp_{s(\wh \sigma_1)}$ and $M(\wh w, \wh m, \wh\phi)\stp_{t(\wh \sigma_{\wh n})}$ respectively, and $\mu_{\wh w} \in K$ is as given in Lemma~\ref{lem:SymBandConst}.
			\item There exists $m \in \spint$, $\psi \in \Aut(K^{2m})$, and a direct summand $\wh N \subseteq U(M)$ such that $\wh N$ is closed under the action of $A$ and is isomorphic to a quasiband module
			\begin{equation*}
				M(\wh w, 2 m, \mu_{\wh w} H \psi) \cong
				\begin{cases}
					M(\wh w, \wh m, \wh\phi)	& \text{if } M(\wh w, \wh m, \wh\phi) \crs' \subseteq M(\wh w, \wh m, \wh\phi), \\
					M(\wh w, 2\wh m, \wh\phi \oplus \mu_{\wh w}^2\lambda_2\lambda'_2\wh\phi\inv) & \text{otherwise,}
				\end{cases}
			 \end{equation*}
			where $H=\left(\begin{smallmatrix} 0 & \lambda_2 \\ 1 & \lambda_1 \end{smallmatrix}\right) \otimes_K \id_{m}$, and $\mu_{\wh w} \in K$ is as given in Lemma~\ref{lem:SymBandConst}.
			\item There exists a symmetric band module $M(w, m, \psi)$ that is isomorphic to a direct summand of $M$ and
			\begin{equation*}
				U(M(w, m, \psi)) \cong
				\begin{cases}
					M(\wh w, \wh m, \wh\phi)	& \text{if } M(\wh w, \wh m, \wh\phi) \crs' \subseteq M(\wh w, \wh m, \wh\phi), \\
					M(\wh w, 2\wh m, \wh\phi \oplus \mu_{\wh w}^2\lambda_2\lambda'_2\wh\phi\inv) & \text{otherwise.}
				\end{cases}
			\end{equation*}
		\end{enumerate}
		
	\end{prop}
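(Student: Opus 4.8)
The plan is to prove the equivalences along the cycle $(f)\Rightarrow(e)\Rightarrow(d)\Rightarrow(a)\Rightarrow(f)$, to obtain $(b)\Leftrightarrow(c)$ directly from Lemma~\ref{lem:BandIsos}, and to deduce $(a)\Leftrightarrow(c)$ from $(a)\Leftrightarrow(f)$ together with the self-duality of symmetric bands. I would dispatch the cheap equivalences first. Since $\wh A$ is gentle, $\wh w$ has no crease symbols, and being $\integer_2$-invariant of even parity it satisfies $g\wh w\sim_3\wh w\inv$ by the proof of Lemma~\ref{lem:FoldedSymmetric}(b); hence Lemma~\ref{lem:BandIsos}(a),(c) gives $M(g\wh w,\wh m,\chi)\cong M(\wh w,\wh m,\chi\inv)$ for any $\chi$, and substituting $\chi=\tfrac{\mu_{g\wh w}^2}{\lambda_2\lambda'_2}\wh\phi$ with $\mu_{g\wh w}=\mu_{\wh w}\inv$ (Lemma~\ref{lem:SymBandConst}) turns the module in $(b)$ into $M(\wh w,\wh m,\mu_{\wh w}^2\lambda_2\lambda'_2\wh\phi\inv)$, the module in $(c)$; this is $(b)\Leftrightarrow(c)$. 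For $(d)\Leftrightarrow(e)$: $\crs$ satisfies the irreducible quadratic $x^2-\lambda_1x-\lambda_2$, so the automorphism $\phi$ it induces on $M(\wh w,\wh m,\wh\phi)\stp_{s(\wh\sigma_1)}$ has this quadratic as minimal polynomial and is conjugate to $H=\left(\begin{smallmatrix}0&\lambda_2\\1&\lambda_1\end{smallmatrix}\right)\otimes_K\id_m$ by rational canonical form, so a simultaneous change of basis identifies $M(\wh w,2m,\mu_{\wh w}\phi\phi')$ with $M(\wh w,2m,\mu_{\wh w}H\psi)$. Finally $(d)\Rightarrow(a)$ is immediate: the block form $M(\wh w,2\wh m,\wh\phi\oplus\mu_{\wh w}^2\lambda_2\lambda'_2\wh\phi\inv)$ splits as $M(\wh w,\wh m,\wh\phi)\oplus M(\wh w,\wh m,\mu_{\wh w}^2\lambda_2\lambda'_2\wh\phi\inv)$, so in either branch of the dichotomy the module in $(a)$ is a direct summand of $\wh N$, hence of $U(M)$.

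The heart of the matter is $(f)\Rightarrow(e)$ and $(a)\Rightarrow(f)$. For $(f)\Rightarrow(e)$ I would take $N=M(w,m,\psi)$ a symmetric band module direct summand of $M$ and compute $U(N)$ straight from Definition~\ref{def:UnfoldingFunctor}, following the template of Propositions~\ref{prop:UStrings} and \ref{prop:UOddBands}: the vertices of $w$ lying over $\ov$ give two copies in $U(N)$ and those over $\cv$ one copy, and since $\crs,\crs'$ act on $N$ by automorphisms of $N\stp_{s(\crs)}$ and $N\stp_{s(\crs')}$, tracing the standard basis of $N$ through $U$ produces a standard basis for a quasiband module of the word $\wh w$. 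Its going-around map is computed by Lemma~\ref{lem:SymBandConst}: one circuit of $\wh w$ in $U(N)$ corresponds, after folding, to the action of $\crs'\sigma_{2n}\ldots\sigma_{n+1}\crs\sigma_1\ldots\sigma_n$ on $N$, which by that lemma equals $\mu_{\wh w}$ times the action of $\crs'\sigma_n\inv\ldots\sigma_1\inv\crs\sigma_1\ldots\sigma_n$, i.e.\ $\mu_{\wh w}$ times (the $\crs'$-action) composed with (the $\crs$-action); identifying the latter two with $\psi$ and $H$ yields $U(N)\cong M(\wh w,2m,\mu_{\wh w}H\psi)$. The dichotomy in the displayed formula is then read off: $M(\wh w,\wh m,\wh\phi)\crs'\subseteq M(\wh w,\wh m,\wh\phi)$ exactly when the $\crs'$-action keeps the $\wh w$-copy inside itself rather than mixing it with the $g\wh w\sim_3\wh w\inv$-copy, and in the remaining case the second copy is identified as $M(\wh w,\wh m,\mu_{\wh w}^2\lambda_2\lambda'_2\wh\phi\inv)$ via Lemma~\ref{lem:SymBandConst} together with $\crs\inv=\lambda_2\inv(\crs-\lambda_1\stp)$ and $(\crs')\inv=(\lambda'_2)\inv(\crs'-\lambda'_1\stp)$ from Remark~\ref{rem:InverseCrease}. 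Additivity of $U$ makes $U(N)$ a direct summand of $U(M)$, which is $(e)$.

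The hard direction is $(a)\Rightarrow(f)$: reconstructing a symmetric band module of $A$ from the assumption that $M(\wh w,\wh m,\wh\phi)$ is a direct summand of $U(M)$. Mimicking Proposition~\ref{prop:UStrings}(a)$\Rightarrow$(c) and Proposition~\ref{prop:UOddBands}(a)$\Rightarrow$(b), I would pick an indecomposable summand $N\subseteq M$ with $M(\wh w,\wh m,\wh\phi)\subseteq U(N)$ and exploit the $\integer_2$-invariance of $\wh w$: applying $\crs$ and $\crs'$ to the standard basis of $M(\wh w,\wh m,\wh\phi)$ inside $U(N)$ lands in copies of the same vertex subspaces, so these vectors together with the original basis span an $A$-submodule $\wh N\subseteq U(N)$ that is closed under the $A$-action; using Remark~\ref{rem:UImage} and the combinatorics of $\fw$ one identifies $\wh N$ as the image under $U$ of a symmetric band module $M(w,m,\psi)$, with $\psi$ pinned down in terms of $\wh\phi$ and the crease constants by Lemma~\ref{lem:SymBandConst} (which also decides which branch of the dichotomy occurs, according to whether $M(\wh w,\wh m,\wh\phi)\crs'$ already lies in $M(\wh w,\wh m,\wh\phi)$). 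The usual superfluity argument then forces $N\cong M(w,m,\psi)$: if it were a proper submodule of the indecomposable $N$, some arrow of $Q$ would carry an element of $N/M(w,m,\psi)$ into $M(\wh w,\wh m,\wh\phi)$, contradicting that the latter is a direct summand of $U(N)$; and the computation of $(f)\Rightarrow(e)$ applied to this $N$ gives the displayed formula for $U(M(w,m,\psi))$. For $(a)\Leftrightarrow(c)$, apply $(a)\Leftrightarrow(f)$ to $M(\wh w,\wh m,\wh\phi)$: in the ``otherwise'' branch $M(\wh w,\wh m,\mu_{\wh w}^2\lambda_2\lambda'_2\wh\phi\inv)$ is visibly a summand of $U(N)$, and in the other branch one uses that a symmetric band is equivalent to its own inverse, so by Lemma~\ref{lem:BandIsos}(a)--(c) the module $M(w,m,\psi)$ is isomorphic to one whose parameter is a constant multiple of $\psi\inv$; comparing images under $U$ (via the already-proven $(f)\Rightarrow(e)$ computation) forces $\wh\phi$ to be similar to $\mu_{\wh w}^2\lambda_2\lambda'_2\wh\phi\inv$, so $(a)$ and $(c)$ hold simultaneously. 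The main obstacle throughout is the scalar bookkeeping attached to the distinguished crease symbols, interlaced with the need to distinguish a symmetric band module that unfolds to a single $\wh w$-band module from one that unfolds to a sum of two; Lemma~\ref{lem:SymBandConst} is precisely the device engineered to keep this under control.
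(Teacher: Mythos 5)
Your proposal reorganizes the cycle of implications but leans on the same tools as the paper. The paper goes $(a)\Leftrightarrow(b)$ (a substantive block-matrix/eigenvector computation using Lemma~\ref{lem:SymBandConst}), then $(b)\Leftrightarrow(c)$ via Lemma~\ref{lem:BandIsos}(a) applied to the asymmetric band $\wh w\in\bnd_{\wh A}$, then builds $(d)$, $(e)$, $(f)$. You instead run $(f)\Rightarrow(e)\Rightarrow(d)\Rightarrow(a)\Rightarrow(f)$ plus $(b)\Leftrightarrow(c)$ plus $(a)\Leftrightarrow(c)$. Your $(f)\Rightarrow(e)$ is a forward unfolding computation that reproduces the content of the paper's $\bigl((a)(b)(c)\bigr)\Rightarrow(d)$ and $(e)\Rightarrow(f)$ steps, and your $(d)\Leftrightarrow(e)$ via rational canonical form and $(b)\Leftrightarrow(c)$ via Lemma~\ref{lem:BandIsos} (with the correct remark that $\wh w$ is asymmetric in $\bnd_{\wh A}$) both match the paper. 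The hard direction $(a)\Rightarrow(f)$ is where you and the paper both need the full force of Lemma~\ref{lem:SymBandConst}, and your outline (build the $A$-closed $\wh N\subseteq U(N)$ from the $\crs$- and $\crs'$-translates of the standard basis, then recognize it as $U$ of a symmetric band module, with the superfluity argument to pin down $N$) is the right shape and essentially matches the paper's construction.

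The place where your plan is genuinely thinner than the paper is the closing $(a)\Leftrightarrow(c)$ step. You write that in the first branch of the dichotomy ``one uses that a symmetric band is equivalent to its own inverse, so by Lemma~\ref{lem:BandIsos}(a)--(c) the module $M(w,m,\psi)$ is isomorphic to one whose parameter is a constant multiple of $\psi\inv$; comparing images under $U$ $\ldots$ forces $\wh\phi$ to be similar to $\mu_{\wh w}^2\lambda_2\lambda'_2\wh\phi\inv$.'' Lemma~\ref{lem:BandIsos} is stated for asymmetric bands and does not apply directly to the symmetric band module $M(w,m,\psi)$, and the conclusion that $\wh\phi\sim\mu_{\wh w}^2\lambda_2\lambda'_2\wh\phi\inv$ in the first branch is \emph{not} an abstract consequence of $\wh\phi$ being similar to $\mu_{\wh w}H\psi$ with $H,\psi$ satisfying quadratics: $H\psi\sim\lambda_2\lambda'_2(H\psi)\inv$ fails for arbitrary such pairs, and one only gets it here because the $\crs'$-translate basis $\wh C$, constructed inside $U(N)$ exactly as in the paper's $(a)\Rightarrow(b)$ step, is forced to coincide with $\wh B$ when $\langle\wh B\rangle$ is closed under $\crs'$. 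So your $(a)\Leftrightarrow(c)$ shortcut is really standing in for the paper's $(a)\Rightarrow(b)$ computation; if you execute $(a)\Rightarrow(f)$ at the level of detail the paper uses for $(a)\Rightarrow(b)$ — explicitly computing the $\crs'$-translates, the block-matrix action of $w\inv$, and the $\wh C$ basis via its eigenvectors — the needed similarity drops out and the argument closes. As written, though, that step of your outline elides exactly the computation that makes the proposition go.
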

	
	\begin{proof}
		(a) $\Rightarrow$ (b): Let $\wh B = \{\wh b_i\ps{j}: 1 \leq i \leq 2\wh n, 1 \leq j \leq \wh m\}$ be the standard basis of $M(\wh w, \wh m, \wh\phi)$. The first step of the proof is to observe the effects of the actions of the words $w=\fw(\wh w)$ and $w \inv = \fw(\wh w\inv) = \fw(g \wh w)$ on the elements $\wh b_{2\wh n}\ps{j}$ (see Remark~\ref{rem:UImage}(b) and Remark~\ref{rem:BandInverseAction}). In particular, it follows from Remark~\ref{rem:UImage}(b) that the action of $w$ on $\wh b_{2\wh n}\ps{j}$ is equivalent to the action of $\wh w$ on $\wh b_{2\wh n}\ps{j}$, which is a full rightward rotation of the band $\wh w$. That is, $\wh b_{2\wh n}\ps{j} \cdot w = \wh\phi(\wh b_{2\wh n}\ps{j})$. Conversely, the action of $g \wh w$ on $\wh b_{2\wh n}\ps{j}$ is a full leftward rotation of the band $\wh w$, and hence $\wh b_{2\wh n}\ps{j} \cdot w\inv = \wh\phi\inv(\wh b_{2\wh n}\ps{j})$.
		
		Since $s(w) \in \cv$, there must exist (by Remark~\ref{rem:UImage}, for each $j$) an element $\wh b_{2\wh n}\ps{j}\crs' \in U(M)$ that is linearly independent to $\wh b_{2\wh n}\ps{j} \in U(M)$. The next step of the proof is to compute the action of $w\inv$ on each $\wh b_{2\wh n}\ps{j}\crs'$. By Lemma~\ref{lem:SymBandConst}, we have
		\begin{align*}
			\wh b_{2\wh n}\ps{j}\crs' w\inv 
			&= \mu_{g\wh w}\wh b_{2\wh n}\ps{j}\crs' \sigma_{2n}\ldots\sigma_{n+1}\crs\inv\sigma_{n+1}\inv\ldots\sigma_{2n}\inv(\crs')\inv \\
			&= \mu_{g\wh w}^2\wh b_{2\wh n}\ps{j}\crs' \sigma_{2n}\ldots\sigma_{n+1}\crs\inv\sigma_1\ldots\sigma_n(\crs')\inv + p_{g\wh w}
		\end{align*}
		for some $\mu_{g\wh w}$ and some $K$-linear combination $p_{g\wh w}$ of terms in $\{\wh b_i\ps{j} \crs' \sigma_{2n}\ldots\sigma_{n+1}\crs\inv\sigma_1\ldots\sigma_i : t(\sigma_i)=t(w), 1 \leq j \leq \wh m\}$. Now using the appropriate identities for $\crs$ and $\crs'$, we obtain
		\begin{align*}
			\wh b_{2\wh n}\ps{j}\crs' w\inv 
			&= \wh b_{2\wh n}\ps{j}\left(\frac{\mu_{g\wh w}^2}{\lambda_2} w -\mu_{g\wh w}\frac{\lambda_1}{\lambda_2} \crs'\right)(\crs')\inv +p'_{g\wh w} \\
			&=\wh b_{2\wh n}\ps{j}\left(\frac{\mu_{g\wh w}^2}{\lambda_2} w \left(\frac{1}{\lambda'_2}\crs' - \frac{\lambda'_1}{\lambda'_2}\stp_{s(w)}\right) -\mu_{g\wh w}\frac{\lambda_1}{\lambda_2} \stp_{s(w)}\right) + p'_{g\wh w}\\
			&=\frac{\mu_{g\wh w}^2}{\lambda_2\lambda'_2} \wh \phi(\wh b_{2\wh n}\ps{j}\crs')
			-\frac{\mu_{g\wh w}^2\lambda'_1}{\lambda_2\lambda'_2}\wh\phi(\wh b_{2\wh n}\ps{j})
			-\frac{\mu_{g\wh w}\lambda_1}{\lambda_2}\wh b_{2\wh n}\ps{j}
			+p'_{g\wh w},
		\end{align*}
		where $p'_{g\wh w}$ is again some $K$-linear combination of terms in $\{\wh b_i\ps{j} \crs' \sigma_{2n}\ldots\sigma_{n+1}\crs\inv\sigma_1\ldots\sigma_i : t(\sigma_i)=t(w), 1 \leq j \leq \wh m\}$.
		
		Next, we consider the action of $w\inv$ on the vector $p'_{g \wh w} \in U(M)$. Since the substring $\sigma_1\ldots \sigma_n$ of $w$ is asymmetric, we must have $p'_{g \wh w}w\inv=0$  by \ref{en:FG2} or \ref{en:FG6}. Through this calculation, we have essentially shown that the word $w\inv$ acts on the subspace $\langle \wh b_{2\wh n}\ps{j},\wh b_{2\wh n}\ps{j}\crs', p'_{g\wh w} : 1 \leq j \leq \wh m\rangle \subseteq U(M)$ via the block matrix
		\begin{equation*}
			\begin{pmatrix}
				\wh\phi\inv & 
				-\frac{\mu_{g\wh w}}{\lambda_2}\left(
				\lambda_1 +\frac{\mu_{g\wh w}\lambda'_1}{\lambda'_2}\wh\phi\right) & 
				0 
				\\
				0 & 
				\frac{\mu_{g\wh w}^2}{\lambda_2\lambda'_2} \wh \phi & 
				0 
				\\
				0 & 
				1 & 
				0
			\end{pmatrix}.
		\end{equation*}
		It is not difficult to verify that a subset of the eigenvalues of this matrix are given by $\frac{\mu_{g\wh w}^2}{\lambda_2\lambda'_2}$-multiples of the eigenvalues of $\wh \phi$. By considering the eigenvectors of this matrix, one can verify that the elements
		\begin{equation*}
			\wh c_{2\wh n}\ps{j} = \frac{\mu_{g\wh w}^2}{\lambda_2\lambda'_2} \wh \phi(\wh b_{2\wh n}\ps{j}\crs') 
			- \wh\phi\inv(\wh b_{2\wh n}\ps{j}\crs') 
			-\frac{\mu_{g\wh w}\lambda_1}{\lambda_2}\wh b_{2\wh n}\ps{j} 
			-\frac{\mu_{g\wh w}^2\lambda'_1}{\lambda_2\lambda'_2}\wh\phi(\wh b_{2\wh n}\ps{j})
			+p'_{g \wh w} 
			- \mu_{\wh w}^2\lambda_2\lambda'_2\wh\phi^{-2}(p'_{g \wh w})
		\end{equation*}
		are such that $\wh c_{2\wh n}\ps{j} w\inv = \frac{\mu_{g\wh w}^2}{\lambda_2\lambda'_2} \wh\phi(\wh c_{2\wh n}\ps{j})$. Now let $\wh b'_i$ be the $i$-th subword of $g\wh w = \wh w\inv$ consisting of the first $i$ symbols. It is then easy to see that the set $\wh C = \{\wh c_i\ps{j}: 1 \leq i \leq 2\wh n, 1 \leq j \leq \wh m\}$ given by $\wh c_i\ps{j} = \wh c_{2\wh n}\ps{j}\fw(b'_i)$ is a basis for $M(g\wh w, \wh m, \frac{\mu_{g\wh w}^2}{\lambda_2\lambda'_2} \wh \phi)$. By construction, the subspace of $\langle \wh C \rangle \subseteq U(M)$ is clearly closed under the action of $\wh A$, and thus, $M(g\wh w, \wh m, \frac{\mu_{g\wh w}^2}{\lambda_2\lambda'_2} \wh \phi)$ is an $\wh A$-submodule of $U(M)$.
		
		It remains to show that  $\langle \wh C \rangle \cong M(g\wh w, \wh m, \frac{\mu_{g\wh w}^2}{\lambda_2\lambda'_2} \wh \phi)$ is actually a direct summand of $U(M)$. Suppose for a contradiction that this is not the case. Since $M(g\wh w, \wh m, \frac{\mu_{g\wh w}^2}{\lambda_2\lambda'_2} \wh \phi)$ is a band module, it is indecomposable. Thus, $M(g\wh w, \wh m, \frac{\mu_{g\wh w}^2}{\lambda_2\lambda'_2} \wh \phi)$ is a proper submodule of some indecomposable direct summand $\wh X \subseteq U(M)$. In particular, it follows from the classification of indecomposable modules of gentle algebras (for example, in \cite{ButlerRingel}) that $\wh X$ must be isomorphic to a band module $M(g\wh w, \wh m, \frac{\mu_{g\wh w}^2}{\lambda_2\lambda'_2}\wh \phi')$, where $\wh \phi'$ contains all of the eigenvalues of $\wh \phi$. By the construction of $U$ and Remark~\ref{rem:UImage}, this is only possible if $\langle \wh B \rangle \cong M(\wh w, \wh m, \wh \phi)$ is a proper $\wh A$-submodule of an indecomposable direct summand $M(\wh w, \wh m, \wh \phi') \subseteq U(M)$. But then this contradicts the assumption that $\langle \wh B \rangle$ is a direct summand of $U(M)$. Thus, $\langle\wh C\rangle \cong M(g\wh w, \wh m, \frac{\mu_{g\wh w}^2}{\lambda_2\lambda'_2} \wh \phi)$ is  a direct summand of $U(M)$, as required.
		
		(b) $\Rightarrow$ (a): 
		This follows by rotating $g\wh w$ such that it is in standard form and relabelling it as $\wh w$. In addition, by relabelling $\frac{\mu_{g\wh w}^2}{\lambda_2\lambda'_2} \wh \phi$ as $\wh \phi$ and considering the identities in Remark~\ref{rem:InverseCrease} and Lemma~\ref{lem:SymBandConst}, the result follows from the same procedure used in the proof for (a) $\Rightarrow$ (b).
		
		(b) $\Leftrightarrow$ (c): 
		This follows from Lemma~\ref{lem:BandIsos}(a) (which is applicable since $\wh w$ is asymmetric, despite $w$ being symmetric) and the fact that $g\wh w = \wh w \inv$. Also note that $\mu_{\wh w} = \mu_{g\wh w}\inv$ by Lemma~\ref{lem:SymBandConst}.

		((a) $\Leftrightarrow$ (b) $\Leftrightarrow$ (c)) $\Rightarrow$ (d):
		Again, let $\wh B$ and $\wh C$ be bases of $M(\wh w, \wh m, \wh\phi)$ and $M(g\wh w, \wh m, \tfrac{\mu_{g\wh w}^2}{\lambda_2\lambda'_2}\wh\phi)$ constructed in the proof of (a) $\Rightarrow$ (b). Consider the $\wh A$-module
		\begin{equation*}
			\wh N = \langle\wh B \rangle
			+
			\langle \wh C \rangle
			\subseteq U(M),
		\end{equation*}
		which is a direct summand of $U(M)$, since we have already proven that both $\langle\wh B \rangle$ and $\langle \wh C \rangle$ are (not necessarily distinct) direct summands of $U(M)$.  We will show that this module is closed under the action of $A$, and is hence an $A$-submodule of $U(M)$.
		
		Let $\wh b_i$ and $\wh b'_i$ be the $i$-th subwords of $\wh w$ and $g\wh w = \wh w\inv$ (respectively), each consisting of the first $i$ symbols. For each $i$, let $b_i = \fw(\wh b_i)$ and $b'_i = \fw(\wh b'_i)$. From Remark~\ref{rem:SymBandConst}(b) and Definition~\ref{def:FoldedWords}, we can see that for each $i$ and each $j>1$, the $j$-th crease symbol of $\crs' b'_i$ is the inverse of the $j$-th crease symbol of $b_i$. Moreover, for each $i$, we have $t(b_i) \in \cv$ if and only if $t(\crs' b'_i) \in \cv$. In addition, for each $i$ such that $t(b_i) \in \cv$, $b_i$ ends on a crease symbol if and only if $\crs' b'_i$ does not end on a crease symbol. Now note that $\wh N$ is closed under the action of the words $b_i$ and $\crs' b'_i$ by construction. Consequently, $\wh N$ is closed under the action of all symbols (and their inverses) in $w$. In addition, for any symbol $\sigma$ such that neither $\sigma$ nor $\sigma\inv$ is a symbol of $w$, we have $\wh N \sigma = \wh N \sigma\inv = 0$. Thus, $\wh N$ is closed under the action of $A$, as required, and hence, $\wh N$ is an $A$-submodule of $U(M)$.
		
		We will now show that we have
		\begin{equation*}
			\wh N \cong
			\begin{cases}
					M(\wh w, \wh m, \wh\phi)	& \text{if } M(\wh w, \wh m, \wh\phi) \crs' \subseteq M(\wh w, \wh m, \wh\phi), \\
					M(\wh w, 2\wh m, \wh\phi \oplus \mu_{\wh w}^2\lambda_2\lambda'_2\wh\phi\inv) & \text{otherwise.}
				\end{cases}
		\end{equation*}
		First suppose that $\langle\wh B \rangle$ is closed under the action of $\crs'$. For this, we note that $\langle\wh B \rangle$ is closed under the action of each $b'_i$, since each $b'_i$ is the inverse of some subword at the end of $b_{2\wh n}$. It therefore follows that $\langle\wh B \rangle$ is closed under the action of each $\crs' b'_i$. But from the construction of $\langle \wh C \rangle$, we must therefore have $\langle\wh B \rangle \cap \langle \wh C \rangle \neq 0$. Suppose for a contradiction that $\langle\wh B \rangle \neq \langle \wh C \rangle$. Note that by construction, for each $i$, every arrow of $\wh Q_1$ acts on each subspace $\langle \wh b_i\ps{j}, \wh c_i\ps{j} : 1 \leq j \leq 2\wh n\rangle$ by either an isomorphism onto the subspace $\langle \wh b_k\ps{j}, \wh c_k\ps{j} : 1 \leq j \leq 2\wh n\rangle$ (with $k = i \pm 1$) or by a zero action. But then if $\langle B \rangle \neq \langle\wh B \rangle \cap \langle \wh C \rangle \neq \langle C \rangle$, then this is only possible if $\wh \phi$ has a direct sum decomposition. But then $M(\wh w, \wh m, \wh\phi)$ is not a band module. Thus, in this case we must have $\langle\wh B \rangle = \langle \wh C \rangle$, as required. So $\wh N \cong M(\wh w, \wh m, \wh\phi)$.
		
		Now suppose that $\langle\wh B \rangle$ is not closed under the action of $\crs'$. Then by construction, at least some $c_i\ps{j}$ are linearly independent to the vector space $\langle\wh B \rangle$. But since $\langle\wh B \rangle$ and $\langle \wh C \rangle$ are isomorphic to band modules, they are each closed under the action of $\wh A$. This is possible only if $\langle\wh B \rangle \cap \langle \wh C \rangle = 0$. Thus, we must have $\wh N \cong M(\wh w, \wh m, \wh\phi) \oplus M(g\wh w, \wh m, \tfrac{\mu_{g\wh w}^2}{\lambda_2\lambda'_2}\wh\phi) \cong M(\wh w, 2\wh m, \wh\phi \oplus \mu_{\wh w}^2\lambda_2\lambda'_2\wh\phi\inv)$, as required.
		
		Finally, we will show that $\wh N \cong M(\wh w, 2 m, \mu_{\wh w} \phi \phi')$ for some $m \in \spint$ and some $\phi,\phi' \in \Aut(K^{2m})$ determined by the actions of $\crs$ and $\crs'$, respectively. We begin by noting that we have already shown that $\wh N$ is a quasiband module of $\wh w$. The first step is to therefore to determine the quantity $m$. In the case where $\wh N \cong M(\wh w, 2\wh m, \wh\phi \oplus \mu_{\wh w}^2\lambda_2\lambda'_2\wh\phi\inv)$, we clearly have $m = \wh m$. For the case where $\wh N \cong M(\wh w, \wh m, \wh\phi)$, we note that for each $b \in M(\wh w, \wh m, \wh\phi)$ such that $b \crs' \neq 0$, we have linearly independent elements $b, b\crs' \in M(\wh w, \wh m, \wh\phi)$. In particular, such elements of $M(\wh w, \wh m, \wh\phi)$ always come in pairs since $\crs'$ satisfies a quadratic relation. Since all symbols of $w$ act by isomorphisms between subspaces of dimension $\wh m$, this implies that $\wh m$ must be even. So $m = \frac{\wh m}{2}$ in this case.
		
		So we have now shown that $\wh N \cong M(\wh w, 2 m, \wh\phi')$, where $\wh \phi' = \wh \phi$ if $M(\wh w, \wh m, \wh\phi) \crs' \subseteq M(\wh w, \wh m, \wh\phi)$ and $\wh \phi' = \wh \phi \oplus \mu_{\wh w}^2\lambda_2\lambda'_2 \wh \phi\inv$ otherwise. It remains to show that $\wh \phi'$ is similar to $\mu_{\wh w} \phi \phi'$. Since $\wh N$ is a direct summand of $U(M)$ (which follows from (a), (c), and the isomorphism of $\wh N$ that we previously computed), there must exist a direct summand $N \subseteq M$ such that $U(N) \cong \wh N$ (since $\wh N$ is closed under the action of $A$ and $U$ is exact and additive). 
		 Now recall from the definition of $U$ that for each $1 \leq i \leq \wh n$ with $\pi(t(\wh\sigma_i)) \in \ov$, the subspaces $\wh N_i = U(N) \stp_{t(\wh\sigma_i)} \subseteq U(N)$ and $\wh N'_i = U(N) \stp_{t(g\wh\sigma_i)}\subseteq U(N)$ are disjoint copies of the subspace $N\stp_{t(\pi(\wh\sigma_i))}$. For each $1 \leq i \leq \wh n$ with $\pi(t(\wh\sigma_i)) \in \cv$, we have $t(\wh\sigma_i) = t(g\wh\sigma_i)$ and so $\wh N_i = U(N) \stp_{t(\wh\sigma_i)} = N\stp_{t(\pi(\wh\sigma_i))} = U(N) \stp_{t(g\wh\sigma_i)} = \wh N'_i$. In particular, this implies that for each $0 \leq i \leq \wh n$, the subspaces $\wh N_i, \wh N'_i \subseteq U(N)$ have a common basis, say $B_i$.
		 
		 Now let $a_n\ps{1},\ldots a_n\ps{2m}$ be the vectors $\wh b_{2\wh n}\ps{1},\ldots \wh b_{2\wh n}\ps{2m}$ written with respect to the basis $B_{\wh n}$. For each $n \geq i \geq 1$ and $1 \leq j \leq 2m$, inductively define vectors $a_{i-1}\ps{j} = a_{i}\ps{j} \sigma_i\inv$. We then have vectors $a_0\ps{1},\ldots a_0\ps{2m} \in B_{0}$ that are related to the vectors $a_n\ps{1},\ldots a_n\ps{2m} \in B_n$ by the identity action of the word $\sigma_1\ldots\sigma_n$. Next, let $b \in \langle a_n\ps{1},\ldots, a_{n}\ps{2m}\rangle \subseteq \langle B_{\wh n} \rangle = \wh N \stp_{s(\wh w)}$ and note (by Lemma~\ref{lem:SymBandConst}) that
		\begin{equation*}
			b \wh w = b \mu_{\wh w}\crs' \sigma_n\inv\ldots\sigma_1\inv \crs \sigma_1\ldots\sigma_n.
		\end{equation*}
		But $\wh b_{2 \wh n}\ps{j} \wh w = \wh\phi'(\wh b_{2 \wh n}\ps{j})$ for each $j$. Thus,  $\wh \phi'$ must be similar to $\mu_{\wh w} \phi \phi'$ with respect to the common bases $B_i$. Thus, $\wh N \cong M(\wh w, 2m, \wh \phi') \cong M(\wh w, 2m, \mu_{\wh w} \phi \phi')$, as required.
		
		(d) $\Rightarrow$ (a): This is obvious from the statement itself.
		
		((a) $\Leftrightarrow$ (b) $\Leftrightarrow$ (c) $\Leftrightarrow$ (d)) $\Rightarrow$ (e):
		Every representation $\phi$ of $\crs$ satisfies the relation $\phi^2 - \lambda_1\phi-\lambda_2$. Any such $\phi$ is thus similar to the matrix $H$. That is, there exists $h \in \Aut(K^{2m})$ such that $H=h\phi h\inv$. But then
		\begin{equation*}
			h\phi \phi' h\inv = h \phi h\inv h \phi' h\inv = H h \phi' h\inv.
		\end{equation*}
		Thus, $M(\wh w, 2m, \mu_{\wh w}\phi\phi') \cong M(\wh w, 2m, \mu_{\wh w}H h \phi' h\inv)$. The result then follows by defining $\psi=h \phi' h\inv$.
		
		(e) $\Rightarrow$ (a):
		This is obvious from the statement itself
		
		(e) $\Rightarrow$ (f):
		We have already shown that there exists a direct summand $N = \langle a_i\ps{j} : 0 \leq i \leq n, 1 \leq j \leq 2m \rangle \subseteq M$ such that $U(N) \cong M(\wh w, 2m, \mu_{\wh w} H \psi)$. It is easy to verify that the basis $\{a_i\ps{j} : 0 \leq i \leq n, 1 \leq j \leq 2m\}$ constructed above is also the standard basis for the module $M(w, m, \psi)$. So $N \cong M(w, m, \psi)$. It remains to show that $M(w, m, \psi)$ is indecomposable, and is thus a band module (and not just a quasi-band module).
		
		There are two cases to consider: either the subspace $\langle \wh B \rangle  = \langle \wh b_i\ps{j} : 1 \leq i \leq 2\wh n, 1 \leq j \leq \wh m\rangle \cong  M(\wh w, \wh m, \wh \phi)$ is closed under the action of $\crs'$ or it is not. So first suppose that this subspace is closed under the action of $\crs'$. Then $U(M(w, m, \psi)) \cong M(\wh w, \wh m, \wh \phi)$ and is thus indecomposable (since $M(\wh w, \wh m, \wh \phi)$ is a band module of a gentle algebra). Now suppose for a contradiction that $M(w, m, \psi)$ is decomposable. Then there exists a non-trivial idempotent $f \in \End_A(M(w, m, \psi))$. But then from the definition of $U$, there must exist a non-trivial idempotent $U(f) \in \End_A(U(M(w, m, \psi)))$, which would imply that $U(M(w, m, \psi))$ is decomposable --- a contradiction to the fact that $M(\wh w, \wh m, \wh \phi)$ is a band module. Thus, $M(w, m, \psi)$ is indecomposable in this case.
		
		For the other case, suppose instead that $\langle \wh B \rangle \cong  M(\wh w, \wh m, \wh \phi)$ is not closed under the action of $\crs'$. Then
		\begin{align*}
			U(M(w, m, \psi)) &\cong M(\wh w, 2\wh m, \wh \phi \oplus \mu_{\wh w}^2 \lambda_2\lambda'_2 \wh \phi\inv) \\
			&\cong M(\wh w, m, \wh \phi) \oplus M(\wh w, m, \mu_{\wh w}^2 \lambda_2\lambda'_2 \wh \phi\inv) \\
			&\cong \langle \wh B\rangle \oplus \langle \wh C\rangle,
		\end{align*}
		where $\wh C$ is as in the proof of (a) $\Rightarrow$ (b). Now suppose for a contradiction that $M(w,m,\psi)$ is decomposable, say $M(w, m, \psi) \cong N' \oplus N''$. But then we must have $U(N') \cong M(\wh w, m, \wh \phi)$ and $U(N'') \cong M(\wh w, m, \mu_{\wh w}^2 \lambda_2\lambda'_2 \wh \phi\inv)$ (up to relabelling of direct summands), since $M(\wh w, m, \wh \phi)$ and $M(\wh w, m, \mu_{\wh w}^2 \lambda_2\lambda'_2 \wh \phi\inv)$ are both band modules (and thus indecomposable), and $U$ is additive. But then $\langle \wh B\rangle \cong M(\wh w, m, \wh \phi)$ is an indecomposable object in the image of $U$. By (d), this is true only if $\langle \wh B\rangle$ is closed under the action of $\crs'$ --- a contradiction to our original supposition. Thus, $M(w, m, \psi)$ is indecomposable. Hence, $M(w, m, \psi)$ is a symmetric band module.
		
		(e) $\Rightarrow$ (a):
		This is obvious from the statement itself.
	\end{proof}
	
	The above Proposition implies that if $M(\wh w, \wh m, \wh \phi)$ is a maximal quasi-band direct summand of $U(M)$ for some $\integer_2$-invariant band $\wh w$ that is of even parity, then $\wh m$ is even and $\mu_{g\wh w}\wh \phi$ is similar to $H\psi$, where $H$ and $\psi$ satisfy some quadratic relations. It may not be immediately obvious to the reader that this decomposition of $\mu_{g\wh w}\wh \phi$ into $H\psi$ is well-defined, but this is indeed the case, as the next lemma shows.
	
	\begin{lem}\label{lem:SymInj}
		Let $M(w, m, \psi)$ and $M(w, m, \psi')$ be symmetric band modules of $A$. Then
		\begin{equation*}
			U(M(w, m, \psi)) \cong U(M(w, m, \psi')) \Rightarrow M(w, m, \psi) \cong M(w, m, \psi').
		\end{equation*}
	\end{lem}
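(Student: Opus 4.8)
The plan is to transport the statement to the unfolded gentle algebra $\unfA$ by means of Proposition~\ref{prop:USymBand}, reduce it there (via Krull--Schmidt and the classification of modules over gentle algebras) to a similarity of automorphisms, and finally deduce the claim from a linear-algebra fact about the two crease actions. Throughout, adopt the notation of Proposition~\ref{prop:USymBand}: $w=\rho(\wh w)$ is the common symmetric band, $\wh w$ is $\integer_2$-invariant of even parity in standard form, $H=\left(\begin{smallmatrix}0&\lambda_2\\1&\lambda_1\end{smallmatrix}\right)\otimes_K\id_m$ is the fixed matrix of the $\crs$-action, and $\lambda_i,\lambda'_i,\mu_{\wh w}$ are as there.

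By parts (e)--(f) of Proposition~\ref{prop:USymBand} there are automorphisms $\wh\phi,\wh\phi'$ --- with $\wh\phi$ similar to $\mu_{\wh w}H\psi$ and $\wh\phi'$ similar to $\mu_{\wh w}H\psi'$ --- such that $U(M(w,m,\psi))$ is the band module $M(\wh w,\wh m,\wh\phi)$ if $M(\wh w,\wh m,\wh\phi)$ is closed under $\crs'$, and the quasi-band module $M(\wh w,2\wh m,\wh\phi\oplus\mu_{\wh w}^2\lambda_2\lambda'_2\wh\phi\inv)$ otherwise, and likewise for $\psi'$. A band module of the gentle algebra $\unfA$ is indecomposable, so the hypothesis $U(M(w,m,\psi))\cong U(M(w,m,\psi'))$ together with the Krull--Schmidt property first forces $\psi$ and $\psi'$ into the same one of the two cases; the classification of indecomposable $\unfA$-modules (\cite{ButlerRingel}) then identifies the indecomposable summands on the two sides. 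Here one uses that two band modules over the \emph{same} band $\wh w$ of $\unfA$ are isomorphic precisely when their defining automorphisms are similar --- up to the flip supplied by Lemma~\ref{lem:BandIsos}(a),(b), which is what occurs when $\wh w$ is a rotation of $\wh w\inv$, and here $g\wh w=\wh w\inv$ by Lemma~\ref{lem:FoldedSymmetric}(b) --- and that the auxiliary summand $M(\wh w,\wh m,\mu_{\wh w}^2\lambda_2\lambda'_2\wh\phi\inv)$ is, again by Lemma~\ref{lem:BandIsos}, a band module over $\wh w$ whose automorphism is determined by $\wh\phi$. Matching summands therefore yields $\wh\phi\sim\wh\phi'$ (after, if necessary, replacing $\psi'$ by the flipped datum, which changes $M(w,m,\psi')$ only up to isomorphism), and hence $H\psi\sim H\psi'$ in $\Aut(K^{\wh m})$, where moreover each of $\psi,\psi'$ satisfies the irreducible quadratic $z^2-\lambda'_1z-\lambda'_2=0$ coming from $\crs'$.

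It remains to pass from $H\psi\sim H\psi'$ back to $M(w,m,\psi)\cong M(w,m,\psi')$, and this is the heart of the matter. Unwinding the construction of a symmetric band module, once $w$, $m$ and the $\crs$-action $H$ are fixed the only remaining datum is the $\crs'$-action $\psi$, and $M(w,m,\psi)\cong M(w,m,\psi')$ if and only if there is an element $g$ in the centralizer of $H$ with $g\psi g\inv=\psi'$. To produce such a $g$ I would base-change to the quadratic field $\viso$ attached to $v=s(\crs)$, over which $H$ becomes a direct sum of $m$ copies of a single $2\times2$ matrix and its centralizer becomes transparent; since the irreducibility of the $\crs'$-relation (axiom~\ref{en:FG5}) forces every admissible $\crs'$-action to be a free $K[z]/(z^2-\lambda'_1z-\lambda'_2)$-module structure on $K^{\wh m}$, the matrices that can occur as $\psi$ in a $2m$-dimensional band module form a single conjugacy class in $\Aut(K^{\wh m})$, so $\psi$ and $\psi'$ are already conjugate, and the equality of similarity classes of $H\psi$ and $H\psi'$ is exactly the extra constraint needed to arrange the conjugating element to centralize $H$; this can be checked by a dimension count of the relevant conjugacy-class strata, or by appealing to the classification of indecomposable homogeneous representations of the $K$-species with valued arrow $\viso\to\fiso_{s(\crs')}$ recalled after the definition of symmetric band modules (see \cite{DRIndec,RingelSpecies}), for which the similarity class of $H\psi$ (up to the scalar $\mu_{\wh w}$) is a complete invariant. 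A more internal alternative, which bypasses this linear algebra, is to work inside $\im U$: since $\mod*A\simeq\im U$ (Remark~\ref{rem:UImage}(a)) and a morphism of $\im U$ is exactly an $\unfA$-morphism that commutes with all crease actions, it suffices to correct the given $\unfA$-isomorphism $U(M(w,m,\psi))\to U(M(w,m,\psi'))$ --- by composing with a unit of the (essentially local) endomorphism ring of the underlying $\unfA$-band module --- so that it additionally intertwines $\crs$ and $\crs'$, the obstruction vanishing precisely because $H\psi\sim H\psi'$. Whichever route is taken, I expect this last step --- realising the similarity $\wh\phi\sim\wh\phi'$ by a \emph{crease-equivariant} isomorphism --- to be the main obstacle, everything before it being bookkeeping already assembled in Proposition~\ref{prop:USymBand} and Lemma~\ref{lem:SymBandConst}.
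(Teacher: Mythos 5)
Your reduction --- to the question of whether the similarity $H\psi\sim H\psi'$ in $\Aut(K^{2m})$ can be realised by a conjugating matrix that centralises $H$ --- is correct and matches the structure of the paper's own proof, and you correctly flag this ``crease-equivariance'' step as the crux. Your second route (work inside $\im U$ and correct the given $\unfA$-isomorphism by a unit of the endomorphism ring) is, in outline, exactly the route the paper takes. However, the proposal stops precisely where the real work begins, and you acknowledge as much.

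Two remarks on the routes you sketch. Route 1 is not available as stated: immediately after defining symmetric band modules (end of Section 4.3) the paper stresses that the indecomposable homogeneous representations of the $K$-species $\fiso_{s(\crs)}\to\fiso_{s(\crs')}$ --- precisely what you would be appealing to --- ``are not well understood in general''; Lemma~\ref{lem:SymInj} together with Proposition~\ref{prop:USymBand} is intended to \emph{supply} an alternative characterisation, not to rely on one. And the ``dimension count of conjugacy-class strata'' you mention is inconclusive on its own: the coset $h_0\cdot Z(H\psi)$ of admissible conjugating matrices and the centraliser $Z(H)$ are each of roughly half the dimension of $GL_{2m}(K)$, so a bare dimension count does not establish that they meet. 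For Route 2, what the paper actually does is construct explicit $A$-module isomorphisms $\omega_u,\sigma_u,\omega_v,\sigma_v$ identifying $\wh M_{\wh n},\wh M'_{\wh n}$ with $S(u)^m$ (on which $\crs$ acts by the fixed matrix $H$) and $\wh M_{2\wh n},\wh M'_{2\wh n}$ with $S(v)^m$ (on which $\crs'$ acts by $\psi$, resp.\ $\psi'$, conjugates of $H'$); it then uses the band-module structure to see that any $\unfA$-isomorphism $f$ restricts to one and the same change of basis $h=f|_{\wh M_i}$ on every subspace $\wh M_i$, and argues --- using that $\wh M,\wh M'\in\im U$ carry the \emph{identical} matrix $H$ for the $\crs$-action --- that $f$ can be chosen so that $h$ commutes with $H$. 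The computation $H\psi'=hH\psi h^{-1}=Hh\psi h^{-1}$ then gives $\psi'=h\psi h^{-1}$, i.e.\ an isomorphism of representations $(K^{2m},H,\psi)\cong(K^{2m},H,\psi')$ of $K\langle x,y\rangle/\langle x^2-\lambda_1 x-\lambda_2,\, y^2-\lambda'_1 y-\lambda'_2\rangle$, which is $M(w,m,\psi)\cong M(w,m,\psi')$. So if you intend to flesh out Route 2, the assertion that $f$ can be chosen with $h$ centralising $H$ is the one non-formal claim, and it is the one your proposal leaves open.
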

	\begin{proof}
		For readability, write $M= M(w, m, \psi)$ and $M' = M(w, m, \psi')$. Let $B=\{\wh b_i\ps{j} : 1 \leq i \leq 2\wh n, 1 \leq j \leq 2m\}$ and $C=\{\wh c_i\ps{j} : 1 \leq i \leq 2\wh n, 1 \leq j \leq 2m\}$ be the standard bases of $\wh M = U(M) \cong M(\wh w, 2 m, \mu_{\wh w} H \psi)$ and $\wh M' = U(M') \cong M(\wh w, 2 m, \mu_{\wh w} H \psi')$, respectively. Let $\wh M_i = \langle\wh b_i\ps{j} : 1 \leq j \leq 2m\rangle$ and $\wh M'_i=\langle \wh c_i\ps{j} : 1 \leq j \leq 2m\rangle$. We will construct an isomorphism $f\colon \wh M \rightarrow \wh M'$ such that $f|_{\wh M_1}=\ldots=f|_{\wh M_{2\wh n}}$ commutes with $H=\left(\begin{smallmatrix} 0 & \lambda_2 \\ 1 & \lambda_1 \end{smallmatrix}\right) \otimes_K \id_{m}$.
		
		Let $u=t(\wh b_{\wh n}\ps{j}) = t(\crs) \in \cv$ and $v=t(\wh b_{2\wh n}\ps{j}) = t(\crs')\in \cv$. One can then assign a semisimple $A$-module structure to the vector subspaces $\wh M_{\wh n}$, $\wh M'_{\wh n}$, $\wh M_{2\wh n}$ and $\wh M'_{2\wh n}$ via $A$-module isomorphisms
		\begin{align*}
			\xymatrix{S(u)^m = \langle \stp_u, \crs_u \rangle^m \cong \langle \stp_u\ps{j}, \crs_u\ps{j} : 1 \leq j \leq m\rangle \ar[r]^-{\omega_u}_-{\cong} & \langle\wh b_{\wh n}\ps{j} : 1 \leq j \leq 2m\rangle = \wh M_{\wh n}}, \\
			\xymatrix{S(v)^m = \langle \stp_v, \crs_v \rangle^m \cong \langle \stp_v\ps{j}, \crs_v\ps{j} : 1 \leq j \leq m\rangle \ar[r]^-{\omega_v}_-{\cong} & \langle\wh b_{2\wh n}\ps{j} : 1 \leq j \leq 2m\rangle = \wh M_{2\wh n}}, \\
			\xymatrix{S(u)^m \cong \langle \stp_u\ps{j}, \crs_u\ps{j} : 1 \leq j \leq m\rangle \ar[r]^-{\sigma_u}_-{\cong} & \langle\wh c_{\wh n}\ps{j} : 1 \leq j \leq 2m\rangle = \wh M'_{\wh n}}, \\
			\xymatrix{S(v)^m \cong \langle \stp_v\ps{j}, \crs_v\ps{j} : 1 \leq j \leq m\rangle \ar[r]^-{\sigma_v}_-{\cong} & \langle\wh c_{2\wh n}\ps{j} : 1 \leq j \leq 2m\rangle = \wh M'_{2\wh n}},
		\end{align*}
		where 
		\begin{align*}
			\stp_u\ps{j} \crs &= \crs_u\ps{j},
			& \crs_u\ps{j} \crs &=\lambda_1 \crs_u\ps{j} + \lambda_2 \stp_u\ps{j},
			& \wh b_{\wh n}\ps{j} \crs &= H(\wh b_{\wh n}\ps{j}),
			\\
			\stp_v\ps{j} \crs' &= \crs_v\ps{j},
			& \crs_v\ps{j} \crs' &=\lambda'_1 \crs_v\ps{j} + \lambda'_2 \stp_v\ps{j},
			& \wh b_{2\wh n}\ps{j} \crs' &= \psi(\wh b_{2\wh n}\ps{j}),
		\end{align*}
		and $\wh c_{\wh n}\ps{j} \crs = H(\wh c_{\wh n}\ps{j})$ and $\wh c_{2\wh n}\ps{j} \crs' = \psi'(\wh c_{2\wh n}\ps{j})$. We thus have
		\begin{align*}
			\omega_u\inv(\wh b_{\wh n}\ps{j}) &= \stp_u\ps{j}, &
			\omega_u\inv(\wh b_{\wh n}\ps{j+m}) &= \crs_u\ps{j},
			\\
			\sigma_u\inv(\wh c_{\wh n}\ps{j}) &= \stp_u\ps{j}, &
			\sigma_u\inv(\wh c_{\wh n}\ps{j+m}) &= \crs_u\ps{j}.
		\end{align*}
		Now let $H' =\left(\begin{smallmatrix} 0 & \lambda'_2 \\ 1 & \lambda'_1 \end{smallmatrix}\right) \otimes_K \id_{m}$. Since $\psi^2 = \lambda'_1 \psi + \lambda'_2$ and $(\psi')^2 = \lambda'_1 \psi' + \lambda'_2$, there must exist matrices $X=(x_{k,l}),Y=(y_{k,l}) \in \Aut(K^{2m})$ such that
		\begin{align*}
			\psi &= X H' X\inv, & \psi' &= Y H' Y\inv, \\
			\omega_v\inv(\wh b_{2\wh n}\ps{j}) &=\sum_{k=1}^{m} (x_{k,j} \stp_v\ps{k} + x_{k+m,j} \crs_v\ps{k}), &
			\sigma_v\inv(\wh c_{2\wh n}\ps{j}) &=\sum_{k=1}^{m} (y_{k,j} \stp_v\ps{k} + y_{k+m,j} \crs_v\ps{k}),\\
			\omega_v\inv(\wh b_{2\wh n}\ps{j}) \crs' &= \omega_v\inv\psi(\wh b_{2\wh n}\ps{j}), & 
			\sigma_v\inv(\wh c_{2\wh n}\ps{j}) \crs' &= \sigma_v\inv\psi'(\wh c_{2\wh n}\ps{j}).
		\end{align*}
		That is, the bases of $\wh M_{2 \wh n}$ and $\wh M'_{2 \wh n}$ are each given by a change of basis from the standard basis of $S(v)^m$, and this is such that the action of $\crs'$ on $\wh M_{2 \wh n}$ and $\wh M'_{2 \wh n}$ is given by the automorphisms $\psi$ and $\psi'$ respectively.
		
		Now consider the $\wh A$-module structures of $\wh M$ and $\wh M'$. Firstly, for any $k$ and $l$, each basis element $\wh b_k\ps{j}$ of $\wh M_{k}$ is identified with the basis element $\wh b_l\ps{j}$ of $\wh M_{l}$ via the action of some subword of a rotation of $\wh w$. A similar statement holds for the basis elements of $\wh M'_{k}$ and $\wh M'_{l}$. In particular, we remark that each $\wh b_{2\wh n}\ps{j}$ is identified with $\wh b_{\wh n}\ps{j}$ and each $\wh c_{2\wh n}\ps{j}$ is identified with $\wh c_{\wh n}\ps{j}$.
		
		Now since it is already given that $\wh M$ and $\wh M'$ are isomorphic $\wh A$-modules, it follows that $\wh M'_{2\wh n}$ is given by a change of basis $f|_{\wh M_{2\wh n}}$ of $\wh M_{2\wh n}$. In addition, $\wh M'_{\wh n}$ is given by a change of basis $f|_{\wh M_{\wh n}}=f|_{\wh M_{2\wh n}}$ of $\wh M_{\wh n}$. But for any $\wh b_{\wh n}\ps{j} \in \wh M_{\wh n}$ and $\wh c_{\wh n}\ps{j} \in \wh M'_{\wh n}$, we have $\wh b_{\wh n}\ps{j} \crs = H(\wh b_{\wh n}\ps{j})$ and $\wh c_{\wh n}\ps{j}\crs = H(\wh b_{\wh n}\ps{j})$. Since $\wh M,\wh M' \in \im U$, there must therefore exist a choice of $f$ such that $f|_{\wh M_{\wh n}}$ commutes with $H$.
		
		For brevity, write $h = f|_{\wh M_{\wh n}} = f|_{\wh M_{2\wh n}}$. Since $f$ is an isomorphism of $\wh A$-modules, we then have
		\begin{equation*}
			H \psi' = h H \psi h\inv = h H h\inv h \psi h\inv = H h \psi h\inv.
		\end{equation*}
		Thus,  $\psi' = h \psi h\inv$. In particular, $(K^{2m}, H, \psi)$ and $(K^{2m}, H, \psi')$ are isomorphic representations of the algebra
		\begin{equation*}
			K\langle x, y \rangle / \langle x^2 - \lambda_1 x - \lambda_2, y^2 - \lambda'_1 y - \lambda'_2 \rangle.
		\end{equation*}
		Hence, $M(w, m, \psi) \cong M(w, m, \psi')$, as required.
	\end{proof}
	
	\begin{rem} \label{rem:UIndImage}
		Determining precisely when the image (under $U$) of a symmetric band module is indecomposable appears to be quite a hard problem in general. By the above Proposition, there is a necessary condition for a band module $M(\wh w, \wh m, \wh \phi) \in \mod*\wh A$ to appear as the indecomposable image of some symmetric band module in $\mod*A$. Namely, $\wh \phi$ must be similar to $\mu_{\wh w}^2 \lambda_2\lambda'_2 \wh \phi\inv$. Observationally, this condition also appears to be sufficient if $K = \real$. However, it is not at all obvious if this condition is sufficient in general.
		
		On the other hand, it is not hard to prove that for any $\wh \phi \in \Aut(K^{\wh m})$, the module $M(\wh w, \wh m, \wh \phi)$ appears as a direct summand in the image of some symmetric quasi-band module $M(w, \wh m, \phi')$ under $U$. Namely, for
		\begin{equation*}
			\phi' = 
			\begin{pmatrix}
				\lambda'_1 & \mu_{g \wh w}\wh\phi \\
				\mu_{\wh w}\lambda'_2 \wh\phi\inv & 0
			\end{pmatrix}.
		\end{equation*}
		Consequently, for any band module $M(\wh w, \wh m, \wh \phi)$ of $\mod* \wh A$, there exists a symmetric band module $M(\wh w, m, \phi')$ such that either $U(M(\wh w, m, \phi')) \cong M(\wh w, \wh m, \wh \phi)$ or $U(M(\wh w, m, \phi')) \cong M(\wh w, \wh m, \wh \phi) \oplus M(\wh w, \wh m, \mu_{\wh w}^2 \lambda_2\lambda'_2 \wh \phi\inv)$.
	\end{rem}
	
	\subsection{Classification of indecomposable modules}
	We will now use the unfolding functor to provide a classification of the iso-classes of indecomposable modules of a folded gentle algebra $A$ (without repetitions). We begin with a technical result that follows from the propositions proved earlier in this section.

	\begin{lem} \label{lem:UEssInj}
		Let $M,N \in \mod*A$. Suppose $U(M)$ does not contain a direct summand that is isomorphic to a quasiband module $M(\wh w,m,\wh \phi)$ with $\wh w \in \bnd_{\wh A}$ of odd parity. Then $U(M) \cong U(M)$ only if $M \cong N$.
	\end{lem}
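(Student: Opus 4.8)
The plan is to combine the Krull--Schmidt theorem, applied in both $\mod*A$ and $\mod*\wh A$, with the explicit descriptions of $U$ on string and band modules obtained in Propositions~\ref{prop:UStrings}, \ref{prop:UEvenBands}, \ref{prop:UOddBands} and \ref{prop:USymBand}; we show that $U(M)\cong U(N)$ implies $M\cong N$. The first step is to observe that the hypothesis severely restricts the indecomposable summands of $M$. Write $M=\bigoplus_i M_i$ with each $M_i$ indecomposable; since $U$ is faithful each $U(M_i)\neq 0$, and since $\wh A$ is a gentle algebra the Butler--Ringel classification expresses $U(M_i)$ as a direct sum of string and band $\wh A$-modules. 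Pick an indecomposable summand $X$ of $U(M_i)$. If $X=M(\wh w)$ is a string module, then Proposition~\ref{prop:UStrings} (implication (a)$\Rightarrow$(c)) shows $M(\rho(\wh w))$ is a direct summand of $M_i$, so $M_i\cong M(\rho(\wh w))$. If $X$ is a band module on a band $\wh w$ that is not $\integer_2$-invariant, then Proposition~\ref{prop:UEvenBands} ((a)$\Rightarrow$(c)) shows $M(\rho(\wh w),m,\phi)$ is a direct summand of $M_i$, so $M_i\cong M(\rho(\wh w),m,\phi)$, an asymmetric band module by Lemma~\ref{lem:FoldedSymmetric}(b). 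If $X$ is a band module on a $\integer_2$-invariant band $\wh w$ of even parity, then, after rotating $\wh w$ into standard form, Proposition~\ref{prop:USymBand} ((a)$\Rightarrow$(f)) produces a symmetric band module isomorphic to a direct summand of $M_i$, so $M_i$ is a symmetric band module. The only case left for $X$ is a band module on a $\integer_2$-invariant band of odd parity --- which, by Lemma~\ref{lem:OddParity}, is the only way an odd-parity band can arise in $\wh A$ --- and this is excluded: such an $X$ would be an odd-parity quasi-band direct summand of $U(M)$, contradicting the hypothesis. Hence $M$, and symmetrically $N$, is a direct sum of string modules, asymmetric band modules, and symmetric band modules.

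The second step is to show that $U$ is injective on isomorphism classes of such direct sums; since $U(M)\cong U(N)$ forces, by Krull--Schmidt in $\mod*\wh A$, that $U(M)$ and $U(N)$ have the same multiset of indecomposable summands, this finishes the proof. One first notes that the three module types do not interfere after applying $U$: by Propositions~\ref{prop:UStrings}, \ref{prop:UEvenBands}, \ref{prop:USymBand} and Lemma~\ref{lem:FoldedSymmetric}, $U$ of a string module is a sum of string $\wh A$-modules, $U$ of an asymmetric band module is a sum of band $\wh A$-modules on non-$\integer_2$-invariant bands, and $U$ of a symmetric band module is a sum of band $\wh A$-modules on $\integer_2$-invariant bands of even parity. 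For the string part, Proposition~\ref{prop:UStrings} shows the string $\wh A$-summands of $U(M)$ are exactly the $\integer_2$-orbits $\{M(\theta(w)),M(g\theta(w))\}$ of the string summands $M(w)$ of $M$, with the right multiplicity (an orbit with $\theta(w)$ $\integer_2$-invariant, equivalently $w$ symmetric by Lemma~\ref{lem:FoldedSymmetric}(a), contributing a factor of $2$), and $w=\rho(\wh w)$ is recovered from any representative $\wh w$ of the orbit by Lemma~\ref{lem:FoldedStrings}. The asymmetric-band part is handled identically via Proposition~\ref{prop:UEvenBands}, Lemma~\ref{lem:FoldedBands}, and Lemma~\ref{lem:BandIsos} (to translate the band equivalences between the two sides). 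For the symmetric-band part I would invoke Lemma~\ref{lem:SymInj}, which is exactly the injectivity of $U$ on isomorphism classes of symmetric band modules, together with Proposition~\ref{prop:USymBand}(f), which tells us that $U$ of a symmetric band module is either a single $\wh A$-band module or a sum of two $\wh A$-band modules exchanged (up to twist) by inverting the band; grouping the $\wh A$-band-summands on $\integer_2$-invariant bands of even parity into these packages and applying Lemma~\ref{lem:SymInj} recovers the symmetric-band part of $M$. Collating the three identifications gives $M\cong N$.

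I expect the main obstacle to be the symmetric-band bookkeeping in the second step. Because $\im U$ is not a full subcategory of $\mod*\wh A$, an abstract $\wh A$-isomorphism $U(M)\cong U(N)$ need not be induced by an $A$-morphism --- indeed, dropping the hypothesis makes the lemma false, precisely on account of odd-parity band modules --- so one genuinely has to reconstruct $M$ from the combinatorial and linear-algebraic data of $U(M)$ rather than transporting the isomorphism. The subtle point is that for a $\integer_2$-invariant band $\wh w$ of even parity the same $\wh A$-band module $M(\wh w,\wh m,\wh\phi)$ can appear either as $U$ of an indecomposable symmetric band module or as a direct summand of $U$ of one (the dichotomy of Remark~\ref{rem:UIndImage} and Proposition~\ref{prop:USymBand}(f)), so sorting the $\wh A$-band-summands into the correct packages before applying Lemma~\ref{lem:SymInj} requires care. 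A minor supplementary check is that any $\integer_2$-invariant string or band summand of $U(M)$ occurs with the expected even multiplicity; this follows by applying the relevant proposition to each indecomposable $M_i$ separately, as in the first step.
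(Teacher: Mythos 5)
Your proof is correct and follows essentially the same route as the paper's: both arguments reduce to the four propositions describing the image of $U$ on strings, asymmetric band modules, odd-parity band modules and symmetric band modules, exclude the odd-parity case by hypothesis, and then invoke Proposition~\ref{prop:UStrings}, Proposition~\ref{prop:UEvenBands} and Lemma~\ref{lem:SymInj} case by case to reconstruct $M$ and $N$. The paper phrases this as a reduction to indecomposable $M$ followed by a three-way case analysis, whereas you first classify the possible indecomposable summands of $M$ and then argue injectivity via Krull--Schmidt in $\mod*\wh A$; this is a reorganization rather than a genuinely different argument, though your explicit attention to the symmetric-band bookkeeping (grouping $\wh A$-band summands into $U$-images before applying Lemma~\ref{lem:SymInj}) is a welcome elaboration of what the paper leaves implicit in Case~3.
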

	\begin{proof}
		Since $U$ is additive, it is sufficient to prove the lemma for $M$ indecomposable, so we will suppose this throughout the proof. Also note that from the classification of indecomposable modules over gentle algebras, we know that $U(M)$ has a unique (up to isomorphism) decomposition into string and band modules. Thus given the assumption of the lemma statement, there are three cases we need to consider: either $U(M)$ contains a direct summand that is isomorphic to a string module $M(\wh w)$, a band module $M(\wh w,m,\phi)$ with $\wh w$ not $\integer_2$-invariant, or a band module $M(\wh w,\wh m,\wh \phi)$ with $\wh w$ being $\integer_2$-invariant and of even parity.
		
		Case 1: Suppose $U(M)$ contains a direct summand isomorphic to a string module $M(\wh w)$. Then by Proposition~\ref{prop:UStrings}, $M \cong M(\rho(\wh w))$ and $U(M) \cong M(\wh w) \oplus M(g\wh w)$. Now if $U(N) \cong U(M)$, then $U(N) \cong M(\wh w) \oplus M(g\wh w)$. Thus, Proposition~\ref{prop:UStrings} implies that we also have $N \cong  M(\rho(\wh w))$, and hence $M \cong N$, as required.
		
		Case 2: Suppose $U(M)$ contains a direct summand isomorphic to a band module $M(\wh w,m,\phi)$ with $\wh w$ not $\integer_2$-invariant. Then by Proposition~\ref{prop:UEvenBands}, $M \cong M(\rho(w),m,\phi)$ and $U(M) \cong M(\wh w, m, \phi) \oplus M(\wh w, m, \lambda_{\wh w}\phi)$, where $\lambda_{\wh w} \in K$ is as in Remark~\ref{rem:LamConst}. If $U(N) \cong U(M)$, then $U(N) \cong M(\wh w, m, \phi) \oplus M(\wh w, m, \lambda_{\wh w}\phi)$, and thus it again follows from Proposition~\ref{prop:UEvenBands} that $N \cong  M(\rho(\wh w), m, \phi) \cong M$, as required.
		
		Case 3: Suppose $U(M)$ contains a direct summand isomorphic to a band module $M(\wh w,\wh m,\wh\phi)$ with $\wh w$ being $\integer_2$-invariant and of even parity. Then the result follows by Lemma~\ref{lem:SymInj}
	\end{proof}
	
	We will now define the collections of objects that form the classification of indecomposable modules of a folded gentle algebra. First, the collection of string modules of $\mod*A$.
	
	\begin{defn} \label{defn:IndM1}
		Define $\mathcal{M}_1$ to be the complete collection of all string modules of $A$ up to equivalence of strings. That is,
		\begin{equation*}
			\mathcal{M}_1 = \{M(w) : w \in \str_A\}.
		\end{equation*}
	\end{defn}		

	Next, the collection of asymmetric band modules. For this we need some additional notation. Let $\Pi$ be the set
	\begin{equation*}
		\Pi = \{p^n \in K[x] : n \in \spint \text{ and } p \text{ is monic, irreducible, and } p(0) \neq 0\}.
	\end{equation*}
	For any polynomial $p \in K[x]$, denote by $\phi_p$ its companion matrix. The condition that $p(0) \neq 0$ for any $p \in \Pi$ implies that $p \neq x^n$ for any $n$, which further implies that $\phi_p$ is an automorphism of $K^{\deg(p)}$.
	
	\begin{defn} \label{def:AsymBandsClass}
		 Define $\mathcal{M}_2$ to be the complete collection of all asymmetric band modules $M(w,\deg(p),\phi_p)$ up to equivalence of $w$, with $p \in \Pi$. That is,
		\begin{equation*}
			\mathcal{M}_2 = \{M(w,\deg(p),\phi_p) : w \in \bnd_A, w \text{ asymmetric}, p \in \Pi\}.
		\end{equation*}
	\end{defn}
	
	Finally, the collection of symmetric band modules. As the most complicated class of objects, this requires even more notation. So define
	\begin{equation*}
		\bnd_A^s = \{\rho(\wh w) \in \bnd_A : \rho(\wh w) \text{ is symmetric}, \wh w \in \bnd_{\wh A} \text{ is in standard form}\}.
	\end{equation*}
	For each $w \in \bnd_A^s$, let $\crs_w$ and $\crs_{w}'$ be the crease symbols such that $\rho(\theta(w)) = \crs'_w\sigma_{2n}\ldots\sigma_{n+1}\crs_w\sigma_1\ldots\sigma_n$, and suppose that $\crs_w^2-\lambda_1 \crs_w - \lambda_2\stp_{s(\crs_w)}$ and $(\crs'_w)^2-\lambda'_1 \crs'_w - \lambda'_2\stp_{s(\crs'_w)}$ are the corresponding relations of the crease symbols. For each $m \in \spint$, let $H_{w,m} = \left(\left(
	\begin{smallmatrix}
		0	&	\lambda_2 \\
		1	& \lambda_1
	\end{smallmatrix}
	\right)\otimes_K \id_m\right)$. In addition, let $\Phi\ps{2m}_w$ be the collection of all classes of automorphisms $\phi' \in \Aut(K^{2m})$ satisfying $(\phi')^2 - \lambda'_1 \phi' - \lambda'_2 =0$, up to similarity of the matrix $\mu_{\theta(w)}H_{w,m}\phi'$. For technical purposes, we will define $\Phi\ps{2m+1}_w=\emptyset$.
	
	Next, note that there is a group action of $\integer_2 = \{1,g\}$ on $\Pi$ that is dependent on the choice of $w$, which is given in the following way. Let $\phi_p$ be a companion matrix with $p \in \Pi$. Then $\phi_p$ is an indecomposable matrix whose characteristic polynomial is $p$. Consequently, the matrix $\mu_{\theta(w)} \lambda_2\lambda'_2 \phi_p\inv$ is also indecomposable and so must be similar to some companion matrix $\phi_{gp}$, where $gp \in \Pi$ is the characteristic polynomial of $\mu_{\theta(w)} \lambda_2\lambda'_2 \phi_p\inv$. It is easy to see that $g^2 p = p$, and hence that this is indeed a group action. Define $\Pi_w$ to be the set of orbits of $\Pi$ under $\integer_2$.
	
	For each $w \in \bnd_A^s$ and $p \in \Pi_w$, define
	\begin{align*}
		\Psi_{w,p}\ps{1} &= \{[\psi] \in \Phi\ps{\deg(p)}_w : \mu_{\theta(w)}H_{w,\deg(p)} \psi \text{ is similar to } \phi_p\}, \\
		\Psi_{w,p}\ps{2} &=
		\begin{cases}
			\emptyset & \text{if } \Psi_{w,p}\ps{1} \neq \emptyset,\\
			\{[\psi] \in \Phi\ps{2 \deg(p)}_w : \mu_{\theta(w)}H_{w,2\deg(p)} \psi \text{ is similar to } \phi_p \oplus \phi_{gp}\}	& \text{if } \Psi_{w,p}\ps{1} = \emptyset,
		\end{cases}\\
		\Psi_{w,p} &= \Phi_{w,p}\ps{1} \cup \Phi_{w,p}\ps{2}.
	\end{align*}
	For any $[\psi] \in \Psi_{w,p}$, we define $m_\psi = \frac{1}{2}\deg(p)$ if $[\psi] \in \Psi_{w,p}\ps{1}$ and $m_\psi = \deg(p)$ if $[\psi] \in \Psi_{w,p}\ps{2}$.
	
	\begin{rem} \label{rem:PsiDi}
		By Proposition~\ref{prop:USymBand}, Lemma~\ref{lem:SymInj} and Remark~\ref{rem:UIndImage}, it follows that for each $p \in \Pi_w$, we have $|\Psi_{w,p}\ps{1}|,|\Psi_{w,p}\ps{2}| \in \{0,1\}$, with $|\Psi_{w,p}\ps{1}| =1$ if and only if $|\Psi_{w,p}\ps{2}| =0$. Thus, we always have $|\Psi_{w,p}|=1$. 
	\end{rem}
	
	\begin{defn} \label{def:SymBandInds}
		Define $\mathcal{M}_3$ to be the complete collection of all symmetric band modules $M(w,m_{\psi_p},\psi_p)$ up to equivalence of $w$, with $[\psi_p] \in \Psi_{w,p}$ and $p \in \Pi_w$. That is,
		\begin{equation*}
			\mathcal{M}_3 = \{M(w,m_{\psi_p},\psi_p) : w \in \bnd^s_A, \psi_p \in \Psi_{w,p}, p \in \Pi_w\}.
		\end{equation*}
	\end{defn}
	
	\begin{lem}\label{lem:NoRepetitions}
		Let $M \in \mathcal{M}_i$ and $N \in \mathcal{M}_j$.
		\begin{enumerate}[label=(\alph*)]
			\item If $i \neq j$, then $M \not\cong N$.
			\item If $i = j$ but $M$ and $N$ are distinct objects, then $M \not\cong N$.
		\end{enumerate}
		Consequently $M \not\cong N$ for any distinct objects $M,N \in \mathcal{M}_1 \cup \mathcal{M}_2 \cup \mathcal{M}_3$
	\end{lem}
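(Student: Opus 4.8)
The plan is to deduce everything from the unfolding functor $U$ together with the Butler--Ringel classification of the indecomposables of the gentle algebra $\wh A$, the explicit formulas for $U$ on (quasi-)string and (quasi-)band modules (Propositions~\ref{prop:UStrings}, \ref{prop:UEvenBands}, \ref{prop:UOddBands} and \ref{prop:USymBand}), and the (un)folding dictionary for strings and bands (Lemmas~\ref{lem:FoldedStrings}, \ref{lem:FoldedBands}, \ref{lem:OddParity}, \ref{lem:FoldedSymmetric}). The structural observation driving the argument is that $U$ sends a string module of $A$ to a direct sum of string modules of $\wh A$; an asymmetric band module of $A$ to a direct sum of band modules of $\wh A$ whose underlying band is either not $\integer_2$-invariant or is $\integer_2$-invariant of odd parity; and a symmetric band module of $A$ to a band module (or sum of two band modules) of $\wh A$ whose underlying band is $\integer_2$-invariant of even parity. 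By Krull--Schmidt over $\wh A$ these three families of $\wh A$-modules are pairwise disjoint. Since $U$ is faithful, $M\cong N$ implies $U(M)\cong U(N)$, and the disjointness immediately gives part (a); in the last comparison ($\mathcal M_2$ versus $\mathcal M_3$) one additionally uses Lemma~\ref{lem:FoldedSymmetric} and Lemma~\ref{lem:OddParity} to see that the underlying $\wh A$-bands occurring lie in disjoint classes.

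For part (b) with both objects in $\mathcal M_1$: if $M(w)\cong M(w')$ then $M(\theta w)\oplus M(g\theta w)\cong M(\theta w')\oplus M(g\theta w')$, and comparing indecomposable summands (a string $\wh A$-module $M(\wh u)$ is determined by $\wh u$ up to $\sim_1$) gives $\theta w\approx\theta w'$ or $\theta w\approx g\theta w'$; applying $\rho$ and Lemma~\ref{lem:FoldedStrings}, and recalling that $\rho\theta$ is the identity up to $\approx$, we get $w\approx w'$, so $M(w)$ and $M(w')$ are the same object. For $\mathcal M_3$: if $M(w,m_\psi,\psi)\cong M(w',m_{\psi'},\psi')$ then comparing underlying $\wh A$-bands via Lemma~\ref{lem:FoldedBands} forces $w\approx w'$, so we may take $w=w'$; since both modules are symmetric band modules, Lemma~\ref{lem:SymInj} yields $\psi\sim\psi'$, so they represent the same class. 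The remaining bookkeeping for $\mathcal M_3$ --- that distinct orbits $p\in\Pi_w$ give distinct objects, and that each $\Psi_{w,p}$ is a singleton --- is exactly Remark~\ref{rem:PsiDi}, built on Proposition~\ref{prop:USymBand}, Lemma~\ref{lem:SymInj} and Remark~\ref{rem:UIndImage}: a band $p$ and its $\integer_2$-translate $gp$ are precisely identified under folding, while $p,p'$ in different orbits produce non-isomorphic $\wh A$-band summands.

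For $\mathcal M_2$ one splits according to parity. If $w,w'$ are asymmetric bands of even parity (equivalently $\theta w,\theta w'$ are not $\integer_2$-invariant), then $U(M(w,\deg p,\phi_p))\cong M(\theta w,\deg p,\phi_p)\oplus M(g\theta w,\deg p,\lambda_{\theta w}\phi_p)$ with $M(\theta w,\cdot)$ and $M(g\theta w,\cdot)$ non-isomorphic (since $\theta w\not\approx g\theta w$); comparing with the analogous decomposition for $(w',p')$ forces $w\approx w'$ by Lemma~\ref{lem:FoldedBands}, and then $\phi_p\sim\phi_{p'}$ on the $\theta w$-summand, hence $p=p'$ because companion matrices of elements of $\Pi$ are similar only when the polynomials coincide. (Equivalently, this sub-case is covered by Lemma~\ref{lem:UEssInj}.) The genuinely harder sub-case is that of odd-parity asymmetric bands, and this is the main obstacle: here Lemma~\ref{lem:UEssInj} does \emph{not} apply, because $U$ identifies $M(w,\deg p,\phi_p)$ with $M(\theta w,\deg p,\lambda_{\theta w}\phi_p^2)$, and distinct $p,p'\in\Pi$ can satisfy $\phi_p^2\sim\phi_{p'}^2$ (for instance $(x-1)^2$ and $(x+1)^2$ over $\real$), so $U$ no longer separates these modules. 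One must argue directly that the $A$-module structure of $M(w,m,\phi)$ determines $\phi$ up to similarity: since $w$ is a primitive asymmetric band, following the symbols of $w$ recovers $(K^m,\phi)$ as a $\mod*K[x,x\inv]$-object from the $A$-action, so that $(V,\phi)\mapsto M(w,\dim V,\phi)$ is a representation embedding; hence $M(w,\deg p,\phi_p)\cong M(w,\deg p',\phi_{p'})$ over $A$ forces $\phi_p\sim\phi_{p'}$, i.e.\ $p=p'$. I expect establishing this last property (by a functorial filtration argument in the spirit of \cite{ButlerRingel,CBClans}, adapted to the crease loops) to require most of the work; granted it, part (b) is complete, and combining (a) and (b) gives the final statement.
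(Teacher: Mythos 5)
Your proposal follows the paper's route closely: part (a) via the disjointness (under $U$ and Krull--Schmidt over $\wh A$) of the three families of $\wh A$-modules, and part (b) via comparing $U(M)$ with $U(N)$ for $\mathcal M_1$ and $\mathcal M_3$ (using Lemmas~\ref{lem:FoldedStrings}, \ref{lem:FoldedBands}, \ref{lem:SymInj}, Proposition~\ref{prop:USymBand}). For $\mathcal M_2$ you split by parity; the paper instead splits by $w\not\approx w'$ versus $w\approx w'$, but the content is the same, and your worry that Lemma~\ref{lem:UEssInj} cannot see the odd-parity case is exactly right --- the lemma is stated with that hypothesis precisely because $U$ sends $M(w,\deg p,\phi_p)$ to $M(\theta w, \deg p, \lambda_{\theta w}\phi_p^2)$ there, and your $(x-1)^2$, $(x+1)^2$ example over $\real$ is a genuine instance where $\phi_p^2\sim\phi_{p'}^2$ with $p\neq p'$.

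The one place where you stop short is the representation-embedding property of $(V,\phi)\mapsto M(w,\dim V,\phi)$, which you flag as the main gap. The paper does supply this, though tersely, and in a form that handles both parities at once in the subcase $w\approx w'$, $p\neq p''$: writing $B_i=\langle b_i^{(j)}\rangle_j$ and $C_i=\langle c_i^{(j)}\rangle_j$ for the standard bases, it asserts that any $A$-isomorphism $h\colon M(w,\deg p,\phi_p)\to M(w,\deg p'',\phi_{p''})$ has all its blocks $h|_{B_i}\colon B_i\to C_i$ equal to a single linear map $f$ (because every non-distinguished symbol of the band acts as the identity on standard basis vectors, while the distinguished arrow acts by $\phi_p$ on the source and $\phi_{p''}$ on the target), and therefore $f\phi_p=\phi_{p''}f$ and hence $p=p''$. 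In other words, the paper realises your functor as $F_w\colon\fin K[x,x^{-1}]\to\mod A$, $F_w(f)|_{B_i}=f$, and deduces reflection of isomorphisms directly from the cyclic structure rather than by a functorial-filtration argument. So there is no wrong turn in your proposal --- the step you defer is exactly the step the paper makes explicit, and it is shorter than you anticipated, needing only the observation about constancy of $h|_{B_i}$ across the band rather than a full filtration analysis in the style of Butler--Ringel or Crawley-Boevey.
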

	\begin{proof}
		(a) By Proposition~\ref{prop:UStrings}, the objects of $\mathcal{M}_1$ map to a direct sum of string modules of $\wh A$ under $U$. By Propositions~\ref{prop:UEvenBands}, \ref{prop:UOddBands} and \ref{prop:USymBand}, the modules of $\mathcal{M}_{2} \cup \mathcal{M}_{3}$ map to band modules of $\wh A$ under $U$. Thus, $U(M) \not\cong U(N)$ for any $M \in \mathcal{M}_1$ and any $N \in \mathcal{M}_{2} \cup \mathcal{M}_{3}$. Hence, $M \not\cong N$ in this case. So suppose instead that $M \in \mathcal{M}_{2}$ and $N \in \mathcal{M}_{3}$. Then by Lemma~\ref{lem:FoldedSymmetric} and Propositions~\ref{prop:UEvenBands} and \ref{prop:UOddBands}, $U(M)$ is a quasiband module of a band $\wh w$ which is either of odd parity or is of even parity but not $\integer_2$-invariant. On the other hand, by Lemma~\ref{lem:FoldedSymmetric} and Proposition~\ref{prop:USymBand}, $U(N)$ is a quasiband module of a band $\wh w'$ that is $\integer_2$-invariant and of even parity. But then $\wh w \not\approx \wh w'$, and thus $U(M) \not\cong U(N)$ by the classification of indecomposable modules over a gentle algebra. Thus, $M \not\cong N$ in this final case, as required.
		
		(b) Consider the case where $i=1$. Then $M \cong M(w)$ and $N \cong M(w')$, where $w \not\approx w'$. By Proposition~\ref{prop:UStrings}, $U(M) \cong M(\theta(w)) \oplus M(g\theta(w))$ and $U(N) \cong M(\theta(w')) \oplus M(g\theta(w'))$. By Lemma~\ref{lem:FoldedStrings}, this implies that $\theta(w) \not\approx \theta(w')$, and thus that $U(M) \not\cong U(N)$. Hence, $M \not\cong N$, as required.
		
		Now consider the case where $i=2$. Then $M \cong M(w,\deg(p),\phi_p)$ and $N \cong M(w', \deg(p'), \phi_{p'})$. In particular, $w \not\approx w'$ or $p \neq p'$. Suppose $w \not\approx w'$, then Lemma~\ref{lem:FoldedBands} as well as Propositions~\ref{prop:UEvenBands} and \ref{prop:UOddBands} imply that $U(M) \not\cong U(N)$. So $M \not\cong N$ in this subcase. 
		
		So consider the remaining subcase (where $w \approx w'$ but $p \neq p'$). Let $\{b\ps{j}_i : 1 \leq i \leq n, 1 \leq j \leq m\}$ be the standard basis of $M(w,\deg(p),\phi_p)$ and $\{c\ps{j}_i : 1 \leq i \leq n, 1 \leq j \leq m\}$ be the standard basis of $M(w, \deg(p'), \phi_{p'})$. Let $B_i = \langle b\ps{j}_i : 1 \leq j \leq m\rangle$ and $C_i = \langle c\ps{j}_i : 1 \leq j \leq m\rangle$. Suppose for a contradiction that there exists an isomorphism $h \in \Hom_A(M(w,\deg(p),\phi_p),M(w, \deg(p'), \phi_{p'}))$. Then it follows from the band structure that each $h|_{B_i} \colon B_i \rightarrow C_i$ is such that $h|_{B_i} = h|_{B_j}$ for any $i$ and $j$.
		
		Now note that there exists a $K$-linear, exact, faithful functor $F_w \colon \fin K[x] \rightarrow \mod*\wh A$ defined by $F_w((K^a, \psi)) = M(w,a,\psi)$. Thus, both $M(w,\deg(p),\phi_p)$ and $M(w, \deg(p'), \phi_{p'})$ are in the image of $F_w$. Moreover, $F_w$ is defined such that $F_w(f)|_{B_i} = f$ for each $i$, where $f\colon K^{m} \rightarrow K^{m'}$ satisfies $f\phi_{p} = \phi_{p'}f$. From this, it is easy to see that $F_w$ reflects isomorphisms. But then $M(w,\deg(p),\phi_p) \cong M(w, \deg(p'), \phi_{p'})$ only if $p=p'$, which is a contradiction to our assumption that $p\neq p'$. So $M \not\cong N$ in this subcase either. This completes the proof for $i=2$.
		
		Finally, consider the case where $i=3$. We have $M \cong M(w,m_{\psi_{p}},\psi_{p})$ and $N \cong M(w', m_{\psi_{p'}},\psi_{p'})$. In this case, either $w \not\approx w'$ or $p \neq p'$ (since $w$ and $w'$ are assumed to be in standard form). The proof for the subcase where $w \not\approx w'$ is similar to the $i \in \{1,2\}$ cases, with the use of Proposition~\ref{prop:USymBand}. So suppose $p \neq p'$, then by Proposition~\ref{prop:USymBand}, we also have $U(M) \not\cong U(N)$. So $M \not\cong N$, as required.
		
		The final statement of the lemma is then a trivial consequence of both (a) and (b).
	\end{proof}
	
	\begin{thm} \label{thm:IndClassification}
		Let $A$ be a folded gentle algebra. The combined collection of all objects in $\mathcal{M}_1 \cup \mathcal{M}_2 \cup \mathcal{M}_3$ forms a complete collection of all iso-classes of indecomposable objects of $\mod*A$ without repetitions.
	\end{thm}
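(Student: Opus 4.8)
The statement has three components: that the modules listed in $\mathcal{M}_1\cup\mathcal{M}_2\cup\mathcal{M}_3$ are pairwise non-isomorphic, that each of them is indecomposable, and that every indecomposable object of $\mod*A$ occurs among them. The first component is exactly Lemma~\ref{lem:NoRepetitions}, so the plan is to dispatch the remaining two using the unfolding functor $U$ together with Propositions~\ref{prop:UStrings}, \ref{prop:UEvenBands}, \ref{prop:UOddBands}, \ref{prop:USymBand} and the Butler--Ringel classification of indecomposables over the gentle algebra $\wh A$ (\cite{ButlerRingel}).

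\emph{Indecomposability of the listed objects.} For a string module $M(w)\in\mathcal{M}_1$: suppose $M(w)\cong X\oplus Y$. Applying $U$ and using $U(M(w))\cong M(\theta(w))\oplus M(g\theta(w))$ (Proposition~\ref{prop:UStrings}, recalling that $\rho\theta=\mathrm{id}$) together with Krull--Schmidt over $\wh A$, one of the indecomposable summands on the right, say $M(\theta(w))$, is a summand of $U(X)$; then Proposition~\ref{prop:UStrings}~(a)$\Rightarrow$(c) gives that $M(\rho\theta(w))=M(w)$ is a summand of $X$, and comparing dimensions forces $X\cong M(w)$ and $Y=0$. The same template handles the asymmetric band modules of $\mathcal{M}_2$ whose underlying band $w$ has even parity, using Proposition~\ref{prop:UEvenBands} in place of Proposition~\ref{prop:UStrings} and the fact that $M(\theta(w),\deg p,\phi_p)$ and $M(g\theta(w),\deg p,\lambda_{\theta(w)}\phi_p)$ are indecomposable over $\wh A$ precisely because $(K^{\deg p},\phi_p)$ is indecomposable over $K[x,x\inv]$. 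When $w$ has odd parity this breaks down, since $U(M(w,\deg p,\phi_p))\cong M(\theta(w),\deg p,\lambda_{\theta(w)}\phi_p^2)$ may itself decompose; here I would instead pass to the \emph{maximal} quasi-band summands of $U(X)$ and $U(Y)$ for the band $\theta(w)$, pull them back via Proposition~\ref{prop:UOddBands} to maximal quasi-band summands $M(w,\ell_X,\chi)\subseteq X$ and $M(w,\ell_Y,\chi')\subseteq Y$ with $\ell_X+\ell_Y=\deg p$, deduce $M(w,\deg p,\phi_p)\cong M(w,\deg p,\chi\oplus\chi')$, and contradict the indecomposability of $(K^{\deg p},\phi_p)$ using the fact (established inside the proof of Lemma~\ref{lem:NoRepetitions}) that the functor $(V,\psi)\mapsto M(w,\dim V,\psi)$ reflects isomorphisms. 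For $\mathcal{M}_3$ there is nothing further to prove: the chain of equivalences in Proposition~\ref{prop:USymBand}, in particular (e)$\Rightarrow$(f), shows that the symmetric quasi-band modules in question are genuine band modules (hence indecomposable), and Remark~\ref{rem:PsiDi} guarantees that $\Psi_{w,p}$ contributes exactly one such module per orbit $p\in\Pi_w$.

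\emph{Completeness.} Let $M\in\mod*A$ be indecomposable; since $U$ is exact and faithful, $U(M)\neq 0$, so by the Butler--Ringel classification $U(M)$ has an indecomposable direct summand which is either a string module $M(\wh w)$ or a band module $M(\wh w,\wh m,\wh\phi)$ of $\wh A$. I would then run through the four possibilities for this summand. If it is a string module, Proposition~\ref{prop:UStrings} gives that $M(\rho(\wh w))$ is a summand of $M$, so $M\cong M(\rho(\wh w))\in\mathcal{M}_1$. If $\wh w$ is a band that is not $\integer_2$-invariant, Proposition~\ref{prop:UEvenBands} gives $M\cong M(\rho(\wh w),\wh m,\wh\phi)$ with $\rho(\wh w)$ asymmetric (Lemma~\ref{lem:FoldedSymmetric}); indecomposability of $M$ forces $(K^{\wh m},\wh\phi)$ to be indecomposable over $K[x,x\inv]$, so $\wh\phi$ is similar to some companion matrix $\phi_p$ with $p\in\Pi$ and $M\in\mathcal{M}_2$. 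If $\wh w$ is $\integer_2$-invariant of odd parity, I would enlarge $M(\wh w,\wh m,\wh\phi)$ to the maximal quasi-band summand of $U(M)$ for $\wh w$ and apply Proposition~\ref{prop:UOddBands} to obtain $M\cong M(\rho(\wh w),m,\phi)$ with $\rho(\wh w)$ asymmetric (Lemma~\ref{lem:OddParity}); as before $(K^m,\phi)$ is indecomposable and $M\in\mathcal{M}_2$. Finally, if $\wh w$ is $\integer_2$-invariant of even parity, replace it by an equivalent band in standard form and apply Proposition~\ref{prop:USymBand}~(a)$\Rightarrow$(f) to obtain a symmetric band module $M(w,m,\psi)$ with $w=\rho(\wh w)\in\bnd_A^s$ that is a summand of $M$, so $M\cong M(w,m,\psi)$; according to whether $M(\wh w,\wh m,\wh\phi)\crs'\subseteq M(\wh w,\wh m,\wh\phi)$ one has $H_{w,m}\psi$ similar to a single companion matrix $\phi_p$ (so $[\psi]\in\Psi_{w,p}\ps{1}$) or to $\phi_p\oplus\phi_{gp}$ for the $\integer_2$-action on $\Pi$ defined before Definition~\ref{def:SymBandInds} (so $[\psi]\in\Psi_{w,p}\ps{2}$), and in either case $m=m_\psi$, so $M\in\mathcal{M}_3$ by Remark~\ref{rem:PsiDi}.

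\emph{The main obstacle.} All the genuine difficulty sits in the $\integer_2$-invariant, even-parity band case: one must keep track of the constants $\mu_{\wh w},\lambda_2,\lambda'_2$ of Lemma~\ref{lem:SymBandConst}, determine exactly when $U$ of a symmetric band module remains indecomposable rather than splitting as $M(\wh w,\wh m,\wh\phi)\oplus M(\wh w,\wh m,\mu_{\wh w}^2\lambda_2\lambda'_2\wh\phi\inv)$, and match this dichotomy against the two-tier definition of $\Psi_{w,p}$ — the consistency of which (one class per orbit) is Remark~\ref{rem:PsiDi}, resting on Proposition~\ref{prop:USymBand} and Lemma~\ref{lem:SymInj}. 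The only other non-formal point is the indecomposability of the odd-parity asymmetric band modules of $\mathcal{M}_2$, whose unfolded images may decompose, which is what forces the detour through maximal quasi-band summands and the isomorphism-reflecting functor.
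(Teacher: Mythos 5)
Your proposal follows the paper's strategy in all essentials: Lemma~\ref{lem:NoRepetitions} for pairwise non-isomorphism, the unfolding functor $U$ together with Propositions~\ref{prop:UStrings}, \ref{prop:UEvenBands}, \ref{prop:UOddBands}, \ref{prop:USymBand} and the Butler--Ringel classification over $\wh A$ for both indecomposability and completeness, and Proposition~\ref{prop:USymBand}(e)$\Rightarrow$(f) plus Remark~\ref{rem:PsiDi} for $\mathcal{M}_3$. The only real divergence is in the indecomposability argument for $\mathcal{M}_2$. The paper dispatches \emph{all} asymmetric band modules uniformly via the exact, faithful functor $F_w\colon\fin K[x,x\inv]\to\mod*A$, $(K^m,\phi)\mapsto M(w,m,\phi)$, which preserves indecomposables; it never needs to distinguish even-parity from odd-parity asymmetric bands at this stage. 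You instead handle even-parity bands through $U$ and Proposition~\ref{prop:UEvenBands}, then recognize (correctly) that this breaks for odd-parity bands because $U(M(w,\deg p,\phi_p))\cong M(\theta(w),\deg p,\lambda\phi_p^2)$ may decompose, and you repair this with a chase through maximal quasi-band summands and Proposition~\ref{prop:UOddBands}, ultimately invoking the same ``$F_w$ reflects isomorphisms'' fact you used in Lemma~\ref{lem:NoRepetitions}. Your detour is correct (the maximality argument does close the loop), but it is heavier than needed: the $F_w$ argument you already trust for the non-repetition lemma gives indecomposability directly and in one stroke, so routing through $U$ only for the even-parity case and then falling back to $F_w$ for the odd-parity case does double duty. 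The completeness half of your argument is a small stylistic variant of the paper's --- you fix an indecomposable $M$ and classify a single summand of $U(M)$, where the paper decomposes $U(M)$ fully and appeals to Lemma~\ref{lem:UEssInj} for uniqueness of pre-images --- but these come to the same thing once Lemma~\ref{lem:NoRepetitions} is in hand.
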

	\begin{proof}
		First, let us show that the objects of $\mathcal{M}_1$ are indecomposable. Let $M(w) \in \mathcal{M}_1$. Then $U(M(w)) \cong M(\theta(w)) \oplus M(g\theta(w))$ by Proposition~\ref{prop:UStrings}. Suppose for a contradiction that $M(w)$ is decomposable. Since $U$ is additive and exact, and string modules of $\mod*\wh A$ are indecomposable, this is only possible if we have $M(w) \cong N \oplus L$ such that $U(N) \cong M(\theta(w))$ and $U(L) \cong M(g\theta(w))$. But then by Proposition~\ref{prop:UStrings}, $M(\theta(w)) \oplus M(g\theta(w))$ must be a direct summand of $U(N)$, which is a contradiction. So the objects of $\mathcal{M}_1$ are indecomposable, as required.
		
		Now let us show that the objects of $\mathcal{M}_2$ are indecomposable. Denote by $\fin K[x, x\inv]$ the category of finite-dimensional representations of $K[x, x\inv]$. Then for each asymmetric $w \in \bnd_A$, there exists an obvious $K$-linear, exact, faithful functor $F_w\colon \fin K[x, x\inv] \rightarrow \mod*A$ that preserves indecomposable objects (given by $F_w(K^m,\phi) = M(w,m,\phi)$). Since $(K^{\deg(p)},\phi_p) \in \fin K[x, x\inv]$ is indecomposable (by the classification of indecomposable modules over a principal ideal domain), it follows that each $M( w, \deg(p), \phi_p) \in \mathcal{M}_2$ is indecomposable as required.
		
		For $\mathcal{M}_3$, we have already shown that the objects are indecomposable: this is shown in the proof of (e) $\Rightarrow$ (f) in Proposition~\ref{prop:USymBand}.
		
		By Lemma~\ref{lem:NoRepetitions} and the arguments above, we can see that $\mathcal{M}_1\cup\mathcal{M}_2\cup\mathcal{M}_3$ form a set of pairwise non-isomorphic indecomposable $A$-modules. It remains to show that this set forms a complete classification of the indecomposable $A$-modules.
		
		So let $M \in \mod*A$ and consider the module $U(M) \in \mod*\wh A$. By the classification of indecomposable modules over gentle algebras, we may uniquely (up to isomorphism) write
		\begin{equation*}
			U(M) \cong \bigoplus_{\wh w \in \str_{\wh A}} M(\wh w)^{a_{\wh w}} \oplus \bigoplus_{\wh w \in \mathcal{B}_{\wh A}}\bigoplus_{p \in \Pi} M(\wh w, \deg(p), \phi_p)^{b_{\wh w, p}},
		\end{equation*}
		where each $a_{\wh w}, b_{\wh w, p} \in \integer_{\geq 0}$, and both the set $\Pi$ and each $\phi_p$ are as in Definition~\ref{def:AsymBandsClass}. Furthermore, every band of $\wh A$ must belong to precisely one of the following three subtypes: bands of even parity that are not $\integer_2$-invariant, bands of even parity that are $\integer_2$-invariant, and bands of odd parity. Let $\mathcal{B}\ps{0}_{\wh A}\subseteq \mathcal{B}_{\wh A}$ be the subcollection of bands of even parity of $\wh A$ that are not $\integer_2$-invariant, let $\mathcal{B}^s_{\wh A}\subseteq \mathcal{B}_{\wh A}$ be the subcollection of bands of even parity of $\wh A$ that are $\integer_2$-invariant, and let $\mathcal{B}\ps{1}_{\wh A}\subseteq \mathcal{B}_{\wh A}$ be the subcollection of bands of odd parity of $\wh A$. We thus have
		\begin{align*}
			\bigoplus_{\wh w \in \mathcal{B}_{\wh A}} \bigoplus_{p \in \Pi} M(\wh w, \deg(p), \phi_p)^{b_{\wh w, p}} \cong \bigoplus_{\wh w \in \mathcal{B}_{\wh A}\ps{0}} \bigoplus_{p \in \Pi} M(\wh w, \deg(p), \phi_p)^{b_{\wh w, p}} 
			&\oplus \bigoplus_{\wh w \in \mathcal{B}_{\wh A}\ps{1}} M(\wh w, m_{\wh w}, \wh\phi_{\wh w}) \\
			&\oplus \bigoplus_{\wh w \in \mathcal{B}_{\wh A}^s} \bigoplus_{p \in \Pi} M(\wh w, \deg(p), \phi_p)^{b_{\wh w, p}},
		\end{align*}
		where each direct summand labelled by $M(\wh w, m_{\wh w}, \wh\phi_{\wh w})$ is a maximal quasiband module and each $\wh w \in \mathcal{B}_{\wh A}^s$ is in standard form.
		
		By Proposition~\ref{prop:UStrings}, we must have
		\begin{equation*}
			\bigoplus_{\wh w \in \str_{\wh A}}M(\wh w)^{a_{\wh w}} \cong \bigoplus_{\wh w \in \str_{\wh A} / \integer_{2}}(M(\wh w) \oplus M(g\wh w))^{a_{\wh w}} \cong \bigoplus_{\wh w \in \str_{\wh A} / \integer_{2}}U(M(\rho(\wh w)))^{a_{\wh w}}.
		\end{equation*}
		Since $\rho$ induces a surjection $\rho\colon \str_{\wh A}\rightarrow \str_A$, we clearly have each $M(\rho(\wh w)) \in \mathcal{M}_1$. Moreover, by Lemma~\ref{lem:UEssInj}, $M(\rho(\wh w))$ is the only $A$-module (up to isomorphism) such that $U(M(\rho(\wh w))) \cong M(\wh w)$.
		
		By Proposition~\ref{prop:UEvenBands}, we must have
		\begin{align*}
			\bigoplus_{\wh w \in \mathcal{B}_{\wh A}\ps{0}} \bigoplus_{p \in \Pi} M(\wh w, \deg(p), \phi_p)^{b_{\wh w, p}} 
			&\cong \bigoplus_{\wh w \in \mathcal{B}_{\wh A}\ps{0} / \integer_2} \bigoplus_{p \in \Pi} (M(\wh w, \deg(p), \phi_p) \oplus M(g\wh w, \deg(p), \lambda_{\wh w}\phi_p))^{b_{\wh w, p}} \\
			&\cong \bigoplus_{\wh w \in \mathcal{B}_{\wh A}\ps{0} / \integer_2} \bigoplus_{p \in \Pi} U(M(\rho(\wh w), \deg(p), \phi_p))^{b_{\wh w, p}},
		\end{align*}
		where each $\lambda_{\wh w} \in K$ is as in Remark~\ref{rem:LamConst}. Since $\rho\colon\bnd_{\wh A} \rightarrow \bnd_A$ is surjective and $\rho(\wh w)$ is an asymmetric band by Lemma~\ref{lem:FoldedSymmetric}, we clearly have each $M(\rho(\wh w), \deg(p), \phi_p) \in \mathcal{M}_2$. Again, Lemma~\ref{lem:UEssInj} additionally implies that each $M(\rho(\wh w), \deg(p), \phi_p)$ is the only $A$-module (up to isomorphism) that maps onto $M(\wh w, \deg(p), \phi_p)$ via $U$.
		
		By Proposition~\ref{prop:UOddBands}, we must have
		\begin{align*}
			\bigoplus_{\wh w \in \mathcal{B}_{\wh A}\ps{1}} M(\wh w, m_{\wh w}, \wh\phi_{\wh w}) 
			&\cong \bigoplus_{\wh w \in \mathcal{B}_{\wh A}\ps{1}} M(\wh w, m_{\wh w}, \phi_{\wh w}^2) \\
			&\cong \bigoplus_{\wh w \in \mathcal{B}_{\wh A}\ps{1}} U(M(\rho(\wh w), m_{\wh w}, \phi_{\wh w}))
		\end{align*}
		where each $\phi_{\wh w} \in \Aut(K^{m_{\wh w}})$. Now each $\phi_{\wh w}$ can be uniquely expressed in its rational canonical form as a direct sum $\phi_{p_1}\oplus\ldots\oplus\phi_{p_r}$ via an automorphism $f \in \Aut(K^{m_{\wh w}})$, where each $p_i \in \Pi$. This induces an isomorphism of (quasi)band modules
		\begin{equation*}
			M(\rho(\wh w), m_{\wh w}, \phi_{\wh w}) \cong M\left(\rho(\wh w), m_{\wh w}, \bigoplus_{i=1}^r\phi_{p_i}\right) \cong \bigoplus_{i=1}^r M(\rho(\wh w), m_{\wh w}, \phi_{p_i}).
		\end{equation*}
		By Lemma~\ref{lem:FoldedSymmetric}, it is then clear that each $M(\rho(\wh w), m_{\wh w}, \phi_{p_i}) \in \mathcal{M}_2$. In addition, Proposition~\ref{prop:UOddBands} implies that any other $A$-module that maps onto $M(\rho(\wh w), m_{\wh w}, \phi_{\wh w})$ via $U$ must be some other quasiband $A$-module $M(\rho(\wh w), m_{\wh w}, \phi'_{\wh w})$. A similar argument shows that any such module must also decompose into $A$-modules that belong to the collection $\mathcal{M}_2$.
		
		Finally, by Proposition~\ref{prop:USymBand} and Remark~\ref{rem:PsiDi}, we must have
		\begin{align*}
			\bigoplus_{\wh w \in \mathcal{B}_{\wh A}^s} \bigoplus_{p \in \Pi} M(\wh w, \deg(p), \phi_p)^{b_{\wh w, p}} 
			&\cong \bigoplus_{\wh w \in \mathcal{B}_{\wh A}^s} \quad \bigoplus_{p \in \Pi_{\wh w} : \Psi_{\rho(\wh w),p}\ps{1} = \emptyset} (M(\wh w, \deg(p), \phi_p) \oplus M(\wh w, \deg(gp), \phi_{gp}))^{b_{\wh w, p}} \\
			&\quad\oplus \bigoplus_{\wh w \in \mathcal{B}_{\wh A}^s} \quad \bigoplus_{p \in \Pi_{\wh w} : \Psi_{\rho(\wh w),p}\ps{2} = \emptyset} M(\wh w, \deg(p), \phi_p) \\
			&\cong \bigoplus_{\wh w \in \mathcal{B}_{\wh A}^s} \bigoplus_{p \in \Pi_{\wh w}} (M(\wh w, 2m_{\psi_p}, H_{\rho(w),m} \psi_p))^{b_{\wh w, p}} \\
			&\cong \bigoplus_{\wh w \in \mathcal{B}_{\wh A}^s} \bigoplus_{p \in \Pi_{\wh w}} (U(M(\rho(\wh w), m_{\psi_p}, \psi_p)))^{b_{\wh w, p}},
		\end{align*}
		where all notation is consistent with Definition~\ref{def:SymBandInds} (and the exposition immediately preceding it). By the surjectivity of $\rho$ and by Lemma~\ref{lem:FoldedSymmetric}, we clearly have each $M(\rho(\wh w), m_{\psi_p}, \psi_p) \in \mathcal{M}_3$. Again, Lemma~\ref{lem:UEssInj} additionally implies that each $M(\rho(\wh w), m_{\psi_p}, \psi_p)$ is the only $A$-module (up to isomorphism) that maps onto $M(\wh w, \deg(p), \phi_p)$ via $U$.
		
		In conclusion, $\mathcal{M}_1 \cup \mathcal{M}_2 \cup \mathcal{M}_3$ is a collection of pairwise non-isomorphic indecomposable $A$-modules. Suppose for a contradiction that there exists an indecomposable $A$-module $M$ that is not isomorphic to a module from this collection. Since $U(M)$ is a direct sum of string and band modules, and since we have just shown that the only $A$-modules that map onto such a direct sum (via $U$) are modules from the collection $\mathcal{M}_1 \cup \mathcal{M}_2 \cup \mathcal{M}_3$, it follows that $M$ must belong to $\mathcal{M}_1 \cup \mathcal{M}_2 \cup \mathcal{M}_3$, which yields the required contradiction. Thus, the collection $\mathcal{M}_1 \cup \mathcal{M}_2 \cup \mathcal{M}_3$ is a complete classification of iso-classes of indecomposable $A$-modules without repetition, as required.
	\end{proof}
	
	As a consequence of the classification, we obtain the following.
	
	\begin{cor} \label{cor:Kxy}
		Let $K$ be a field and let $A = K\langle x,y \rangle / \langle p(x), q(y) \rangle$ be such that
		\begin{align*}
			p(x) &= x^2 - \lambda_1 x - \lambda_2, \\
			q(x) &= y^2 - \lambda'_1 y - \lambda'_2
		\end{align*}
		are both irreducible over $K$. Define an action of $\integer_2$ on $\Pi$ by defining, for each $p \in \Pi$, the polynomial $gp$ as the characteristic polynomial of the matrix $\lambda_2\lambda'_2\phi_p\inv$, where $\phi_p$ is the companion matrix of $p$. Then the isomorphism classes of indecomposable objects in the category $\fin A$ are indexed by the set $\Pi/\integer_2$.
	\end{cor}
	\begin{proof}
		Consider the folded gentle algebra $B=KQ/\langle Z \rangle$ given by the quiver and relations
		\begin{equation*}
			\begin{tikzpicture}
				\draw (-1.7,0) node {$Q\colon$};
				\draw (-0.5,0) node {1};
				\draw (0.5,0) node {2};
				\draw [->](-0.3,0) -- (0.3,0);
				\draw [->](-0.7,0.1) .. controls (-0.9,0.2) and (-1,0.1) .. (-1,0) .. controls (-1,-0.1) and (-0.9,-0.2) .. (-0.7,-0.1);
				\draw [->](0.7,-0.1) .. controls (0.9,-0.2) and (1,-0.1) .. (1,0) .. controls (1,0.1) and (0.9,0.2) .. (0.7,0.1);
				\draw (-1.2,0) node {\footnotesize $\eta_1$};
				\draw (0,0.2) node {\footnotesize $\alpha$};
				\draw (1.3,0) node {\footnotesize $\eta_2$};
			\end{tikzpicture}
		\end{equation*}
		\begin{equation*}
			Z = \{\crs_1^2 + \lambda_1\lambda_2\inv \crs_1 - \lambda_2\inv\stp_1, \crs_2^2 - \lambda'_1 \crs_2 - \lambda'_2 \stp_2 \}.
		\end{equation*}
		Here, we note that $\crs\inv_1$ satisfies $\crs_1^{-2} - \lambda_1 \crs_1\inv - \lambda_2\stp_1$. 
		
		The algebra $B$ has only one band (up to equivalence), which is the symmetric band $w=\crs_2\alpha\inv\crs_1\inv\alpha$. The full subcategory $\mathcal{B} \subset \mod*B$ whose objects are direct sums of band modules is thus equivalent to the category $\fin A$. The unfolding of $B$ is the algebra $\wh B = K\wh Q$, where
		\begin{equation*}
			\begin{tikzpicture}
				\draw (-0.9,0) node {$\widehat Q\colon$};
				\draw (-0.5,0) node {1};
				\draw (0.5,0) node {2};
				\draw [->](-0.3,0.1) -- (0.3,0.1);
				\draw (0,0.3) node {\footnotesize $\widehat\alpha$};
				\draw [->](-0.3,-0.1) -- (0.3,-0.1);
				\draw (0,-0.3) node {\footnotesize $\widehat\alpha'$};
			\end{tikzpicture}
		\end{equation*}
		is the Kronecker quiver. This contains only one band (up to equivalence), which is the $\integer_2$-invariant band of even parity given by $\wh w = (\alpha')\inv\alpha$. Note that $\wh w$ is also in standard form, and $\rho(\wh w) = w$. By Propositions~\ref{prop:UStrings} and \ref{prop:USymBand} and Theorem~\ref{thm:IndClassification}, this implies that the isomorphism classes of the objects in $\mathcal{B}$ are indexed by the set $\Pi / \integer_2$. Hence, the isomorphism classes of objects in $\fin A$ are indexed by the set $\Pi / \integer_2$, as required.
	\end{proof}
	
	\subsection{Auslander-Reiten sequences}
	As well as providing the means of classifying indecomposable objects of $\mod*A$ in terms of indecomposable objects of $\mod*\wh A$, the unfolding functor is capable of producing a classification of the Auslander-Reiten sequences of $\mod*A$. We will begin with the following.
	
	\begin{lem} \label{lem:UIrred}
		Suppose $f \in \Hom_A(M,N)$. If $U(f)$ is irreducible, then $f$ is irreducible.
	\end{lem}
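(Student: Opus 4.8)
The plan is to verify the irreducibility of $f$ through the factorisation criterion: a morphism is irreducible exactly when it is neither a split monomorphism nor a split epimorphism, and every factorisation through a third object has the first factor a split mono or the second factor a split epi. The first requirement is immediate, since $U$ is additive and faithful: if $f$ were a split mono, applying $U$ to a retraction would exhibit $U(f)$ as a split mono, contradicting its irreducibility, and the split epi case is dual. Likewise, given a factorisation $f = gh$ with $h\colon M\to L$ and $g\colon L\to N$, we get $U(f) = U(g)U(h)$, and irreducibility of $U(f)$ forces $U(h)$ to be a split mono or $U(g)$ a split epi. Everything thus reduces to the descent statement: $U$ \emph{reflects} split monomorphisms and split epimorphisms.

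For the descent I would treat the case $U(h)\colon U(M)\to U(L)$ a split mono, the other being dual. Since $U$ is exact and faithful it reflects monomorphisms, so $h$ is mono; writing $C=\Coker h$ and using exactness, $U$ sends the short exact sequence $0\to M\xrightarrow{h} L\to C\to 0$ to a split sequence. Hence $h$ is a split mono if and only if the class $[E]\in\Ext^1_A(C,M)$ of this sequence vanishes, and we already know its image under $U_*\colon\Ext^1_A(C,M)\to\Ext^1_{\wh A}(U(C),U(M))$ is zero; so it suffices to show $U_*$ is injective. Here I would invoke Corollary~\ref{cor:UProps}: a minimal projective resolution $P_\bullet\to C$ in $\mod*A$ is carried by $U$ to a minimal projective resolution $U(P_\bullet)\to U(C)$ in $\mod*\wh A$, so both $\Ext^1$ groups are computed from $\Hom_A(P_\bullet,M)$ and $\Hom_{\wh A}(U(P_\bullet),U(M))$ with $U_*$ induced by the chain map $U$. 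Using the evident $\integer_2$-action $g$ on $\mod*\wh A$ (for which one checks $g\circ U\cong U$ naturally, so every $U(X)$ is canonically $g$-invariant), together with $U(P(v))\cong P(\iota v)\oplus P(g\iota v)$ from Corollary~\ref{cor:UProj} and the consequent identification $\Hom_{\wh A}(U(P(v)),U(M))\cong (UM)_{\iota(v)}\oplus(UM)_{g\iota(v)}$, this chain map identifies $\Hom_A(P_\bullet,M)$ with the subcomplex of $\integer_2$-invariants of $\Hom_{\wh A}(U(P_\bullet),U(M))$.

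It then remains to see that passing to $\integer_2$-invariants is injective on $H^1$. When $\chr K\neq 2$ this is routine averaging: the inclusion of invariants is split already at the chain level by $x\mapsto\tfrac12(x+gx)$, hence injective on all cohomology. The subtle case — and the point I expect to be the main obstacle — is $\chr K=2$, where $U$ is genuinely not full (Remark~\ref{rem:UImage}(a)) and no averaging is available: the Hom-complexes above are built from copies of the regular representation $K[\integer_2]$ over the ordinary vertices and trivial representations over the crease vertices, so the transfer argument breaks and a retraction of $U(h)$ need not come from $\mod*A$. In that case I anticipate having to argue more concretely, leaning on the explicit descriptions of $\im U$ on string and band modules (Propositions~\ref{prop:UStrings}, \ref{prop:UEvenBands}, \ref{prop:UOddBands}, \ref{prop:USymBand}) and the classification of indecomposables (Theorem~\ref{thm:IndClassification}) to reduce to $M$ indecomposable and to follow how a retraction of $U(h)$ interacts with the $\integer_2$-orbit decomposition of $U(L)$, thereby manufacturing an honest $A$-linear retraction of $h$. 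Once the descent of split monos and split epis is in hand, the lemma follows formally from the factorisation criterion as described in the first paragraph.
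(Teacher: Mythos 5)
Your reduction of the lemma to the statement that $U$ reflects split monomorphisms and split epimorphisms is correct, and the paper does use the same first step (faithfulness of $U$ to rule out $f$ being invertible). But from there you take a genuinely different, and much heavier, route than the paper: you reach for $\Ext^1$ via minimal projective resolutions and try to show $U_*\colon\Ext^1_A(C,M)\to\Ext^1_{\wh A}(U(C),U(M))$ is injective. The paper instead argues directly: if $U(f')$ (or $U(f'')$) is split, then $U(L)$ has a direct summand isomorphic to $U(M)$ (resp. $U(N)$), and one then descends that direct summand to $L$ using additivity, exactness, and the explicit descriptions of $U$ on indecomposables from Propositions~\ref{prop:UStrings}--\ref{prop:USymBand}; there is no homological machinery, no averaging, and no case split on characteristic.

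The pivot of your argument is the assertion that the chain map identifies $\Hom_A(P_\bullet,M)$ with the $\integer_2$-invariants of $\Hom_{\wh A}(U(P_\bullet),U(M))$ under the evident folding action, and this is where the real gap sits --- it is not a peripheral characteristic-two annoyance but a problem at every crease vertex. Take the minimal example $A=\fiso_v=K[x]/(x^2-\lambda_1x-\lambda_2)$ with a single crease vertex $v$, so that $\wh A=K$ with one vertex fixed by $g$. Then $\Hom_A(P(v),P(v))\cong\fiso_v$ has $K$-dimension $2$, while $\Hom_{\wh A}(U(P(v)),U(P(v)))\cong\Mat_2(K)$ has $K$-dimension $4$, and the image of the former in the latter is the centraliser $K[\phi]$ of the matrix $\phi$ representing the crease action. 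The naive folding action $g$ is \emph{trivial} here (there are no ordinary vertices to swap), so its invariants are all of $\Mat_2(K)$, not $K[\phi]$. To cut $\Mat_2(K)$ down to $K[\phi]$ by an involution you would need something like conjugation by $\phi$, but that is an involution only when $\phi^2$ is central, i.e.\ when $\lambda_1=0$; and in the inseparable case (which is allowed by \ref{en:FG5} when $\chr K=2$) no Galois-style involution with the right fixed points exists at all. So the proposed identification fails even before the averaging step you flag as suspect.

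You already noticed that characteristic $2$ blocks the averaging argument and anticipated falling back on the explicit unfolding of indecomposables, which is precisely what the paper's proof does from the start. I would therefore suggest abandoning the $\Ext$-computation and instead descending direct summands directly: given a factorisation and the resulting direct summand of $U(L)$, appeal to the Krull--Schmidt property together with Propositions~\ref{prop:UStrings}, \ref{prop:UEvenBands}, \ref{prop:UOddBands} and \ref{prop:USymBand} (which characterise exactly when a string or band module of $\wh A$ arises as a summand of $U$ of something), and then close the loop via faithfulness of $U$. That argument is characteristic-free and avoids having to understand the image of $U$ on $\Hom$-spaces, which, as your own Remark~\ref{rem:UImage}(a) reminder notes, is delicate because $U$ is not full.
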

	
	\begin{proof}
		This is a simple consequence of $\mod*A \simeq \im U$ being a subcategory of $\mod*\wh A$. Since $\wh f = U(f)$ is irreducible, $\wh f$ has neither a left inverse nor a right inverse. That is, there exists no map $\wh h \in \Hom_{\wh A}(U(N),U(M))$ such that $\wh h \wh f = \id_{U(M)}$ or $\wh f \wh h = \id_{U(N)}$. Thus, there exists no such map in $\im U$, and hence no such map in $\mod*A$ by the fact that $U$ is exact and faithful. So $f$ has neither a left inverse nor a right inverse.
		
		Suppose $f=f'f''$ for some morphisms $f' \in \Hom_A(L,N)$ and $f'' \in \Hom_A(M,L)$. Then $U(f) = U(f'f'') = U(f')U(f'')$. Since $U(f)$ is irreducible, this implies that either $U(f')$ has a right inverse (say $\wh h' \in \Hom_{\wh A}(U(N),U(L))$ or $U(f'')$ has a left inverse (say $\wh h'' \in \Hom_{\wh A}(U(L),U(M))$). In particular, this implies that $U(L)$ has a direct summand $\wh X$ that is isomorphic to either $U(M)$ or $U(N)$. But $U$ is additive and exact. Thus, this implies that there exists a direct summand $X \subseteq L$ that is isomorphic to either $M$ or $N$. Moreover, since $U$ is exact and faithful, we either have $X = L / \Ker f'$ or $X= \im f''$. Thus, either $f'$ is right invertible or $f''$ is left invertible. Hence, $f$ is irreducible, as required.
	\end{proof}
	
	The above lemma allows us to describe the Auslander-Reiten sequences of $\mod*A$. First we must introduce some further definitions and notation for strings, which we adapt from \cite{ButlerRingel}.
	
	\begin{defn}
		We say a string $w$ \emph{ends in a deep} if there exists no ordinary arrow $\alpha \in \oa$ such that $w \alpha$ is a string. Dually, we say $w$ \emph{starts in a deep} if there exists no ordinary arrow $\alpha \in \oa$ such that $\alpha\inv w$ is a string.
	\end{defn}
	
	\begin{defn}
		Suppose a string $w$ does not end in a deep. Then there exists a string $w_c = w \alpha w'$, where $\alpha \in \oa$ and $w'$ is a (possibly simple) inverse string of maximal length. We say that $w_c$ is obtained from $w$ by \emph{adding a cohook} to the end of $w$. Suppose instead that $w$ does not start in a deep. Then there exists a string $_c w = w' \alpha\inv w$, where $\alpha \in \oa$ and $w'$ is a (possibly simple) direct string of maximal length. We say that $_c w$ is obtained from $w$ by \emph{adding a cohook} to the start of $w$.
	\end{defn}
	
	For any string $w$ that is not simple, axioms \ref{en:FG1} and \ref{en:FG2} ensure that the strings $_c w$ and $w_c$ are uniquely determined. If $w$ is a simple string $\stp_v$ such that $v \in \ov$ is the source of $n \in \{1,2\}$ arrows, then there exist precisely $n$ possible choices for $w_c$. In this case there are also $n$ possible choices for $_c w$, but these strings will be equivalent to the corresponding strings $w_c$. Whenever $n=2$, the choices for $w_c$ (or equivalently $_c w$) can be a potential source of ambiguity when computing rays of irreducible morphisms in the Auslander-Reiten quiver. This ambiguity is practically resolved by fixing one possible choice as $_c w$ and the other possible choice as $w_c$, and then computing the irreducible morphisms around the simple string in the Auslander-Reiten quiver.
	
	\begin{defn}
		Suppose $w \not\sim_2 w'$ for some direct string $w'$. Then $w \sim_2 w_{-h} w_1 w_2$, where $w_1$ is an ordinary formal inverse and $w_2$ is a (possibly simple) direct string. We say the substring $w_{-h}$ is obtained from $w$ by \emph{deleting a hook} from the end of $w$. Suppose instead that $w \not\sim_2 w'$ for some inverse string $w'$. Then $w \sim_2 w_2 w_1 {_{-h}w}$, where $w_1$ is an ordinary arrow and $w_2$ is a (possibly simple) inverse string. We say the substring $_{-h}w$ is obtained from $w$ by \emph{deleting a hook} from the start of $w$.
	\end{defn}
	
	\begin{thm} \label{thm:ARSeq}
		Let $A$ be a folded gentle algebra.
		\begin{enumerate}[label=(\alph*)]
			\item Let $w \in \str_A$.
			\begin{enumerate}[label = (\roman*)]
				\item If $w$ starts and ends in a deep, then the Auslander-Reiten sequence ending in $M(w)$ is
				\begin{equation*}
					0 \rightarrow M(_{-h}w_{-h}) \rightarrow M(_{-h}w) \oplus M(w_{-h}) \rightarrow M(w) \rightarrow 0.
				\end{equation*}
				\item If $w$ starts, but does not end, in a deep, then the Auslander-Reiten sequence ending in $M(w)$ is
				\begin{equation*}
					0 \rightarrow M(_{-h}w_{c}) \rightarrow M(_{-h}w) \oplus M(w_{c}) \rightarrow M(w) \rightarrow 0.
				\end{equation*}
				\item If $w$ ends, but does not start, in a deep, then the Auslander-Reiten sequence ending in $M(w)$ is
				\begin{equation*}
					0 \rightarrow M(_{c}w_{-h}) \rightarrow M(_{c}w) \oplus M(w_{-h}) \rightarrow M(w) \rightarrow 0.
				\end{equation*}
				\item If $w$ neither starts nor ends in a deep, then the Auslander-Reiten sequence ending in $M(w)$ is
				\begin{equation*}
					0 \rightarrow M(_{c}w_{c}) \rightarrow M(_{c}w) \oplus M(w_{c}) \rightarrow M(w) \rightarrow 0.
				\end{equation*}
			\end{enumerate}
			\item Let $w \in \bnd_A$ be asymmetric and let $q$ be a monic and irreducible polynomial. For each $n \geq 1$, denote by $\phi_{q^n}$ the companion matrix of $q^n$. Then for each $n$, there exists an Auslander-Reiten sequence
			\begin{equation*}
				0 \rightarrow M(w, m_n, \phi_{q^n}) \rightarrow M(w, m_{n-1}, \phi_{q^{n-1}}) \oplus M(w, m_{n+1}, \phi_{q^{n+1}}) \rightarrow M(w, m_{n}, \phi_{q^n}) \rightarrow 0,
			\end{equation*}
			where each $m_i = i\deg(q)$ and $M(w, m_{0}, \phi_{q^0})$ is interpreted as the trivial module.
			\item Let $w \in \bnd_A$ be symmetric and let $q$ be a monic and irreducible polynomial. Then for each $M(w,m_{\psi_{q^n}}, \psi_{q^n}) \in \mathcal{M}_3$ from Definition~\ref{def:SymBandInds}, there exists an Auslander-Reiten sequence
			\begin{equation*}
				0 \rightarrow M(w,m_{\psi_{q^n}}, \psi_{q^n}) \rightarrow M(w,m_{\psi_{q^{n-1}}}, \psi_{q^{n-1}}) \oplus M(w,m_{\psi_{q^{n+1}}}, \psi_{q^{n+1}}) \rightarrow M(w,m_{\psi_{q^n}}, \psi_{q^n}) \rightarrow 0,
			\end{equation*}
			where $M(w,m_{\psi_{q^0}}, \psi_{q^0})$ is interpreted as the trivial module.
		\end{enumerate}
	\end{thm}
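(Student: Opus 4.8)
The plan is to deduce the statement from the Auslander--Reiten theory of the associated unfolded gentle algebra $\unfA$, where the almost split sequences are the ones of Butler and Ringel~\cite{ButlerRingel}, by transporting it along the unfolding functor $U$. Three properties of $U$ drive the argument and are already available: $U$ is exact, faithful and $K$-linear, with $\mod*A$ equivalent to the (non-full) subcategory $\im U$ of $\mod*\unfA$ (Remark~\ref{rem:UImage}); $U$ sends minimal projective and minimal injective presentations to minimal ones (Corollary~\ref{cor:UProps}), and hence commutes with the Auslander--Reiten translate, $U\tau_A\cong\tau_{\unfA}U$ and $U\tau_A\inv\cong\tau_{\unfA}\inv U$, on modules without projective, resp.\ injective, summands; and $U$ reflects irreducible morphisms (Lemma~\ref{lem:UIrred}). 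I would also use repeatedly the consequence of Propositions~\ref{prop:UStrings}, \ref{prop:UEvenBands}, \ref{prop:UOddBands} and \ref{prop:USymBand} that $U$ reflects direct-sum decompositions, retractions onto summands, and split short exact sequences: any such decomposition, splitting or retraction that appears over $\unfA$ after applying $U$ already exists over $A$.

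For part (a) I would first settle the purely combinatorial point that $\theta$ intertwines the four boundary operations on strings, namely $\theta({}_{-h}w)\approx{}_{-h}\theta(w)$, $\theta(w_{-h})\approx\theta(w)_{-h}$, $\theta({}_c w)\approx{}_c\theta(w)$, $\theta(w_c)\approx\theta(w)_c$ up to equivalence and the folding action $g$, and that ``$w$ starts (resp.\ ends) in a deep'' holds exactly when $\theta(w)$ does --- the crease loops play no role here, since by~\ref{en:FG6} crease symbols always sit strictly between ordinary stretches of a string, while adding and deleting hooks and cohooks involves only ordinary arrows. Combined with Lemma~\ref{lem:FoldedSymmetric}(a), Proposition~\ref{prop:UStrings} then identifies $U$ applied to each proposed short exact sequence in (a)(i)--(iv) --- with the maps taken to be the canonical string-module morphisms, exactly as in~\cite{ButlerRingel} --- with the direct sum of the two Butler--Ringel almost split sequences ending in $M(\theta(w))$ and $M(g\theta(w))$, module by module and map by map (the two summands coinciding precisely when $w$ is symmetric, where a dimension count shows the multiplicities still agree). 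From this one reads off that the proposed sequence is non-split (its $U$-image is) with indecomposable ends (Theorem~\ref{thm:IndClassification}), and that $\tau_A M(w)\cong M({}_{-h}w{}_{-h})$ etc.: indeed $U(\tau_A M(w))\cong\tau_{\unfA}(M(\theta w)\oplus M(g\theta w))$ equals $U$ of the proposed left-hand term by Butler--Ringel and the compatibility just established, and Lemma~\ref{lem:UEssInj} upgrades this to an isomorphism in $\mod*A$.

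Parts (b) and (c) are handled the same way, with the band dictionary supplied by Propositions~\ref{prop:UEvenBands}, \ref{prop:UOddBands} and \ref{prop:USymBand} in place of Proposition~\ref{prop:UStrings}. First, via $U$ and Butler--Ringel, one checks that the band modules of a fixed band $w$ form Auslander--Reiten components closed under $\tau_A$ (the $\unfA$-band modules lie in homogeneous tubes with $\tau_{\unfA}=\id$, and these descend); the precise correspondence of parameters --- the rescaling constants of Remark~\ref{rem:LamConst} for even-parity asymmetric bands, the squaring map $\phi\mapsto\lambda_w\phi^2$ and the ``even/odd'' dichotomy for odd-parity bands already worked out in Lemma~\ref{lem:UCoversBands}, and the setup preceding Definition~\ref{def:SymBandInds} together with Remark~\ref{rem:PsiDi} for symmetric bands --- shows these components are again homogeneous tubes. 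One then uses the parametrising functor $F_w$ (the exact, faithful, $K$-linear functor $\fin K[x,x\inv]\to\mod*A$, $F_w(K^m,\phi)=M(w,m,\phi)$, for asymmetric $w$, and its analogue for symmetric bands), which is full onto the corresponding tube and hence transports its Auslander--Reiten sequences, to produce exactly the sequences in (b) and (c); where $U$ alone fails to be injective on isomorphism classes --- the odd-parity case --- the functors $F_w$ together with Lemmas~\ref{lem:UEssInj} and~\ref{lem:SymInj} pin down the end terms.

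It remains, in every case, to promote the resulting non-split short exact sequence $0\to X\xrightarrow{f} Y\xrightarrow{g} Z\to 0$, whose end terms are indecomposable with $X\cong\tau_A Z$, to an almost split sequence by showing $g$ is right almost split. Given a non-retraction $h\colon V\to Z$, one uses that $U$ reflects retractions onto summands to see that $U(h)$ is not a retraction onto any indecomposable summand of $U(Z)$; hence $U(h)$ factors through $U(g)$, which by the previous paragraphs is a direct sum of right almost split maps over $\unfA$; the connecting obstruction $\delta(h)\in\Ext^1_A(V,\Ker g)$ then dies under the map to $\Ext^1_{\unfA}(U(V),U(\Ker g))$ induced by $U$, which is injective because $U$ reflects split sequences, so $\delta(h)=0$ and $h$ factors through $g$. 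I expect this descent step --- and, at the level of bookkeeping rather than ideas, the parameter combinatorics for odd-parity and especially symmetric bands, together with the need to match $U$ of the proposed connecting maps with the Butler--Ringel maps on the nose --- to be the main obstacles; part (a) and the even-parity case of (b) should be essentially mechanical once the compatibility of $\theta$ with hooks and cohooks is in place. As in~\cite{ButlerRingel}, the projective --- equivalently projective-injective --- string modules $M(w)$ are excluded from (a), there being no almost split sequence ending in a projective.
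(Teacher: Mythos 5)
Your overall strategy coincides with the paper's: unfold via $U$, match the $U$-image of the proposed short exact sequence with the Butler--Ringel almost split sequences over $\unfA$, and descend. Your preparatory observations are also correct and match the paper's: $\theta$ intertwines adding/deleting hooks and cohooks and preserves "starts/ends in a deep", and the parametrising functors $F_w$ from $\fin K[x,x\inv]$ organise the band tubes. The divergence, and the gap, is in the final descent step.

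You state as a "consequence of Propositions~\ref{prop:UStrings}, \ref{prop:UEvenBands}, \ref{prop:UOddBands} and \ref{prop:USymBand}" that $U$ reflects direct-sum decompositions, retractions onto summands, and split short exact sequences, and the whole argument that $g\colon Y\to Z$ is right almost split is built on this. That claim is false, and Proposition~\ref{prop:UStrings} itself is a counterexample: $U(M(w))\cong M(\theta(w))\oplus M(g\theta(w))$ is a non-trivial decomposition over $\unfA$ whose preimage $M(w)$ is indecomposable over $A$. This is precisely the phenomenon recorded in Remark~\ref{rem:UImage}(a): $\im U$ is not a full subcategory of $\mod*\unfA$, so decomposable objects of $\mod*\unfA$ can have local endomorphism ring in $\im U$, and sections, retractions and splittings available in $\mod*\unfA$ need not lift along $U$. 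Concretely, the indecomposable summands of $U(Z)$, such as $M(\theta(w))$, need not lie in $\im U$, so a splitting of $p_1 U(h)\colon U(V)\to M(\theta(w))$ neither comes from nor contradicts anything about $h$ over $A$; and the asserted injectivity of $\Ext^1_A(V,\Ker g)\to\Ext^1_{\unfA}(U(V),U(\Ker g))$ rests on the same unsupported reflection principle. The paper instead takes the shorter route you already had available (you cite Lemma~\ref{lem:UIrred} but do not use it for this purpose): it exhibits the proposed sequences directly via standard-basis inclusions and surjections, observes that the $U$-image is a direct sum of Butler--Ringel almost split sequences, deduces from Lemma~\ref{lem:UIrred} that the two connecting morphisms over $A$ are irreducible, and then applies the classical characterisation that a non-split short exact sequence with indecomposable end terms and irreducible connecting morphisms is almost split. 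This avoids any appeal to reflection of retractions or of split sequences, and avoids having to verify the right almost split condition morphism-by-morphism.
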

	
	\begin{proof}
		(a) It is not difficult to verify that the above exact sequences exist via the obvious inclusions/surjections of standard bases. Thus, it remains to show that these are indeed Auslander-Reiten sequences. By Proposition~\ref{prop:UStrings}, $U(M(w)) \cong M(\theta(w)) \oplus M(g\theta(w))$. Now note that it follows from the (un)folding procedure that $w$ starts (resp. ends) in a deep if and only if $\theta(w)$ starts (resp. ends) in a deep if and only if $g\theta(w)$ starts (resp. ends) in a deep. Now since $\wh A$ is a gentle algebra, it is also a string algebra. By the results of \cite[p. 172, Proposition]{ButlerRingel} and \cite[Sec. 5.4]{GeissClanMaps}, this implies that the Auslander-Reiten sequence ending in $\theta(w)$ is given by adding cohooks/deleting hooks in precisely the same way as given above. This is true for $g\theta(w)$ also. In particular, the image under $U$ of the appropriate choice of exact sequence above is a direct sum of Auslander-Reiten sequences (one ending in $\theta(w)$, the other ending in $g \theta(w)$). Consequently, the inclusion/surjection morphisms of the exact sequence in the image of $U$ are both irreducible. By Lemma~\ref{lem:UIrred}, this implies that both the inclusion/surjection morphisms of the choice of exact sequence above are irreducible. Since the starting and ending terms are string modules, they are indecomposable. Thus, the sequence is an Auslander-Reiten sequence, as required.
		
		(b) For any quasiband module $\bigoplus_{i=1}^r M(\wh w, m_{n_i}, \phi_{q_i^{n_i}}) \in \mod*\wh A$, it is known (see for example \cite[p. 165]{ButlerRingel} and \cite[Sec. 5.4]{GeissClanMaps}) that, for each $i$, we have Auslander-Reiten sequences
		\begin{equation*} \tag{$\ast$} \label{eq:whABandAR}
			0 \rightarrow M(\wh w, m_{n_i}, \phi_{q_i^{n_i}})
			\rightarrow M(\wh w, m_{n_i-1}, \phi_{q_i^{n_i-1}}) \oplus M(\wh w, m_{n_i+1}, \phi_{q_i^{n_i+1}})
			\rightarrow M(\wh w, m_{n_i}, \phi_{q_i^{n_i}})
			\rightarrow 0
		\end{equation*}
		in $\mod*\wh A$. Now for any $M(w, m_n, \phi_{q^n}) \in \mathcal{M}_2$, we clearly have an exact sequence
		\begin{equation*} \tag{$\ast\ast$} \label{eq:ABandAR}
			0 \rightarrow M(w, m_n, \phi_{q^n}) \rightarrow M(w, m_{n-1}, \phi_{q^{n-1}}) \oplus M(w, m_{n+1}, \phi_{q^{n+1}}) \rightarrow M(w, m_{n}, \phi_{q^n}) \rightarrow 0
		\end{equation*}
		(to see this, one can consider the $K$-linear, exact, faithful functor $F_w$ from the category $\fin K[x,x\inv]$ used in the proof of Theorem~\ref{thm:IndClassification}). Moreover, Propositions~\ref{prop:UEvenBands} and \ref{prop:UOddBands} imply that we have
		\begin{equation*}
			U(M(w, m_n, \phi_{q^n})) \cong \bigoplus_{i=1}^r M(\theta(w), m_{n_i}, \phi_{q_i^{n_i}})
		\end{equation*}	
		for some $r$ and some monic and irreducible polynomials $q_i$ if $U(M(w, m_n, \phi_{q^n}))$ is indecomposable, and
		\begin{equation*}
			U(M(w, m_n, \phi_{q^n})) \cong \bigoplus_{i=1}^r (M(\theta(w), m_{n_i}, \phi_{q_i^{n_i}}) \oplus M(g\theta(w), m_{n_i}, \phi_{p_i^{n_i}}))
		\end{equation*}	
		for some $r$ and some monic and irreducible polynomials $p_i$ and $q_i$ if $U(M(w, m_n, \phi_{q^n}))$ is not indecomposable. Consequently, the sequence (\ref{eq:ABandAR}) maps to the direct sum of the Auslander-Reiten sequences (\ref{eq:whABandAR}) under $U$. Lemma~\ref{lem:UIrred} then implies that (\ref{eq:ABandAR}) is an Auslander-Reiten sequence, as required.
		
		(c) The proof is similar to (b).
	\end{proof}
	
	\begin{rem} \label{rem:SymMiddleTerm}
		Note that if the string $w$ is symmetric, then it starts in a deep if and only if it ends in a deep. Furthermore, it is easy to see that, if they exist, $_{-h}w_{-h}$ and $_c w_c$ are also symmetric. In addition, $_{-h}w \approx w_{-h}$ and $_{c}w \approx w_{c}$. Thus, the Auslander-Reiten sequence starting/ending in symmetric string can always be viewed as a \emph{non-trivially valued Auslander-Reiten sequence}
		\begin{equation*}
			\xymatrix@1{
				0 \ar[r] &
				\tau M(w) \ar[r]^-{(1,2)} &
				E \ar[r]^-{(2,1)} &
				M(w) \ar[r] &
				0,
			}
		\end{equation*}
		where $E$ is indecomposable. See \cite[Sec. VII.1]{AuslanderReitenSmalo} for an account on valued Auslander-Reiten sequences/quivers.
	\end{rem}
	
	The Auslander-Reiten quiver of a folded gentle algebra can often be obtained by folding the Auslander-Reiten quiver of its corresponding unfolded gentle algebra (with respect to the folding action), as the next example shows.
	
	\begin{exam}
		Consider the folded gentle algebra $A=K Q / Z$ and corresponding unfolded gentle algebra $\wh A = K \wh Q / \wh Z$ given by the following quivers
		\begin{equation*}
			\begin{tikzpicture}
				\draw (-4.5,0) node {$Q\colon$};
				\draw (-4,0) node {$1$};
				\draw (-2.6,0) node {$2$};
				\draw (-1.2,0) node {$3$};
				\draw [->](-3.8,0) -- (-2.8,0);
				\draw [->](-2.4,0) -- (-1.4,0);
				\draw [->](-2.7,0.2) .. controls (-2.8,0.4) and (-2.7,0.5) .. (-2.6,0.5) .. controls (-2.5,0.5) and (-2.4,0.4) .. (-2.5,0.2);
				\draw (-3.3,0.2) node {\footnotesize$\alpha$};
				\draw (-2.6,0.7) node {\footnotesize$\eta_2$};
				\draw (-1.9,0.2) node {\footnotesize$\beta$};
				\draw [dashed](-3.0875,-0.1113) arc (-167.1394:-12.8571:0.5);
				
				\draw (0.2,0) node {$\widehat{Q}\colon$};
				\draw (0.7,0.7) node {$\widehat 1$};
				\draw (0.7,-0.7) node {$\widehat 1'$};
				\draw (1.9,0) node {$\widehat 2$};
				\draw (3.1,0.7) node {$\widehat 3$};
				\draw (3.1,-0.7) node {$\widehat 3'$};
				\draw [->](0.9,0.5) -- (1.7,0.1);
				\draw [->](0.9,-0.5) -- (1.7,-0.1);
				\draw [->](2.1,0.1) -- (2.9,0.5);
				\draw [->](2.1,-0.1) -- (2.9,-0.5);
				\draw (1.3,0.6) node {\footnotesize$\widehat\alpha$};
				\draw (1.3,-0.6) node {\footnotesize$\widehat\alpha'$};
				\draw (2.5,0.6) node {\footnotesize$\widehat\beta$};
				\draw (2.5,-0.6) node {\footnotesize$\widehat\beta'$};
				\draw [dashed](1.5091,0.3117) arc (141.4315:38.5714:0.5);
				\draw [dashed](1.5091,-0.3117) arc (-141.4315:-38.5714:0.5);
			\end{tikzpicture}
		\end{equation*}
		where dashed lines indicate ordinary relations. The Auslander-Reiten quiver of $\mod*A$ is as follows.
		\begin{equation*}
			\def\ashort{2ex}
			\begin{tikzpicture}
				\coordinate (hdist) at (2.5,0);
				\coordinate (vdist) at (0,2.5);
				\coordinate (P1) at ($(vdist)$);
				\coordinate (P3) at ($0.5*(vdist)-1.5*(hdist)$);
				\coordinate (P2) at ($-1*(hdist)$);
				\foreach \i in {0,1,2,3} {
					\coordinate (TrD\i_P3) at ($(P3) + \i*(hdist)$);
				}
				\foreach \i in {0,1,2} {
					\coordinate (TrD\i_P2) at ($(P2) + \i*(hdist)$);
				}
				
				\draw (P1) node {\footnotesize$M(\alpha\eta_2\beta)$};
				
				\draw (P3) node {\footnotesize$M(\varepsilon_3)$};
				\draw (TrD1_P3) node {\footnotesize$M(\eta_2\beta)$};
				\draw (TrD2_P3) node {\footnotesize$M(\alpha\eta_2)$};
				\draw (TrD3_P3) node {\footnotesize$M(\varepsilon_{1})$};
				
				\draw (P2) node {\footnotesize$M(\beta^{-1}\eta_2\beta)$};
				\draw (TrD1_P2) node {\footnotesize$M(\eta_2)$};
				\draw (TrD2_P2) node {\footnotesize$M(\alpha\eta_2\alpha^{-1})$};
				
				\draw[->, shorten <= \ashort, shorten >= \ashort] (TrD1_P3) -- (P1);
				\draw[->, shorten <= \ashort, shorten >= \ashort] (P1) -- (TrD2_P3);
				\foreach \i in {0,1,2} {
					\draw[->, shorten <= \ashort, shorten >= \ashort] (TrD\i_P3) -- (TrD\i_P2);
					\draw ($0.5*(TrD\i_P3)+0.5*(TrD\i_P2)+(0.1,0.1)$) node[rotate=-45] {\tiny$(2,1)$};
				}
				\foreach \i [evaluate=\i as \j using int(\i-1)] in {1,2,3} {
					\draw[->, shorten <= \ashort, shorten >= \ashort] (TrD\j_P2) -- (TrD\i_P3);
					\draw ($0.5*(TrD\j_P2)+0.5*(TrD\i_P3)+(-0.15,0.05)$) node[rotate=45] {\tiny$(1,2)$};
				}
			\end{tikzpicture}
		\end{equation*}
		On the other hand, the Auslander-Reiten quiver of $\mod*\wh A$ is as follows.
		\begin{equation*}
			\def\ashort{2ex}
			\begin{tikzpicture}
				\coordinate (hdist) at (2,0);
				\coordinate (vdist) at (0,2);
				\coordinate (P1) at ($(vdist)$);
				\coordinate (P3) at ($0.5*(vdist)-1.5*(hdist)$);
				\coordinate (P2) at ($-1*(hdist)$);
				\coordinate (P3p) at ($-0.5*(vdist)-1.5*(hdist)$);
				\coordinate (P1p) at ($-1*(vdist)$);
				\foreach \i in {0,1,2,3} {
					\coordinate (TrD\i_P3) at ($(P3) + \i*(hdist)$);
					\coordinate (TrD\i_P3p) at ($(P3p) + \i*(hdist)$);
				}
				\foreach \i in {0,1,2} {
					\coordinate (TrD\i_P2) at ($(P2) + \i*(hdist)$);
				}
				
				\draw[red, dashed] ($(P2)-(hdist)$) -- ($(TrD2_P2)+(hdist)$);
				\draw[white,fill=white] ($(P2)-(0.75,0.5)$) rectangle ($(P2)+(0.75,0.5)$);
				\draw[white,fill=white] ($(TrD1_P2)-(0.5,0.5)$) rectangle ($(TrD1_P2)+(0.5,0.5)$);
				\draw[white,fill=white] ($(TrD2_P2)-(0.85,0.5)$) rectangle ($(TrD2_P2)+(0.85,0.5)$);
				
				\draw (P1) node {\footnotesize$M(\widehat\alpha\widehat\beta')$};
				
				\draw (P3) node {\footnotesize$M(\varepsilon_{\widehat 3})$};
				\draw (TrD1_P3) node {\footnotesize$M(\widehat\beta')$};
				\draw (TrD2_P3) node {\footnotesize$M(\widehat\alpha)$};
				\draw (TrD3_P3) node {\footnotesize$M(\varepsilon_{\widehat 1'})$};
				
				\draw (P2) node {\footnotesize$M(\widehat\beta^{-1}\widehat\beta')$};
				\draw (TrD1_P2) node {\footnotesize$M(\varepsilon_{\widehat 2})$};
				\draw (TrD2_P2) node {\footnotesize$M(\widehat\alpha(\widehat\alpha')^{-1})$};
				
				\draw (P3p) node {\footnotesize$M(\varepsilon_{\widehat 3'})$};
				\draw (TrD1_P3p) node {\footnotesize$M(\widehat\beta^{-1})$};
				\draw (TrD2_P3p) node {\footnotesize$M((\widehat\alpha')^{-1})$};
				\draw (TrD3_P3p) node {\footnotesize$M(\varepsilon_{\widehat1})$};
				
				\draw (P1p) node {\footnotesize$M(\widehat\beta^{-1}(\widehat\alpha')^{-1})$};
				
				\draw[->, shorten <= \ashort, shorten >= \ashort] (TrD1_P3) -- (P1);
				\draw[->, shorten <= \ashort, shorten >= \ashort] (P1) -- (TrD2_P3);
				\foreach \i in {0,1,2} {
					\draw[->, shorten <= \ashort, shorten >= \ashort] (TrD\i_P3) -- (TrD\i_P2);
					\draw[->, shorten <= \ashort, shorten >= \ashort] (TrD\i_P3p) -- (TrD\i_P2);
				}
				\foreach \i [evaluate=\i as \j using int(\i-1)] in {1,2,3} {
					\draw[->, shorten <= \ashort, shorten >= \ashort] (TrD\j_P2) -- (TrD\i_P3);
					\draw[->, shorten <= \ashort, shorten >= \ashort] (TrD\j_P2) -- (TrD\i_P3p);
				}
				\draw[->, shorten <= \ashort, shorten >= \ashort] (TrD1_P3p) -- (P1p);
				\draw[->, shorten <= \ashort, shorten >= \ashort] (P1p) -- (TrD2_P3p);
			\end{tikzpicture}
		\end{equation*}
		Here, the dashed red line indicates the $\integer_2$-symmetry of the Auslander-Reiten quiver. In particular, the folding action maps vertices and arrows in the Auslander-Reiten quiver to their reflected counterparts. Moreover, in this example, the Auslander-Reiten quiver of $\mod*A$ is given by taking the quotient of the Auslander-Reiten quiver of $\mod*\wh A$ with respect to the folding action. This $\integer_2$-symmetry is explored in more detail in the following Corollary.
	\end{exam}
	
	\begin{cor} \label{cor:ARZ2}
		Let $A$ be a folded gentle algebra and $\wh A$ be the unfolded gentle algebra of $A$ obtained via the unfolding procedure. Let $\Gamma_A$ be the valued Auslander-Reiten quiver of $A$, let $\Gamma'_A$ be the underlying unvalued translation quiver of $\Gamma_A$, and let $\Gamma_{\wh A}$ be the Auslander-Reiten quiver of $\wh A$. 
		\begin{enumerate}[label = (\alph*)]
			\item There exists a group action of $\integer_2$ on $\Gamma_{\wh A}$ such that $\Gamma'_{A} \cong \Gamma_{\wh A} / \integer_2$.
			\item Let $\wh M$ be a vertex of $\Gamma_{\wh A}$ that is fixed under the action of $\integer_2$ and which corresponds to a string module. Then:
			\begin{enumerate}[label=(\roman*)]
				\item $[\wh M] \in \Gamma_A$ corresponds to the string module of a symmetric string;
				\item the arrow of source $[\wh M] \in \Gamma_A$ has a valuation of $(1,2)$; and
				\item the arrow of target $[\wh M] \in \Gamma_A$ has a valuation of $(2,1)$;
			\end{enumerate}
		\end{enumerate}
	\end{cor}
	\begin{proof}
		The action of $\integer_2$ on $\Gamma_{\wh A}$ used in this corollary is instead related to a twist of the folding action. For the purposes of clarity, we will thus label the non-identity element of $\integer_2$ as $g'$ to distinguish it from the folding action on $\wh Q$, $\wh A$, and other related sets. Suppose an indecomposable object $M \in \mod*A$ corresponds to a string module, an asymmetric band module of even parity, or a symmetric band module. If $U(M) \cong \wh M \oplus \wh M'$, then we define $g'\wh M \cong \wh M'$. On the other hand, if $U(M) \cong \wh M$, then we define $g' \wh M \cong \wh M$. The action of $g'$ on bands modules $\wh M$ of odd parity is simply given by $g' \wh M \cong \wh M$. A consequence of this construction and Propositions~\ref{prop:UStrings}, \ref{prop:UEvenBands}, \ref{prop:UOddBands} and \ref{prop:USymBand} is that the action of $\integer_2$ is closed on the following subclasses of vertices of $\Gamma_{\wh A}$:
		\begin{enumerate}[label = (\arabic*)]
			\item those corresponding to strings;
			\item those corresponding to bands that are not $\integer_2$-variant;
			\item those corresponding to bands that are of odd parity (and hence $\integer_2$-variant); and
			\item those corresponding to bands that are of even parity and $\integer_2$-variant.
		\end{enumerate}
		Moreover, this list of subclasses is mutually exclusive and collectively exhaustive.
		
		It remains to define the group action on irreducible morphisms in $\Gamma_{\wh A}$. We will begin by considering the irreducible morphisms between band modules. Since every band module resides in a homogeneous tube of the Auslander-Reiten quiver, there is an obvious action of $\integer_2$ on the irreducible morphisms between band modules $f\colon \wh M_1 \rightarrow \wh M_2$ given by $g'f\colon g'\wh M_1 \rightarrow g'\wh M_2$. A consequence of this is that $\integer_2$ acts on the collection of homogeneous tubes of $\Gamma_{\wh A}$ (containing band modules).
		
		From this, we will show that the quotient by $\integer_2$ on the collections of homogeneous tubes in $\Gamma_{\wh A}$ containing each class of band modules bijectively corresponds to the matching subclass of homogeneous tubes in $\Gamma_A$. Since $\bnd_A \cong \bnd_{\wh A} / \integer_2$ by Lemma~\ref{lem:FoldedBands}, it follows that the quotient by $\integer_2$ of the collection of homogeneous tubes in $\Gamma_{\wh A}$ associated to the vertices in (2) bijectively corresponds to the collection of homogeneous tubes in $\Gamma_A$ corresponding to asymmetric bands of even parity (c.f. Lemmata~\ref{lem:OddParity} and \ref{lem:FoldedSymmetric}(b)). Since the collection of tubes in $\Gamma_{\wh A}$ associated to subclass (3) and the collection of tubes in $\Gamma_A$ associated to bands of odd parity are both indexed by the irreducible polynomials of the set $\Pi$, and since every such vertex associated to (3) is fixed under $g'$, there is an obvious bijection between the collections in this case too. Finally, we note by Lemma~\ref{lem:SymInj} that the quotient by $\integer_2$ of the collection of tubes in $\Gamma_{\wh A}$ associated to subclass (4) bijectively corresponds to the class of homogeneous tubes of $\Gamma_A$ corresponding to symmetric band modules.
		
		Next, consider the irreducible morphisms between string modules $M(\wh w_1)$ and $M(\wh w_2)$. To this aim, consider the various cases outlined in Theorem~\ref{thm:ARSeq}(a) along with the folding action on strings: If $\wh w_2$ ends in a deep if and only if $g\wh w_2$ ends in a deep, and $\wh w_2$ starts in a deep if and only if $g\wh w_2$ starts in a deep. Consequently, for any irreducible morphism $f\colon M(\wh w_1) \rightarrow M(\wh w_2)$, there exists a corresponding irreducible morphism $g' f \colon  g'M(\wh w_1) \rightarrow g'M(\wh w_2)$, since $g'M(\wh w) \cong M(g\wh w)$ by Proposition~\ref{prop:UStrings}. Since $\rho(\wh w) \approx \rho(g\wh w)$ and hence $\str_A \cong \str_{\wh A} / \integer_2$ by Lemma~\ref{lem:FoldedStrings}, it follows that the subquiver of $\Gamma'_A$ that contains string modules is isomorphic to the quotient by $\integer_2$ of the subquiver of $\Gamma_{\wh A}$ that contains string modules.
		
		Consequently, we have shown that $\Gamma'_A \cong \Gamma_{\wh A} / \integer_2$, which completes the proof for (a). To show (b), we simply make use of existing results in this paper. Statement (b)(i) is a consequence of Lemma~\ref{lem:FoldedSymmetric}(a). Statement (b)(ii) and (iii) result from Theorem~\ref{thm:ARSeq} and Remark~\ref{rem:SymMiddleTerm}. This completes the proof.
	\end{proof}
	
	\section{Closure Under Derived Equivalence} \label{sec:DerEquiv}
	The aim of this section is to show that folded gentle algebras are closed under derived equivalence. That is, we want to show that if $A$ is a finite-dimensional folded gentle algebra and $A'$ is a finite-dimensional algebra that is derived equivalent to $A$, then $A'$ is a folded gentle algebra. The proof of this result is essentially an application of folding on the same result given for gentle algebras in \cite{SchroerZim}. We adopt the same notation throughout this section as used in the previous section, along with the following additional notation. In any given triangulated category, we will denote the $n$th shift of an object $X$ by $X[n]$. Throughout, let $D = \Hom_K({-},K)$ be the standard $K$-duality. In this section, we freely use the fact that $DA$ has a natural $A$-$A$-bimodule structure. Specifically, for any $f \in DA$ and $a,b \in A$, we have $(af)(b) = f(ba)$ and $(fa)(b) = f(ab)$.
	
	\subsection{(Un)folding repetitive algebras}
	Given a finite-dimensional algebra $A$, one can construct its \emph{repetitive algebra}, which has important applications in the theory of derived equivalences. The following definition is due to \cite{HW}, though we mainly follow the exposition of \cite{Schroer}.
	
	\begin{defn}
		Let $A$ be a finite-dimensional $K$-algebra and let $D = \Hom_K({-},K)$ be the standard $K$-duality. Consider the $K$-vector space
		\begin{equation*}
			A_R=\left\{(a\ps{i},f\ps{i})_i \in \bigoplus_{i \in \integer} A \oplus \bigoplus_{i \in \integer} DA : \text{all but finitely many } a\ps{i} \text{ and } f\ps{i} \text{ are zero} \right\}.
		\end{equation*}
		The \emph{repetitive algebra} of $A$ is the vector space $A_R$ endowed with the ring multiplicative operation
		\begin{equation*}
			(a\ps{i},f\ps{i})_i (b\ps{i},h\ps{i})_i = (a\ps{i}b\ps{i},a\ps{i}h\ps{i}+f\ps{i}b\ps{i-1})_i,
		\end{equation*}
		where $a\ps{i}b\ps{i} = (ab)\ps{i}$, $a\ps{i}h\ps{i} = (ah)\ps{i}$ and $f\ps{i}b\ps{i-1} = (fb)\ps{i}$.
	\end{defn}
	
	Now suppose that $(K,Q,Z)$ is either a folded gentle triple or a gentle triple/pair and that $A$ is its associated algebra. Let $\mathcal{P}_{Q,Z}$ be the subset of $\mathcal{P}_Q$ (from Section\ref{sec:Preliminaries}) given by removing all paths in $\mathcal{P}_Q$ that contain (as a subpath) either an ordinary relation or the square of a crease loop. Then $A_R$ has a basis given by
	\begin{equation*}
		\{p\ps{i} : p \in \mathcal{P}_{Q,Z}, i \in \integer\} \cup \{f_p\ps{i} : \mathcal{P}_{Q,Z}, i \in \integer \},
	\end{equation*}
	where $f_p\ps{i}(q) = 1$ if $p=q \in \mathcal{P}_{Q,Z}$ and $f_p\ps{i}(q)=0$ otherwise. It then follows from the multiplicative structure of $A_R$ that
	\begin{align*}
		p\ps{i}q\ps{j} &= 
		\begin{cases}
			(pq)\ps{i}	& \text{if } i=j \text{ and } pq \in \mathcal{P}_{Q,Z}, \\
			0			& \text{otherwise,}
		\end{cases} \\
		p\ps{i}f_q\ps{j} &= 
		\begin{cases}
			f_r\ps{i}	& \text{if } i=j \text{ and there exists } r \in \mathcal{P}_{Q,Z} \text{ such that } q = rp, \\
			0			& \text{otherwise,}
		\end{cases} \\
		f_q\ps{j}p\ps{i} &= 
		\begin{cases}
			f_r\ps{i}	& \text{if } i=j-1 \text{ and there exists } r \in \mathcal{P}_{Q,Z} \text{ such that } q = pr, \\
			0			& \text{otherwise,}
		\end{cases} \\
		f_p\ps{i}f_q\ps{j} &= 0
	\end{align*}
	for all $i, j \in \integer$ and $p, q \in \mathcal{P}_{Q,Z}$. For any crease $\crs_v$ satisfying $\crs_v^2 - \lambda_1\crs_v - \lambda_2\stp_v$, we note in particular that this implies that
	\begin{align*}
		\crs_v\ps{i} f_{\crs_v}\ps{i} &= f_{\crs_v}\ps{i} \crs_v\ps{i-1} = f_{\stp_v}\ps{i} \\
		\crs_v\ps{i} f_{\stp_v}\ps{i} &= f_{\stp_v}\ps{i} \crs_v\ps{i-1} = f_{\crs_v\inv}\ps{i} = \frac{1}{\lambda_2}f_{\crs_v}\ps{i} + \frac{\lambda_1}{\lambda_2}f_{\stp_v}\ps{i}.
	\end{align*}
	
	\subsubsection{The quiver and relations of a repetitive algebra}
	Suppose that $A = KQ / \langle Z \rangle$, where $(K,Q,Z)$ satisfies \ref{en:FG1}--\ref{en:FG5} (so it is either a gentle algebra or folded gentle algebra, depending on whether there are any crease loops or not). From $(K,Q,Z)$, we obtain a new triple $(K, Q_R, Z_R)$ in the following way.
	
	Firstly, $Q_R = (Q_{R,0}, Q_{R,1})$ with
	\begin{align*}
		Q_{R,0} &= \bigsqcup_{i \in \integer}  Q_0, \\
		Q_{R,1} &= \bigsqcup_{i \in \integer} ( Q_1 \cup  X),
	\end{align*}
	where $X$ is a set of additional arrows called \emph{connecting arrows}. In particular, for each vertex $ v \in  Q_0$ and arrow $ \alpha \in  Q_0$, we denote the $i$-th copies in $ Q_{R}$ by $ v\ps{i}$ and $ \alpha\ps{i}$ respectively. The elements of the set $X$ correspond to the maximal paths of $\mathcal{P}_{Q,Z}$ (that is, the paths of $\mathcal{P}_{Q,Z}$ that have no non-trivial left or right concatenation contained within $\mathcal{P}_{Q,Z}$). Specifically, for each maximal path $ p \in  \mathcal{P}_{Q,Z}$, there exists an arrow $ \chi_p\ps{i}\colon t( p \ps{i}) \rightarrow s( p \ps{i-1})$ in the $i$-th copy of $ X$.
	
	For any maximal path $ p =  p_1  p_2 \in \mathcal{P}_{Q,Z}$, we call each path $ p_2\ps{i} \chi\ps{i}_{ p}  p_1\ps{i-1}$ a \emph{full path}. The relations of $ Z_R$ are then determined as follows. 
	\begin{enumerate}[label=(RZ.\roman*)]
		\item For each ordinary relation $\alpha\beta \in  Z$, we have corresponding ordinary relations $\alpha\ps{i}\beta\ps{i} \in  Z_R$.
		\item  For each crease relation $\crs_v^2-\lambda_1\crs_v-\lambda_2\stp_v \in Z$, we have corresponding crease relations $(\crs_v\ps{i})^2-\lambda_1\crs_v\ps{i}-\lambda_2\stp_v\ps{i} \in Z_R$.
		\item For any maximal paths $ p =  p_1  p_2  p_3$ and $ q =  q_1  q_2  q_3$ in $\mathcal{P}_{Q,Z}$ with $p_2 = q_2$, we have \emph{commutativity relations} $p_3\ps{i} \chi_p\ps{i}p_1\ps{i-1} - q_3\ps{i}\chi_q\ps{i}q_1\ps{i-1} \in Z_R$.
		\item For any other path $p \in \mathcal{P}_{Q_R}$ that contains a connecting arrow, we say $p \in  Z_R$ if, up to the aforementioned commutativity relations and the crease relations, it is not equivalent to a linear combination of subpaths of full paths. 
	\end{enumerate}
	
	If $A$ is a gentle algebra, then we know from \cite[Sec. 3, Theorem]{Schroer} that $A_R \cong KQ_R / \langle Z_R \rangle$. As it happens, we have the same result for folded gentle algebras.
	
	\begin{lem} \label{lem:RepAlgQuiver}
		Let $A = KQ / \langle Z\rangle$ be a folded gentle algebra. Then $A_R \cong KQ_R / \langle Z_R \rangle$
	\end{lem}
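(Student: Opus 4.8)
The plan is to follow the strategy of \cite[Sec.~3, Theorem]{Schroer} for gentle algebras: I would construct an explicit $K$-algebra homomorphism $\Phi\colon KQ_R \to A_R$ and show it descends to an isomorphism $KQ_R/\langle Z_R\rangle \xrightarrow{\ \sim\ } A_R$, carrying the crease loops along throughout. Define $\Phi$ on the generators of the path algebra by $\stp_{v\ps{i}}\mapsto \stp_v\ps{i}$, by $\alpha\ps{i}\mapsto \alpha\ps{i}$ for each $\alpha\in Q_1$ (so ordinary arrows \emph{and} crease loops are sent to their $i$-th copies $\alpha\ps{i}$ inside $A_R$), and by $\chi_p\ps{i}\mapsto f_p\ps{i}$ for each maximal path $p\in\mathcal{P}_{Q,Z}$. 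Since $KQ_R$ is free on its arrows this is well defined, and it is compatible with sources and targets because $f_p\ps{i}$ lies in $\stp_{t(p)\ps{i}}A_R\stp_{s(p)\ps{i-1}}$, which matches the prescribed endpoints of $\chi_p\ps{i}$.

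The next step is to check that $\Phi$ kills every relation of $Z_R$, so that it factors through $\bar\Phi\colon KQ_R/\langle Z_R\rangle\to A_R$. For a relation $\alpha\ps{i}\beta\ps{i}$ with $\alpha\beta\in Z$ — which covers both the ordinary relations of $\orel$ and the quadratic crease relations of $\crel$ — this is immediate, since $a\mapsto a\ps{i}$ is an algebra embedding. For a path of $Q_R$ of the form $a\ps{i}\chi_p\ps{i}b\ps{i-1}$ one computes directly with the multiplication table of $A_R$: it maps to $a\ps{i}f_p\ps{i}b\ps{i-1}$, and the rules $b\ps{k}f_q\ps{k}=f_r\ps{k}$ (for $q=rb$) and $f_q\ps{j}b\ps{j-1}=f_r\ps{j-1}$ (for $q=br$) show this equals $f_r\ps{i-1}$ precisely when $a$ is a suffix of $p$, $b$ a prefix of $p$, with $|a|+|b|\le|p|$ — i.e.\ exactly when the path is a subpath of a full path of $p$ — and is $0$ otherwise; a path through two connecting arrows maps to $0$ since any product of two basis elements of the $DA$-part vanishes. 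The commutativity relations $p_3\ps{i}\chi_p\ps{i}p_1\ps{i-1}-q_3\ps{i}\chi_q\ps{i}q_1\ps{i-1}$ are killed because both terms map to $f_{p_2}\ps{i-1}=f_{q_2}\ps{i-1}$. The only genuinely new point is that when the maximal path $p$ traverses a crease loop $\crs_v$, these manipulations must be combined with the identities $\crs_v\ps{i}f_{\crs_v}\ps{i}=f_{\stp_v}\ps{i}$ and $\crs_v\ps{i}f_{\stp_v}\ps{i}=\tfrac{1}{\lambda_2}f_{\crs_v}\ps{i}+\tfrac{\lambda_1}{\lambda_2}f_{\stp_v}\ps{i}$ recorded above (where $\crs_v^2-\lambda_1\crs_v-\lambda_2\stp_v\in\crel$), which express that $\crs_v$ acts \emph{invertibly}, rather than nilpotently, on the two-dimensional slice $\langle f_{\stp_v},f_{\crs_v}\rangle$ of $DA$; this is precisely where the quadratic, as opposed to zero, nature of the crease relation enters.

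Finally I would prove $\bar\Phi$ bijective. Surjectivity: the $\stp_v\ps{i}$ and $\alpha\ps{i}$ generate the diagonal copies of $A$, and every $f_q\ps{i}$ with $q\in\mathcal{P}_{Q,Z}$ is realised by taking a maximal path $p$ containing $q$ as a subpath, decomposing $p=p_1p_2$ at a vertex in the interior (or at an endpoint) of $q$, and applying $\Phi$ to the corresponding truncated full path — again using the crease identities when $\crs_v$ lies inside $q$. Injectivity: $A_R$ is locally finite-dimensional, with $\stp_{u\ps{i}}A_R\stp_{v\ps{j}}=0$ unless $i=j$ (basis $\{p\ps{i}:p\in\mathcal{P}_{Q,Z},\, s(p)=v,\,t(p)=u\}$) or $i=j-1$ (basis $\{f_p\ps{i}:p\in\mathcal{P}_{Q,Z},\, s(p)=v,\,t(p)=u\}$, as connecting arrows lower the copy index by one). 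On the other side $\stp_{u\ps{i}}(KQ_R/\langle Z_R\rangle)\stp_{v\ps{j}}$ is spanned by residue classes of paths $v\ps{j}\to u\ps{i}$ not in $Z_R$, and such a path uses no connecting arrow (forcing $i=j$ and making it a path of $Q$ avoiding $Z$, hence a copy of an element of $\mathcal{P}_{Q,Z}$) or exactly one connecting arrow (forcing $i=j-1$ and exhibiting it as a truncated full path), since a path through two connecting arrows lies in $Z_R$. Matching these spanning sets pair by pair, $\bar\Phi$ restricts to a bijection on each, and summing over all vertex pairs gives injectivity, hence the isomorphism.

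I expect the main obstacle to be the crease-vertex bookkeeping inside the second and third steps: one must confirm that the notions of maximal path, full path, and ``subpath of a full path'' defining $Q_R,Z_R$ — together with the quadratic crease relations in $Z_R$ — exactly reproduce the slice of $A_R$ attached to a crease vertex, given that $\crs_v$ acts invertibly on the relevant two-dimensional pieces of both $A$ and $DA$. Once the crease-loop identities above are in hand this tracks the gentle case of \cite{Schroer} closely, but it is the one place where the proof genuinely differs. (Alternatively one could attempt to deduce the statement from the gentle case by folding, via the $\integer_2$-action induced on $A_R$ by the unfolding procedure and the known presentation $\wh A_R\cong K\wh Q_R/\langle\wh Z_R\rangle$; but this would still require the same crease bookkeeping, so the direct computation looks cleaner.)
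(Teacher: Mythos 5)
Your proposal is essentially the same as the paper's proof: both use the assignment $\stp_{v\ps{i}}\mapsto\stp_v\ps{i}$, $\alpha\ps{i}\mapsto\alpha\ps{i}$, $\chi_p\ps{i}\mapsto f_p\ps{i}$ (the paper defines $\sigma$ directly on the quotient, you factor $\Phi$ through $\langle Z_R\rangle$), and the paper simply asserts ``it is easy to verify'' what you spell out, including the crease-loop identities. The only blemish is that your intermediate rule $f_q\ps{j}b\ps{j-1}=f_r\ps{j-1}$ inherits an index typo from the paper's multiplication table --- since $f\ps{i}b\ps{i-1}=(fb)\ps{i}$ the product lives at the upper index, so it should read $f_r\ps{j}$, which is what the paper's own final value $\sigma(q)=f_{p_2}\ps{i}$ implicitly uses.
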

	\begin{proof}
		The proof is identical to \cite[Sec. 3, Theorem]{Schroer}, and we use exactly the same isomorphism. Namely, define an map $\sigma\colon KQ_R / \langle Z_R \rangle  \rightarrow  A_R$ as follows. For each path $p\ps{i}$ in the $i$-th copy of the quiver $Q$ in $Q_R$, define $\sigma(p\ps{i})$ to be the corresponding path in the $i$-th copy of $A$ in $A_R$. For each maximal path $p \in \mathcal{P}_{Q,Z}$, define $\sigma(\chi_p\ps{i}) = f_p\ps{i}$ in the $i$-th copy of $DA$ in $A_R$. Thus, for any path $q \in KQ_R / \langle Z_R \rangle$ that contains a connecting arrow $\chi_p\ps{i}$, we know from the relations in $Z_R$ that $q = p_3\ps{i} \chi_p\ps{i} p_1\ps{i-1}$, where $p = p_1 p_2 p_3$. We consequently define $\sigma(q) = p_3\ps{i} f_{p}\ps{i} p_1\ps{i-1} = f_{p_2}\ps{i}$. It is easy to verify that this yields a bijective isomorphism of algebras, and so we are done.
	\end{proof}
	
	\subsubsection{The (un)folding procedure and morphisms}
	Now suppose that $\wh A = K\wh Q / \langle \wh Z \rangle$ is an unfolded gentle algebra corresponding to a folded gentle algebra $A = KQ / \langle Z \rangle$ (via the unfolding procedure). One can naturally extend the folding action of $g \in \integer_2$ on $\wh A$ to the repetitive algebra $\wh A_{R} \cong K\wh Q_R / \langle \wh Z_R \rangle$. Namely, we can define $g p\ps{i} = (gp)\ps{i}$ and $gf_{p}\ps{i} = f_{gp}\ps{i}$. Note that under the folding action, the only basis elements of $\wh A_{R}$ that are fixed under $g$ are the elements $\stp_{\wh v}\ps{i}$ and $f_{\stp_{\wh v}}\ps{i}$ such that $\pi(\wh v) \in \cv$. One can thus obtain the algebra $A_R = KQ_R / \langle Z_R \rangle$ using the following folding procedure for repetitive algebras of unfolded gentle algebras:
	\begin{enumerate}[label=(RF.\roman*)]
		\item The vertices of $Q_R$ correspond to the orbits of vertices of $\wh Q_R$ under $\integer_2$. In particular, we have $Q_{R,0} = \{ [\wh v\ps{i}] : \wh v\ps{i} \in \wh Q_{R,0}\}$.
		\item For each $[\wh v\ps{i}] \in Q_{R,0}$ such that $\wh v\ps{i} = g \wh v\ps{i}$, we have a crease loop $\crs_{\wh v}\ps{i} \in Q_{R,1}$.
		\item All other arrows of $Q_R$, which we again call \emph{ordinary arrows}, correspond to the orbits of arrows of $\wh Q_R$ under $\integer_2$. In particular, we have 
		\begin{equation*}
			Q_{R,1} = \{\crs_{[\wh v]}\ps{i} : g\wh v\ps{i} = \wh v\ps{i}\} \cup \{ [\wh \alpha\ps{i}] : \wh \alpha\ps{i} \in \wh Q_{R,1}\}.
		\end{equation*}
		\item For each crease $\crs_{[\wh v]}\ps{i} \in Q_{R,1}$, we have a \emph{crease relation} in $Z_R$ that matches the crease relation for $\crs_{[\wh v]} \in \crel$. In the alternative circumstance where we do not have a specific target folded gentle algebra in mind, we may choose these arbitrarily, provided that these still satisfy \ref{en:FG5} and $\crs_{[\wh v]}\ps{i}$ and $\crs_{[\wh v]}\ps{j}$ satisfy the same quadratic relations for each $i$ and $j$.
		\item All other relations of $Z_R$, which we will again call \emph{ordinary relations}, are given by the orbits of relations in $\wh Z_R$ under the action of $\integer_2$.
	\end{enumerate}
	
	Conversely, since $\wh A$ is obtained from $A$ via the unfolding procedure, it is not difficult to verify that $\wh A_R$ and $A_R$ are also related to each other via the following unfolding procedure:
	\begin{enumerate}[label=(RU.\roman*)]
		\item For each vertex $v\ps{i} \in Q_{R,0}$ that is not incident to a crease (which we again call an \emph{ordinary vertex}), there exist precisely two corresponding vertices $\wh v\ps{i},g\wh v\ps{i} \in \wh Q_{R,0}$.
		\item For each $v\ps{i} \in Q_{R,0}$ that is incident to a crease (which we again call a \emph{crease vertex}), there exists precisely one corresponding vertex $\wh v\ps{i} = g\wh v\ps{i} \in \wh Q_{R,0}$.
		\item For each ordinary arrow $\alpha\ps{i}\colon u\ps{i}\rightarrow v\ps{j}$, there exist precisely two corresponding arrows in $\wh Q_R$, namely $\wh \alpha\ps{i}\colon u\ps{i}\rightarrow v\ps{j}$ and $g\wh \alpha\ps{i}\colon gu\ps{i}\rightarrow gv\ps{j}$.
		\item For each ordinary zero relation $p \in Z_R$, we have precisely two corresponding relations $p,gp \in \wh Z_R$. Likewise, for each ordinary commutative relation $p - q \in Z_R$, we have precisely two corresponding relations $p - q,gp -gq \in \wh Z_R$.
	\end{enumerate}
	
	We can therefore naturally extend the maps $\pi$ and $\iota$ to $\wh A_R$ and $A_R$, though we need to amend/clarify \ref{en:QU2} slightly as follows:
	\begin{enumerate}[label=(QU2$^\ast$)]
		\item $\iota$ respects the relations of $Z_R$: if $\alpha\ps{i_1}_1\ldots\alpha\ps{i_n}_n \in Z_R$ then $\iota(\alpha\ps{i_1}_1)\ldots\iota(\alpha\ps{i_n}_n) \in \wh Z_R$, and if $\alpha\ps{i_1}_1\ldots\alpha\ps{i_n}_n - \beta\ps{j_1}_1\ldots\beta\ps{j_m}_m \in Z_R$ then $\iota(\alpha\ps{i_1}_1)\ldots\iota(\alpha\ps{i_n}_n) - \iota(\beta\ps{j_1}_1)\ldots\iota(\beta\ps{j_m}_m)\in \wh Z_R$.
	\end{enumerate}
	
	Here, caution is advised, as this can sometimes lead to an arrow $\wh\alpha \in \wh Q_1$ being such that $\wh \alpha\ps{i} \in \im \iota$ but $\wh \alpha\ps{i-1} \not\in \im \iota$, as the following example shows.
	
	\begin{exam}
		Consider the folded gentle algebra $A$ given in Example~\ref{ex:C3} and its corresponding unfolded gentle algebra $\wh A$ given in Example~\ref{ex:C3Unf}. In this example, there is precisely one maximal path $p=\crs_1\alpha\beta \in\mathcal{P}_{Q,Z}$, and precisely two maximal paths $\wh p, \wh p' \in \mathcal{P}_{\wh Q, \wh Z}$ given by $\wh p = \wh\alpha\wh\beta$ and $\wh p = \wh\alpha'\wh\beta'$. The repetitive algebra $A_R$ is then given by the following quiver with relations.
		\begin{center}
			\begin{tikzpicture}
				\draw [anchor = east] (-6,0.1) node {$Q_R\colon$};
				\draw (-5.7,0) node {$\ldots$};
				\draw (-5,0.3) node {\footnotesize$\chi^{(i+1)}_{p}$};
				\draw [->] (-5.4,0) -- (-4.6,0);
				\draw (-4.3,0) node {$1^{(i)}$};
				\draw[<-] (-4.2,0.3) .. controls (-4,0.6) and (-4.2,0.7) .. (-4.3,0.7) .. controls (-4.4,0.7) and (-4.6,0.6) .. (-4.4,0.3);
				\draw (-4.3,1) node {\footnotesize$\eta_1^{(i)}$};
				\draw [->] (-4,0) -- (-3.2,0);
				\draw (-3.6,0.2) node {\footnotesize$\alpha^{(i)}$};
				\draw (-2.9,0) node {$2^{(i)}$};
				\draw [->] (-2.6,0) -- (-1.8,0);
				\draw (-2.2,0.2) node {\footnotesize$\beta^{(i)}$};
				\draw (-1.5,0) node {$3^{(i)}$};
				\draw [->] (-1.2,0) -- (-0.4,0);
				\draw (-0.8,0.3) node {\footnotesize$\chi^{(i)}_{p}$};
				
				\draw (0.1,0) node {$1^{(i-1)}$};
				\draw[<-] (0.2,0.3) .. controls (0.4,0.6) and (0.2,0.7) .. (0.1,0.7) .. controls (0,0.7) and (-0.2,0.6) .. (0,0.3);
				\draw (0.1,1) node {\footnotesize$\eta_1^{(i-1)}$};
				\draw [->] (0.6,0) -- (1.4,0);
				\draw (1,0.2) node {\footnotesize$\alpha^{(i-1)}$};
				\draw (1.9,0) node {$2^{(i-1)}$};
				\draw [->] (2.4,0) -- (3.2,0);
				\draw (2.8,0.2) node {\footnotesize$\beta^{(i-1)}$};
				\draw (3.7,0) node {$3^{(i-1)}$};
				\draw [->] (4.2,0) -- (5,0);
				\draw (4.6,0.3) node {\footnotesize$\chi^{(i-1)}_{p}$};
				\draw (5.3,0) node {$\ldots$};
			\end{tikzpicture}
		\end{center}
		\begin{align*}
			Z_R = \{(\crs_1\ps{i})^2+\stp_1\ps{i},\chi_{p}\ps{i}\alpha\ps{i-1}, \alpha\ps{i}\beta\ps{i}\chi_{p}\ps{i}\crs_1\ps{i-1}\alpha\ps{i-1},\beta\ps{i}\chi_{p}\ps{i}\crs_1\ps{i-1}\alpha\ps{i-1}\beta\ps{i-1}, \\ \crs_1\ps{i}\alpha\ps{i}\beta\ps{i}\chi_{p}\ps{i} - \alpha\ps{i}\beta\ps{i}\chi_{p}\ps{i}\crs_1\ps{i-1} : i \in \integer\}.
		\end{align*}
		On the other hand, the repetitive algebra $\wh A_R$ is given by the following quiver with relations.
		\begin{center}
			\begin{tikzpicture}
				\draw (-6.8,0.5) node {$\ldots$};
				\draw [->] (-6.5,0.5) -- (-5.9,0.1);
				\draw (-6.8,-0.5) node {$\ldots$};
				\draw [->] (-6.5,-0.5) -- (-5.9,-0.1);
				\draw [anchor = east] (-7.1,0.1) node {$\wh Q_R\colon$};
				\draw (-5.5,0) node {$\widehat{1}^{(i)}$};
				\draw [->] (-5.2,0.1) -- (-4.6,0.5);
				\draw (-5.1,0.5) node {\footnotesize$\widehat{\alpha}^{(i)}$};
				\draw (-4.2,0.7) node {$\widehat{2}^{(i)}$};
				\draw [->] (-3.9,0.7) -- (-2.6,0.7);
				\draw (-3.2,0.9) node {\footnotesize$\widehat{\beta}^{(i)}$};
				\draw (-2.2,0.7) node {$\widehat{3}^{(i)}$};
				\draw [->] (-5.2,-0.1) -- (-4.6,-0.5);
				\draw (-5.3,-0.5) node {\footnotesize$(\widehat{\alpha}')^{(i)}$};
				\draw (-4.2,-0.7) node {$(\widehat{2}')^{(i)}$};
				\draw [->] (-3.7,-0.7) -- (-2.7,-0.7);
				\draw (-3.2,-0.5) node {\footnotesize$(\widehat{\beta}')^{(i)}$};
				\draw (-2.2,-0.7) node {$(\widehat{3}')^{(i)}$};
				\draw [->] (-1.9,0.5) -- (-1.2,0.2);
				\draw (-1.4,0.6) node {\footnotesize$\chi_{\widehat{p}}^{(i)}$};
				\draw [->] (-1.7,-0.5) -- (-1.2,-0.2);
				\draw (-1.2,-0.7) node {\footnotesize$\chi_{\widehat{p}'}^{(i)}$};
				
				\draw (-0.6,0) node {$\widehat{1}^{(i-1)}$};
				\draw [->] (0,0.2) -- (0.7,0.5);
				\draw (0,0.5) node {\footnotesize$\widehat{\alpha}^{(i-1)}$};
				\draw (1.3,0.7) node {$\widehat{2}^{(i-1)}$};
				\draw [->] (1.8,0.7) -- (3.1,0.7);
				\draw (2.5,0.9) node {\footnotesize$\widehat{\beta}^{(i-1)}$};
				\draw (3.7,0.7) node {$\widehat{3}^{(i-1)}$};
				\draw [->] (0,-0.2) -- (0.7,-0.5);
				\draw (0.9,-0.1) node {\footnotesize$(\widehat{\alpha}')^{(i-1)}$};
				\draw (1.3,-0.7) node {$(\widehat{2}')^{(i-1)}$};
				\draw [->] (2,-0.7) -- (3,-0.7);
				\draw (2.5,-0.5) node {\footnotesize$(\widehat{\beta}')^{(i-1)}$};
				\draw (3.7,-0.7) node {$(\widehat{3}')^{(i-1)}$};
				\draw [->] (4,0.5) -- (4.6,0.2);
				\draw (5,0) node {$\ldots$};
				\draw [->] (4.3,-0.4) -- (4.6,-0.2);
			\end{tikzpicture}
		\end{center}
		\begin{align*}
			\wh Z_R = \{&\chi_{\wh p}\ps{i}(\wh\alpha')\ps{i-1},
			\chi_{\wh p'}\ps{i}\wh\alpha\ps{i-1}, \\
			&\wh\alpha\ps{i}\wh\beta\ps{i}\chi_{\wh p}\ps{i}\wh\alpha\ps{i-1},
			(\wh\alpha')\ps{i}(\wh\beta')\ps{i}\chi_{\wh p'}\ps{i}(\wh\alpha')\ps{i-1}, \\
			&\wh\beta\ps{i}\chi_{\wh p}\ps{i}\wh\alpha\ps{i-1}\wh\beta\ps{i-1}, 
			(\wh\beta')\ps{i}\chi_{\wh p'}\ps{i}(\wh\alpha')\ps{i-1}(\wh\beta')\ps{i-1}, \\ 
			&\wh\alpha\ps{i}\wh\beta\ps{i}\chi_{\wh p}\ps{i} - (\wh\alpha')\ps{i}(\wh\beta')\ps{i}\chi_{\wh p'}\ps{i} : i \in \integer\}.
		\end{align*}
		Here, we have $\pi(\wh v\ps{i})=\pi((\wh v')\ps{i}) = v\ps{i}$ and $\pi(\wh\gamma\ps{i})=\pi((\wh\gamma')\ps{i}) = \gamma\ps{i}$ for each $v\ps{i} \in Q_{R,0}$ and each $\gamma\ps{i} \in Q_{R,1}$ (and each $i \in \integer$). We again have two unfolding morphisms $\iota$ and $g\iota$ that are right inverse to $\pi$. Note, however, that  the relations are such that we have $\wh\alpha\ps{i} \in \im \iota$ if and only if $(\wh\alpha')\ps{i-1} \in \im \iota$ if and only if $\wh\alpha\ps{i-1} \not\in \im \iota$.
	\end{exam}
	
	\subsection{Unfolding the module category of a repetitive algebra}
	The previous subsection motivates an extension of the unfolding functor $U$ to the module category of the repetitive algebra of a folded gentle algebra. Henceforth, we will suppress superscripts (unless necessary) for the purposes of readability. So define a functor
	\begin{equation*}
		U_R\colon \fin A_R \rightarrow \fin \wh A_R
	\end{equation*}
	as follows.
	
	Firstly, each finite-dimensional module of $M \in \fin A_R$ has an underlying $K$-vector space given by
	\begin{equation*}
		M = \bigoplus_{v \in V} M_v,
	\end{equation*}
	where $V \subset Q_{R,0}$ and $M_v = M\stp_{v}$. Define a set $\wh V = \{\wh v \in \wh Q_{R,0} : \pi(\wh v) \in V\}$. We then define the underlying vector space of $\wh M = U(M)$ as
	\begin{equation*}
		\wh M = \bigoplus_{\wh v \in \wh V} \wh M_{\wh v},
	\end{equation*}
	where each $\wh M_{\wh v}$ is a copy of $M_{\pi(\wh v)}$. For each $\wh m \in \wh M$ and $\wh v \in Q_{0,R}$, we then define
	\begin{equation*}
		\wh m \stp_{\wh v} = 
		\begin{cases}
			\wh m	& \text{if } \wh v \in \wh V \text{ and } \wh m \in \wh M_{\wh v}, \\
			0		& \text{otherwise.}
		\end{cases}
	\end{equation*}
	For the action of non-idempotent elements of $\wh A_R$ on $\wh M$, we need to extend the definition $\fw$ to the repetitive algebra. So let $\wh \sigma \in \wh Q_{R,1} \cup \wh Q_{R,1}\inv$ be any symbol of $\wh Q_R$. Then we define
	\begin{equation*}
		\fw(\wh \sigma) =
		\begin{cases}
			\crs_{u}\ps{i}\sigma
			& \text{if } \wh \sigma \not\in \im \iota, u\in \cv \text{ and } v \not\in\cv,\\
			\sigma\crs_{v}\ps{j}
			& \text{if } \wh \sigma \not\in \im \iota, u \not\in \cv \text{ and } v \in\cv,\\
			\crs_{u}\ps{i}\sigma(\crs_{v}\inv)\ps{j}
			& \text{if } \wh \sigma \not\in \im \iota \text{ and } u,v \in \cv,\\
			\sigma
			& \text{otherwise,}
		\end{cases}
	\end{equation*}
	where $u\ps{i} = \pi(s(\wh \sigma))$, $v\ps{j} = \pi(t(\wh \sigma))$ and $\sigma = \pi(\wh \sigma)$. For any non-simple $\wh w=\wh\sigma_1\ldots\wh\sigma_n \in \wrd_{\wh Q_R}$, we then define
	\begin{equation*}
		\fw(\wh w)=\fw(\wh\sigma_1)\ldots\fw(\wh\sigma_n).
	\end{equation*}
	Following the exposition of Section~\ref{sec:UnfFunctor}, for any $\wh m \in \wh M$ and any arrow $\wh \alpha \in \wh Q_{R,1}$, we then define $\wh m \wh\alpha = \wh m \fw(\wh \alpha)$ (where, as in Section~\ref{sec:UnfFunctor}, the source and target vector subspaces of $\wh M$ are given by $s(\wh\alpha)$ and $t(\wh\alpha)$ respectively).
	
	For any morphism $f \in \Hom_{A_R}(M,M')$, we again write $f = (f_v)_{v \in V}$ and $U( f) = \wh f = (\wh f_{\wh v})_{\wh v \in \wh V}$, where each $f_v \colon M\stp_v \rightarrow M'\stp_v$ and each $\wh f_{\wh v} \colon \wh M\stp_{\wh v} \rightarrow \wh M'\stp_{\wh v}$ is $K$-linear. We then define $\wh f$ by $\wh f_{\wh v} = f_{\pi(\wh v)}$.
	
	We will again call this functor the \emph{unfolding functor}. Comparing the definition of $U_R$ and $U$ (from Section~\ref{sec:UnfFunctor}), one can see that these definitions are more or less the same. It may perhaps not be too surprising to the reader then that these functors satisfy many of the same properties. For example, it is clear that $U_R$ is exact, $K$-linear and faithful. We have an analogue of Remark~\ref{rem:UImage}, where we replace $U$ with $U_R$, $A$ with $A_R$ and $\wh A$ with $\wh A_R$. We also have the following important lemma, which extends Corollary~\ref{cor:UProj} to the repetitive algebra setting.
	
	\begin{lem} \label{lem:URProj}
		Let $M \in \fin A_R$ and $\wh v \in \wh Q_{R,0}$. Then the following are equivalent.
		\begin{enumerate}[label = (\alph*)]
			\item $U_R(M)$ contains an indecomposable projective direct summand isomorphic to $P(\wh v)$.
			\item $U_R(M)$ contains a projective direct summand isomorphic to $P(\wh v) \oplus P(g\wh v)$.
			\item $M$ contains an indecomposable projective direct summand isomorphic to $P(\pi(\wh v))$.
		\end{enumerate}
		In particular, $U_R(P(v)) \cong P(\iota(v)) \oplus P(g\iota(v))$ for any $v \in Q_{R,0}$.
	\end{lem}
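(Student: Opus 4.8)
\emph{The plan} is to transplant the proofs of Corollary~\ref{cor:UProj} and of the relevant part of Proposition~\ref{prop:UStrings} to the repetitive setting. Two structural facts make this possible. First, $U_R$ is built from the extended folding map $\fw$ in exactly the same way that $U$ is built from $\fw$ (Definition~\ref{def:FoldedWords}); consequently the analogue of Remark~\ref{rem:UImage} holds, identifying $\fin A_R$ with the (non-full) subcategory $\im U_R \subseteq \fin\wh A_R$ and letting us regard every object of $\im U_R$ simultaneously as an $A_R$-module and as an $\wh A_R$-module, the $\wh A_R$-action being recovered $K$-linearly from the $A_R$-action through $\fw$. Second, by the presentations $A_R \cong KQ_R/\langle Z_R\rangle$ and $\wh A_R \cong K\wh Q_R/\langle \wh Z_R\rangle$, the indecomposable projective $P(v)=\stp_v A_R$ (resp.\ $P(\wh v)=\stp_{\wh v}\wh A_R$) has a $K$-basis given by the residues of the paths of $Q_R$ (resp.\ $\wh Q_R$) starting at $v$ (resp.\ $\wh v$), which one reads off from the description of $A_R$ via $\mathcal{P}_{Q_R,Z_R}$.

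First I would prove the last assertion $U_R(P(v)) \cong P(\iota(v)) \oplus P(g\iota(v))$ directly, by an explicit basis construction that is the verbatim analogue of the one in the proof of (a)$\Rightarrow$(c) of Proposition~\ref{prop:UStrings}. Regarding $U_R(P(v))$ as a right $A_R$-module via $\fw$, I attach to each path $\wh p$ of $\wh Q_R$ starting at $\iota(v)$ the element $\stp_v\,\fw(\wh p) \in P(v)$, placed in the copy of $P(v)\stp_{\pi(t(\wh p))}$ inside $U_R(P(v))$ indexed by $t(\wh p)$, together with its crease companion $\stp_v\,\fw(\wh p)\,\crs_{\pi(t(\wh p))}^{\pm 1}$ whenever $\pi(t(\wh p))$ is a crease vertex; similarly for paths starting at $g\iota(v)$, now using the copy indexed by $g\,t(\wh p)$. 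Using the combinatorics (RU.i)--(RU.iv) of the repetitive unfolding procedure together with the behaviour of $\fw$ at crease vertices, one checks that the $K$-spans of the two families are each $\wh A_R$-stable, meet trivially, and together span $U_R(P(v))$; this exhibits $U_R(P(v))$ as $P(\iota(v)) \oplus P(g\iota(v))$, consistently with the count
\begin{equation*}
\dim_K U_R(P(v)) = \sum_{\wh v' \in \wh Q_{R,0}} \dim_K P(v)\stp_{\pi(\wh v')} = \dim_K P(\iota(v)) + \dim_K P(g\iota(v)).
\end{equation*}

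The three equivalences then follow as in Proposition~\ref{prop:UStrings}. For (a)$\Rightarrow$(c): since $U_R$ is additive, an indecomposable projective summand $P(\wh v)$ of $U_R(M)$ is already a summand of $U_R(N)$ for some indecomposable summand $N \subseteq M$; repeating the argument of (a)$\Rightarrow$(c) of Proposition~\ref{prop:UStrings} verbatim (with $P(\wh v)$ in place of $M(\wh w)$ and $\pi(\wh v)$ in place of $\rho(\wh w)$) one builds a submodule of $N$ generated in degree $\pi(\wh v)$, shows it must be all of $N$ because $N$ is indecomposable and $P(\wh v)$ is a direct summand of $U_R(N)$, and identifies it with $P(\pi(\wh v))$, which is projective since $P(\wh v)$ is. For (c)$\Rightarrow$(b): applying the last assertion to the summand $P(\pi(\wh v)) \subseteq M$ and the additivity of $U_R$, and noting $\{\iota(\pi(\wh v)), g\iota(\pi(\wh v))\} = \{\wh v, g\wh v\}$, one gets that $U_R(M)$ has $P(\wh v) \oplus P(g\wh v)$ as a summand. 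And (b)$\Rightarrow$(a) is immediate.

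\emph{The main obstacle} is the $\wh A_R$-stability check in the first step. Unlike in Corollary~\ref{cor:UProj}, the algebras $A_R$ and $\wh A_R$ carry commutativity relations and are not string algebras, so $P(v)$ is not literally a string module and the string-algebra machinery is unavailable; moreover, as the example preceding the lemma shows, an arrow $\wh\alpha$ of $\wh Q$ may satisfy $\wh\alpha\ps{i} \in \im\iota$ while $\wh\alpha\ps{i-1} \notin \im\iota$, so the crease symbols inserted by $\fw$ interact nontrivially with the $\integer$-grading of the repetitive quiver. The delicate bookkeeping is to verify that, through these complications, the connecting arrows $\chi_p\ps{i}$ and the crease loops sitting at connecting-arrow vertices still act on $U_R(P(v))$ so as to keep the two candidate submodules separate and stable.
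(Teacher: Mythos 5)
Your overall architecture (prove $U_R(P(v)) \cong P(\iota(v)) \oplus P(g\iota(v))$, then cycle through (a)$\Rightarrow$(c)$\Rightarrow$(b)$\Rightarrow$(a)) matches the paper, and you have correctly located the real difficulty. But the plan as written leaves that difficulty unresolved, and the proposed resolution --- a ``verbatim analogue'' of the basis construction in Proposition~\ref{prop:UStrings}(a)$\Rightarrow$(c) --- does not actually go through. That construction is tailored to string modules: it builds one chain of subwords $b_0, b_1, \ldots, b_n$ and checks it is $A$-stable. Over $\wh A_R$ the indecomposable projective $P(\wh v)$ is not a string module; it is projective-injective, biserial, and its socle is identified via the commutativity relation $p_3\ps{i}\chi_p\ps{i}p_1\ps{i-1} - q_3\ps{i}\chi_q\ps{i}q_1\ps{i-1}$. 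There is no single string $\wh w$ with $P(\wh v) \cong M(\wh w)$, so there is nothing for the Proposition~\ref{prop:UStrings} machinery to latch onto, and ``one checks'' in your first step is precisely the gap.

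The paper avoids this by invoking what is actually known about $\wh A_R$: it is special biserial (cited from \cite{RingelGentle} and Schr\"oer), so $P(\wh v) = \langle \stp_{\wh v}\rangle + \wh N_{\wh p} + \wh N_{\wh p'}$, where $\wh N_{\wh p}$ and $\wh N_{\wh p'}$ are the two uniserial submodules generated by the (at most two) maximal paths $\wh p, \wh p'$ out of $\wh v$ in $\mathcal{P}_{\wh Q_R, \wh Z_R}$, the commutativity relation being absorbed into the fact that their socles coincide inside $P(\wh v)$. This three-term biserial decomposition, not a string presentation, is what the paper folds: each $\wh N_{\wh p}$ folds to the uniserial $N_{[\wh p]}$, and the top $\langle\stp_{\wh v}\rangle$ folds to $\langle\stp_{\pi(\wh v)}\rangle$ or $\langle\stp_{\pi(\wh v)}, \crs_{\pi(\wh v)}\rangle$ depending on whether $\pi(\wh v)$ is a crease vertex, giving $L \cong P(\pi(\wh v))$ directly. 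This is the missing ingredient in your write-up: replace the string-module reduction with the biserial decomposition of projectives, and the ``delicate bookkeeping'' you flag (commutativity relations, connecting arrows, the index shift causing $\wh\alpha\ps{i} \in \im\iota$ but $\wh\alpha\ps{i-1}\notin\im\iota$) is handled automatically because you are comparing two uniserial chains at a time rather than a single string. The (c)$\Rightarrow$(b) step and the dimension count you give are fine and agree with the paper.
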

	\begin{proof}
		(a) $\Rightarrow$ (c): Throughout, let $L \subseteq M$ be the indecomposable direct summand such that $P(\wh v) \subseteq U_R(L)$. It is known that the algebra $\wh A_R$ is special biserial (see, for example, \cite{RingelGentle} and \cite[Sec. 4, Proposition]{Schroer}). That is, $\wh A_R$ satisfies axioms \ref{en:FG1} and \ref{en:FG2}. Consequently,
		\begin{equation*}
			P(\wh v) = \stp_{\wh v} \wh A_R = \langle \stp_{\wh v} \rangle_K + \wh N_{\wh p} + \wh N_{\wh p'},
		\end{equation*}
		as a vector space, where $\wh N_{\wh p}$ and $\wh N_{\wh p'}$ are respectively the uniserial modules given by all non-stationary subpaths of distinct maximal paths $\wh p$ and $\wh p'$ of source $\wh v$ (of which there are at most 2). If there is only one such path then $P(\wh v)$ is uniserial and we take $\wh N_{\wh p'} = 0$. It is then clear from the definition of $U_R$ and the (un)folding procedure that we must have
		\begin{equation*}
		L \cong P(\pi(\wh v)) = \stp_{\pi(\wh v)} \wh A_R = S_{\pi(\wh v)} +  N_{[\wh p]} +  N_{[\wh p']},
		\end{equation*}
		where
		\begin{equation*}
			S_{\pi(\wh v)} =
			\begin{cases}
				\langle \stp_{\pi(\wh v)} \rangle	&	\text{if } g\wh v \neq \wh v, \\
				\langle \stp_{\pi(\wh v)}, \crs_{\pi(\wh v)} \rangle  & \text{if } g\wh v = \wh v
			\end{cases}
		\end{equation*}
		and $N_{[\wh p]}$ and $N_{[\wh p']}$ are again uniserial modules given by all non-stationary subpaths of the $\integer_2$-orbits of the respective paths $\wh p$ and $\wh p'$.
		
		(c) $\Rightarrow$ (b): By the unfolding procedure, for any path $p$ of source $v$, there are two distinct corresponding paths $\wh p$ and $g\wh p$ with respective sources $\wh v$ and $g \wh v$. Note that there are no fixed arrows of $\wh Q_R$ under the action of $\integer_2$. Moreover, if $v=\pi(\wh v)$ is ordinary then $\wh v$ and $g \wh v$ are distinct vertices. It is then clear from this and the definition of $U_R$ that we must have
		\begin{equation*}
			U_R(P(v)) = U(\stp_v A_R) \cong \stp_{\wh v} \wh A_R \oplus \stp_{g\wh v} \wh A_R = P(\iota(v)) \oplus P(g \iota(v)).
		\end{equation*}
		On the other hand, if $v=\pi(\wh v)$ is a crease vertex, then $\wh v = g \wh v$. However, dimension counting reveals that we still must have
		\begin{equation*}
			U_R(P(v)) = U(\stp_v A_R) \cong \stp_{\wh v} \wh A_R \oplus \stp_{g\wh v} \wh A_R = P(\iota(v)) \oplus P(g \iota(v)) =  P(\iota(v)) \oplus P(\iota(v)),
		\end{equation*}
		since $\crs_{v} \in \stp_v A_R$ and since there are two copies of the vector subspace $P(v) \stp_{u}$ in $U_R(P(v))$ for each ordinary vertex $u$.
		
		(b) $\Rightarrow$ (a): This is obvious from the statement itself.
	\end{proof}
	
	The functor $U_R$ trivially satisfies Corollary~\ref{cor:UProps}(a). We also have an equivalent of Proposition~\ref{prop:UStrings}, which we do not list here: The maps $\theta$ and $\rho$ can naturally be extended to the classes of strings over the algebras $A_R$ and $\wh A_R$, and the proof of Proposition~\ref{prop:UStrings} does not require the gentle axioms \ref{en:FG3} and \ref{en:FG4} (\ref{en:FG5} is used implicitly in the proof, but $A_R$ also satisfies \ref{en:FG5}). 
	
	\subsection{The proof of the closure under derived equivalence} \label{sec:DerEqProof}
	We now move towards the proof of the main theorem of this section --- that folded gentle algebras are closed under derived equivalence. We begin with some preliminary results that will help with the proof.
	
	\begin{lem}\label{lem:URExt}
		Suppose $M \in \fin A_R$ is indecomposable and such that $\Ext_{A_R}^1(M,M)=0$. Then $\Ext_{\wh A_R}^1(U_R(M),U_R(M))=0$.
	\end{lem}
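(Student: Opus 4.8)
The plan is to reduce the statement to the already-known gentle-algebra situation by computing $\Ext^1_{\wh A_R}(U_R(M),U_R(M))$ from a projective resolution pulled up along the unfolding functor, and then checking that the resulting cochain complex is nothing but two copies of the complex computing $\Ext^1_{A_R}(M,M)$. Concretely, first I would choose a projective resolution $\cdots\to P_1\xrightarrow{d_1}P_0\to M\to 0$ in $\fin A_R$. Since $U_R$ is exact and, by Lemma~\ref{lem:URProj} together with additivity, carries projective $A_R$-modules to projective $\wh A_R$-modules, the complex $\cdots\to U_R(P_1)\to U_R(P_0)\to U_R(M)\to 0$ is a projective resolution of $U_R(M)$ over $\wh A_R$ (this is the repetitive-algebra form of Corollary~\ref{cor:UProps}(a), which $U_R$ satisfies). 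Hence $\Ext^i_{\wh A_R}(U_R(M),U_R(M))=H^i\bigl(\Hom_{\wh A_R}(U_R(P_\bullet),U_R(M))\bigr)$ for all $i$.

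The heart of the argument is a natural isomorphism of cochain complexes $\Hom_{\wh A_R}(U_R(P_\bullet),U_R(M))\cong\Hom_{A_R}(P_\bullet,M)\oplus\Hom_{A_R}(P_\bullet,M)$. The clean way to produce this is to introduce a \emph{folding functor} $W_R\colon\fin\wh A_R\to\fin A_R$ which is right adjoint to $U_R$: on objects $W_R$ relabels the vector spaces along the quiver folding morphism $\pi$ and \emph{doubles} the space attached to each crease vertex $\wh v$, equipping it with the crease action given by the companion matrix of the quadratic relation of $\crs_{\pi(\wh v)}$, exactly as in the $\phi'$ of Remark~\ref{rem:UIndImage} (this is the module-level counterpart of the string-folding map $\rho$). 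One then checks $W_R\circ U_R\cong\Id\oplus\Id$. Granting this, the adjunction isomorphisms $\Hom_{\wh A_R}(U_R(P_i),U_R(M))\cong\Hom_{A_R}\bigl(P_i,W_RU_R(M)\bigr)\cong\Hom_{A_R}(P_i,M)^{\oplus2}$ are natural in $P_i$, so they assemble into the desired isomorphism of complexes, and passing to cohomology in degree $1$ gives $\Ext^1_{\wh A_R}(U_R(M),U_R(M))\cong\Ext^1_{A_R}(M,M)^{\oplus2}=0$. (Note this argument in fact yields $\Ext^i_{\wh A_R}(U_R(M),U_R(M))\cong\Ext^i_{A_R}(M,M)^{\oplus2}$ for every $i$ and nowhere uses that $M$ is indecomposable; the hypothesis is only needed for the intended application.)

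The routine parts here are the exactness/additivity bookkeeping and the verification that $W_R$ is a well-defined exact functor respecting the relations of $Z_R$ (including the connecting-arrow and commutativity relations of the repetitive algebra). The genuine obstacle is the identity $W_R\circ U_R\cong\Id^{\oplus2}$, equivalently the natural isomorphism $\Hom_{\wh A_R}(U_R(X),U_R(Y))\cong\Hom_{A_R}(X,Y)^{\oplus2}$: over ordinary vertices this is immediate because $U_R$ literally creates two separate copies of each space, but over a crease vertex $c$ the two copies are not visible as distinct vertices of $\wh Q_R$ — they are fused inside a single space carrying the $\crs_c$-action — so the splitting has to be extracted functorially using the $\integer_2$-twisting of crease symbols recorded by $\fw$ and Lemma~\ref{lem:Z2Invert}, together with the fact that $\crs_c\oplus\crs_c$ acting on $M_c\oplus M_c$ is similar over $K$ to the companion matrix of $x^2-\lambda_1x-\lambda_2$ acting on a doubled copy of $M_c$, which holds precisely because that polynomial is irreducible (axiom~\ref{en:FG5}). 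An alternative that avoids $W_R$ is to exploit the $\integer_2$-autoequivalence $g$ of $\fin\wh A_R$ (which fixes $U_R(M)$ up to a natural isomorphism, and hence acts on the complex $\Hom_{\wh A_R}(U_R(P_\bullet),U_R(M))$) and show that complex splits $\integer_2$-equivariantly into two copies of $\Hom_{A_R}(P_\bullet,M)$ — but this runs into exactly the same crease-vertex bookkeeping, so I would carry out the $W_R$ version.
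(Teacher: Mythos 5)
Your approach is genuinely different from the paper's and, as written, it has a real gap. The paper's proof of this lemma is short and structural: since $\wh A_R$ is special biserial, $U_R(M)$ decomposes into string modules, band modules, and projective-injective modules; each case is disposed of separately (a band summand forces $\Ext^1_{A_R}(M,M)\neq 0$, a projective-injective summand forces $\wh M$ projective via Lemma~\ref{lem:URProj}, and the remaining case of a string summand is handled by noting any self-extension of $M(\wh w)\oplus M(g\wh w)$ pulls back along $U_R$). No folding functor, adjunction, or identification of $\Hom$-complexes is needed. Your route instead tries to prove the much stronger fact that $\Hom_{\wh A_R}(U_R(P_\bullet),U_R(M))\cong\Hom_{A_R}(P_\bullet,M)^{\oplus 2}$ as complexes, which would give $\Ext^i$ in all degrees.

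The gap is exactly where you flag it, but I don't think it is merely ``bookkeeping'': the assertion that $W_R$ is a right adjoint to $U_R$ with $W_R\circ U_R\cong\Id^{\oplus 2}$ \emph{naturally} is unverified and, in the generality the paper allows, I do not believe it. At a crease vertex $v$ one needs a family of isomorphisms between $M_v^{\oplus 2}$ carrying the companion-matrix $\crs_v$-action and $M_v^{\oplus 2}$ carrying the diagonal $\crs_v\oplus\crs_v$-action, compatible with every morphism of $A_R$-modules and with the arrows into and out of $v$. Such an isomorphism exists pointwise (both are free $\viso$-modules of the same rank), but the obvious naturally-defined candidates --- the $\viso\otimes_K\viso$-linear maps $(a,b)\mapsto(\mu_1a+\mu_2b,\nu_1a+\nu_2b)$ --- are forced by the intertwining condition to factor through a single copy of $M_v$ and hence fail to be injective. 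This is not an accident: $W_RU_R(M)_v$ carries two commuting $\viso$-actions (the original $\crs_v$ on $M_v$ and the companion matrix), whereas $(M^{\oplus 2})_v$ carries only the diagonal one, and when $\viso/K$ is \emph{inseparable} (allowed by \ref{en:FG5}: e.g.\ $q=x^2-t$ over $\finite_2(t)$) the ring $\viso\otimes_K\viso$ is local with nilpotents and the intended splitting is unavailable. The paper itself records (Remark~\ref{rem:UImage}(a)) that $\im U$ is \emph{not} a full subcategory of $\mod*\wh A$, which is already a warning that $\Hom_{\wh A_R}(U_R X,U_R Y)$ cannot be recovered naively from $\Hom_{A_R}(X,Y)$. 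Your alternative using the $\integer_2$-autoequivalence has its own version of the same problem: splitting a complex into $\integer_2$-isotypic pieces needs $2$ invertible in $K$, but the paper works over $\finite_2$ (Example~\ref{ex:C3TildeD4}). So as it stands the proposal proves the claim only under extra hypotheses (separable crease relations, $\chr K\neq 2$), and even there the naturality of the crease-vertex splitting would need to be exhibited explicitly rather than asserted.
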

	\begin{proof}
		Consider a projective resolution of $M$ given by
		\begin{equation*}
			\xymatrix@1{\ldots \ar[r] & P_2 \ar[r]^-{f_1} & P_1 \ar[r]^-{f_0} & P_0 \ar[r] & M \ar[r] & 0}.
		\end{equation*}
		If $\Ext_{A_R}^1(M,M)=0$, then there exists a cochain complex
		\begin{equation*}
			\xymatrix@1{0 \ar[r] & \Hom_{A_R}(P_0,M) \ar[r]^-{\delta_0} & \Hom_{A_R}(P_1,M) \ar[r]^-{\delta_1} & \Hom_{A_R}(P_2,M) \ar[r] & \ldots}
		\end{equation*}
		such that $\Ker \delta_1 = \im \delta_0$. Now by the variant of Corollary~\ref{cor:UProps}(a) that applies to the functor $U_R$, we have a cochain complex
		\begin{equation*}
			\xymatrix@1{0 \ar[r] & \Hom_{\wh A_R}(\wh P_0,\wh M) \ar[r]^-{\wh\delta_0} & \Hom_{\wh A_R}(\wh P_1,\wh M) \ar[r]^-{\wh\delta_1} & \Hom_{\wh A_R}(\wh P_2,\wh M) \ar[r] & \ldots},
		\end{equation*}
		where (for readability purposes) $\wh X = U_R(X)$ for any $X \in \mod*A_R$ and $\wh\delta_i = U_R( {-} \circ f_i) = U_R({-}) \circ U_R(f_i)$. In particular, $\Ext_{\wh A_R}^1(\wh M, \wh M)= \Ker \wh\delta_1 / \im \wh\delta_0$. 	But since $U_R$ is exact, we must also have $\Ker \wh\delta_1 =\im \wh\delta_0$. So $\Ext_{\wh A_R}^1(U_R(M),U_R(M))=0$, as required.
	\end{proof}
	
	As a result of Lemma~\ref{lem:URProj}, we can safely consider the morphisms in the stable module category $\ufin A_R$ in terms of morphisms in the stable module category $\ufin \wh A_R$. Here, by $\ufin B$ (for some $K$-algebra $B$), we mean the category with the same objects as $\fin B$, but whose morphisms are equivalence classes of morphisms in $\fin B$ modulo those that factor through a projective module. Similarly, by $\uEnd_{B}(M)$, we mean the algebra of stable endomorphisms of $M$. That is, the algebra of all equivalence classes of endomorphisms modulo those that factor through a projective module. We begin by proving the following, which follows from \cite[Theorem 1.1]{SchroerZim} via an (un)folding argument.
	
	\begin{prop} \label{prop:FoldedEnd}
		Suppose $M \in \fin A_R$ is such that $\Ext_{A_R}^1(M,M)=0$. Then the algebra of stable endomorphisms $\uEnd_{A_R}(M)$ is a folded gentle algebra.
	\end{prop}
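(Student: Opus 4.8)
The plan is to reduce the statement to the gentle case \cite[Theorem 1.1]{SchroerZim} by means of the unfolding functor $U_R$. \textbf{Step 1: descend to the unfolded repetitive algebra.} Since $U_R\colon \fin A_R \rightarrow \fin\wh A_R$ is exact, faithful and $K$-linear, with $\fin A_R$ equivalent to the (non-full) subcategory $\im U_R$, and since $U_R$ sends $\Hom_{A_R}(M,M')$ onto the fixed points of the natural $\integer_2$-action on $\Hom_{\wh A_R}(U_R(M),U_R(M'))$ — the action defined using the autoequivalence $g$ together with the canonical isomorphisms $g\,U_R(N)\cong U_R(N)$ — Lemma~\ref{lem:URProj} and the discussion of stable categories following it show that $U_R$ identifies $\uEnd_{A_R}(M)$ with the $g$-fixed part of $\uEnd_{\wh A_R}(U_R(M))$. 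By Lemma~\ref{lem:URExt}, extended from indecomposable $M$ to arbitrary $M$ with $\Ext^1_{A_R}(M,M)=0$ (applying it to the indecomposable summands and using that the string summands of $U_R(M)$ occur in $\integer_2$-orbits of the string summands of $M$), we obtain $\Ext^1_{\wh A_R}(U_R(M),U_R(M))=0$. As $\wh A_R$ is the repetitive algebra of the gentle algebra $\wh A$, \cite[Theorem 1.1]{SchroerZim} then yields a gentle presentation $\wh B := \uEnd_{\wh A_R}(U_R(M)) \cong K\wh Q_B/\langle\wh Z_B\rangle$.

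\textbf{Step 2: fold the presentation.} The autoequivalence $g$ of $\fin\wh A_R$ and the invariance of $U_R(M)$ induce a $\integer_2$-action on $\wh B$ by $K$-algebra automorphisms, which permutes the iso-classes of indecomposable projective $\wh B$-modules and hence the vertices of $\wh Q_B$. Because the gentle quiver, the zero relations and the commutativity relations produced by the construction in \cite{SchroerZim} are read off combinatorially from the decomposition of $U_R(M)$ into string modules over $\wh A_R$, and that decomposition is indexed by $\integer_2$-orbits matching the summands of $M$ (Proposition~\ref{prop:UStrings} and its analogue for $A_R$, together with Lemma~\ref{lem:URProj} for the projective-injective summands), this action restricts to a quiver automorphism of $\wh Q_B$ which is closed on $\wh Z_B$ and has no fixed arrow. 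Thus $(\wh Q_B,\wh Z_B)$ with this action is an unfolded gentle pair. Applying the folding procedure of Section~\ref{sec:UnfoldedGentle} (with the amendment (QU2$^{\ast}$) for repetitive-type algebras) gives a folded gentle triple $(K,Q_B,Z_B)$: the vertices of $Q_B$, its ordinary arrows and its ordinary relations $\orel$ are the $\integer_2$-orbits of those of $(\wh Q_B,\wh Z_B)$; a crease loop is attached at every $g$-fixed vertex; and the crease relations $\crel$ are the quadratic identities satisfied by the corresponding endomorphisms of $U_R(M)$, which are irreducible over $K$ because they descend, through the $\fw$-mechanism in the definition of $U_R$, from the crease relations of $A_R$ (the latter being irreducible since $A_R$ inherits axiom \ref{en:FG5} from $A$). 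Unwinding the folding identifications then gives a $K$-algebra isomorphism $\uEnd_{A_R}(M)\cong KQ_B/\langle Z_B\rangle$, so $\uEnd_{A_R}(M)$ is a folded gentle algebra.

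\textbf{Main obstacle.} The delicate part is Step 2: confirming that the $\integer_2$-action inherited by $\uEnd_{\wh A_R}(U_R(M))$ is compatible with the \emph{explicit} gentle presentation of \cite{SchroerZim} — that each stage of that combinatorial recipe (choosing the quiver vertices, the arrows, the zero relations, the commutativity relations, and the identification $\wh A_R \cong K\wh Q_R/\langle\wh Z_R\rangle$) is $g$-equivariant and produces no $g$-fixed arrow — so that the presentation may legitimately be folded. This is handled by re-examining the argument of \cite{SchroerZim} summand-by-summand and invoking at each point that the indecomposable constituents of $U_R(M)$ occur in $\integer_2$-orbits in bijection with those of $M$ via $\rho$ and $\theta$, and that $U_R$ preserves (and reflects, up to orbit) projective and projective-injective summands by Lemma~\ref{lem:URProj}. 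Once this equivariance is established, the remaining verifications — the vanishing of self-extensions, the bookkeeping of the folding identifications, and the irreducibility of the induced crease relations — are routine.
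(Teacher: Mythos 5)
There is a genuine gap in Step 1 that undermines the rest of the argument. You assert that $U_R$ identifies $\uEnd_{A_R}(M)$ with the $g$-fixed part of $\uEnd_{\wh A_R}(U_R(M))$, but this is not supported by anything in the paper and is in fact false when $M$ has symmetric string summands. The functor $U_R$ is faithful but \emph{not} full (Remark~\ref{rem:UImage}(a)), so $\im U_R$ cannot be characterised as a fixed-point subalgebra. Concretely, if $w$ is a symmetric string, then $\End_{A_R}(M(w))$ contains the automorphism $h_w$ from Lemma~\ref{lem:StringIsos}(b) and is a quadratic \emph{field} extension of $K$ (isomorphic to $K[x]/(q)$ with $q$ irreducible, by \ref{en:FG5}). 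Meanwhile $U_R(M(w)) \cong M(\theta(w))^2$, so $\End_{\wh A_R}(U_R(M(w))) \cong \Mat_2(K)$, and the $g$-fixed subalgebra under the swap of the two isomorphic copies is the centraliser of an involution, which is $K\times K$ — not a field, and not equal to $U_R(\End_{A_R}(M(w)))$. There is also no canonical choice of the isomorphisms $gU_R(N) \cong U_R(N)$ needed even to define the action you invoke.

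This matters for Step 2 as well: because $\wh B=\uEnd_{\wh A_R}(U_R(M))$ is not basic when symmetric strings occur (each such $M(\theta(w))$ appears with multiplicity two), the $\integer_2$-action does not act as a permutation on the vertices of the gentle quiver $\wh Q_B$ — it acts on a non-basic algebra by swapping isomorphic primitive idempotents — and your appeal to Remark~\ref{rem:NoFixedArrows} and the folding procedure doesn't directly apply. The paper's proof resolves both problems with one move you omit: it replaces $\wh M = U_R(M)$ by the quotient $\wh M^\ast$ in which each symmetric string summand $M(\theta(w))$ occurs only once, so that $\uEnd_{\wh A_R}(\wh M^\ast)$ is basic, gentle, and manifestly closed under $\integer_2$, hence an unfolded gentle pair. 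It then matches the morphisms of $\uEnd_{A_R}(M)$ \emph{modulo those factoring through the $h_w$} (which become the crease loops) with the $\integer_2$-\emph{orbits} — not fixed points — of morphisms of $\uEnd_{\wh A_R}(\wh M^\ast)$, and checks that this realises the folding procedure. Without the passage to $\wh M^\ast$ and the orbit (rather than fixed-point) bookkeeping, the folding in your Step 2 cannot be carried out.
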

	\begin{proof}
		Consider the object $\wh M = U_R(M) \in \fin A_R$. By Lemma~\ref{lem:URExt}, $\Ext_{A_R}^1(\wh M,\wh M)=0$. Consequently, $\wh M$ does not contain a direct summand that is isomorphic to a band module. Thus, $\wh M$ is a direct sum of string modules and/or projective-injective modules (since $\wh A_R$ is special biserial). But then Lemma~\ref{lem:URProj} and Proposition~\ref{prop:UStrings} imply that $g \wh M \cong \wh M$. Hence, the algebra $\uEnd_{A_R}(M)$ has a well-defined $\integer_2$-action. Moreover, $\uEnd_{\wh A_R}(\wh M)$ is a gentle algebra by \cite[Theorem 1.1]{SchroerZim}. Thus, $\uEnd_{\wh A_R}(\wh M)$ is an unfolded gentle algebra.
		
		We will consider the modules $M$ and $\wh M$ in more detail. Since we are considering the algebra of stable endomorphisms of $M$, we may henceforth assume without loss of generality that $M$ (and equivalently $\wh M$ by Lemma~\ref{lem:URProj}) does not contain any direct summands that are projective. Since $\wh M$ does not have any band module direct summands, Proposition~\ref{prop:UStrings} implies that
		\begin{equation*}
			M \cong \bigoplus_{w \in W^{a}} M(w)^{k_w} \oplus \bigoplus_{w \in W^{s}} M(w)^{k_w}
		\end{equation*}
		for some subsets $W^a,W^s \subset \str_{A_R}$ and some $k_w \in \spint$, where $W^a$ consists entirely of asymmetric strings and $W^s$ consists entirely of symmetric strings. Since we are only interested in algebras up to Morita equivalence, we can assume that each $k_w =1$ (and thus, that the indecomposable direct summands are pairwise non-isomorphic). By Proposition~\ref{prop:UStrings} and Lemma~\ref{lem:FoldedSymmetric}(a), we then have
		\begin{equation*}
			\wh M \cong \bigoplus_{w \in W^{a}} (M(\theta(w)) \oplus M(g\theta(w))) \oplus \bigoplus_{w \in W^{s}} M(\theta(w))^2
		\end{equation*}
		From this, we will consider the quotient module
		\begin{equation*}
			\wh M^\ast \cong \bigoplus_{w \in W^{a}} (M(\theta(w)) \oplus M(g\theta(w))) \oplus \bigoplus_{w \in W^{s}} M(\theta(w)).
		\end{equation*}
		Clearly, $\Ext_{\wh A_R}^1(\wh M^\ast,\wh M^\ast)=0$ and is closed under $\integer_2$, and so $\uEnd_{\wh A_R}(\wh M^\ast)$ is also an unfolded gentle algebra.
		
		We will now consider the algebra $\uEnd_{\wh A_R}(\wh M^\ast)$ in more detail. We can see from the above presentation that the set
		\begin{equation*}
			\wh E = \{\wh e_{\theta(w)}, g\wh e_{\theta(w)} : w \in W^a\} \cup \{\wh e_{\theta(w)} : w \in W^s\}
		\end{equation*}
		is a complete set of primitive orthogonal idempotents of $\uEnd_{\wh A_R}(\wh M^\ast)$, where each $\wh e_{\theta(w)}\colon M(\theta(w))\rightarrow M(\theta(w))$ is the identity map. Let $\wh Q_{\wh M^\ast}$ be the quiver and $\wh Z_{\wh M^\ast}$ be the set of relations such that $\uEnd_{\wh A_R}(\wh M^\ast) \cong K\wh Q_{\wh M^\ast} / \langle \wh Z_{\wh M^\ast} \rangle$. Then the vertices of $\wh Q_{\wh M^\ast}$ are indexed by the set $\wh E$. 
		
		Since $\uEnd_{\wh A_R}(\wh M^\ast)$ is gentle, for any vertex $\wh v_{\wh w}$ of $\wh Q_{\wh M^\ast}$ indexed by $\wh e_{\wh w}$, there are at most two arrows $\wh \alpha_{\wh w, \wh w_1}$ and $\wh \alpha'_{\wh w, \wh w_2}$ of target $\wh v_{\wh w}$. Equivalently, there are at most two morphisms $\wh f_{\wh w, \wh w_1}\colon M(\wh w) \rightarrow M(\wh w_1)$ and $\wh f'_{\wh w, \wh w_2} \colon M(\wh w) \rightarrow M(\wh w_2)$ in $\uEnd_{\wh A_R}(\wh M^\ast)$ such that any other morphism of $\uEnd_{\wh A_R}(\wh M^\ast)$ whose source is the direct summand $M(\wh w)$ factors through either $\wh f_{\wh w, \wh w_1}$ or $\wh f'_{\wh w, \wh w_2}$. Similarly, there are at most two arrows $\wh \alpha_{\wh w_3, \wh w}$ and $\wh \alpha'_{\wh w_4, \wh w}$ of source $\wh v_{\wh w}$, or equivalently, at most two morphisms $\wh f_{\wh w_3, \wh w}\colon M(\wh w_3) \rightarrow M(\wh w)$ and $\wh f'_{\wh w_4, \wh w} \colon M(\wh w_4) \rightarrow M(\wh w)$ in $\uEnd_{\wh A_R}(\wh M^\ast)$ such that any other morphism of $\uEnd_{\wh A_R}(\wh M^\ast)$ whose target is the direct summand $M(\wh w)$ factors through either $\wh f_{\wh w_3, \wh w}$ or $\wh f'_{\wh w_4, \wh w}$. 
		
		Now let
		\begin{equation*}
			\wh F = \{\wh f_{\wh w, \wh w'} : M(\wh w) \text{ and } M(\wh w') \text{ are direct summands of } \wh M^{\ast}\}
		\end{equation*}
		be the complete set of all morphisms that bijectively correspond to distinct arrows $\wh v_{\wh w'} \rightarrow \wh v_{\wh w}$ in $\wh Q_{\wh M^\ast}$. Since $\uEnd_{\wh A_R}(\wh M^\ast)$ is unfolded gentle, $g \wh f_{\wh w, \wh w'} \neq \wh f_{\wh w, \wh w'}$ for any $\wh f_{\wh w, \wh w'} \in \wh F$ (this is due to Remark~\ref{rem:NoFixedArrows}). In addition, we have a relation $\wh \alpha_{\wh w_2, \wh w_3}\wh \alpha_{\wh w_1, \wh w_2} \in \wh Z_{\wh M^\ast}$ precisely when $\wh f_{\wh w_2, \wh w_3}\wh f_{\wh w_1, \wh w_2} =0$, and for any such relation, we also have $(g\wh f_{\wh w_2, \wh w_3})(g\wh f_{\wh w_1, \wh w_2}) =0$.

		We will now consider the algebra $\uEnd_{\wh A_R}( M)$ in more detail. From the presentation of $M$ above, we can see that the set
		\begin{equation*}
			E= \{e_{w} : w \in W^a \cup W^s \}
		\end{equation*}
		is a complete set of primitive orthogonal idempotents of $\uEnd_{ A_R}( M)$, where each $e_{w}\colon M(w) \rightarrow M(w)$ is the identity map. Moreover, for each $w \in W^s$, there is a linearly independent isomorphism $h_{w}\colon M(w) \rightarrow M(w)$. Specifically, for each symmetric string $w = \sigma_1\ldots\sigma_n\crs\sigma_n\inv\ldots\sigma_1\inv$, the element $h_{w}$ represents the non-trivial isomorphism
		\begin{equation*}
			M(\sigma_1\ldots\sigma_n\crs_v\sigma_n\inv\ldots\sigma_1\inv) \rightarrow M(\sigma_1\ldots\sigma_n\crs_v\inv\sigma_n\inv\ldots\sigma_1\inv)
		\end{equation*}
		given in Lemma~\ref{lem:StringIsos}(b). Moreover, $h_{w}$ satisfies the same quadratic relation as $\crs_v$. That is, if $\crs_v^2 = \lambda_1 \crs_v + \lambda_2 \stp_v$, then $h^2_{w} = \lambda_1 h_{w} + \lambda_2 e_{w}$. 
		
		Let $ Q_{ M}$ be the quiver and $ Z_{ M}$ be the set of relations such that $\uEnd_{ A_R}( M) \cong K Q_{ M} / \langle  Z_{ M} \rangle$. We can then see that the vertices $v_w$ of $Q_M$ are indexed by the set $E$. Moreover, $E$ is in bijective correspondence with the set of $\integer_2$-orbits of elements of $\wh E$. Thus, the vertices of $Q_M$ are obtained from the vertices of $\wh Q_{\wh M^\ast}$ via \ref{en:F1} of the folding procedure.
		
		Now consider any non-idempotent morphism $f\colon M(w) \rightarrow M(w')$ of string modules, where $w,w' \in W^a$. It is not difficult to verify that there are then precisely two corresponding non-idempotent morphisms $\wh f\colon M(\theta(w)) \rightarrow M(\theta(w)) \oplus M(g\theta(w))$ and $g\wh f\colon M(g\theta(w)) \rightarrow M(\theta(w)) \oplus M(g\theta(w))$ in $\uEnd_{\wh A_R}(\wh M^\ast)$. Thus, the elements of the vector subspace of all morphisms 
		\begin{equation*}
			f\colon \bigoplus_{w \in W^{a}} M(w) \rightarrow \bigoplus_{w \in W^{a}} M(w)
		\end{equation*}
		in $\uEnd_{ A_R}( M)$ are in bijective correspondence with the $\integer_2$-orbits of elements in the vector subspace of all morphisms 
		\begin{equation*}
			\wh f\colon \bigoplus_{w \in W^{a}} (M(\theta(w)) \oplus M(g\theta(w))) \rightarrow \bigoplus_{w \in W^{a}} (M(\theta(w)) \oplus M(g\theta(w))) 
		\end{equation*}
		in $\uEnd_{\wh A_R}( \wh M^\ast)$.

		Next, consider any non-idempotent morphism $f\colon M(w) \rightarrow M(w')$ of string modules, where $w \in W^a$ and $w \in W^s$. For any such morphism, there is a corresponding, linearly independent morphism $f'\colon M(w) \rightarrow M(w')$ that factors through a non-trivial linear combination of $e_{w'}$ and $h_{w'}$. For example, we have linearly independent morphisms $f$ and $h_{w'} f$. Moreover, there exist precisely two corresponding non-idempotent morphisms $\wh f\colon M(\theta(w)) \rightarrow M(\theta(w'))$ and $g\wh f\colon M(g\theta(w)) \rightarrow M(\theta(w'))$ (since $\theta(w')$ is $\integer_2$-invariant). Thus, the elements of the vector subspace of all morphisms
		\begin{equation*}
			f\colon \bigoplus_{w \in W^{a}} M(w) \rightarrow \bigoplus_{w \in W^{s}} M(w)
		\end{equation*}
		modulo those that factor through a morphism $h_w$ (with $w \in W^s$) are in bijective correspondence with the $\integer_2$-orbits of elements the vector subspace of all morphisms
		\begin{equation*}
			\wh f\colon \bigoplus_{w \in W^{a}} (M(\theta(w)) \oplus M(g\theta(w))) \rightarrow \bigoplus_{w \in W^{s}} M(\theta(w)).
		\end{equation*}
		
		Dually, for any non-idempotent morphism $f\colon M(w) \rightarrow M(w')$ with $w \in W^s$ and $w \in W^a$, there is a corresponding, linearly independent morphism $f h_w$. Similar reasoning to that used above shows that the elements of the vector subspace of all morphisms
		\begin{equation*}
			f\colon \bigoplus_{w \in W^{s}} M(w) \rightarrow \bigoplus_{w \in W^{a}} M(w)
		\end{equation*}
		modulo those that factor through a morphism $h_w$ (with $w \in W^s$) are in bijective correspondence with the $\integer_2$-orbits of elements the vector subspace of all morphisms
		\begin{equation*}
			\wh f\colon \bigoplus_{w \in W^{s}} M(\theta(w)) \rightarrow \bigoplus_{w \in W^{a}} (M(\theta(w)) \oplus M(g\theta(w))).
		\end{equation*}
		
		Finally, for any non-idempotent morphism $f\colon M(w) \rightarrow M(w')$ with $w,w' \in W^s$, one can see that the elements of the vector subspace of all morphisms
		\begin{equation*}
			f\colon \bigoplus_{w \in W^{s}} M(w) \rightarrow \bigoplus_{w \in W^{s}} M(w)
		\end{equation*}
		modulo those that factor through a morphism $h_w$ (with $w \in W^s$) are in bijective correspondence with the $\integer_2$-orbits of elements the vector subspace of all morphisms
		\begin{equation*}
			\wh f\colon \bigoplus_{w \in W^{s}} M(\theta(w)) \rightarrow \bigoplus_{w \in W^{s}} M(\theta(w)).
		\end{equation*}
		
		Consequently, the morphisms of $\uEnd_{A_R}(M)$, modulo those that factor through morphisms $\wh h_w$ (with $w \in W^s$) are in bijective correspondence with the $\integer_2$-orbits of morphisms of $\uEnd_{\wh A_R}(\wh M^\ast)$. All of this implies three things: Firstly, the morphisms $h_w$ correspond to crease loops in the quiver $Q_M$ --- so $Q_M$ is obtained from $\wh Q_{\wh M^\ast}$ via \ref{en:F3} of the folding procedure. Secondly, the arrows of $Q_M$ are in bijective correspondence with the $\integer_2$-orbits of arrows of $\wh Q_{\wh M^\ast}$ --- so $Q_M$ is obtained from $\wh Q_{\wh M^\ast}$ via \ref{en:F2} of the folding procedure. Thirdly, the relations of $Z_{M}$ are precisely those given by the $\integer_2$-orbits of the relations of $\wh Z_{\wh M^\ast}$ --- so $Z_{M}$ is obtained from $\wh Z_{\wh M^\ast}$ via \ref{en:F4} of the folding procedure. Thus, the algebra $\uEnd_{A_R}(M)$ is obtained from the unfolded gentle algebra $\uEnd_{\wh A_R}( \wh M^\ast)$ via the folding procedure, and hence, $\uEnd_{A_R}(M)$ is a folded gentle algebra.
	\end{proof}
	
	Armed with this, we can prove the main theorem of this section, which is the analogue of \cite[Corollary 1.2]{SchroerZim} for folded gentle algebras.
	
	\begin{thm} \label{thm:DerClosed}
		Let $A$ be a finite dimensional folded gentle algebra and let $B$ be an algebra that is derived equivalent to $A$. Then $B$ is a folded gentle algebra. Moreover, there are only finitely many such algebras (up to Morita equivalence) that are derived equivalent to $A$.
	\end{thm}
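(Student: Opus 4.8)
The plan is to follow the strategy of \cite{SchroerZim} for gentle algebras, channelling it through the unfolding functor $U_R$ and Proposition~\ref{prop:FoldedEnd}. First I would recall the standard bridge from derived to stable equivalences. The repetitive algebra $A_R$ is self-injective, so Happel's construction supplies a fully faithful triangle functor $\Phi\colon D^b(A) \hookrightarrow \ufin A_R$ that commutes with the shift, sending $[1]$ to the cosyzygy $\Omega^{-1}$. If $B$ is derived equivalent to $A$, I pick a tilting complex $T$ over $A$ with $\End_{D^b(A)}(T) \cong B$ and set $M = \Phi(T) \in \fin A_R$; discarding any projective summands of $M$ (which affects neither $\uEnd_{A_R}$ nor self-extensions), I may assume $M$ has none. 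Then $\uEnd_{A_R}(M) \cong \End_{D^b(A)}(T) \cong B$, and for $i \geq 1$,
\[
	\Ext^i_{A_R}(M,M) \cong \uHom_{A_R}(M,\Omega^{-i}M) \cong \Hom_{D^b(A)}(T,T[i]) = 0,
\]
since $A_R$ is self-injective. In particular $\Ext^1_{A_R}(M,M) = 0$, so Proposition~\ref{prop:FoldedEnd} applies and shows that $B \cong \uEnd_{A_R}(M)$ is a folded gentle algebra.

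For the finiteness statement I would reduce to the gentle case on two complementary fronts. The exact functor $U$ derives to a triangle functor $D^b(A) \to D^b(\wh A)$; because $U$ and $U_R$ send projectives to projectives (Corollary~\ref{cor:UProj}, Lemma~\ref{lem:URProj}), it is compatible with the two Happel embeddings, so the image of $T$ over $\wh A$ is $\Phi_{\wh A}(U(T)) \cong U_R(M)$. Now $U(T)$ generates $D^b(\wh A)$, since $\wh A = U(A)$ lies in the triangulated subcategory generated by $U(T)$; and $U(T)$ has no self-extensions because $\Ext^1_{\wh A_R}(U_R(M),U_R(M)) = 0$ by Lemma~\ref{lem:URExt}. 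Hence $U(T)$ is a tilting complex over $\wh A$, its endomorphism algebra $C \cong \uEnd_{\wh A_R}(U_R(M))$ is a gentle algebra derived equivalent to $\wh A$, and (by the construction in the proof of Proposition~\ref{prop:FoldedEnd}) $C$ is Morita equivalent to the unfolded gentle algebra $\wh B$ of $B$. By the finiteness part of \cite[Corollary~1.2]{SchroerZim}, only finitely many gentle algebras up to Morita equivalence are derived equivalent to $\wh A$, so $\wh B$ ranges over a finite set.

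The second front controls how $B$ sits over $\wh B$ via the folding procedure \ref{en:F1}--\ref{en:F5}. The $\integer_2$-action required in \ref{en:F1}--\ref{en:F4} is a combinatorial involution of the finite quiver of $\wh B$ preserving its relations, and there are only finitely many such. For the crease relations \ref{en:F5}, I would note that each indecomposable summand $T_j$ of $T$ has no self-extensions, so $U(T_j)$ contains no band-complex summand over $\wh A$ and hence $T_j$ corresponds to a string module over $A$; consequently $\End_{D^b(A)}(T_j)/\rad$ is either $K$ or, when $U(T_j)$ is $\integer_2$-invariant, one of the finitely many crease fields $\viso$ of $A$. Therefore the semisimple quotient $B/\rad B$, and in particular the collection of crease fields of $B$, is confined to a fixed finite list depending only on $A$; after rescaling each crease loop its quadratic relation is determined up to Morita equivalence by the isomorphism type of the associated field, so only finitely many crease relations occur up to Morita equivalence. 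Combining the two fronts yields finitely many algebras $B$ up to Morita equivalence.

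The step I expect to be the main obstacle is the precise bookkeeping behind the finiteness claim: verifying that $U$ is genuinely compatible with Happel's functors (so that $U(T)$ really is a tilting complex and $C \cong \wh B$ up to Morita equivalence), and pinning down exactly which data of $A$ constrains the crease fields of $B$ --- I expect the cleanest route is to read this off the endomorphism rings of the summands of $T$ as sketched above, rather than going through $HH^{\ast}(A)$.
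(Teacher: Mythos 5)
Your argument for the first assertion matches the paper's exactly: Rickard's tilting complex $T$, Happel's embedding $F\colon D^b(A)\hookrightarrow\ufin A_R$, the vanishing $\Ext^1_{A_R}(F(T),F(T))=0$, and then Proposition~\ref{prop:FoldedEnd}; this part is fine.

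For the finiteness statement your route diverges from the paper's, and is in one respect more careful. The paper records (via \cite[Lemma~6.3.3]{KonigZim}) that $B$ has the same number $n$ of simples as $A$ and then asserts that FG1--FG6 allow only finitely many folded gentle algebras up to Morita equivalence on $n$ vertices. Read literally, that assertion fails over a field admitting infinitely many degree-two extensions: the remark following the folding procedure already observes that varying the crease relations in \ref{en:F5} yields infinitely many non-isomorphic algebras on a fixed quiver, and these are pairwise non-Morita-equivalent because the division rings $\End(S(v))$ differ. The ingredient that actually rescues the count --- and which you make explicit --- is that the crease fields of $B$ are confined to those of $A$: $B\cong\uEnd_{A_R}(F(T))$ with $F(T)$ a direct sum of string $A_R$-modules, and the crease loops of $B$ are the maps $h_w$ appearing in the proof of Proposition~\ref{prop:FoldedEnd}, each of which satisfies the same quadratic relation as the crease loop $\crs_v$ at the symmetric centre of $w$. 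Once this is recorded, the bound on $n$ together with FG1--FG6 already yields finitely many candidates, and your first front --- deriving $U$, arguing that $U(T)$ is a tilting complex over $\wh A$, and quoting the Schr\"oer--Zimmermann bound for $\wh A$ --- becomes a detour that also imports the unverified compatibility of the two Happel embeddings with $U$ and $U_R$, which, as you yourself flag, the paper never needs. A final minor caution affecting both your argument and the paper's: Lemma~\ref{lem:URExt} is stated only for indecomposable $M$, so to apply it (or the argument of Proposition~\ref{prop:FoldedEnd}) to $F(T)$ one should also check that the cross-$\Ext^1$ groups between distinct indecomposable summands of $U_R(F(T))$ vanish.
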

	\begin{proof}
		The proof is very much the same as the proof of \cite[Corollary 1.2]{SchroerZim}. Denote by $D^b(A)$ the bounded derived category of $A$. Let $B$ be an algebra such that $D^b(B) \simeq D^b(A)$. By a theorem of Rickard (\cite[Theorem 6.4, Corollary 8.3]{Rickard}, see also \cite[Ch. 3.2]{KonigZim}) this is true if and only if there exists a tilting complex $T \in D^b(A)$ such that $B \cong \End_{D^b(A)}(T)$. We refer the reader to \cite[Definition 5.2, Theorem 6.4]{Rickard} for the precise definition of a tilting complex, however we remark here that this implies that $\Hom_{D^b(A)}(T,T[i]) = 0$ for all $i \neq 0$.
		
		Now by \cite[Ch. 2.4]{Happel}, there exists a fully faithful embedding
		\begin{equation*}
			F\colon D^b(A) \rightarrow \ufin A_R.
		\end{equation*}
		In particular, since $\ufin A_R$ is a suspended category with suspension functor given by the syzygy functor $\Omega$, and since $F$ is fully faithful and respects the triangulated structure, we have
		\begin{align*}
			\Hom_{D^b(A)}(T,T[1]) &= \Hom_{D^b(A)}(T[-1],T)\\
			&=\uHom_{A_R}(F(T[-1]),F(T))\\
			&= \uHom_{A_R}(\Omega(F(T)),F(T))\\
			&= \Ext^1_{A_R}(F(T),F(T))\\
			&= 0.
		\end{align*}
		Noting that $\End_{D^b(A)}(T)$ and $\uEnd_{A_R}(F(T))$ are isomorphic, we see that $B\cong\End_{D^b(A)}(T)$ is a folded gentle algebra by Proposition~\ref{prop:FoldedEnd}, as required.
		
		For the final statement of the theorem, we note by \cite[Lemma 6.3.3]{KonigZim} that derived equivalent finite-dimensional algebras have the same number of isomorphism classes of simple modules. The isomorphism classes of simple modules of a folded gentle algebra are in bijective correspondence with the vertices of its quiver. Now it is clear from the combinatorics induced by axioms \ref{en:FG1}-\ref{en:FG6} that, for any $n$, there are only finitely many folded gentle algebras (up to Morita equivalence) with $n$ vertices. Hence, there are only finitely many algebras (up to Morita equivalence) that are derived equivalent to $A$.
	\end{proof}
	
	\appendix
	\section{An Explicit Worked Example}\label{sec:Example}
	In this appendix, we will explore a comprehensive example that showcases the entire theory of the paper. Throughout, we will work with the folded gentle algebra $A = K Q / \langle Z \rangle$, where $K = \mathbb{F}_3$, $Q$ is the quiver
	\begin{equation*}
		\begin{tikzpicture}
			\draw (-1.2,0.7) node {1};
			\draw (-1.2,-0.7) node {2};
			\draw (0,0) node {3};
			\draw (1.2,0.7) node {4};
			\draw (1.2,-0.7) node {5};
			\draw [->](-0.9,0.5) -- (-0.2,0.1);
			\draw [->](0.2,0.1) -- (0.9,0.5);
			\draw [->](-0.9,-0.5) -- (-0.2,-0.2);
			\draw [->](0.2,-0.2) -- (0.9,-0.5);
			\draw [->](-1.2,-0.4) -- (-1.2,0.4);
			\draw [->](1.2,0.4) -- (1.2,-0.4);
			\draw [->](-1.3,1) .. controls (-1.4,1.2) and (-1.4,1.4) .. (-1.2,1.4) .. controls (-1,1.4) and (-1,1.2) .. (-1.1,1);
			\draw [->](1.1,1) .. controls (1,1.2) and (1,1.4) .. (1.2,1.4) .. controls (1.4,1.4) and (1.4,1.2) .. (1.3,1);
			\draw (-2.1,0.2) node {\footnotesize $Q\colon$};
			\draw (-0.7,1.2) node {\footnotesize $\eta_1$};
			\draw (0.7,1.2) node {\footnotesize $\eta_4$};
			\draw (-1.4,-0.1) node {\footnotesize $\alpha$};
			\draw (-0.4,0.5) node {\footnotesize $\beta$};
			\draw (-0.5,-0.6) node {\footnotesize $\gamma$};
			\draw (0.4,0.5) node {\footnotesize $\delta$};
			\draw (0.5,-0.6) node {\footnotesize $\zeta$};
			\draw (1.4,0) node {\footnotesize $\xi$};
		\end{tikzpicture}
	\end{equation*}
	and
	\begin{equation*}
		Z = \{\alpha\beta, \beta\delta, \gamma\zeta, \delta\xi, \eta_1^2 + \stp_1, \eta_4^2 + \eta_4 + 2\stp_4\}
	\end{equation*}
	In particular, 
	\begin{align*}
		\ov &= \{2,3,5\}, & \cv &= \{1,4\},\\
		\oa &= \{\alpha,\beta,\gamma,\delta,\zeta,\xi\}, & \ca &= \{\eta_1,\eta_4\},\\
		\orel &= \{\alpha\beta, \beta\delta, \gamma\zeta, \delta\xi\}, & \crel &= \{\eta_1^2 + \stp_1, \eta_4^2 + \eta_4 + 2\stp_2\}.
	\end{align*}
	See Definition~\ref{def:FoldedGentleTriple} to verify the relevant axioms.
	
	\subsection{Biseriality}
	The indecomposable projective right $A$-modules are as follows.
	\begin{align*}
		\stp_1A &= \langle \varepsilon_1, \beta, \beta\zeta, \eta_1, \eta_1\beta, \eta_1\beta\zeta\rangle \cong M(\zeta^{-1}\beta^{-1}\eta_1\beta\zeta), \\
		\stp_2A &= \langle \varepsilon_2, \alpha, \alpha\eta_1, \alpha\eta_1\beta, \alpha\eta_1\beta\zeta, \gamma, \gamma\delta, \gamma\delta\eta_4, \gamma\delta\eta_4\xi\rangle \cong M(\zeta^{-1}\beta^{-1}\eta_1\inv\alpha^{-1}\gamma\delta\eta_4\xi), \\
		\stp_3A &= \langle \varepsilon_3, \zeta, \delta, \delta\eta_4, \delta\eta_4\xi\rangle \cong M(\zeta^{-1}\delta\eta_4\xi), \\
		\stp_4A &= \langle \varepsilon_4, \xi, \eta_4, \eta_4\xi\rangle \cong M(\xi^{-1}\eta_4\xi), \\
		\stp_5A &=\langle \varepsilon_5\rangle \cong M(\varepsilon_5).
	\end{align*}
	The indecomposable injective right $A$-modules are
	\begin{align*}
		D(A\varepsilon_1 ) &= D(\langle \varepsilon_1, \alpha, \eta_1, \alpha\eta_1\rangle) \cong M(\alpha\eta_1\alpha^{-1}), \\
		D(A\varepsilon_2 ) &= D(\langle \varepsilon_2\rangle) \cong M(\varepsilon_2), \\
		D(A\varepsilon_3 ) &= D(\langle \varepsilon_3, \beta, \eta_1\beta, \alpha\eta_1\beta, \gamma\rangle) \cong M(\alpha\eta_1\beta\gamma^{-1}), \\
		D(A\varepsilon_4 ) &= D(\langle \varepsilon_4, \delta, \gamma\delta, \eta_4, \delta\eta_4, \gamma\delta\eta_4\rangle) \cong M(\gamma\delta\eta_4\delta^{-1}\gamma^{-1}), \\
		D(A\varepsilon_5 ) &= D(\langle \varepsilon_5, \zeta, \beta\zeta, \eta_1\beta\zeta, \alpha\eta_1\beta\zeta, \xi, \eta_4\xi, \delta\eta_4\xi, \gamma\delta\eta_4\xi\rangle) \cong M(\alpha\eta_1\beta\zeta\xi^{-1}\eta_4\inv\delta^{-1}\gamma^{-1}).
	\end{align*}
	One can easily observe the biserial structure of $A$ from this complete list of indecomposable right projective and right injective modules. For example,
	\begin{align*}
		\rad \stp_1A &\cong \langle \beta, \beta\zeta\rangle \oplus \langle \eta_1\beta, \eta_1\beta\zeta\rangle \cong M(\zeta) \oplus M(\zeta), \\
		\rad \stp_2A &\cong \langle \alpha, \alpha\eta_1, \alpha\eta_1\beta, \alpha\eta_1\beta\zeta\rangle \oplus \langle\gamma, \gamma\delta, \gamma\delta\eta_4, \gamma\delta\eta_4\xi\rangle \cong M(\eta_1\beta\zeta) \oplus M(\delta\eta_4\xi), \\
		\soc D(A\stp_3) &\cong D(\langle \beta, \eta_1\beta, \alpha\eta_1\beta\rangle) \oplus D(\langle\gamma\rangle )\cong M(\alpha\eta_1) \oplus M(\stp_2), \\
		\soc D(A\stp_4) &\cong D(\langle \delta, \gamma\delta\rangle) \oplus D(\langle \delta\eta_4, \gamma\delta\eta_4\rangle) \cong M(\gamma) \oplus M(\gamma),
	\end{align*}
	and so on (c.f. Definition~\ref{def:Biserial} and Theorem~\ref{thm:Biserial}).
	
	\subsection{The combinatorics of unfolding and the indecomposable modules}
	The unfolded gentle algebra obtained from $A$ via the unfolding procedure (\ref{en:U1}-\ref{en:U4}) is $\wh A = K\wh Q / \langle\wh Z\rangle$, where
	\begin{equation*}
		\begin{tikzpicture}
			\draw (-1.5,1.4) node {$\widehat 2$};
			\draw (-0.3,0.7) node {$\widehat 3$};
			\draw (0.9,1.4) node {$\widehat 5$};
			\draw [->](-1.2,0.2) -- (-0.5,0.6);
			\draw [->](-0.1,0.6) -- (0.6,0.2);
			\draw [->](-1.2,1.2) -- (-0.5,0.9);
			\draw [->](-0.1,0.9) -- (0.6,1.2);
			\draw [->](-1.5,1.1) -- (-1.5,0.3);
			\draw [->](0.9,0.3) -- (0.9,1.1);
			\draw (-1.7,0.8) node {\footnotesize $\widehat\alpha$};
			\draw (-1,0.6) node {\footnotesize $\widehat\beta$};
			\draw (-0.8,1.3) node {\footnotesize $\widehat\gamma$};
			\draw (0.4,0.6) node {\footnotesize $\widehat\delta$};
			\draw (0.2,1.3) node {\footnotesize $\widehat\zeta$};
			\draw (1.1,0.7) node {\footnotesize $\widehat\xi$};
			
			\draw (-1.5,0) node {$\widehat 1$};
			\draw (-1.5,-1.4) node {$\widehat 2'$};
			\draw (-0.3,-0.7) node {$\widehat 3'$};
			\draw (0.9,0) node {$\widehat 4$};
			\draw (0.9,-1.4) node {$\widehat 5'$};
			\draw [->](-1.2,-0.2) -- (-0.5,-0.6);
			\draw [->](-0.1,-0.6) -- (0.6,-0.2);
			\draw [->](-1.2,-1.2) -- (-0.5,-0.9);
			\draw [->](-0.1,-0.9) -- (0.6,-1.2);
			\draw [->](-1.5,-1.1) -- (-1.5,-0.3);
			\draw [->](0.9,-0.3) -- (0.9,-1.1);
			\draw (-2.3,0) node {\footnotesize $\widehat Q\colon$};
			\draw (-1.7,-0.8) node {\footnotesize $\widehat\alpha'$};
			\draw (-0.7,-0.2) node {\footnotesize $\widehat\beta'$};
			\draw (-0.8,-1.3) node {\footnotesize $\widehat\gamma'$};
			\draw (0.1,-0.2) node {\footnotesize $\widehat\delta'$};
			\draw (0.2,-1.3) node {\footnotesize $\widehat\zeta'$};
			\draw (1.1,-0.7) node {\footnotesize $\widehat\xi'$};
		\end{tikzpicture}
	\end{equation*}
	\begin{equation*}
		\wh Z = \{\wh\alpha\wh\beta, \wh\alpha'\wh\beta',\wh\beta\wh\delta,\wh\beta'\wh\delta',\wh\gamma\wh\zeta,\wh\gamma'\wh\zeta',\wh\delta\wh\xi,\wh\delta'\wh\xi'\}.
	\end{equation*}
	The action of $\integer_2 = \{1,g\}$ on $\wh Q$ (as prescribed by Lemma~\ref{lem:Folding}) is given by $g\wh{i} = \wh i'$ and $g \wh{a} = \wh a'$ for each $ i \in \{2,3,5\}$ and each $ a \in \{\alpha,\beta,\gamma,\delta,\zeta,\xi\}$. In addition, $g \wh i = \wh i$ for each $i \in \{1,4\}$. The quiver folding morphism $\pi \colon \wh Q \rightarrow Q \setminus \ca$ is defined such that $\pi(\wh i) = \pi(\wh i') = i$ and $\pi(\wh a) = \pi(\wh a') = a$. In particular, $\pi(\wh i) = \pi(g\wh i)$ and $\pi(\wh a) = \pi(g\wh a)$. 
	
	There is more than one choice of quiver unfolding morphism $\iota\colon Q \setminus \ca \rightarrow \wh Q$, but we will henceforth use the unfolding defined by $\iota(i) = \wh i$ and $\iota(a) = \wh a$ for each $i \in Q_0$ and $a \in Q_1 \setminus \ca$. It is clear that $\iota$ is a right inverse of $\pi$ and respects the relations of $\orel$, and thus meets the necessary requirements outlined in Definition~\ref{def:QuiverMorphisms}.

	Henceforth, we let $U\colon \mod*A \rightarrow \mod*\wh A$ be the unfolding functor of Section~\ref{sec:FoldedModule}, which describes how the module category of the folded gentle algebra $A$ is embedded into the module category of the unfolded gentle algebra $\wh A$.
	
	\subsubsection{Unfolding string modules} 
	There are infinitely many strings in $A$ and $\wh A$, so we will give an example of an asymmetric string and a symmetric string to showcase the combinatorics of (un)folding strings and their associated modules. Consider the string $\wh w = \wh \alpha \wh\beta' \wh\zeta'(\wh\xi')\inv\wh\xi$ of $\wh A$. This string is not $\integer_2$-invariant, since $g\wh w = \wh \alpha' \wh\beta \wh\zeta\wh\xi\inv\wh\xi' \not\approx \wh w$. The corresponding folded string of $A$ is
	\begin{equation*}
		w=\rho(\wh w) = \alpha\crs_1\beta\zeta\xi\inv\crs_4\inv\xi \approx \alpha\crs_1\inv\beta\zeta\xi\inv\crs_4\xi = \rho(g\wh w)
	\end{equation*}
	which is an asymmetric string. Due to Proposition~\ref{prop:UStrings}, we know that $M(w) = M(\alpha\crs_1\beta\zeta\xi\inv\crs_4\inv\xi) \in \mod* A$ corresponds to the module
	\begin{equation*}
		U(M(w)) \cong M(\wh w) \oplus M(g\wh w)
		\cong M(\wh \alpha \wh\beta' \wh\zeta'(\wh\xi')\inv\wh\xi) \oplus M(\wh \alpha' \wh\beta \wh\zeta\wh\xi\inv\wh\xi') \in \mod*\wh A.
	\end{equation*}
	
	Now consider the string $\wh w = \wh \gamma\inv\wh\alpha(\wh\alpha')\inv\wh\gamma'$ of $\wh A$. We can see that $g\wh w = (\wh \gamma')\inv\wh\alpha'\wh\alpha\inv\wh\gamma \approx \wh w$, since $g\wh w = \wh w\inv$. So $\wh w$ is $\integer_2$-invariant. In addition, the folded string is
	\begin{equation*}
		w=\rho(\wh w) = \gamma\inv\alpha\crs_1\alpha\inv\gamma \approx \gamma\inv\alpha\crs_1\inv\alpha\inv\gamma = \rho(g\wh w),
	\end{equation*}
	which is symmetric, since $w\inv \sim_2 w$. Again due to Proposition~\ref{prop:UStrings}, we know that $M(w) = M(\gamma\inv\alpha\crs_1\alpha\inv\gamma) \in \mod* A$ corresponds to the module
	\begin{equation*}
		U(M(w)) \cong M(\wh w) \oplus M(g\wh w)
		\cong M(\wh \gamma\inv\wh\alpha(\wh\alpha')\inv\wh\gamma') \oplus M(\wh \gamma\inv\wh\alpha(\wh\alpha')\inv\wh\gamma') \in \mod*\wh A.
	\end{equation*}
	
	\subsubsection{Unfolding asymmetric band modules}
	We will now consider asymmetric bands and band modules. The each asymmetric band of $A$ yields a family of $A$-modules indexed by the set
	\begin{equation*}
		\Pi = \{p^n \in K[x] : n \in \spint \text{ and } p \text{ is monic, irreducible, and } p(0) \neq 0\}.
	\end{equation*}
	Henceforth, we let $\phi_p$ be the companion matrix of $p \in \Pi$. For example, the companion matrices associated to the the polynomials of degree at most 2 in $\Pi$ are
	\begin{align*}
		\phi_{x+1} &= \begin{pmatrix} 2 \end{pmatrix}, & 
		\phi_{x+2} &= \begin{pmatrix} 1 \end{pmatrix}, \\
		\phi_{(x+1)^2} &= \begin{pmatrix} 0 & 2 \\ 1 & 1\end{pmatrix}, & 
		\phi_{(x+2)^2} &= \begin{pmatrix} 0 & 2 \\ 1 & 2\end{pmatrix}, \\ 
		\phi_{x^2 + 1} &= \begin{pmatrix} 0 & 2 \\ 1 & 0\end{pmatrix}, & 
		\phi_{x^2 + x+2} &= \begin{pmatrix} 0 & 1 \\ 1 & 2\end{pmatrix}, & 
		\phi_{x^2 + 2x+2} &= \begin{pmatrix} 0 & 1 \\ 1 & 1\end{pmatrix}.
	\end{align*}
	
	Now consider the band $\wh w = \wh\alpha\wh\beta'\wh\zeta'(\wh\xi')\inv\wh\delta\inv\wh\gamma\inv$ of $\wh A$. This band is not $\integer_2$-invariant, since $g\wh w = \wh\alpha'\wh\beta\wh\zeta\wh\xi\inv(\wh\delta')\inv(\wh\gamma')\inv$, which is inequivalent to $\wh w$. Thus, $\wh w$ is of even parity by Lemma~\ref{lem:OddParity}. The corresponding folded band is
	\begin{equation*}
		w = \rho(\wh w) = \alpha\crs_1\beta\zeta\xi\inv\crs_4\inv\delta\inv\gamma\inv \approx \alpha\crs_1\inv\beta\zeta\xi\inv\crs_4\delta\inv\gamma\inv = \rho(g\wh w),
	\end{equation*}
	which has an even number of crease symbols and is thus also of even parity. The band $w$ is also asymmetric. Here, we note that the crease symbols of $\rho(\wh w)$ satisfy the following irreducible quadratic relations:
	\begin{equation*}
		\crs_1^2 - 2\stp_1 = 0  \qquad \text{and} \qquad \crs_4^{-2} - \crs_4^{-1} - \stp_4 =0
	\end{equation*} 
	Due to Proposition~\ref{prop:UEvenBands} and Remark~\ref{rem:LamConst}, we then know that for each $p \in \Pi$ and each $\phi \in \Aut(K^{\deg(p)})$, the module $M(w, \deg(p), \phi_p)$ corresponds to the $\wh A$-module
	\begin{equation*}
		U(M(w, \deg(p), \phi_p)) \cong M(\wh w, \deg(p), \phi_p) \oplus M(g\wh w, \deg(p), 2\phi_p).
	\end{equation*}
	Specifically, we have
	\begin{align*}
		U(M(w, 1, \phi_{x+1})) &\cong M(\wh w, 1, \phi_{x+1}) \oplus M(g\wh w, 1, \phi_{x+2}), \\
		U(M(w, 1, \phi_{x+2})) &\cong M(\wh w, 1, \phi_{x+2}) \oplus M(g\wh w, 1, \phi_{x+1}), \\
		U(M(w, 2, \phi_{(x+1)^2})) &\cong M(\wh w, 2, \phi_{(x+1)^2}) \oplus M(g\wh w, 2, \phi_{(x+2)^2}), \\
		U(M(w, 2, \phi_{(x+2)^2})) &\cong M(\wh w, 2, \phi_{(x+2)^2}) \oplus M(g\wh w, 2, \phi_{(x+1)^2}), \\
		U(M(w, 2, \phi_{x^2+1})) &\cong M(\wh w, 2, \phi_{x^2+1}) \oplus M(g\wh w, 2, \phi_{x^2+1}), \\
		U(M(w, 2, \phi_{x^2+x+2})) &\cong M(\wh w, 2, \phi_{x^2+x+2}) \oplus M(g\wh w, 2, \phi_{x^2+2x+2}), \\
		U(M(w, 2, \phi_{x^2+2x+2})) &\cong M(\wh w, 2, \phi_{x^2+2x+2}) \oplus M(g\wh w, 2, \phi_{x^2+x+2}).
	\end{align*}
	
	Now consider the band $\wh w = \wh \alpha \wh\beta' (\wh \gamma')\inv\wh\alpha'\wh\beta\wh\gamma\inv$. This band is $\integer_2$-invariant, since $g\wh w =\wh \alpha' \wh\beta \wh \gamma\wh\alpha\wh\beta'(\wh\gamma')\inv \approx \wh w$ by a rotation of the band. This also shows that $\wh w$ is of odd parity. Here we note that
	\begin{equation*}
		\fw(\wh w) = \alpha\crs_1\beta\gamma\inv\alpha\crs_1\inv\beta\gamma\inv 
		\qquad \text{and} \qquad
		w=\rho(\wh w) = \alpha\crs_1\beta\gamma\inv.
	\end{equation*}
	Thus, the folded word of $\wh w$ follows the folded band $w$ twice. We can also see that $w$ is of odd parity, which is an example of Lemma~\ref{lem:OddParity}. Due to Proposition~\ref{prop:UOddBands} and Remark~\ref{rem:LamConst}, we then know that for each $p \in \Pi$ and each $\phi \in \Aut(K^{\deg(p)})$, the module $M(w, \deg(p), \phi_p)$ corresponds to the $\wh A$-module
	\begin{equation*}
		U(M(w, \deg(p), \phi_p)) \cong M(\wh w, \deg(p), 2\phi_p^2).
	\end{equation*}
	Specifically, we have
	\begin{align*}
		U(M(w, 1, \phi_{x+1})) &\cong U(M(w, 1, \phi_{x+2})) \cong M(\wh w, 1, \phi_{x+1}), \\
		U(M(w, 2, \phi_{(x+1)^2})) &\cong U(M(w, 2, \phi_{(x+2)^2})) \cong M(\wh w, 2, \phi_{(x+1)^2}), \\
		U(M(w, 2, \phi_{x^2+1})) &\cong M(\wh w, 1, \phi_{x+2}) \oplus M(\wh w, 1, \phi_{x+2}), \\
		U(M(w, 2, \phi_{x^2+x+2})) &\cong U(M(w, 2, \phi_{x^2+2x+2})) \cong M(\wh w, 2, \phi_{x^2+1}).
	\end{align*}
	
	\subsubsection{Unfolding symmetric band modules}
	Now consider the band $\wh \beta\wh\zeta\wh\xi\inv\wh\xi'(\wh\zeta')\inv(\wh\beta')\inv$ of $\wh A$. This is a $\integer_2$-invariant band of even parity, but it is not in standard form (as the first symbol of of the folded band is not a crease symbol). We will instead consider the equivalent band $\wh w = \wh\xi'(\wh\zeta')\inv(\wh\beta')\inv\wh \beta\wh\zeta\wh\xi$, which is given by a rotation and is in standard form. The corresponding folded band is $w=\rho(\wh w)=\crs_4\xi\zeta\inv\beta\inv\crs_1\inv\beta\zeta\xi\inv$, which is symmetric. To consider the symmetric band modules of $w$ and how they unfold, we will need to consider the group action on $\Pi$ with respect to the band $w$. First, note that the crease symbols of $\rho(\wh w)$ satisfy the following irreducible quadratic relations:
	\begin{equation*}
		\crs_1^{-2} - 2\stp_1 = 0  \qquad \text{and} \qquad \crs_4^2 -2\crs_4 -\stp_4=0.
	\end{equation*} 
	Then note that the constant $\mu_{\wh w}$ from Lemma~\ref{lem:SymBandConst} is 1, as there are no crease symbols in the subword $\xi\zeta\inv\beta\inv$. Thus, for any polynomial $p \in \Pi$, the polynomial $gp \in \Pi$ is the characteristic polynomial of the matrix $2\phi\inv_p$. Thus
	\begin{align*}
		g(x+1) &= x+2, & 
		g(x+2) &= x+1, \\
		g(x+1)^2 &= (x+2)^2, & 
		g(x+2)^2 &= (x+1)^2, \\ 
		g(x^2 + x+2) &= x^2 + x+2, & 
		g(x^2 + 2x+2) &= x^2 + 2x+2, &
		g(x^2 + 1) &= x^2 + 1.
	\end{align*}
	So $[x+1],[(x+1)^2],[x^2 + 1],[x^2 + x+2],[x^2 + 2x+2] \in \Pi_w$. This indicates that there are precisely 5 indecomposable modules (up to isomorphism) of the form $M(w,m_{\psi_p},\psi_p)$ with $\deg(p) \leq 2$. One can likewise show that there are precisely 4 indecomposable modules (up to isomorphism) of the form $M(w,m_{\psi_p},\psi_p)$ with $\deg(p) = 3$ and precisely 10 indecomposable modules (up to isomorphism) with $\deg(p) = 4$, and so on. The value of $m_{\psi_p}$ is $2\deg(p)$ whenever $gp \neq p$. In general, when $gp = p$, it is less clear a priori when $m_{\psi_p} = \deg(p)$ or when $m_{\psi_p} =2\deg(p)$, other than to say that $m_{\psi_p} = \deg(p)$ if and only if $U(M(w,m_{\psi_p},\psi_p))$ is indecomposable. In the case of this specific example, we may compute representatives for these polynomials explicitly. Example representatives are as follows.
	\begin{align*}
		M_{x+1} &= M\!\left(w, 2, \left(\begin{smallmatrix} 0 & 1 \\ 2 & 0 \end{smallmatrix}\right)\right), \\
		M_{(x+1)^2} &= M\!\left(w, 4, \left(\begin{smallmatrix} 0 & 0 & 1 & 2 \\ 0 & 0 & 0 & 1 \\ 2 & 2 & 0 & 0 \\ 0 & 2 & 0 & 0 \end{smallmatrix}\right)\right), \\
		M_{x^2+x+2} &= M\!\left(w, 2, \left(\begin{smallmatrix} 1 & 2 \\ 2 & 2 \end{smallmatrix}\right)\right), \\
		M_{x^2+2x+2} &= M\!\left(w, 2, \left(\begin{smallmatrix} 2 & 1 \\ 1 & 1 \end{smallmatrix}\right)\right), \\
		M_{x^2+1} &= M\!\left(w, 4, \left(\begin{smallmatrix} 0 & 0 & 0 & 1 \\ 0 & 0 & 2 & 0 \\ 0 & 1 & 0 & 0 \\ 2 & 0 & 0 & 0 \end{smallmatrix}\right)\right).
	\end{align*}
	These representatives can be verified via an exhaustive search. As it happens, one may compute each representative via a product
	\begin{equation*}
		\psi_p = \begin{pmatrix} \id_{\frac{1}{2}m_{\psi_p}} & X \\ 0 & Y \end{pmatrix}\inv H'\begin{pmatrix} \id_{\frac{1}{2}m_{\psi_p}} & X \\ 0 & Y \end{pmatrix}
	\end{equation*}
	for some block matrices $X$ and $Y$, where $H' = \left(\begin{smallmatrix} 0 & 2 \\ 1 & 0 \end{smallmatrix}\right) \otimes_K \id_{\frac{1}{2}m_{\psi_p}}$. In particular,
	\begin{align*}
		\begin{pmatrix} 0 & 1 \\ 2 & 0 \end{pmatrix} &= \begin{pmatrix} 1 & 0 \\ 0 & 2 \end{pmatrix}\inv H' \begin{pmatrix} 1 & 0 \\ 0 & 2 \end{pmatrix} \\ 
		\begin{pmatrix} 0 & 0 & 1 & 2 \\ 0 & 0 & 0 & 1 \\ 2 & 2 & 0 & 0 \\ 0 & 2 & 0 & 0 \end{pmatrix} &= \begin{pmatrix} 1 & 0 & 0 & 0 \\ 0 & 1 & 0 & 0 \\ 0 & 0 & 2 & 1 \\ 0 & 0 & 0 & 2 \end{pmatrix}\inv H' \begin{pmatrix} 1 & 0 & 0 & 0 \\ 0 & 1 & 0 & 0 \\ 0 & 0 & 2 & 1 \\ 0 & 0 & 0 & 2 \end{pmatrix} \\ 
		\begin{pmatrix} 1 & 2 \\ 2 & 2 \end{pmatrix} &= \begin{pmatrix} 1 & 1 \\ 0 & 2 \end{pmatrix}\inv H' \begin{pmatrix} 1 & 1 \\ 0 & 2 \end{pmatrix} \\ 
		\begin{pmatrix} 2 & 1 \\ 1 & 1 \end{pmatrix} &= \begin{pmatrix} 1 & 1 \\ 0 & 1 \end{pmatrix}\inv H' \begin{pmatrix} 1 & 1 \\ 0 & 1 \end{pmatrix} \\ 
		\begin{pmatrix} 0 & 0 & 0 & 1 \\ 0 & 0 & 2 & 0 \\ 0 & 1 & 0 & 0 \\ 2 & 0 & 0 & 0 \end{pmatrix} &= \begin{pmatrix} 1 & 0 & 0 & 0 \\ 0 & 1 & 0 & 0 \\ 0 & 0 & 0 & 2 \\ 0 & 0 & 1 & 0 \end{pmatrix}\inv H' \begin{pmatrix} 1 & 0 & 0 & 0 \\ 0 & 1 & 0 & 0 \\ 0 & 0 & 0 & 2 \\ 0 & 0 & 1 & 0 \end{pmatrix}.
	\end{align*}
	An astute reader may see the rationale behind the ansatze for the block diagonal cases. The non-block diagonal cases are somewhat of a mystery to the author (particularly for irreducible polynomials of higher degree).
	
	Each of these modules unfolds in accordance to Proposition~\ref{prop:USymBand}. In particular,
	\begin{align*}
		U(M_{x+1}) &\cong M(\wh w, 1, \phi_{x+1}) \oplus M(\wh w, 1, \phi_{x+2}), \\
		U(M_{(x+1)^2}) &\cong M(\wh w, 2, \phi_{(x+1)^2}) \oplus M(\wh w, 2, \phi_{(x+2)^2}), \\
		U(M_{x^2+x+2}) &\cong M(\wh w, 2, \phi_{x^2+x+2}), \\
		U(M_{x^2+2x+2}) &\cong M(\wh w, 2, \phi_{x^2+2x+2}), \\
		U(M_{x^2+1}) &\cong M(\wh w, 2, \phi_{x^2+1}) \oplus M(\wh w, 2, \phi_{x^2+1}).
	\end{align*}
	
	\subsection{The Auslander-Reiten quiver}
	Since the algebra is representation-infinite, we will only give an example component. The component in Figure~\ref{fig:FoldedARQStrings} contains the indecomposable injective modules $D(A\stp_1)$, $D(A\stp_2)$, $D(A\stp_3)$ and $D(A\stp_4)$, and the indecomposable projective module $\stp_2 A$. A dual component contains the modules $\stp_1 A$, $\stp_3 A$, $\stp_4 A$, $\stp_5 A$ and $D(A\stp_5)$. The various irreducible morphisms between string modules are clearly given by adding/deleting hooks/cohooks, as per Theorem~\ref{thm:ARSeq}. The Auslander-Reiten components of band modules are given by tubes, as per Theorem~\ref{thm:ARSeq}.
	
	\begin{figure}
		\rotatebox{270}{
			\begin{tikzpicture}[scale = 1.2]
				\coordinate (tau) at (-3, 0);
				\coordinate (rhook) at (1.5, 1.2);
				\coordinate (lhook) at (1.5, -1.2);
				\coordinate (col) at (0, 2.4);
			
				\coordinate (o) at (0,0); 
				\draw ($(o) + -0.5*(col)$) node {\footnotesize $\zeta^{-1} \beta^{-1} \eta_1^{-1} \alpha^{-1} \gamma \delta \eta_4 \xi$}; 
				
				\draw ($(o) + 1*(lhook) -2*(col)$) node {\footnotesize $\zeta^{-1} \beta^{-1} \eta_1^{-1} \alpha^{-1} \gamma \delta \eta_4 \xi \zeta^{-1} \delta \eta_4 \xi$}; 
				\draw ($(o) + -1*(rhook) + 1*(lhook) -2*(col)$) node {\footnotesize $\delta \eta_4 \xi \zeta^{-1} \delta \eta_4 \xi$}; 
				\draw ($(o) + -1*(rhook) + -2*(col)$) node {\footnotesize $\delta \eta_4 \xi$}; 
				\draw ($(o) + -2*(rhook)+ -2*(col)$) node {\footnotesize $\gamma \delta \eta_4 \xi \zeta^{-1} \delta \eta_4 \xi$}; 
				\draw ($(o) + -2*(rhook)+ -1*(lhook)+ -2*(col)$) node {\footnotesize $\gamma \delta \eta_4 \xi$}; 
				\draw ($(o) + -3*(rhook)+ -2*(lhook)+ -2*(col)$) node {\footnotesize $\alpha \eta_1$}; 
				\draw ($(o) + -4*(rhook)+ -2*(lhook)+ -2*(col)$) node {\footnotesize $\alpha \eta_1 \beta \gamma^{-1} \alpha \eta_1$}; 
				\draw ($(o) + -3*(rhook)+ -1*(lhook)+ -2*(col)$) node {\footnotesize $\alpha \eta_1 \alpha^{-1} \gamma \delta \eta_4 \xi$}; 
				\draw ($(o) + -3*(rhook)+ -3*(lhook)+ -2*(col)$) node {\footnotesize $\alpha \eta_1 \beta \gamma^{-1}$}; 
				\draw ($(o) + -2*(rhook)+ -3*(lhook)+ -2*(col)$) node {\footnotesize $\varepsilon_2$}; 
				\draw ($(o) + -3*(rhook)+ -4*(lhook)+ -2*(col)$) node {\footnotesize $\alpha \eta_1 \beta \gamma^{-1} \alpha \eta_1^{-1} \alpha^{-1}$}; 
				\draw ($(o) + -2*(rhook)+ -4*(lhook)+ -2*(col)$) node {\footnotesize $\alpha \eta_1^{-1} \alpha^{-1}$}; 
				\draw ($(o) + -3*(rhook)+ -5*(lhook)+ -2*(col)$) node {\footnotesize $\alpha \eta_1 \beta \gamma^{-1} \alpha \eta_1^{-1} \alpha^{-1} \gamma \beta^{-1} \eta_1^{-1} \alpha^{-1}$}; 
				
				\draw ($(o) + 1*(rhook)$) node {\footnotesize $\xi^{-1} \eta_4^{-1} \xi \zeta^{-1} \beta^{-1} \eta_1^{-1} \alpha^{-1} \gamma \delta \eta_4 \xi$}; 
				\draw ($(o) + 1*(rhook) + -1*(lhook)$) node {\footnotesize $\xi^{-1} \eta_4^{-1} \xi \zeta^{-1} \beta^{-1} \eta_1^{-1}$}; 
				\draw ($(o) + -1*(lhook)$) node {\footnotesize $\zeta^{-1} \beta^{-1} \eta_1^{-1}$}; 
				\draw ($(o) + -2*(lhook)$) node {\footnotesize $\zeta^{-1} \beta^{-1} \eta_1^{-1} \beta \gamma^{-1}$}; 
				\draw ($(o) + -2*(lhook) - 1*(rhook)$) node {\footnotesize $\gamma^{-1}$};
				\draw ($(o) + -3*(lhook) - 1*(rhook)$) node {\footnotesize $\gamma^{-1} \alpha \eta_1^{-1} \alpha^{-1}$}; 
				\draw ($(o) + -2*(lhook) - 2*(rhook)$) node {\footnotesize $\gamma \delta \eta_4 \delta^{-1} \gamma^{-1}$}; 
				\draw ($(o) + -3*(lhook) - 3*(rhook)$) node {\footnotesize $\alpha \eta_1 \alpha^{-1} \gamma \delta \eta_4 \delta^{-1} \gamma^{-1} \alpha \eta_1^{-1} \alpha^{-1}$}; 
				\draw ($(o) + -3*(lhook) - 2*(rhook)$) node {\footnotesize $\gamma \delta \eta_4 \delta^{-1} \gamma^{-1} \alpha \eta_1^{-1} \alpha^{-1}$};
				
				\draw[->, shorten <= 3ex, shorten >= 3ex] ($(o) - 0.5*(col)$) -- ($(o) +(rhook)$);
				\draw[->, shorten <= 3ex, shorten >= 3ex] ($(o) +(rhook)$) -- ($(o) +2*(rhook)$);
				\foreach \i in {1,2,3} {
					\draw[->, shorten <= 3ex, shorten >= 3ex] ($(o) +(rhook) + \i*(tau)$) -- ($(o) +2*(rhook) + \i*(tau)$);
					\draw[->, shorten <= 3ex, shorten >= 3ex] ($(o) +2*(rhook) + \i*(tau)$) -- ($(o) +3*(rhook) + \i*(tau)$);
					\draw[->, shorten <= 3ex, shorten >= 3ex] ($(o) + -2*(rhook) + 1*(lhook) + \i*(tau) - 2*(col)$) -- ($(o) + -1*(rhook) + 1*(lhook) + \i*(tau) - 2*(col)$);
					\draw[->, shorten <= 3ex, shorten >= 3ex] ($(o) + -1*(rhook) + 1*(lhook) + \i*(tau) - 2*(col)$) -- ($(o) + 0*(rhook) + 1*(lhook) + \i*(tau) - 2*(col)$);
				}
				\draw[->, shorten <= 3ex, shorten >= 3ex] ($(o) + -2*(rhook) + 1*(lhook) - 2*(col)$) -- ($(o) + -1*(rhook) + 1*(lhook) - 2*(col)$);
				\draw[->, shorten <= 3ex, shorten >= 3ex] ($(o) + -1*(rhook) + 1*(lhook) - 2*(col)$) -- ($(o) + 0*(rhook) + 1*(lhook) - 2*(col)$);
				\draw[->, shorten <= 3ex, shorten >= 3ex] ($(o) +(lhook) - 2*(col)$) -- ($(o) - (tau) - 0.5*(col)$);
				\draw[->, shorten <= 3ex, shorten >= 3ex] ($(o) +(lhook) - 2*(col) +(tau)$) -- ($(o) - 0.5*(col)$);
				\foreach \i in {2,3} {
					\draw[->, shorten <= 3ex, shorten >= 3ex] ($(o) + \i*(tau)$) -- ($(o) +1*(rhook) + \i*(tau)$);
				}
				\draw[->, shorten <= 3ex, shorten >= 3ex] ($(o) + 3*(tau) + -2*(col)$) -- ($(o) +1*(rhook) + 3*(tau) + -2*(col)$);
				\draw[->, shorten <= 3ex, shorten >= 3ex] ($(o) + -1*(rhook) + 3*(tau) + -2*(col)$) -- ($(o) +0*(rhook) + 3*(tau) + -2*(col)$);
				\draw[->, shorten <= 3ex, shorten >= 3ex] ($(o) + 4*(tau) + -2*(col)$) -- ($(o) +1*(rhook) + 4*(tau) + -2*(col)$);
				\draw[->, shorten <= 3ex, shorten >= 3ex] ($(o) + 1*(rhook) + 4*(tau) + -2*(col)$) -- ($(o) +2*(rhook) + 4*(tau) + -2*(col)$);
				\draw[->, shorten <= 3ex, shorten >= 3ex] ($(o) + 1*(rhook) + 5*(tau) + -2*(col)$) -- ($(o) +2*(rhook) + 5*(tau) + -2*(col)$);
				
				\draw[->, shorten <= 3ex, shorten >= 3ex]  ($(o) +(rhook)$) -- ($(o) - (tau) - 0.5*(col)$);
				\draw[->, shorten <= 3ex, shorten >= 3ex]  ($(o) +(rhook) + (tau)$) -- ($(o) - 0.5*(col)$);
				\foreach \i in {0,1,2} {
					\draw[->, shorten <= 3ex, shorten >= 3ex]  ($(o) +(rhook) + -2*(lhook) + \i*(tau)$) -- ($(o) +(rhook) + -1*(lhook) + \i*(tau)$);
					\draw[->, shorten <= 3ex, shorten >= 3ex]  ($(o) +(rhook) + -1*(lhook) + \i*(tau)$) -- ($(o) +(rhook) + 0*(lhook) + \i*(tau)$);
					\draw[->, shorten <= 3ex, shorten >= 3ex]  ($(o) + -1*(rhook) + 0*(lhook) + \i*(tau) - 2*(col)$) -- ($(o) + -1*(rhook) + 1*(lhook)  + \i*(tau) - 2*(col)$);
					\draw[->, shorten <= 3ex, shorten >= 3ex]  ($(o) + -1*(rhook) + 1*(lhook) + \i*(tau) - 2*(col)$) -- ($(o) + -1*(rhook) + 2*(lhook)  + \i*(tau) - 2*(col)$);
				}
				\draw[->, shorten <= 3ex, shorten >= 3ex]  ($(o) + -2*(lhook) + 2*(tau)$) -- ($(o) + -1*(lhook) + 2*(tau)$);
				\draw[->, shorten <= 3ex, shorten >= 3ex]  ($(o) + -1*(lhook) + 2*(tau)$) -- ($(o) + 0*(lhook) + 2*(tau)$);
				\draw[->, shorten <= 3ex, shorten >= 3ex]  ($(o) + -1*(lhook) + 3*(tau)$) -- ($(o) + 0*(lhook) + 3*(tau)$);
				\draw[->, shorten <= 3ex, shorten >= 3ex] ($(o) - 0.5*(col)$) -- ($(o) +(lhook) - 2*(col)$);
				\draw[->, shorten <= 3ex, shorten >= 3ex] ($(o) +(lhook) - 2*(col)$) -- ($(o) +2*(lhook) - 2*(col)$);
				\draw[->, shorten <= 3ex, shorten >= 3ex] ($(o) + 3*(tau) - 1*(col)$) -- ($(o) +(lhook) + 3*(tau) - 1*(col)$);
				\draw[->, shorten <= 3ex, shorten >= 3ex] ($(o) + 4*(tau) - 1*(col)$) -- ($(o) +(lhook) + 4*(tau) - 1*(col)$);
				\draw[->, shorten <= 3ex, shorten >= 3ex] ($(o) +1*(lhook)  + 4*(tau) - 1*(col)$) -- ($(o) +2*(lhook) + 4*(tau) - 1*(col)$);
				\draw[->, shorten <= 3ex, shorten >= 3ex] ($(o) +2*(lhook)  + 4*(tau) - 1*(col)$) -- ($(o) +3*(lhook) + 4*(tau) - 1*(col)$);
				\draw[->, shorten <= 3ex, shorten >= 3ex] ($(o) +-1*(lhook)  + 3*(tau) - 3*(col)$) -- ($(o) +0*(lhook) + 3*(tau) - 3*(col)$);
				\draw[->, shorten <= 3ex, shorten >= 3ex] ($(o) +0*(lhook)  + 3*(tau) - 3*(col)$) -- ($(o) +1*(lhook) + 3*(tau) - 3*(col)$);
				
				\draw ($(o) + 2*(tau) + -0.5*(lhook) + (0.1, 0.1)$) node [rotate=-35] {\tiny$(2,1)$};
				\draw ($(o) + 2*(tau) + 0.5*(rhook) + (-0.1, 0.1)$) node [rotate=35] {\tiny$(1,2)$};
				\draw ($(o) + 3*(tau) + -0.5*(lhook) + (0.1, 0.1)$) node [rotate=-35] {\tiny$(2,1)$};
				\draw ($(o) + 3*(tau) + 0.5*(rhook) + (-0.1, 0.1)$) node [rotate=35] {\tiny$(1,2)$};
				\draw ($(o) - (col) + 3*(tau) + 0.5*(lhook) + (0.1, 0.1)$) node [rotate=-35] {\tiny$(1,2)$};
				\draw ($(o) - (col) + 3*(tau) + -0.5*(rhook) + (-0.1, 0.1)$) node [rotate=35] {\tiny$(2,1)$};
				\draw ($(o) - (col) + 4*(tau) + 0.5*(lhook) + (0.1, 0.1)$) node [rotate=-35] {\tiny$(1,2)$};
				\draw ($(o) - (col) + 4*(tau) + -0.5*(rhook) + (-0.1, 0.1)$) node [rotate=35] {\tiny$(2,1)$};
			\end{tikzpicture}
		}
		\caption{An example component of the Auslander-Reiten quiver of $\mod*A$ containing string modules. Specific modules are given by the the strings in the figure.} \label{fig:FoldedARQStrings}
	\end{figure}
	
	\subsection{The repetitive algebra and derived equivalences}
	The maximal paths of $A$ are $p=\alpha\crs_1\beta\zeta$ and $q=\gamma\delta\crs_4\xi$. Thus, the quiver of the repetitive algebra $A_R$ is given as follows.
	\begin{equation*}
		\begin{tikzpicture}
			\draw (-5.9,-0.7) node {\footnotesize $\cdots$};
			\draw[->] (-5.5,-0.6) -- (-4.4,-0.6);
			\draw[->] (-5.5,-0.8) -- (-4.4,-0.8);
			\draw (-4.9,-0.3) node {\footnotesize $\chi^{(2)}_p$};
			\draw (-4.9,-1.1) node {\footnotesize $\chi^{(2)}_q$};
			
			\draw (-4,0.7) node {$1^{(1)}$};
			\draw (-4,-0.7) node {$2^{(1)}$};
			\draw (-2.8,0) node {$3^{(1)}$};
			\draw (-1.6,0.7) node {$4^{(1)}$};
			\draw (-1.6,-0.7) node {$5^{(1)}$};
			\draw [->](-3.7,0.5) -- (-3.1,0.1);
			\draw [->](-2.5,0.1) -- (-1.9,0.5);
			\draw [->](-3.7,-0.5) -- (-3.1,-0.2);
			\draw [->](-2.5,-0.2) -- (-1.9,-0.5);
			\draw [->](-4,-0.4) -- (-4,0.4);
			\draw [->](-1.6,0.4) -- (-1.6,-0.4);
			\draw [->](-4.1,1) .. controls (-4.2,1.2) and (-4.2,1.4) .. (-4,1.4) .. controls (-3.8,1.4) and (-3.8,1.2) .. (-3.9,1);
			\draw [->](-1.7,1) .. controls (-1.8,1.2) and (-1.8,1.4) .. (-1.6,1.4) .. controls (-1.4,1.4) and (-1.4,1.2) .. (-1.5,1);
			\draw (-6.4,0.2) node {\footnotesize $Q_R\colon$};
			\draw (-3.5,1.2) node {\footnotesize $\eta_1^{(1)}$};
			\draw (-2.1,1.2) node {\footnotesize $\eta_4^{(1)}$};
			\draw (-4.3,0.1) node {\footnotesize $\alpha^{(1)}$};
			\draw (-3.2,0.5) node {\footnotesize $\beta^{(1)}$};
			\draw (-3.3,-0.6) node {\footnotesize $\gamma^{(1)}$};
			\draw (-2.4,0.5) node {\footnotesize $\delta^{(1)}$};
			\draw (-2.3,-0.6) node {\footnotesize $\zeta^{(1)}$};
			\draw (-1.3,0.1) node {\footnotesize $\xi^{(1)}$};
			\draw[->] (-1.2,-0.6) -- (-0.1,-0.6);
			\draw[->] (-1.2,-0.8) -- (-0.1,-0.8);
			\draw (-0.7,-0.3) node {\footnotesize $\chi^{(1)}_p$};
			\draw (-0.7,-1.1) node {\footnotesize $\chi^{(1)}_q$};
			
			\draw (0.3,0.7) node {$1^{(0)}$};
			\draw (0.3,-0.7) node {$2^{(0)}$};
			\draw (1.5,0) node {$3^{(0)}$};
			\draw (2.7,0.7) node {$4^{(0)}$};
			\draw (2.7,-0.7) node {$5^{(0)}$};
			\draw [->](0.6,0.5) -- (1.2,0.1);
			\draw [->](1.8,0.1) -- (2.4,0.5);
			\draw [->](0.6,-0.5) -- (1.2,-0.2);
			\draw [->](1.8,-0.2) -- (2.4,-0.5);
			\draw [->](0.3,-0.4) -- (0.3,0.4);
			\draw [->](2.7,0.4) -- (2.7,-0.4);
			\draw [->](0.2,1) .. controls (0.1,1.2) and (0.1,1.4) .. (0.3,1.4) .. controls (0.5,1.4) and (0.5,1.2) .. (0.4,1);
			\draw [->](2.6,1) .. controls (2.5,1.2) and (2.5,1.4) .. (2.7,1.4) .. controls (2.9,1.4) and (2.9,1.2) .. (2.8,1);
			\draw (0.8,1.2) node {\footnotesize $\eta_1^{(0)}$};
			\draw (2.2,1.2) node {\footnotesize $\eta_4^{(0)}$};
			\draw (0,0.1) node {\footnotesize $\alpha^{(0)}$};
			\draw (1.1,0.5) node {\footnotesize $\beta^{(0)}$};
			\draw (1,-0.6) node {\footnotesize $\gamma^{(0)}$};
			\draw (1.9,0.5) node {\footnotesize $\delta^{(0)}$};
			\draw (2,-0.6) node {\footnotesize $\zeta^{(0)}$};
			\draw (3,0.1) node {\footnotesize $\xi^{(0)}$};
			\draw[->] (3.1,-0.6) -- (4.2,-0.6);
			\draw[->] (3.1,-0.8) -- (4.2,-0.8);
			\draw (3.7,-0.3) node {\footnotesize $\chi^{(0)}_p$};
			\draw (3.7,-1.1) node {\footnotesize $\chi^{(0)}_q$};
			\draw (4.6,-0.7) node {\footnotesize $\cdots$};
		\end{tikzpicture}
	\end{equation*}
	The relations are given by
	\begin{align*}
		Z_R = \{&\alpha\ps{i}\beta\ps{i},\beta\ps{i}\delta\ps{i},\gamma\ps{i}\zeta\ps{i},\delta\ps{i}\xi\ps{i}, (\crs_1\ps{i})^2 + \stp_{1}\ps{i}, (\crs_4\ps{i})^2 +\crs_4\ps{i} + 2\stp_{4}\ps{i}, \\
		&\crs_1\ps{i}\beta\ps{i}\zeta\ps{i}\chi_p\ps{i}\alpha\ps{i-1} - \beta\ps{i}\zeta\ps{i}\chi_p\ps{i}\alpha\ps{i-1}\crs_1\ps{i-1}, 
		\alpha\ps{i}\crs_1\ps{i}\beta\ps{i}\zeta\ps{i}\chi_p\ps{i} - \gamma\ps{i}\delta\ps{i}\crs_4\ps{i}\xi\ps{i}\chi_q\ps{i}, \\
		&\zeta\ps{i}\chi_p\ps{i}\alpha\ps{i-1}\crs_1\ps{i-1}\beta\ps{i-1} - \delta\ps{i}\crs_4\ps{i}\xi\ps{i}\chi_q\ps{i}\gamma\ps{i-1},
		\crs_4\ps{i}\xi\ps{i}\chi_q\ps{i}\gamma\ps{i-1}\delta\ps{i-1} - \xi\ps{i}\chi_q\ps{i}\gamma\ps{i-1}\delta\ps{i-1}\crs_4\ps{i-1}, \\
		&\chi_p\ps{i}\alpha\ps{i-1}\crs_1\ps{i-1}\beta\ps{i-1}\zeta\ps{i-1} - \chi_q\ps{i}\gamma\ps{i-1}\delta\ps{i-1}\crs_4\ps{i-1}\xi\ps{i-1}, \\
		&\zeta\ps{i}\chi_q\ps{i}, \xi\ps{i}\chi_p\ps{i}, \chi_p\ps{i}\gamma\ps{i-1}, \chi_q\ps{i}\alpha\ps{i-1} : i \in \integer\}.
	\end{align*}
	
	The maximal paths of the unfolded gentle algebra are $\wh p = \wh\alpha\wh\beta'\wh\zeta'$, $\wh p' = g\wh p = \wh\alpha'\wh\beta\wh\zeta$, $\wh q = \wh\gamma\wh\delta\wh\xi'$, and $\wh q' = g\wh q = \wh\gamma'\wh\delta'\wh\xi$. Thus, the  quiver of the unfolded repetitive algebra $\wh A_R$ is given as follows.
	\begin{equation*}
		\begin{tikzpicture}[scale = 1.2]
			\draw (-6.6,0) node {\footnotesize $\widehat Q_R\colon$};
			\draw (-6.1,-1.4) node {\footnotesize $\cdots$};
			\draw[->] (-5.7,-1.3) -- (-4.6,-1.3);
			\draw[->] (-5.7,-1.5) -- (-4.6,-1.5);
			\draw (-5.1,-1) node {\footnotesize $\widehat\chi^{(2)'}_p$};
			\draw (-5.1,-1.8) node {\footnotesize $\widehat\chi^{(2)'}_q$};
			
			\draw (-4.2,0) node {$\widehat 1^{(1)}$};
			\draw (-4.2,-1.4) node {$\widehat 2^{(1)'}$};
			\draw (-2.9,-0.7) node {$\widehat 3^{(1)'}$};
			\draw (-1.6,0) node {$\widehat 4^{(1)}$};
			\draw (-1.6,-1.4) node {$\widehat 5^{(1)'}$};
			\draw [->](-3.9,-0.2) -- (-3.3,-0.6);
			\draw [->](-2.5,-0.6) -- (-1.9,-0.2);
			\draw [->](-3.9,-1.2) -- (-3.3,-0.9);
			\draw [->](-2.5,-0.9) -- (-1.9,-1.2);
			\draw [->](-4.2,-1.1) -- (-4.2,-0.3);
			\draw [->](-1.6,-0.3) -- (-1.6,-1.1);
			
			\draw (-4.5,-0.6) node {\footnotesize $\widehat\alpha^{(1)'}$};
			\draw (-3.4,-0.2) node {\footnotesize $\widehat\beta^{(1)'}$};
			\draw (-3.5,-1.3) node {\footnotesize $\widehat\gamma^{(1)'}$};
			\draw (-2.4,-0.2) node {\footnotesize $\widehat\delta^{(1)'}$};
			\draw (-2.3,-1.3) node {\footnotesize $\widehat\zeta^{(1)'}$};
			\draw (-1.3,-0.6) node {\footnotesize $\widehat\xi^{(1)'}$};
			\draw[->] (-1.2,-1.3) -- (-0.1,-1.3);
			\draw[->] (-1.2,-1.5) -- (-0.1,-1.5);
			\draw (-0.7,-1) node {\footnotesize $\widehat\chi^{(1)'}_p$};
			\draw (-0.7,-1.8) node {\footnotesize $\widehat\chi^{(1)'}_q$};
			
			\draw (0.3,0) node {$\widehat 1^{(0)}$};
			\draw (0.3,-1.4) node {$\widehat 2^{(0)'}$};
			\draw (1.6,-0.7) node {$\widehat 3^{(0)'}$};
			\draw (2.9,0) node {$\widehat 4^{(0)}$};
			\draw (2.9,-1.4) node {$\widehat 5^{(0)'}$};
			\draw [->](0.6,-0.2) -- (1.2,-0.6);
			\draw [->](2,-0.6) -- (2.6,-0.2);
			\draw [->](0.6,-1.2) -- (1.2,-0.9);
			\draw [->](2,-0.9) -- (2.6,-1.2);
			\draw [->](0.3,-1.1) -- (0.3,-0.3);
			\draw [->](2.9,-0.3) -- (2.9,-1.1);
			\draw (0,-0.6) node {\footnotesize $\widehat\alpha^{(0)'}$};
			\draw (1.1,-0.2) node {\footnotesize $\widehat\beta^{(0)'}$};
			\draw (1,-1.3) node {\footnotesize $\widehat\gamma^{(0)'}$};
			\draw (2.1,-0.2) node {\footnotesize $\widehat\delta^{(0)'}$};
			\draw (2.2,-1.3) node {\footnotesize $\widehat\zeta^{(0)'}$};
			\draw (3.2,-0.6) node {\footnotesize $\widehat\xi^{(0)'}$};
			\draw[->] (3.3,-1.3) -- (4.4,-1.3);
			\draw[->] (3.3,-1.5) -- (4.4,-1.5);
			\draw (3.9,-1) node {\footnotesize $\widehat\chi^{(0)'}_p$};
			\draw (3.9,-1.8) node {\footnotesize $\widehat\chi^{(0)'}_q$};
			\draw (4.8,-1.4) node {\footnotesize $\cdots$};
			
			\draw (-6.1,1.4) node {\footnotesize $\cdots$};
			\draw[->] (-5.7,1.3) -- (-4.6,1.3);
			\draw[->] (-5.7,1.5) -- (-4.6,1.5);
			\draw (-5.1,1) node {\footnotesize $\widehat\chi^{(2)}_p$};
			\draw (-5.1,1.8) node {\footnotesize $\widehat\chi^{(2)}_q$};
			
			\draw (-4.2,1.4) node {$\widehat 2^{(1)}$};
			\draw (-2.9,0.7) node {$\widehat 3^{(1)}$};
			\draw (-1.6,1.4) node {$\widehat 5^{(1)}$};
			\draw [->](-3.9,0.2) -- (-3.3,0.6);
			\draw [->](-2.5,0.6) -- (-1.9,0.2);
			\draw [->](-3.9,1.2) -- (-3.3,0.9);
			\draw [->](-2.5,0.9) -- (-1.9,1.2);
			\draw [->](-4.2,1.1) -- (-4.2,0.3);
			\draw [->](-1.6,0.3) -- (-1.6,1.1);
			\draw (-4.5,0.6) node {\footnotesize $\widehat\alpha^{(1)}$};
			\draw (-3.4,0.2) node {\footnotesize $\widehat\beta^{(1)}$};
			\draw (-3.5,1.3) node {\footnotesize $\widehat\gamma^{(1)}$};
			\draw (-2.4,0.2) node {\footnotesize $\widehat\delta^{(1)}$};
			\draw (-2.3,1.3) node {\footnotesize $\widehat\zeta^{(1)}$};
			\draw (-1.3,0.6) node {\footnotesize $\widehat\xi^{(1)}$};
			\draw[->] (-1.2,1.3) -- (-0.1,1.3);
			\draw[->] (-1.2,1.5) -- (-0.1,1.5);
			\draw (-0.7,1) node {\footnotesize $\widehat\chi^{(1)}_p$};
			\draw (-0.7,1.8) node {\footnotesize $\widehat\chi^{(1)}_q$};
			
			\draw (0.3,1.4) node {$\widehat 2^{(0)}$};
			\draw (1.6,0.7) node {$\widehat 3^{(0)}$};
			\draw (2.9,1.4) node {$\widehat 5^{(0)}$};
			\draw [->](0.6,0.2) -- (1.2,0.6);
			\draw [->](2,0.6) -- (2.6,0.2);
			\draw [->](0.6,1.2) -- (1.2,0.9);
			\draw [->](2,0.9) -- (2.6,1.2);
			\draw [->](0.3,1.1) -- (0.3,0.3);
			\draw [->](2.9,0.3) -- (2.9,1.1);
			\draw (0,0.6) node {\footnotesize $\widehat\alpha^{(0)}$};
			\draw (1.1,0.2) node {\footnotesize $\widehat\beta^{(0)}$};
			\draw (1,1.3) node {\footnotesize $\widehat\gamma^{(0)}$};
			\draw (2.1,0.2) node {\footnotesize $\widehat\delta^{(0)}$};
			\draw (2.2,1.3) node {\footnotesize $\widehat\zeta^{(0)}$};
			\draw (3.2,0.6) node {\footnotesize $\widehat\xi^{(0)}$};
			\draw[->] (3.3,1.3) -- (4.4,1.3);
			\draw[->] (3.3,1.5) -- (4.4,1.5);
			\draw (3.9,1) node {\footnotesize $\widehat\chi^{(0)}_p$};
			\draw (3.9,1.8) node {\footnotesize $\widehat\chi^{(0)}_q$};
			\draw (4.8,1.4) node {\footnotesize $\cdots$};
		\end{tikzpicture}
	\end{equation*}
	The relations are given by
	\begin{align*}
		\wh Z_R = \{
		&\alpha\ps{i}\beta\ps{i},
		\beta\ps{i}\delta\ps{i},\gamma\ps{i}\zeta\ps{i},
		\delta\ps{i}\xi\ps{i},  \\
		&\alpha\psp{i}\beta\psp{i},
		\beta\psp{i}\delta\psp{i},\gamma\psp{i}\zeta\psp{i},
		\delta\psp{i}\xi\psp{i}, \\
		&\beta\ps{i}\zeta\ps{i}\chi_p\ps{i}\alpha\ps{i-1} - \beta\psp{i}\zeta\psp{i}\chi_p\psp{i}\alpha\psp{i-1}, \\
		&\alpha\ps{i}\beta\psp{i}\zeta\psp{i}\chi_p\psp{i} - \gamma\ps{i}\delta\ps{i}\xi\psp{i}\chi_q\psp{i},
		\alpha\psp{i}\beta\ps{i}\zeta\ps{i}\chi_p\ps{i} - \gamma\psp{i}\delta\psp{i}\xi\ps{i}\chi_q\ps{i}, \\
		&\zeta\ps{i}\chi_p\ps{i}\alpha\ps{i-1}\beta\psp{i-1} - \delta\ps{i}\xi\psp{i}\chi_q\psp{i}\gamma\psp{i-1},
		\zeta\psp{i}\chi_p\psp{i}\alpha\psp{i-1}\beta\ps{i-1} - \delta\psp{i}\xi\ps{i}\chi_q\ps{i}\gamma\ps{i-1}, \\
		&\xi\ps{i}\chi_q\ps{i}\gamma\ps{i-1}\delta\ps{i-1} - \xi\psp{i}\chi_q\psp{i}\gamma\psp{i-1}\delta\psp{i-1}, \\
		&\chi_p\ps{i}\alpha\ps{i-1}\beta\psp{i-1}\zeta\psp{i-1} - \chi_q\ps{i}\gamma\ps{i-1}\delta\ps{i-1}\xi\psp{i-1},
		\chi_p\psp{i}\alpha\psp{i-1}\beta\ps{i-1}\zeta\ps{i-1} - \chi_q\psp{i}\gamma\psp{i-1}\delta\psp{i-1}\xi\ps{i-1}, \\
		&\zeta\ps{i}\chi_q\ps{i}, 
		\xi\ps{i}\chi_p\ps{i}, 
		\chi_p\ps{i}\gamma\ps{i-1}, 
		\chi_q\ps{i}\alpha\ps{i-1}, \\
		&\zeta\psp{i}\chi_q\psp{i}, 
		\xi\psp{i}\chi_p\psp{i}, 
		\chi_p\psp{i}\gamma\psp{i-1}, 
		\chi_q\psp{i}\alpha\psp{i-1}
		 : i \in \integer\}.
	\end{align*}
	
	One can easily verify that $\wh A_R$ is obtained via the unfolding procedure from $A_R$. Thus, unfolding commutes with the repetitive algebra construction.  The folding action is similar to that before, with $g \wh i\ps{j} = \wh i\psp{j}$ (for all $ i \in \{ 2, 3, 5\}$), $g \wh i\ps{j} = \wh i\ps{j}$ (for all $ i \in \{ 1, 4\}$) and $g\wh a\ps{j} = \wh a\psp{j}$ (for all $ a \in \{\alpha,\beta,\gamma,\delta,\zeta,\xi,\chi_p,\chi_q\}$). Under the unfolding functor $U_R \colon \fin A_R \rightarrow \fin \wh A_R$, we have $U_R(P(i\ps{j})) = P(\wh i\ps{j}) \oplus P(g\wh i\ps{j})$, as per Lemma~\ref{lem:URProj}.
	
	The essence of the proof on the closure under derived equivalence (Section~\ref{sec:DerEqProof}) is that tilting commutes with unfolding. We will show an example of that here. Consider the object
	\begin{equation*}
		M \cong P(1\ps{0}) \oplus P(2\ps{0}) \oplus P(3\ps{0}) \oplus P(4\ps{0}) \oplus M((\xi\ps{0})\inv\crs_4\ps{0}\xi\ps{0}(\zeta\ps{0})\inv\delta\ps{0}\crs_4\ps{0}\xi\ps{0}) \in \fin A_R
	\end{equation*}
	It is not difficult to show that this is the image of a tilting object in $D^b(A)$ under the functor given in the proof of Theorem~\ref{thm:DerClosed}. The image of this object under the unfolding functor is
	\begin{align*}
		U_R(M) \cong P(\wh 1\ps{0}) \oplus P(\wh 1\ps{0}) \oplus P(\wh 2\ps{0}) \oplus P(\wh 2\psp{0}) \oplus P(\wh 3\ps{0}) \oplus P(\wh 3\psp{0}) \oplus P(\wh 4\ps{0}) \oplus P(\wh 4\ps{0}) \oplus ~{} \\ 
		M((\wh\xi\ps{0})\inv\wh\xi\psp{0}(\wh\zeta\psp{0})\inv\wh\delta\psp{0}\wh\xi\ps{0}) \oplus M((\wh\xi\psp{0})\inv\wh\xi\ps{0}(\wh\zeta\ps{0})\inv\wh\delta\ps{0}\wh\xi\psp{0})
	\end{align*}
	From this, we consider the reduced module
	\begin{align*}
		\wh M^\ast \cong \; &P(\wh 1\ps{0}) \oplus P(\wh 2\ps{0}) \oplus P(\wh 2\psp{0}) \oplus P(\wh 3\ps{0}) \oplus P(\wh 3\psp{0}) \oplus P(\wh 4\ps{0}) \oplus ~{} \\
		&M((\wh\xi\ps{0})\inv\wh\xi\psp{0}(\wh\zeta\psp{0})\inv\wh\delta\psp{0}\wh\xi\ps{0}) \oplus M((\wh\xi\psp{0})\inv\wh\xi\ps{0}(\wh\zeta\ps{0})\inv\wh\delta\ps{0}\wh\xi\psp{0})
	\end{align*}
	which is obtained by removing duplicate direct summands.
	
	The principle of the proof is that the endomorphism algebra of $\wh M^\ast$ has a $\integer_2$-action, and that the quotient with respect to this action yields the endomorphism algebra of $M$ with the crease maps removed. In addition, crease maps exist only between the direct summands of $M$ corresponding to the direct summands of $\wh M^\ast$ that are fixed under the action of $\integer_2$. In other words, the endomorphism algebra of $M$ is given precisely by the folding procedure (Section~\ref{sec:Folding}) on the endomorphism algebra of $\wh M^\ast$. Since the endomorphism algebra of $\wh M^\ast$ is known to be (unfolded) gentle, this shows that the endomorphism algebra of $M$ is folded gentle. Such statements are obvious in this particular example. Here, $B=(\End_{A_R}(M))^{\text{op}}$ is given by the quiver
	\begin{equation*}
		\begin{tikzpicture}
			\draw (-1.2,0.7) node {1};
			\draw (-1.2,-0.7) node {2};
			\draw (0,0) node {3};
			\draw (1.2,0.7) node {4};
			\draw (1.2,-0.7) node {5};
			\draw [->](-0.9,0.5) -- (-0.2,0.1);

			\draw [->](-0.9,-0.7) -- (0.9,-0.7);
			\draw [->](0.9,-0.6) -- (0.1,-0.2);
			\draw [<-](1,-0.4) -- (0.2,0);
			\draw [->](-1.2,-0.4) -- (-1.2,0.4);
			\draw [->](1.2,-0.4) -- (1.2,0.4);
			\draw [->](-1.3,1) .. controls (-1.4,1.2) and (-1.4,1.4) .. (-1.2,1.4) .. controls (-1,1.4) and (-1,1.2) .. (-1.1,1);
			\draw [->](1.1,1) .. controls (1,1.2) and (1,1.4) .. (1.2,1.4) .. controls (1.4,1.4) and (1.4,1.2) .. (1.3,1);
			\draw (-2.1,0.2) node {\footnotesize $\widetilde Q\colon$};
			\draw (-0.7,1.2) node {\footnotesize $\eta_1$};
			\draw (0.7,1.2) node {\footnotesize $\eta_4$};
			\draw (-1.4,-0.1) node {\footnotesize $\alpha$};
			\draw (-0.4,0.5) node {\footnotesize $\beta$};
			\draw (0,-1) node {\footnotesize $\gamma$};
			\draw (0.3,-0.5) node {\footnotesize $\zeta$};
			\draw (0.7,0) node {\footnotesize $\delta$};
			\draw (1.4,0) node {\footnotesize $\xi$};
		\end{tikzpicture}
	\end{equation*}
	bound by the relations of the set
	\begin{equation*}
		\wt Z = \{\alpha\beta, \beta\delta,\delta\zeta, \gamma\xi, \eta_1^2 + \stp_1, \eta_4^2 + \eta_4 + 2\stp_4 \}.
	\end{equation*}
	On the other hand, the algebra $\wh B = (\End_{\wh A_R}(\wh M^\ast))^{\text{op}}$ is given by the quiver
	\begin{equation*}
		\begin{tikzpicture}
			\draw (-1.2,0) node {$\widehat 1$};
			\draw (-1.2,-1.6) node {$\widehat 2'$};
			\draw (0,-0.8) node {$\widehat 3'$};
			\draw (1.2,0) node {$\widehat 4$};
			\draw (1.2,-1.6) node {$\widehat 5'$};
			\draw [->](-1,-0.2) -- (-0.2,-0.7);

			\draw [->](-0.9,-1.6) -- (0.9,-1.6);
			\draw [->](0.9,-1.5) -- (0.1,-1);
			\draw [<-](1,-1.3) -- (0.2,-0.8);
			\draw [->](-1.2,-1.3) -- (-1.2,-0.3);
			\draw [->](1.2,-1.3) -- (1.2,-0.3);
			\draw (-2.1,0) node {\footnotesize $\widehat{\widetilde Q}\colon$};
			\draw (-1.4,-0.8) node {\footnotesize $\widehat\alpha'$};
			\draw (-0.7,-0.7) node {\footnotesize $\widehat\beta'$};
			\draw (0,-1.9) node {\footnotesize $\widehat\gamma'$};
			\draw (0.2,-1.37) node {\footnotesize $\widehat\zeta'$};
			\draw (0.7,-0.8) node {\footnotesize $\widehat\delta'$};
			\draw (1.4,-0.8) node {\footnotesize $\widehat\xi'$};
			
			\draw (-1.2,1.6) node {$\widehat 2$};
			\draw (0,0.8) node {$\widehat 3$};
			\draw (1.2,1.6) node {$\widehat 5$};
			\draw [->](-1,0.2) -- (-0.2,0.7);
			\draw [->](-0.9,1.6) -- (0.9,1.6);
			\draw [->](0.9,1.5) -- (0.1,1);
			\draw [<-](1,1.3) -- (0.2,0.8);
			\draw [->](-1.2,1.3) -- (-1.2,0.3);
			\draw [->](1.2,1.3) -- (1.2,0.3);
			\draw (-1.4,0.8) node {\footnotesize $\widehat\alpha$};
			\draw (-0.7,0.6) node {\footnotesize $\widehat\beta$};
			\draw (0,1.9) node {\footnotesize $\widehat\gamma$};
			\draw (0.2,1.35) node {\footnotesize $\widehat\zeta$};
			\draw (0.7,0.7) node {\footnotesize $\widehat\delta$};
			\draw (1.4,0.8) node {\footnotesize $\widehat\xi$};
		\end{tikzpicture}
	\end{equation*}
	bound by the relations in the set
	\begin{equation*}
		\wh{\wt Z} = \{\wh\alpha\wh\beta, \wh\alpha'\wh\beta', \wh\beta\wh\delta, \wh\beta'\wh\delta',\wh\delta\wh\zeta, \wh\delta'\wh\zeta', \wh\gamma\wh\xi,\wh\gamma'\wh\xi'\}.
	\end{equation*}
	One can clearly see that $B$ is folded gentle, $\wh B$ is (unfolded) gentle, and that $\wh B$ is obtained from $B$ via the unfolding procedure (and conversely, $B$ is obtained from $\wh B$ via the folding procedure). In addition, the algebra $B$ is derived equivalent to $A$, just as $\wh B$ is derived equivalent to $\wh A$. That is, unfolding commutes with derived equivalence.
	\bibliography{Bibliography}{}

\begin{thebibliography}{10}

\bibitem{Arnesen}
K.~K. Arnesen, R.~Laking, and D.~Pauksztello.
\newblock Morphisms between indecomposable complexes in the bounded derived
  category of a gentle algebra.
\newblock {\em Journal of Algebra}, 467:1--46, 2016.

\bibitem{AssemA}
I.~Assem.
\newblock Tilted algebras of type {${A}_n$}.
\newblock {\em Communications in Algebra}, 10(19):2121--2139, 1982,
  https://doi.org/10.1080/00927878208822826.

\bibitem{AssemC}
I.~Assem.
\newblock Iterated tilted algebras of types {${\bf B}_n$} and {${\bf C}_n$}.
\newblock {\em Journal of Algebra}, 84(2):361--390, 1983.

\bibitem{ABCP}
I.~Assem, T.~Br{\"u}stle, G.~Charbonneau-Jodoin, and P.-G. Plamondon.
\newblock {Gentle algebras arising from surface triangulations}.
\newblock {\em Algebra \& Number Theory}, 4(2):201 -- 229, 2010.

\bibitem{AssemHappel}
I.~Assem and D.~Happel.
\newblock Generalized tilted algebras of type {${A}_n$}.
\newblock {\em Communications in Algebra}, 9(20):2101--2125, 1981,
  https://doi.org/10.1080/00927878108822697.

\bibitem{ASIteratedTilted}
I.~Assem and A.~Skowro\'{n}ski.
\newblock Iterated tilted algebras of type {$\tilde{\bf A}_n$}.
\newblock {\em Math. Z.}, 195(2):269--290, 1987.

\bibitem{AuslanderReitenSmalo}
M.~Auslander, I.~Reiten, and S.~O. Smal\o.
\newblock {\em Representation theory of {A}rtin algebras}, volume~36 of {\em
  Cambridge Studies in Advanced Mathematics}.
\newblock Cambridge University Press, Cambridge, 1997.
\newblock Corrected reprint of the 1995 original.

\bibitem{BekkertGentle}
V.~Bekkert and H.~A. Merklen.
\newblock Indecomposables in derived categories of gentle algebras.
\newblock {\em Algebras and Representation Theory}, 6(3):285--302, 2003.

\bibitem{SemilinearClan}
R.~Bennett-Tennenhaus and W.~Crawley-Boevey.
\newblock Semilinear clannish algebras.
\newblock {\em Proceedings of the London Mathematical Society}, 129(4):e12637,
  2024, https://londmathsoc.onlinelibrary.wiley.com/doi/pdf/10.1112/plms.12637.

\bibitem{Broomhead}
N.~Broomhead.
\newblock {\em Dimer models and Calabi-Yau algebras}.
\newblock Mem. Amer. Math. Soc., 2012.

\bibitem{ButlerRingel}
M.~C.~R. Butler and C.~M. Ringel.
\newblock Auslander-{R}eiten sequences with few middle terms and applications
  to string algebras.
\newblock {\em Comm. Algebra}, 15(1-2):145--179, 1987.

\bibitem{Canakci}
I.~\c{C}anak\c{c}i, D.~Pauksztello, and S.~Schroll.
\newblock Mapping cones in the bounded derived category of a gentle algebra.
\newblock {\em Journal of Algebra}, 530:163--194, 2019.

\bibitem{CBClans}
W.~W. Crawley-Boevey.
\newblock Functorial filtrations. {II}. {C}lans and the {G}elfand problem.
\newblock {\em J. London Math. Soc. (2)}, 40(1):9--30, 1989.

\bibitem{Deng}
B.~Deng.
\newblock On a problem of nazarova and roiter.
\newblock {\em Comment. Math. Helv.}, 75(3):368--409, 2000.

\bibitem{DengDuI}
B.~Deng and J.~Du.
\newblock Frobenius morphisms and representations of algebras.
\newblock {\em Trans. Amer. Math. Soc.}, 358(8):3591--3622, 2006.

\bibitem{DengDuII}
B.~Deng and J.~Du.
\newblock Folding derived categories with {F}robenius functors.
\newblock {\em J. Pure Appl. Algebra}, 208(3):1023--1050, 2007.

\bibitem{DRIndec}
V.~Dlab and C.~M. Ringel.
\newblock Indecomposable representations of graphs and algebras.
\newblock {\em Mem. Amer. Math. Soc.}, 6(173):v+57, 1976.

\bibitem{Drozd}
J.~A. Drozd.
\newblock Tame and wild matrix problems.
\newblock In V.~Dlab and P.~Gabriel, editors, {\em Representation Theory II},
  pages 242--258, Berlin, Heidelberg, 1980. Springer Berlin Heidelberg.

\bibitem{Elsener}
A.~G. Elsener.
\newblock Gentle $m$-calabi-yau tilted algebras.
\newblock {\em Algebra and Discrete Mathematics}, 30(1):44--62, 2020.

\bibitem{FockGoncharov06}
V.~V. Fock and A.~B. Goncharov.
\newblock Cluster {$\mathcal{X}$}-varieties, amalgamation, and {P}oisson-{L}ie
  groups.
\newblock In {\em Algebraic geometry and number theory}, volume 253 of {\em
  Progr. Math.}, pages 27--68. Birkh\"{a}user Boston, Boston, MA, 2006.

\bibitem{GabrielIndec}
P.~Gabriel.
\newblock Indecomposable representations. {II}.
\newblock In {\em Symposia {M}athematica, {V}ol. {XI} ({C}onvegno di {A}lgebra
  {C}ommutativa, {INDAM}, {R}ome, 1971 \& {C}onvegno di {G}eometria, {INDAM},
  {R}ome, 1972)}, pages 81--104. Academic Press, London-New York, 1973.

\bibitem{GeissClanMaps}
C.~Gei\ss.
\newblock Maps between representations of clans.
\newblock {\em J. Algebra}, 218(1):131--164, 1999.

\bibitem{SkewedGentle}
C.~Gei\ss and J.~A. de~la Pe\~{n}a.
\newblock Auslander-{R}eiten components for clans.
\newblock {\em Bol. Soc. Mat. Mexicana (3)}, 5(2):307--326, 1999.

\bibitem{GSMultiserial}
E.~L. Green and S.~Schroll.
\newblock Multiserial and special multiserial algebras and their
  representations.
\newblock {\em Adv. Math.}, 302:1111--1136, 2016.

\bibitem{Happel}
D.~Happel.
\newblock {\em Triangulated Categories in the Representation of Finite
  Dimensional Algebras}.
\newblock London Mathematical Society Lecture Note Series. Cambridge University
  Press, 1988.

\bibitem{Huerfano}
R.~S. Huerfano and M.~Khovanov.
\newblock Categorification of some level two representations of quantum
  $\mathfrak{sl}_n$.
\newblock {\em Journal of Knot Theory and Its Ramifications}, 15(06):695--713,
  2006, https://doi.org/10.1142/S0218216506004713.

\bibitem{HW}
D.~Hughes and J.~Waschbüsch.
\newblock Trivial extensions of tilted algebras.
\newblock {\em Proceedings of the London Mathematical Society},
  s3-46(2):347--364, 1983,
  https://londmathsoc.onlinelibrary.wiley.com/doi/pdf/10.1112/plms/s3-46.2.347.

\bibitem{KonigZim}
S.~K\"{o}nig and A.~Zimmermann.
\newblock {\em Derived Equivalences for Group Rings}, volume 1685 of {\em
  Lecture Notes in Mathematics}.
\newblock Springer Berlin, Heidelberg, 1998.

\bibitem{Opper}
S.~Opper, P.-G. Plamondon, and S.~Schroll.
\newblock A geometric model for the derived category of gentle algebras, 2018,
  arXiv:1801.09659.

\bibitem{Rickard}
J.~Rickard.
\newblock Morita theory for derived categories.
\newblock {\em Journal of the London Mathematical Society}, s2-39(3):436--456,
  1989,
  https://londmathsoc.onlinelibrary.wiley.com/doi/pdf/10.1112/jlms/s2-39.3.436.

\bibitem{RingelSpecies}
C.~M. Ringel.
\newblock Representations of k-species and bimodules.
\newblock {\em Journal of Algebra}, 41(2):269--302, 1976.

\bibitem{RingelGentle}
C.~M. Ringel.
\newblock The repetitive algebra of a gentle algebra.
\newblock {\em Boletín de la Sociedad Matemática Mexicana}, 3(3):235--253,
  1997.

\bibitem{Schroer}
J.~Schr\"{o}er.
\newblock On the quiver with relations of a repetitive algebra.
\newblock {\em Archiv der Mathematik}, 72(6):426--432, 1999.

\bibitem{SchroerZim}
J.~Schr\"{o}er and A.~Zimmermann.
\newblock Stable endomorphism algebras of modules over special biserial
  algebras.
\newblock {\em Mathematische Zeitschrift}, 244(3):515--530, 2003.

\bibitem{Simson}
D.~Simson.
\newblock Categories of representations of species.
\newblock {\em J. Pure Appl. Algebra}, 14(1):101--114, 1979.

\bibitem{SteinbergLectures}
R.~Steinberg.
\newblock {\em Lectures on Chevalley groups}, volume~66 of {\em University
  Lecture Series}.
\newblock American Mathematical Society, Providence, RI, 2016.

\bibitem{Tanisaki}
T.~Tanisaki.
\newblock Foldings of root systems and {G}abriel's theorem.
\newblock {\em Tsukuba J. Math.}, 4(1):89--97, 1980.

\bibitem{BekkertSkewedGentle}
E.~N.~M. Viktor~Bekkert and H.~A. Merklen.
\newblock Indecomposables in derived categories of skewed-gentle algebras.
\newblock {\em Communications in Algebra}, 31(6):2615--2654, 2003,
  https://doi.org/10.1081/AGB-120021885.

\bibitem{VHW}
H.-J. von H\"{o}hne and J.~Waschb\"{u}sch.
\newblock Die struktur {$n$}-reihiger {A}lgebren.
\newblock {\em Comm. Algebra}, 12(9-10):1187--1206, 1984.

\bibitem{WaldWasch}
B.~Wald and J.~Waschb\"{u}sch.
\newblock Tame biserial algebras.
\newblock {\em Journal of Algebra}, 95(2):480--500, 1985.

\bibitem{WaschSkow}
J.~Waschb\"{u}sch and A.~Skowronski.
\newblock Representation-finite biserial algebras.
\newblock {\em Journal f\"{u}r die reine und angewandte Mathematik},
  1983(345):172--181, 1983.

\end{thebibliography}
	\bibliographystyle{habbrv}
\end{document}